\documentclass[11pt,a4paper]{preprint}
\usepackage[utf8]{inputenc}
\usepackage[english]{babel}
\usepackage{helvet, stmaryrd, ulem}
\usepackage{amsmath}
\usepackage{appendix}
\usepackage{amsthm}
\usepackage{amsfonts}
\usepackage{amssymb}
\usepackage{mathtools}
\usepackage{mathrsfs}
\usepackage{dsfont}
\usepackage{enumitem}
\usepackage{tabularx}
\usepackage{xcolor}
\usepackage{bbm}
\usepackage{mhequ}
\usepackage{todonotes}
\usepackage[paper=a4paper,left=25mm,right=25mm,top=25mm,bottom=35mm]{geometry}
\usepackage[noabbrev,capitalize]{cleveref}

\newcommand{\cE}{\mathcal{E}}

\def\scal#1{\langle #1 \rangle}

\newtheorem{theorem}{Theorem}[section]

\crefname{definition}{Definition}{Definitions}
\newtheorem{proposition}{Proposition}[section]
\crefname{proposition}{Proposition}{Propositions}
\newtheorem{lemma}{Lemma}[section]
\crefname{lemma}{Lemma}{Lemmata}

\newtheorem{corollary}{Corollary}[section]
\crefname{corollary}{Corollary}{Corollaries}

\crefname{example}{Example}{Examples}
\newtheorem{assumption}{Assumption}[section]
\crefname{assumption}{Assumption}{Assumptions}

\theoremstyle{definition}
\newtheorem{remark}{Remark}[section]
\crefname{remark}{Remark}{Remarks}

\numberwithin{equation}{section}

\newcommand{\supp}{\operatorname{supp}}
\newcommand{\R}{\mathbb{R}}

\newcommand{\cQ}{\mathcal{Q}}
\newcommand{\cF}{\mathcal{F}}

\newcommand{\T}{\mathbb{T}}
\newcommand{\N}{\mathbb{N}}
\newcommand{\E}{\mathds{E}}

\newcommand{\F}{\mathcal{F}}
\newcommand{\cB}{\mathcal{B}}
\newcommand{\cC}{\mathcal{C}}
\newcommand{\cM}{\mathcal{M}}
\newcommand{\bL}{\mathbb{L}}
\newcommand{\bS}{\mathbb{S}}

\newcommand{\calC}{\mathscr{C}}

\renewcommand{\mathcal}[1]{\mathscr{#1}}
\newcommand{\Qx}[2]{Q(#1,#2)}
\renewcommand{\epsilon}{\varepsilon}
\newcommand{\eps}{\varepsilon}
\newcommand{\bF}{\mathbb{F}}
\newcommand{\Z}{\mathbb{Z}}
\newcommand{\C}{\mathbb{C}}

\let\leqslant\leq
\let\geqslant\geq

\newcommand{\squeeze}[2][0]{\mbox{$\medmuskip=#1mu\displaystyle#2$}}

\newcommand{\Id}{\operatorname{Id}}

\DeclarePairedDelimiter{\abs}{\lvert}{\rvert}
\DeclarePairedDelimiter{\norm}{\lVert}{\rVert}

\DeclarePairedDelimiter{\paren}{(}{)}

\begin{document}

\title{Overcoming the spatial order barrier for nonlinear SPDEs with additive space-time white noise}

	\date{\today}
\author{Lukas Anzeletti\footnote{TU Wien,
\email{lukas.anzeletti@asc.tuwien.ac.at}}, Máté Gerencsér\footnote{TU Wien,
\email{mate.gerencser@asc.tuwien.ac.at}}, Helena Kremp\footnote{TU and WIAS Berlin,
\email{kremp@wias-berlin.de}}}

\maketitle

	\begin{abstract}
We introduce a fully discrete numerical scheme for semilinear SPDEs with additive space-time white noise that overcomes the previous order barrier for the spatial convergence rate. The scheme achieves a strong convergence rate of $M^{-1+\eps}$ in time and $N^{-3/2+\eps}$ in space for any $\eps>0$, where $M^{-1}$ and $N^{-1}$ are the temporal, respectively the spatial, meshsizes. This substantially improves the standard spatial error bounds of order $N^{-1/2}$ in the literature.
		
		\bigskip
		
		 {{\sc Mathematics Subject Classification (2020):  60H15, 60H35 }
		
		}

		{{\sc Keywords: stochastic PDEs, strong convergence rates}}
	\end{abstract}

\tableofcontents

\section{Introduction}
When approximating solutions of stochastic differential equations, a fundamental challenge is that
the rate of convergence is often limited due to the low regularity of the solutions. 
When it comes to stochastic \textit{partial} differential equations (SPDEs), this issue appears both in the time and space variable,
limiting both temporal and spatial convergence rates. 
While the temporal rate has been significantly studied and in many cases successfully improved
(see below for more details on the literature),
to our best knowledge no discretisation scheme of an SPDE has yet been able to achieve a spatial strong rate of convergence that
is superior to the spatial regularity of the solution\footnote{To be more precise, this is valid for equations with function-valued solutions and approximations thereof in a (possibly discretised) function space. For equations whose solutions are distribution-valued and/or errors measured in a distributional space, the analogue of this order barrier would be the difference between the regularity of the solution and the error topology.}. Going beyond this barrier is the upshot of the present paper.

We consider this problem in the context of additive space-time white noise-driven semilinear SPDEs:
\begin{equ}\label{eq:main}
    \partial_t u=\Delta u+ f( u)+\xi, \qquad u(0,x)=u_0(x)
\end{equ}
on a time horizon $[0,T]$ and on the spatial domain $\T=\R/\Z$.
Under classical assumptions on the nonlinearity $f$ and the initial condition $u_0$ it is known that the equation admits a unique solution that is (almost) $1/4$-H\"older continuous in time and (almost) $1/2$-H\"older continuous in space.
The same exponents appear as the order barriers of a large class of numerical schemes (see also the classical work \cite{Gy}).
 
To discuss these order barriers more precisely,
it is convenient to view numerical schemes as consisting of two qualitatively different steps\footnote{The description of numerical schemes given here applies for methods based on fixed observables and does not cover adaptive schemes.}. First, a set of Gaussian random variables are generated that are linear functionals of the noise, that is, are of the form $\xi(h)$ for some $h\in L^2([0,T]\times\T)$.
Second, a deterministic function (for example, a recursion given by a time-stepping method) is applied to the generated random variables.
With this in mind, one way to formalise the meaning of order barriers is as follows.
Choose a pseudometric $d$ on $C(\T)$. For a given finite set $\mathbb{S}=\{h_1,\ldots,h_{|\bS|}\}\subset L^2([0,T]\times \T)$ define
 \begin{equs}
     \mathbf{c}_d(\mathbb{S})=\inf_{\substack{\varphi:\R^{|\mathbb{S}|}\to C(\T)\\\text{\footnotesize measurable}}}
     \sqrt{\E \Big|d\Big(u(1,\cdot),\varphi\big(\xi(h_1),\ldots,\xi(h_{|\bS|})\big)\Big)\Big|^2}.
 \end{equs}
A lower bound $\delta$ for $\mathbf{c}_d(\mathbb{S})$ expresses that there is no numerical scheme based on the chosen random variables that makes an error less than $\delta$. If $\delta$ can be expressed as a power of $|\mathbb{S}|$, this exponent can be informally referred to as an order barrier.
 
\begin{remark}
    One reason why the comparison of various convergence results in the literature is tedious is that there is no  canonical choice of $d$. Common choices are the $L^p(\T)$ distances for $p\in[1,\infty]$, the discrete $L^p$ distances between functions restricted to a grid, or the absolute difference at a given point.
    In fact, it is also conceivable to measure differently in the time and $\omega$ variables, leading to a wide choice of error criteria.
\end{remark}

The numerical analysis of space-time white noise-drive SPDEs goes back to the work \cite{Gy}, which considered equations even more general than \eqref{eq:main}, where the noise term was also allowed to have a dependence on the solution of the form $g(u)\xi$. Both explicit in time and implicit in time finite difference schemes were considered on the space-time lattice $I_M\times \Pi_N$, where
\begin{equ}\label{eq:lattice}
    I_M\coloneqq\{t_k\coloneqq kh\coloneqq k\frac{T}{M},\quad k=0,\dots,M\},\quad \Pi_N\coloneqq\{x_n\coloneqq n\frac{1}{N},\quad n=0,\ldots,N-1\},
\end{equ}
with $M,N\in\N$, and a strong error bound of order $M^{-1/4}+N^{-1/2}$ was proven, matching the regularity of the solution.
These approximations can be written as functions of the random variables $\xi(h)$ for $h\in \mathbb{S}_{M,N}^{\mathrm{lat}}:=\{\mathbf{1}_{[t_{k},t_{k+1}]\times[x_n,x_{n+1}]}:\,\,k=0,\dots,M,\,n=0,\ldots,N-1\}$. With the asymptotically optimal choice $M\sim N^2$, the error bound is therefore $O(|\mathbb{S}_{M,N}|^{-1/6})$.
In the multiplicative case $f=0$, $g(u)=u$ this is shown to be optimal in a very strong sense in \cite[Theorem~3.1]{Davie-Gaines}:
with $d(v,w)=\Big|\int_{\T} v(x)-w(x)\,dx\,\Big|$, the authors prove that
\begin{equ}\label{eq:Davie-Gaines-very-strong}
    \inf_{|\mathbb{S}|\leq K}\mathbf{c}_d(\mathbb{S})\geq c K^{-1/6}
\end{equ}
with a universal constant $c>0$.
That is, no matter how one chooses the samples and no matter what scheme is applied to these samples, the order in \cite{Gy} cannot be improved.

In the additive case,  \cite[Section~2.1]{Davie-Gaines} establishes a weaker lower bound. Fixing the choice of linear functionals determined by $\mathbb{S}_{M,N}^{\mathrm{lat}}$, the rate of \cite{Gy} is shown to be optimal. Indeed, with the choice $d(v,w)=|v(0)-w(0)|$, the authors prove that with $f=0$ in \eqref{eq:main} one has
\begin{equ}\label{eq:Davie-Gaines-weaker}
  \mathbf{c}_d(\mathbb{S}_{M,N}^{\mathrm{lat}})\geq c (M^{-1/4}+N^{-1/2}).  
\end{equ}

Since no such strong lower bound as in \eqref{eq:Davie-Gaines-very-strong} is available for the additive case, it was natural to conjecture that replacing $\mathbb{S}_{M,N}^{\mathrm{lat}}$ by another set, an appropriate scheme could achieve higher rates.
A concrete proposal was put forth in
\cite{Jentzen-Kloden}: the so-called \textit{accelerated} exponential Euler scheme, which uses the set
\begin{equ}
    \mathbb{S}_{M,N}^{\mathrm{acc}}=\big\{t,x\mapsto \mathbf{1}_{[0,t_k]}(t)e^{-4\pi^2n^2(t_k-t)}e^{2\pi i n x}:\,\, k=0,\dots,M,\,n=0,\dots,N \big\},
\end{equ}
i.e. the scheme is based on truncating $O$, where $O$ is the solution of the linearised problem $\partial_t O=\Delta O+\xi$ with $O_0=0$,
to Fourier modes $\leq N$ and sampling the truncated solution on the temporal grid $I_M$.
This scheme provides a massive improvement by achieving temporal rate $1$ as opposed to the earlier $1/4$, which was referred to as \textit{``Overcoming the order barrier''}. Unfortunately, \cite{Jentzen-Kloden} imposes very restrictive conditions of $f$. The assumptions therein are phrased in terms of a condition on maps $F:L^2(\T)\to L^2(\T)$ 
and the only $f$ such that $\big(F(u)\big)(x)=f(u(x))$ satisfies the assumption \cite[Assumption~2.4]{Jentzen-Kloden} are the affine linear ones. In the genuinely nonlinear case, \cite{Jentzen12} established an improvement from $1/4$ to $1/2$, and the conjectured rate $1$ was open until very recently, when \cite{DjGK} proved that the temporal rate $1$ (with a loss of an arbitrarily small $\eps>0$) is achieved for any sufficiently regular and globally Lipschitz $f$  by the accelerated exponential Euler scheme. Moreover, a matching lower bound is established, that is, with $d(u,v)=\|u-v\|_{L^2(\T)}$ and with $f(u)=u$ in \eqref{eq:main} one has
\begin{equ}
    \mathbf{c}_d(\mathbb{S}_{M,N}^{\mathrm{acc}})\geq c (M^{-1}+N^{-1/2}).
\end{equ}
Notably, despite the substantial improvement in the temporal rate, the spatial rate remained $1/2$. The same spatial order barrier appears in \cite{BGJK}, which uses yet another set of linear functionals of the noise.
In the present paper we go substantially beyond the spatial order $1/2$ and construct a scheme that achieves spatial rate (arbitrarily close to) $3/2$.
Although the scheme is different from the one considered in \cite{Jentzen-Kloden, DjGK}, the temporal improvement proved in \cite{DjGK} is retained by our scheme, resulting in a full error estimate of order $M^{-1+\eps}+N^{-3/2+\eps}$ for any $\eps$. The following table summarises the evolution in the literature of the total computational effort (ignoring the losses of the arbitrarily small $\eps$) required to guarantee an error of order $\delta>0$.

\begin{center}
\begin{tabular}{|c|c|c|c|}
 \hline
 & temporal rate & spatial rate & total cost to ensure error $\lesssim\delta$ \\
\hline    
\cite{Gy}    & $M^{-1/4}$ & $N^{-1/2}$      & $\delta^{-6}$          \\
\hline
\cite{Jentzen12} & $M^{-1/2}$ & $N^{-1/2}$        & $\delta^{-4}$          \\
\hline
\cite{DjGK}                            & $M^{-1}$ & $N^{-1/2}$        & $\delta^{-3}$          \\
\hline
           Theorem \ref{thm:main1}           & $M^{-1}$ & $N^{-3/2}$             & $\delta^{-1.666\cdots}$  \\
           \hline

\end{tabular}

\end{center}

While in the works mentioned above the nonlinearity is assumed to be (among other assumptions) globally Lipschitz continuous, interesting examples such as the stochastic Allen-Cahn equation, with $f(r)=r-r^3$, fail this assumption.
The extension of classical results to such nonlinearities is a delicate matter, due to the known phenomenom of blow-up of explicit schemes for stochastic equations with superlinearly growing one-sided Lipschitz drift, see \cite{HutzJentz, beccari2019strong}.
The rate obtained by \cite{Gy} in the Lipschitz case was proven by \cite{becker2023strong} in the Allen-Cahn case, albeit for a rather different discretisation. The rate obtained by \cite{Jentzen12} in the Lipschitz case was proven by \cite{Wang} in the Allen-Cahn case, modulo taming the scheme.
In the work \cite{DjGK}, as well as in the present paper, both the Lipschitz case and the Allen-Cahn case are treated. Our result on equations with superlinearly growing drift is Theorem \ref{thm:main2} below, matching the rate of Theorem \ref{thm:main1}. Finally, we mention that the reason why our error estimates do not follow a parabolic scaling (i.e. the spatial rate is not twice the temporal rate) arises in a fairly subtle point of the proof and we comment on it after the proof in Remark \ref{rem:rates}.

The paper is organized as follows. We introduce the setting, numerical schemes and main results in \cref{subsec:setting} below. Within this section, we distinguish between the cases of bounded nonlinearity, which is treated in \cref{subsec:setting1} with main \cref{thm:main1} and a nonlinearity with possible superlinear growth in \cref{subsec:setting2} with main \cref{thm:main2}. In \cref{sec:prelim}, we derive regularity bounds for the process $O$ solving the linear system, consider well-posedness questions and prove all needed auxiliary results. \cref{bigsec:bounded} is devoted to the proof of \cref{thm:main1}, where we treat the spatial error terms in \cref{sec:spatialerror} and the temporal error terms in \cref{sec:temporalerror}. \cref{bigsec:superlinear} is concerned with the proof of \cref{thm:main2}, where we first derive a priori estimates for the numerical scheme in \cref{subsec:apriori}. Afterwards we consider the spatial error terms in \cref{subsec:spatial} and the temporal error terms in \cref{subsec:temporal}. \cref{section:input-noise} is devided into \cref{section:tildeO}, where we construct a process $\tilde{O}$ that is employed in our numerical scheme and that satisfies the required \cref{ass:tildeO} below and in \cref{sec:implement} we detail the numerical algorithm.

\subsection{Setting and main results}\label{subsec:setting}

Let $(\Omega,\F,\mathbb{P})$ be a probability space. We fix a time horizon $T>0$. The torus $\T$ is defined as $\T=\R/\Z$.
The space-time white noise $\xi$ is defined as a mapping from the Borel sets $\cB([0,T]\times\mathbb{T})$ into $L^{2}(\Omega)$, such that for any collection $A_{1},\dots,A_{k}\in \cB([0,T]\times\mathbb{T})$, the vector $(\xi(A_{1}),\dots,\xi(A_{k}))$ is Gaussian with zero mean and covariance $\E[\xi(A_{i})\xi(A_{j})]=\lambda(A_{i}\cap A_{j})$, where $\lambda$ denotes the Lebesgue measure.
We also fix a filtration $\bF=(\F_{t})_{t\in[0,T]}$, such that $\F_0$ is complete, $\F_T\subset \F$, and such that for any $t\in[0,T]$, $A\in \cB([0,t]\times\mathbb{T})$, $B\in \cB([t,T]\times\mathbb{T})$, $\xi(A)$ is $\F_{t}$-measurable and $\xi(B)$ is independent of $\F_{t}$. An example for $\bF$ would be the completed filtration generated by $\xi$.
The predictable $\sigma$-algebra on $\Omega\times[0,T]$ will be denoted by $\mathcal{P}$. Stochastic integrals $\int_{0}^{T}\int_{\mathbb{T}}g(s,y)\xi(ds,dy)$ against $\xi$ can be defined for all $\mathcal{P}\otimes \cB(\mathbb{T})$-measurable integrands $g:\Omega\times[0,T]\times\mathbb{T}\to\C$ with $g\in L^{2}(\Omega\times[0,T]\times\mathbb{T})$.
We refer to \cite{DPZ} for more details, but remark that for deterministic $g$, the stochastic integral is simply the unique isometric and linear extension of the map $\mathbf{1}_A\mapsto\xi(A)$ to $L^2([0,T]\times\T)$.

Let $(P_t)_{t\geq 0}$ be the heat semigroup and let $p_t$ be the associated heat kernel. That is, denoting by $e_{j}(x)\coloneqq e^{2\pi ijx}$ the Fourier modes for $j\in\Z$ (here $i=\sqrt{-1}$) and by $\F f (j)\coloneqq\hat{f}(j)\coloneqq\int_{\mathbb{T}}f(x)e^{-2\pi i j x}\,dx$, for $j\in\Z$, the Fourier transform of $f\in L^1(\T,\C)$,
set
\begin{equ}
  P_{t}f\coloneqq\F^{-1}\big(j\mapsto e^{-4\pi^2j^2t}\hat f (j)\big)\coloneqq  \sum_{j\in\mathbb{Z}}e^{-4\pi^2j^2t}\hat f(j)e_j.
\end{equ}
One can equivalently write $P_t f=p_t\ast f$, where
\begin{equ}
p_{t}(x)\coloneqq\sum_{j\in\mathbb{Z}}e^{-4\pi^2j^2t}e^{2\pi i j x}=\frac{1}{\sqrt{4\pi t}}\sum_{m\in\Z}e^{-(x-m)^2/4t}.
\end{equ}
We define a mild solution $u$ of \eqref{eq:main} as a $\mathcal{P}\otimes\mathcal{B}(\T)$-measurable continuous process that satisfies almost surely $$u_t=P_{t}u_0 + \int_{0}^{t} P_{t-s} f(u_s) \, ds + \int_{0}^{t}\int_{\T} p_{t-s} (\cdot-y)\, \xi(ds,dy)$$ 
for $t\in[0,T]$.
The existence and uniqueness of a mild solution to \eqref{eq:main} is classical under standard assumptions on $u_0$ and $f$, see below in \cref{prop:wp}. 
The stochastic convolution will be denoted by 
\begin{align} \label{eq:OUdef}
    O_t:= \int_{0}^{t}\int_{\T} p_{t-s} (\cdot-y)\xi(ds,dy), \quad t\in[0,T]
\end{align}
and will be referred to as
the Ornstein-Uhlenbeck (OU) process.

We decompose the mild solution $u$ to \eqref{eq:main} as $u=v+O$
where for $t\in[0,T]$
\begin{align}
    v_t&:=P_{t} u_0 + \int_{0}^{t} P_{t-s} f(u_s) \,ds=P_{t} u_0 + \int_{0}^{t} P_{t-s} f(v_s+O) \,ds.
\end{align}
for $t\in[0,T]$. Therefore, $v$ is the mild solution of the PDE
\begin{align}
    \partial_t v =\Delta v + f(v+O), \quad v_0=u_0. \label{eq:v}
\end{align}
Thanks to the high regularity of $v$ (see \cref{prop:wp} below), it is actually also a strong solution. 

We fix $M,N\in\N$. Our aim is to build a numerical scheme for the equation for $v$ on the space-time lattice $I_M\times \Pi_N$, see \eqref{eq:lattice}, whose approximation rates are improved due to the improved regularity of $v$ compared to $u$.
To construct such a scheme and to transfer these improved rates to the scheme for $u$, one would like to be able to simulate the irregular noise part $O_{t}(x)$ explicitly for $(t,x)\in I_M\times \Pi_N$. However, since the covariance of $O$ on $I_M\times\Pi_N$ is not in a closed form, we instead consider an intermediate Gaussian process $\tilde{O}$ (see \cref{section:tildeO} below), for which we have an explicit expression for the covariance on the time space-grid (and therefore is genuinely implementable) and for which the remainder $O-\tilde{O}$ decays at a sufficiently high temporal and spatial rate. In our scheme, we then use $\tilde O_t(x)$ instead of $O_t(x)$
for $(t,x)\in I_{M}\times\Pi_N$.
In \cref{section:tildeO}, expanding on the method detailed in \cite{Davie-Gaines}, we construct such a process $\tilde{O}$, for which the remainder $O-\tilde{O}$ actually decays with an exponential rate under the condition that there exist $c, \epsilon>0$ with $M^{-1}\geq cN^{-2+\epsilon}$. The latter condition is harmless in our case, since our error estimate of order $M^{-1+\eps}+N^{-3/2+\eps}$ is optimised when $M^{-1}\sim N^{-3/2}\gg N^{-2+\eps}$.
At first reading, the reader might think of $O$ in place of $\tilde{O}$ throughout the article. The results remain true in this case, but the resulting scheme (to our best knowledge) is not implementable.

We summarize the required properties of $\tilde{O}$ in \cref{ass:tildeO} below. 
We define the restriction operator $\Theta_N$ acting on $C(\T)$ by
\begin{equ}\label{eq:restriction}
    \Theta_N g(x)=g(x),\qquad x\in\Pi_N,
\end{equ}
as well as the $L^2$-norm on the discrete torus $\Pi_N$ by
\begin{equ}
    \|f\|_{L^2(\Pi_N)}^2=N^{-1}\sum_{x\in\Pi_N}|f(x)|^2.
\end{equ}
In the assumptions below, we denote by $C_T^{\theta}\calC^{\beta}(\T)=C^{\theta}([0,T],\calC^{\beta}(\T))$, for $\theta \in (0,1]$ and $\beta\in\R$, 
the space of $\theta$-Hölder continuous functions on $[0,T]$ taking values in $\calC^{\beta}(\T)$, 
where $\calC^{\beta}(\T)$ denotes the H\"older-Besov space that we formally introduce in \cref{subsec:Besov-space} below. We equip the space with the norm
\begin{align*}
    \norm{u}_{C_T^{\theta}\calC^{\beta}(\T)}=\sup_{t\in[0,T]}\norm{u_t}_{\calC^{\beta}(\T)}+\sup_{0\leq s<t\leq T}\frac{\norm{u_t-u_s}_{\calC^{\beta}(\T)}}{\abs{t-s}^{\theta}}.
\end{align*}
For $\theta=0$, we let $C_T\calC^{\beta}(\T)$ be the space of continuous functions on $[0,T]$ with values in $\calC^{\beta}(\T)$ equipped with the supremum norm.

\begin{assumption}\label{ass:tildeO}
Assume that $(\tilde O_t)_{t\in[0,T]}$ is a $\mathcal{P}\otimes \cB(\mathbb{T})$-measurable process that satisfies:
    \begin{enumerate}[label=\alph*)]
        \item \label{en:b}
        for any $p\geq 1$ and $\epsilon>0$, there exists a constant $C=C(p, \epsilon)$ such that 
        \begin{align}\label{eq:O-tildeO-close}
            \max_{t\in I_{M}}\big(\E\|\Theta_N O_t-\Theta_N\tilde O_t\|_{L^2(\Pi_N)}^p\big)^{1/p}&\leq C (M^{-1+\epsilon} + N^{-\frac{3}{2}+\epsilon});
        \end{align} 
        \item \label{en:c}for any $p\geq 1$, $\lambda\in [0,1)$, and $\epsilon>0$, there exists a constant $C=C(\epsilon, p, \lambda)$ such that
        \begin{align}\label{eq:Otimespace}
        \E\norm{\tilde O}_{C_T^{\frac{\lambda}{2}}\calC^{\frac{1}{2}-\lambda-\epsilon}(\T)}^p\leq C.
    \end{align} 
    \end{enumerate}
\end{assumption}

\begin{remark}
    The rate assumed in \eqref{eq:O-tildeO-close} is simply chosen so that the replacement of $O$ by $\tilde O$ does not produce a larger error than the rest of the argument.
    In practice, in \eqref{eq:O-tildeO-close} a much faster rate is achievable: the construction from \cite{Davie-Gaines} that we detail in \cref{sec:implement} below yields $\tilde O$ with exponentially small error to $O$.
\end{remark}
Our assumption on the initial condition is as follows.

\begin{assumption}\label{asn:u0}
 Assume that $u_0$ is an $\F_0$-measurable random variables with values in $C(\T)$ and that for all $p\geq 1$, $\eps>0$ there exists $\mathscr{M}(p,\eps)<\infty$ with $\E\norm{u_0}_{\calC^{5/2-\epsilon}(\T)}^p\leq \mathcal{M}(p,\eps)$.
\end{assumption}
\begin{remark}
    The assumed regularity on the initial condition is relatively high, but this is merely a choice of convenience: it would be a fairly straightforward but lengthy task to modify the article to accommodate initial conditions of the form $u_0+Z_0$, where $u_0$ satisfies \cref{asn:u0}, $Z_0$ is $\cF_0$-measurable and $Z_0\stackrel{\mathrm{law}}{=}O_1$.
    This is a natural class of initial conditions since the solution (even if starting from a merely continuous initial condition) admits such a decomposition at any positive time.
    In order to not overwhelm the already heavy notation of the article, we refrain from this generalisation.
\end{remark}

\subsubsection{Bounded nonlinearity} \label{subsec:setting1}

We first study \eqref{eq:main} with nonlinearities $f$ that are sufficiently regular and bounded. More precisely, we work under the following assumption. Here and in the following we use the convention $\partial^0 f=f$.
\begin{assumption}\label{ass1}
Assume that $f\in C^{2}_{b}(\R)$, i.e. there exists $K>0$ such that $|\partial^i f(x)|\leq K$ for all $i=0,1,2$ and $x\in\R$.
\end{assumption}

We start by defining a discrete semigroup $\tilde{P}^N$ acting on functions on $\Pi_N$, henceforth denoted by $C(\Pi_N)$ (although continuity is of course not an interesting notion on $\Pi_N$).
Recall that the functions $(e_j)_{j\in\Z}$ are orthonormal eigenfunctions of $\Delta$ with eigenvalues $\lambda_j=-4\pi^2j^2$. 
Denote for $j\in\Z$, $e_j^N=\Theta_N e_j$.
The collection $(e_j^N)_j$ are of course not orthonormal since for any $k\in\Z$, the functions $e_j^N$ and $e_{j+kN}^{N}$ are actually equal.
However, it holds that
\begin{equ}
\langle e_j^N,e_\ell^N\rangle_{L^2(\Pi_N,\C)}\coloneqq N^{-1}\sum_{x\in\Pi_N}e_j^N(x)\overline{e_\ell^N}(x)=\mathbf{1}_{j-\ell=0\,\,\mathrm{mod}\,N}.
\end{equ}
Therefore, we restrict the index set to $J_N\coloneqq\{-\lfloor N/2\rfloor,\ldots, \lceil N/2\rceil-1\}$, where for $r\in\R$, $\lfloor r\rfloor$ denotes the largest integer $n\leq r$, respectively $\lceil r\rceil$ denotes the smallest integer $n\geq r$. Then the collection $(e_j^N)_{j\in J_N}$ is an orthonormal basis of $L^2(\Pi_N,\C)$. 
We then define, for $t\geq 0$, the linear operator $\tilde P^N_t$ by its action on the base functions given by
\begin{equ}\label{eq:discrete-semigroup}
\tilde P^N_t e_j^N=e^{-4j^2\pi^2 t}e_j^N,\qquad j\in J_N.
\end{equ}
 Since $C(\Pi_N)$ is finite dimensional, it is straightforward that the family $(\tilde P^N)_{t\geq 0}$ forms an analytic semigroup on it, and its infinitesimal generator $\tilde\Delta_N$  is given by
\begin{equ}\label{eq:tilde-Laplace}
    \tilde\Delta_N e_j^N=-4j^2\pi^2 e_j^N,\qquad j\in J_N.
\end{equ}

We start with a spatial discretisation:
let $v^N$ be the solution of
\begin{align}\label{eq:tildevN}
    \partial_t v^N=\tilde{\Delta}_N v^N + f(v^N+\Theta_N O), \quad v^{N}(0,x)=\Theta_N u_0,
\end{align}
which is given in the mild form by
\begin{align}
    v^N_t=\tilde{P}_t^N \Theta_N u_0 + \int_0^t \tilde{P}^N_{t-s}f(v^N_s+\Theta_N O_s) \,ds, \quad t\in[0,T].
\end{align}
For the well-posedness of \eqref{eq:tildevN} we refer to \cref{lem:wpvNzN} below.
Next, we introduce a space-time discrete process that plays only an auxiliary role, as it is not implementable.
This is the exponential Euler scheme for the equation for $v^N$: that is, we define $V^{M,N}$ inductively for $k=1,\ldots,M$ by
\begin{equ}\label{eq:V-recursion}
    V^{M,N}_{t_{k}}=\tilde P^N_hV^{M,N}_{t_{k-1}}+(\tilde \Delta_N)^{-1}(\tilde P^N_h-\mathrm{Id})\Big(f(V^{M,N}_{t_{k-1}}+\Theta_N O_{t_{k-1}}\big)\Big),\quad V^{M,N}_0=\Theta_N u_0.
\end{equ}
Above and below $\operatorname{Id}$ denotes the identity operator.
The scheme that we study simply replaces $O$ by $\tilde O$ in the above recursion: we define $\tilde{V}^{M,N}$ inductively for $k=1,\ldots, M$ by
\begin{equ}\label{eq:Vtilde-recursion}
    \tilde{V}^{M,N}_{t_{k}}=\tilde P^N_h\tilde{V}^{M,N}_{t_{k-1}}+(\tilde \Delta_N)^{-1}(\tilde P^N_h-\mathrm{Id})\Big(f(\tilde{V}^{M,N}_{t_{k-1}}+\Theta_N \tilde O_{t_{k-1}}\big)\Big),\quad \tilde{V}^{M,N}_0=\Theta_N u_0.
\end{equ}
Note that in both \eqref{eq:V-recursion} and \eqref{eq:Vtilde-recursion},  $\tilde \Delta_N$ is invertible only on $\mathrm{span}\{e_j^N:\,j\in J_N\setminus\{0\}\}$, therefore we set by convention  $(\tilde \Delta_N)^{-1}(\tilde P^N_h-\mathrm{Id})e_0^N\coloneqq h e_0^N$.
With this convention, one can write $V^{M,N}$ and $\tilde{V}^{M,N}$ in the mild form as 
\begin{align}
    V^{M,N}_t&=\tilde P^N_{t}(\Theta_N u_0)+\int_0^t\tilde P^N_{t-s}\Big(f\big(V^{M,N}_{\kappa_M(s)}+\Theta_N O_{\kappa_M(s)}\big)\Big)\,ds,\label{eq:V-mild}\\
   \tilde{V}^{M,N}_t&=\tilde P^N_{t}(\Theta_N u_0)+\int_0^t\tilde P^N_{t-s}\Big(f\big(\tilde{V}^{M,N}_{\kappa_M(s)}+\Theta_N \tilde{O}_{\kappa_M(s)}\big)\Big)\,ds,\label{eq:tildeV-mild}
\end{align}
where ${\kappa_M(s)}=\lfloor sh^{-1}\rfloor h=\sup\{t\in I_M:\,t\leq s\} $.  
It is easy to check that \eqref{eq:V-recursion} and \eqref{eq:V-mild} (respectively, \eqref{eq:Vtilde-recursion} and \eqref{eq:tildeV-mild}) indeed coincide for $t\in I_M$, and for all other $t$ we take \eqref{eq:V-mild} -\eqref{eq:tildeV-mild} as the definition of  $V^{M,N}_t$ and $\tilde{V}^{M,N}_{t}$.
The fully discretised numerical scheme for the solution $u=v+O$ is then defined by $$\tilde{V}^{M,N}+\Theta_N \tilde{O}.$$ Note  that $$
\Theta_N u-(\tilde{V}^{M,N}+\Theta_N \tilde{O})=\Theta_N v - \tilde{V}^{M,N}+\Theta_N(O-\tilde O).
$$
The last term is estimated by Assumption~\ref{ass:tildeO} \ref{en:b}, and so the main goal is to bound the error
 $\Theta_N v-\tilde{V}^{M,N}$ coming from the remainder equation.
Note also that the scheme only inputs $(M+1)N$ many random variables given by $\tilde{O}_{t_k}(x_j)$, $k=0,\ldots,M$, $j=0,\ldots,N-1$. 
We can now formulate our first main theorem, whose proof is given in \cref{bigsec:bounded}.
\begin{remark}
    To ease notation, we use the following convention: whenever $\theta$ is some collection of parameters, expressions of the form $C=C(\theta,\cM)$ mean that there exist $p^*,\eps^*$ depending on $\theta$ such that the constant $C$ depends only on $\theta$ and the quantity $\cM(p^*,\eps^*)$.
\end{remark}
\begin{theorem}\label{thm:main1}
  Let $p\geq 1$, $\eps\in (0, 1/2)$. Let \cref{ass:tildeO}, \cref{asn:u0}, and \cref{ass1} hold. Then there exists a constant $C=C(p,\epsilon, T, K, \mathcal{M})$, that does not depend on $M,N$, such that
   \begin{align}
       \big(\E \sup_{t\in[0,T]}\norm{ \Theta_N u_{t}- (\tilde{V}^{M,N}_t + \Theta_N \tilde{O}_t)}_{L^2(\Pi_N)}^p\big)^{1/p}\leq C (M^{-1+\epsilon}+ N^{-\frac{3}{2} + \epsilon}).
   \end{align}
\end{theorem}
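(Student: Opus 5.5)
We split the error as
\begin{equ}
\Theta_N u_t-(\tilde V^{M,N}_t+\Theta_N\tilde O_t)=\big(\Theta_N v_t-v^N_t\big)+\big(v^N_t-V^{M,N}_t\big)+\big(V^{M,N}_t-\tilde V^{M,N}_t\big)+\Theta_N(O_t-\tilde O_t),
\end{equ}
and treat the four pieces separately: a spatial error, a temporal error, a noise-replacement error, and a term controlled directly by \cref{ass:tildeO}\ref{en:b}. For the last term with the supremum over $t\in[0,T]$, we first handle the grid times via \eqref{eq:O-tildeO-close}, using a union bound over the $M+1$ grid points with $p$ large so that only an $M^{\eps}$ loss appears. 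Between grid points we use the time H\"older regularity of $O$ and $\tilde O$ from \eqref{eq:Otimespace}. If this regularity is too weak, we measure the supremum only on $I_M$ and interpolate through the mild forms.

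\textbf{Spatial error.} Here $e_t=\Theta_N v_t-v^N_t$ satisfies
\begin{equ}
e_t=(\Theta_NP_t-\tilde P^N_t\Theta_N)u_0+\int_0^t(\Theta_NP_{t-s}-\tilde P^N_{t-s}\Theta_N)f(u_s)\,ds+\int_0^t\tilde P^N_{t-s}\big(\Theta_Nf(u_s)-f(v^N_s+\Theta_NO_s)\big)\,ds.
\end{equ}
The last integrand is bounded by $K|e_s|$ pointwise, so Gronwall closes this part. The key estimate is a bound on the commutator $\Theta_NP_t-\tilde P^N_t\Theta_N$. In Fourier, $\Theta_N e_k=e^N_{k \bmod N}$, so the commutator acting on $g$ equals
\begin{equ}
\sum_{|k|\gtrsim N}\hat g(k)\big(e^{-4\pi^2k^2t}-e^{-4\pi^2\bar k^2t}\big)e^N_{\bar k},
\end{equ}
where $\bar k$ is the representative of $k$ in $J_N$. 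This gives an error of order $N^{-\alpha}$ for $g\in\calC^{\alpha+1/2+}$ (aliasing), up to smoothing from $P_{t-s}$.

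For $u_0\in\calC^{5/2-}$ the initial term is therefore fine. The obstacle is $f(u_s)$, which is only $\calC^{1/2-}$, so naive aliasing gives only $N^{-0+}$. After time integration, the heat smoothing recovers up to two derivatives minus integrability, which still yields only about $N^{-3/2}$ at best, and more plausibly $N^{-1/2}$. To reach $N^{-3/2}$ we plan to exploit the structure $f(u_s)=f(v_s+O_s)$ with $O$ Gaussian and nearly independent across high frequencies. The aliased high modes $\widehat{f(u_s)}(k)$, $|k|\gtrsim N$, have random phases, so their contributions average out in $L^2(\Omega)$, and the time integral of $e^{-\lambda_{\bar k}(t-s)}\widehat{f(u_s)}(k)$ behaves like a stochastic integral, gaining square-root cancellations. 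Concretely, we would Taylor-expand $f(v_s+O_s)$ around $v_s+P_{s-r}O_r$ with $r=s-N^{-2}$, replacing $O$ by its smooth conditional expectation plus a small fluctuation. The first-order term $f'(\cdot)(O_s-P_{s-r}O_r)$ is a Wiener integral with an adapted coefficient, and its aliased part is controlled by It\^o isometry. The second-order term is of size $N^{-1}$ in $L^2$ and gains a further $N^{-1/2}$ from spatial averaging. This stochastic-cancellation step is the main obstacle, and it is where the $f\in C^2_b$ hypothesis is used.

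\textbf{Temporal error.} For $v^N-V^{M,N}$ we follow the strategy of \cite{DjGK}, with constants uniform in $N$. The difference
\begin{equ}
f(v^N_s+\Theta_NO_s)-f(V^{M,N}_{\kappa_M(s)}+\Theta_NO_{\kappa_M(s)})
\end{equ}
splits into a Lipschitz part, handled by Gronwall, and the term $f(v^N_s+\Theta_NO_s)-f(v^N_{\kappa_M(s)}+\Theta_NO_{\kappa_M(s)})$. The $v^N$ increment is $O(h^{1-})$ by the regularity of $v^N$, uniformly in $N$ (uniform discrete Schauder bounds, presumably among the auxiliary results of \cref{sec:prelim}). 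The $O$ increment is Taylor-expanded: the first-order term $f'(\cdot)(O_s-O_{\kappa_M(s)})$ is handled by a stochastic sewing / martingale argument, since conditionally on $\cF_{\kappa_M(s)}$ the increment is approximately centred, which yields $h^{1-\eps}$ after integration. The second-order term is $O(h^{1/2})$ pointwise, and the heat-kernel smoothing in time upgrades it.

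\textbf{Noise-replacement error.} The difference $V^{M,N}-\tilde V^{M,N}$ is handled by a discrete Gronwall inequality on $I_M$, using the $L^2(\Pi_N)$ contractivity of $\tilde P^N$, the Lipschitz continuity of $f$, and \eqref{eq:O-tildeO-close}. This step is routine.
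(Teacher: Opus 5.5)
Your four-term decomposition is the paper's \eqref{eq:errordecomposed}, and your Gronwall treatment of $V^{M,N}-\tilde V^{M,N}$ is \cref{lem:approximation}. The gaps are in the two estimates that actually carry the theorem.

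\textbf{Spatial error.} A one-step Taylor expansion at the single lag $r=s-N^{-2}$ cannot produce $N^{-3/2}$. The zeroth-order term $f(v_s+P_{s-r}O_r)$ is not addressed in your plan. It is smooth only at scale $N^{-1}$, since $\|P_{N^{-2}}O_r\|_{\calC^{\beta}}\sim N^{\beta-1/2}$. Hence its aliased part $(\Id-\Psi_N\Theta_N)f(\cdots)$ has size $N^{-1/2}$ at each time, and you give no mechanism for cancellation in the time integral. The second-order term has conditional mean $\approx\tfrac12 f''(\cdot)\,Q(N^{-2})\sim N^{-1}$, which does not average out in time. Under \cref{ass1}, $f''$ is only bounded, so nothing makes its aliased part smaller than $N^{-1}$; ``spatial averaging'' does not supply the missing $N^{-1/2}$. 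Changing the lag does not help: a larger lag makes the zeroth-order term smoother but the fluctuation and correction terms larger.

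What is needed is cancellation on all time scales at once, which the paper obtains by stochastic sewing (\cref{lem:space-ss}). First, Girsanov (\cref{cor:girsanov}) removes the drift from the argument of $f$. The germ $A_{u,v}=\E_u\int_u^vP_{t-r}(\Id-\Psi_N\Theta_N)f(O_r+P_r\Psi_N\Theta_Nu_0)\,dr$ is then computed exactly via \eqref{eq:rule}: $f$ is replaced by $P^\R_{Q(r-u)}f$ and $O_r$ by $P_{r-u}O_u$. This gives $\E_u\delta A=0$. By \cref{lem:operator} and $\|P_{r-u}O_u\|_{\calC^{\alpha+\eps}}\lesssim(r-u)^{1/4-\alpha/2-\eps}$, one gets $\|A_{u,v}\|\lesssim N^{-\alpha}(v-u)^{5/4-\alpha/2-\eps}$. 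The exponent exceeds $1/2$ exactly when $\alpha<3/2$, and \cref{lem:SSLHilbert} does the multi-scale summation.

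\textbf{Temporal error.} Handling the first- and second-order terms in $O_s-O_{\kappa}$ (with $\kappa=\kappa_M(s)$) separately fails.
\begin{itemize}
\item By Gaussian integration by parts, $\E_u[f'(O_\kappa(x))(O_s(x)-O_\kappa(x))]$ contains $\E_u[f''(O_\kappa(x))]\,\mathrm{Cov}_u(O_\kappa(x),O_s(x)-O_\kappa(x))$. This covariance is $\approx -c\,(s-\kappa)^{1/2}$ however early $u$ is, so the first-order term is not approximately centred even under sewing.
\item The second-order term has conditional mean $\approx +c\,\E_u[f''(O_\kappa(x))]\,(s-\kappa)^{1/2}$.
\end{itemize}
Each piece is a non-centred contribution of order $h^{1/2}$ that cancels only against the other. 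Spatial smoothing by $\tilde P^N_{t-s}$ cannot ``upgrade'' it.

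The paper never Taylor-expands in the noise. In \cref{lem:temp-ss} the long-range part of the germ consists of two pieces. One is a difference of $P^\R_{Q(r-u)}f$ evaluated at the two conditional means $\Theta_NP_{r-u}O_u$ and $\Theta_NP_{\kappa_M(r)-u}O_u$. The other is $(P^\R_{Q(r-u)}f-P^\R_{Q(\kappa_M(r)-u)}f)(\cdots)\lesssim h^{1-\eps}(\kappa_M(r)-u)^{-1/2+\eps}$ by \eqref{eq:Qtwostep}. The first difference and the short-range germ are measured in $H^{-1/2+\eps}(\Pi_N)$ via \cref{lem:productestnegative}; in $L^2$, a one-step increment of $\Theta_NO$ is only of order $h^{1/4}$.

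This is also where your claim of ``constants uniform in $N$'' breaks. Negative-norm increment bounds for $\Theta_NO$ are not uniform in $N$, since restriction to the grid aliases high modes. \cref{lem:Otreg} is needed for this, and it produces the additional $N^{-2+\eps}$ in the temporal error.
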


\subsubsection{Nonlinearity with superlinear growth }\label{subsec:setting2}

To handle nonlinearities such as   the Allen-Cahn nonlinearity $f(x)=x-x^3$, a different scheme is required. Indeed, it is known that the explicit (exponential) Euler scheme diverges in $L^p(\Omega)$ for superlinearly growing nonlinearities (\cite{HutzJentz,beccari2019strong}).
We employ a splitting scheme  to avoid such blow-ups, which allows to
cover a large class of growing nonlinearities. The scope of the allowed nonlinearities is as follows.
\begin{assumption}\label{ass2}
    Assume that $f\in C^3(\R)$ such that there exist $K\geq 1$ and $m\geq 0$, such that for all $i=0,\dots, 3$ and $x\in\R$,
    \begin{align}
       \abs{\partial^{i} f(x)} &\leq K(1+\abs{x}^{(2m+1-i)\vee 0})\\
        \partial f(x)&\leq K.
    \end{align}
\end{assumption}
\begin{remark}
       \cref{ass2} implies a local Lipschitz bound with polynomial growth and a global one-sided Lipschitz bound. That is, for all $x,y\in\R$ one has 
    \begin{align}
        |f(x)-f(y)|&\leq K(1+|x|^{2m}+|y|^{2m})|x-y|,
    \\
    (x-y)(f(x)-f(y))&\leq K |x-y|^2.\label{eq:onesidedLip}
    \end{align}
\end{remark}

Compared to the case of bounded nonlinearity, here we consider a different spatial discretisation of the Laplacian:
instead of the operator $\tilde{\Delta}_N$ from \eqref{eq:tilde-Laplace}, we consider the classical discrete Laplacian $\Delta_N$ defined by 
\begin{align}
    \Delta_N v(x_n)= \partial^{-}_N \partial^{+}_{N}v (x_n),\quad n\in \{0,\dots, N-1\},\quad v\in C(\Pi_N)
\end{align}
with 
\begin{equ}\label{eq:discrete-derivatives}
    \partial^{-}v (x_n) = N(v(x_{n})-v(x_{n-1})), \quad \partial^{+}v (x_n) = -N(v(x_{n})-v(x_{n+1})).
\end{equ}
Indeed, it turns out that the classical discrete Laplacian is more handy in terms of the variational energy estimates, which motivates this choice.
The functions $e_j^N$ are also eigenfunctions of $\Delta_N$, with eigenvalues $\lambda_j^N\coloneqq-4N^2 \sin^2(\frac{j\pi}{N})$.
We then set $P_t^N \coloneqq e^{t\Delta_N}$, $t\geq 0$, which is the semigroup generated by $\Delta_N$.
Its action on the base functions are given by
\begin{equ}
    P^N_t e_j^N=e^{\lambda_j^N t}e_j^N,\qquad j\in  J_N.
\end{equ}
The spatial discretisation is then given by $w^N$, defined as the solution of
\begin{align}\label{eq:bfvN}
    \partial_t w^N = \Delta_N w^N + f(w^N+\Theta_N O), \quad w^N(0,x)=\Theta_N u_0.
\end{align}
One can write \eqref{eq:bfvN} in the mild form as
\begin{align}
    w^N_t=P_t^N \Theta_N u_0+ \int_0^t P^N_{t-s}f(w^N_s+\Theta_N O_s)\, ds, \quad t\in[0,T].
\end{align}
For the well-posedness of \eqref{eq:bfvN} we refer to \cref{lem:apriori-WMN} below.
For the splitting scheme used in the temporal discretisation we introduce the function $\big(\Phi_t(x)\big)_{t\geq 0,x\in\R}$ as the solution flow of the ODE with nonlinearity $f$. It is defined as the solution to the ODE
\begin{equ}\label{eq:Phi}
    \partial_t \Phi_t(x)= f(\Phi_t(x)),\quad \Phi_0(x)=x.
\end{equ}
\begin{remark}
Often the ODE \eqref{eq:Phi} admits an explicit solution. For example in the Allen-Cahn case $f(x)=x-x^3$ one has
\begin{align}
    \Phi_t(x)=\mathrm{sgn}(x)\frac{e^t}{\sqrt{x^{-2}-1+e^{2t}}}.
\end{align}
\end{remark}
Similarly to the case of bounded nonlinearity, we first introduce auxiliary processes $X^{M,N}$ and $Y^{M,N}$.
They are defined inductively through the following splitting scheme for \eqref{eq:bfvN}:
To start the iteration we set $X^{M,N}_{0}=\Theta_N u_0$. For $k=1,\dots, M$, we set
\begin{align}   Y^{M,N}_{t_k}&=\Phi_h\big(X^{M,N}_{t_{k-1}} + \Theta_N O_{t_{k-1}}\big)- \Theta_N O_{t_{k-1}}, \label{eq:Y}\\
    X^{M,N}_{t_{k}}&= P^{N}_h Y^{M,N}_{t_k}. \label{eq:XY}
\end{align}

The scheme we study simply replaces $O$ by $\tilde O$ in the above recursion. That is, again we define inductively two processes $\tilde{X}^{M,N}$ and $\tilde{Y}^{M,N}$. To start the induction we set $\tilde{X}^{M,N}_{0}=\Theta_N u_0$. For $k=1,\dots, M$, we set
\begin{align}   \tilde{Y}^{M,N}_{t_k}&=\Phi_h\big(\tilde{X}^{M,N}_{t_{k-1}} + \Theta_N \tilde{O}_{t_{k-1}}\big)- \Theta_N \tilde{O}_{t_{k-1}}, \label{eq:tY}\\
    \tilde{X}^{M,N}_{t_{k}}&= P^{N}_h \tilde{Y}^{M,N}_{t_k}. \label{eq:tXY}
\end{align}

These recursions can also be written in a mild form. Indeed, letting first
\begin{equ}\label{eq:gh}
    g_t(x)=\frac{\Phi_t(x) - x}{t},\quad t>0,\quad x\in\R,
\end{equ}
together with the convention $g_0(x)=f(x)$,
we first rewrite the recursions as
\begin{equs}
    X^{M,N}_{t_{k}}&= P^N_h X^{M,N}_{t_{k-1}}+h P_{h}^N \Big(g_h\big(X^{M,N}_{t_{k-1}}+\Theta_N O_{t_{k-1}}\big)\Big),\\
    \tilde{X}^{M,N}_{t_{k}}&= P^N_h\tilde{X}^{M,N}_{t_{k-1}}+h P_{h}^N \Big(g_h\big(\tilde{X}^{M,N}_{t_{k-1}}+\Theta_N \tilde{O}_{t_{k-1}}\big)\Big).\label{eq:XMN}
\end{equs}
Once again, it is easy to check that these inductive forms agree with the mild forms
\begin{equs}
    X^{M,N}_t &=  P^{N}_t \Theta_N u_{0} + \int_{0}^{t}  P_{t-k_{M}(s)}^{N} \Big( g_h\big(X^{M,N}_{k_{M}(s)}+\Theta_N O_{k_{M}(s)}\big)\Big)\,ds,\label{eq:auxscheme}
\\
    \tilde{X}^{M,N}_t &=  P^{N}_t \Theta_N u_{0}+ \int_{0}^{t}  P_{t-k_{M}(s)}^{N} \Big( g_h\big(\tilde{X}^{M,N}_{k_{M}(s)}+\Theta_N \tilde{O}_{k_{M}(s)}\big)\Big)\,ds\label{eq:scheme1-mild}
\end{equs}
on the time gridpoints $t=t_0,t_1,\ldots,t_M$, and for general $t\in[0,T]$ we take \eqref{eq:auxscheme}-\eqref{eq:scheme1-mild} as the definitions of
$X^{M,N}_t$ and $\tilde{X}^{M,N}_t$.

Similarly to before, the numerical scheme for $u=v+O$ is then defined by
$$\tilde{X}^{M,N}+\Theta_N \tilde O.$$ 
Our second main theorem then reads as follows, whose proof will be given in \cref{bigsec:superlinear}.
\begin{theorem}\label{thm:main2}
   Let $p\geq 1$, $\eps\in (0, 1/2)$. Let \cref{ass:tildeO}, \cref{asn:u0}, and \cref{ass2} hold. Then there exists a constant $C=C(p,\epsilon, T, K, \mathcal{M})$, that does not depend on $M,N\in\N$, such that 
   \begin{equ}
        \Big(\E\sup_{t\in[0,T]}\big\|\Theta_N u_{t}-(\tilde{X}^{M,N}_{t}+\Theta_N \tilde{O}_{t})\big\|_{L^{2}(\Pi_N)}^p\Big)^{1/p}\leq C(M^{-1+\eps}+N^{-\frac{3}{2}+\eps}).
    \end{equ}
\end{theorem}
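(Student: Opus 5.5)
Since $\Theta_N u-(\tilde X^{M,N}+\Theta_N\tilde O)=\Theta_N v-\tilde X^{M,N}+\Theta_N(O-\tilde O)$, I split the error into four pieces:
\begin{equ}
\Theta_N v-w^N,\qquad w^N-X^{M,N},\qquad X^{M,N}-\tilde X^{M,N},\qquad \Theta_N(O-\tilde O).
\end{equ}
The last one is covered by \cref{ass:tildeO} \ref{en:b} at grid times. Off the grid I would control it through the Hölder bound \eqref{eq:Otimespace} for $\tilde O$ and the corresponding bound for $O$. For the supremum over $t$ I would bound the maximum over $I_M$ crudely, accepting an $\eps$-loss via a union bound over polynomially many points and high moments.

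\textbf{A priori bounds.} I first need moment bounds, uniform in $M,N$, on $\sup_t\|w^N_t\|_{L^\infty}$, $\sup_k\|X^{M,N}_{t_k}\|_{L^\infty}$ and the same for $\tilde X^{M,N}$. These come from the one-sided Lipschitz property \eqref{eq:onesidedLip}, which gives $|\Phi_h(x)|\le e^{Kh}|x|+Ch$ type bounds. The splitting step then satisfies $\|\tilde Y_{t_k}\|_\infty\le e^{Kh}(\|\tilde X_{t_{k-1}}\|_\infty+2\|\tilde O\|_\infty)+Ch$. Iterating $M$ times produces an exponential-in-$T$ factor times $\sup\|\tilde O\|_\infty$, because $P^N_h$ is an $L^\infty$-contraction: the discrete Laplacian semigroup is positive and Markov, which is the reason for switching to $\Delta_N$. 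I also need regularity of $w^N$ in discrete Hölder norms of order almost $5/2$, inherited from $u_0$ and from $f(w^N+O)$ having regularity almost $1/2$. With these bounds, all nonlinear differences can be linearised with polynomial weights that are uniformly integrable.

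\textbf{Spatial error $\Theta_Nv-w^N$.} I would compare through the mild forms: $\Theta_N P_t u_0-P^N_t\Theta_Nu_0$, plus $\int_0^t(\Theta_NP_{t-s}-P^N_{t-s}\Theta_N)f(u_s)ds$, plus a Lipschitz-type term handled by Gronwall. For the last term I use the one-sided Lipschitz energy estimate: $\frac{d}{dt}\|e\|^2\le 2\langle e,\text{defect}\rangle+2K\|e\|^2$. The gain beyond $1/2$ comes from the following. Applying the semigroup mismatch to $f(v+O)$, which is only $1/2$-regular, naively gives $N^{-1/2}$. The time integral smooths by two derivatives, and the symbol error $|\lambda_j^N-\lambda_j|\lesssim j^4/N^2$ together with the aliasing of $f(u_s)$ can be traded against heat-kernel smoothing. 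I would write $f(u_s)=f(v_s+O_s)$ and Taylor expand around $v_s$: $f(v_s)+f'(v_s)O_s+R$, with $R$ quadratic in small-scale fluctuations of $O$. The linear term $f'(v_s)O_s$ is a Gaussian field with explicit covariance. Its time-integrated aliasing error can be computed in $L^2(\Omega)$ using the temporal decorrelation of high Fourier modes of $O$ (mode $j$ decorrelates on time scale $j^{-2}$), which yields an extra $N^{-1}$ averaging gain, i.e.\ rate $N^{-3/2}$. \textbf{This stochastic averaging step is the main obstacle.} It requires conditional expectation and decorrelation estimates, and controlling $R$ and the dependence of $f'(v_s)$ on the noise via Malliavin or freezing arguments: freeze $v$ at time $s-N^{-2}$ and treat the remainder by time regularity of $v$.

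\textbf{Temporal error and input noise.} For $w^N-X^{M,N}$ I would use the local error of the splitting, $g_h(x)-f(x)=O(h(1+|x|^{4m+1}))$. The main term is $\int P^N_{t-s}[f(w_s+O_s)-f(w_{\kappa(s)}+O_{\kappa(s)})]ds$, where $O_s-O_{\kappa(s)}$ is only $1/4$-regular in time. I would again expand to first order and use that $\int P^N_{t-s}f'(\cdot)(O_s-O_{\kappa(s)})ds$ has mean-zero increments that are conditionally nearly independent across steps. This gives rate $h^{1-\eps}$ by the same stochastic-averaging mechanism as in \cite{DjGK}, with the second-order terms of size $h^{1/2}\cdot h^{1/2}$. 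Gronwall through the one-sided Lipschitz energy inequality closes the argument. Finally, for $X^{M,N}-\tilde X^{M,N}$ I use local Lipschitz continuity of $g_h$ with polynomial weights, stability of the recursion, and \cref{ass:tildeO} \ref{en:b}.
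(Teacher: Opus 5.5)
The overall architecture matches the paper's. You split off $X^{M,N}-\tilde X^{M,N}$ and $\Theta_N(O-\tilde O)$, absorb the nonlinear difference with the one-sided Lipschitz energy estimate and Gronwall once a mild-form defect is small in $C_TL^2$, and attribute the gain beyond $N^{-1/2}$ to stochastic averaging. However, the step that produces $N^{-3/2}$ is not supplied, and the linearisation you propose for it fails.

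Expanding $f(v_s+O_s)=f(v_s)+f'(v_s)O_s+R$ treats $O_s$ as a small increment, but $O_s$ is of order one. The remainder $R=O_s^2\int_0^1(1-\theta)f''(v_s+\theta O_s)\,d\theta$ is neither small nor smoother than $f(u_s)$; it is only $\calC^{1/2-\eps}$. Its aliasing error is therefore again naively of order $N^{-1/2}$, and controlling it needs exactly the nonlinear averaging estimate you wanted to reduce to a Gaussian covariance computation. Moreover, $f'(v_s)O_s$ is not Gaussian.

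Your temporal step has the same defect. The term $f'(\cdot)(O_s-O_{\kappa_M(s)})$ does not have mean-zero increments, because $O_s-O_{\kappa_M(s)}=(P_{s-\kappa_M(s)}-\Id)O_{\kappa_M(s)}+(\text{independent part})$ is correlated with the coefficient. The second-order term has conditional mean of order $f''\,Q(s-\kappa_M(s))\sim h^{1/2}$, not $h$. Rate one only appears after these $h^{1/2}$-drifts cancel.

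The paper never expands in $O$. It applies \cref{lem:SSLHilbert} to germs such as $A_{u,v}=\E_u\int_u^vP_{t-r}(\Id-\Psi_N\Theta_N)g_h(\E_uz^{M,N}_r+O_r)\,dr$. The conditional expectation is computed exactly as $(P^{\R}_{Q(r-u)}g_h)(P_{r-u}O_u+\E_uz^{M,N}_r)$, which captures all orders at once. This is a $\calC^{\alpha+\eps}$ function with norm $\lesssim(r-u)^{1/4-\alpha/2-\eps}$, and its time integral $(v-u)^{5/4-\alpha/2-\eps}$ beats the sewing threshold $(v-u)^{1/2}$ exactly when $\alpha<3/2$.

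\cref{lem:operator} then gives $\Gamma_2\sim N^{-\alpha}$. It is valid only for $\alpha>1$ and only with $L^2$ on the left, because aliased modes land at $|j|\le N/2$, where $P_{t-r}$ does not smooth. Freezing $z^{M,N}$ at $\E_uz^{M,N}_r$ is paid for in $\E_u\delta A$ through the $C^{1/2}_TH^\alpha$ bound of \cref{cor:wMN1}. The mismatch between the symbols of $\Delta_N$ and $\Delta$ is handled separately by \cref{lem:commutator}.

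The temporal part also lacks the grid-specific ingredient. To get $h^{1-\eps}$, the germ and the increments $\Theta_NO_r-\Theta_NO_{\kappa_M(r)}$ and $\Theta_N(P_{r-u}-P_{\kappa_M(r)-u})O_u$ must be measured in $H^{-1/2+\eps}(\Pi_N)$, using \cref{lem:productestnegative}. Estimating them only in $L^2$ or $L^\infty$ caps the rate at $h^{3/4}$. These negative-norm bounds do not follow from those for $O$, since $\Theta_N$ is unbounded on negative Sobolev spaces. \cref{lem:Otreg} proves them with the trade-off $\max\{N^{-1/2-\alpha-\eps},(t-s)^{1/4+\alpha/2}\}$, and this is where the $N^{-2+\eps}$ in the temporal error comes from. "The same mechanism as in \cite{DjGK}" does not cover it. You also need time regularity of order almost one for $X^{M,N}$ (\cref{prop:apriori-XMN}), not just $L^\infty$ bounds.

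There are three smaller points:
\begin{itemize}
\item Your recursion $\|\tilde Y_{t_k}\|_\infty\le e^{Kh}(\|\tilde X_{t_{k-1}}\|_\infty+2\|\tilde O\|_\infty)+Ch$ adds an $O(1)$ term per step, so after $M$ steps it gives $M\|\tilde O\|_\infty$. You must instead split off $\Phi_h(\Theta_N\tilde O)-\Theta_N\tilde O=h\,g_h(\Theta_N\tilde O)$ and use \eqref{eq:PhiLip} and \eqref{eq:ghgrowth}, so that the $\tilde O$-contribution is $O(h)$ per step.
\item The H\"older bound \eqref{eq:Otimespace} only gives $h^{1/4-\eps}$ for $\Theta_N(O-\tilde O)$ between grid points, so it cannot extend the $M^{-1+\eps}$ bound off the grid. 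The paper simply invokes \cref{ass:tildeO} \ref{en:b} for this term.
\item Your union bound over $I_M$ is fine.
\end{itemize}
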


\section{Preliminaries}\label{sec:prelim}

The proofs of \cref{thm:main1} and \cref{thm:main2} rely on a number of function space estimates, in this section we collect all of them.
These preliminaries are subdivided in several subsections. In \cref{subsec:Besov-space} we introduce Sobolev and Besov spaces on the continuum and in the discrete and state embedding, product, and norm equivalence statements.
Estimates on $O$, the stochastic sewing lemma and a version of Kolmogorov's continuity theorem are stated or proven in \cref{subsec:sewing}.
The properties of the restriction operator $\Theta_N$ and its (in an appropriate sense) inverse are collected in \cref{subsec:disc-op}.
Semigroup estimates for the different semigroups appearing in the equation and its discretisations  are found in \cref{subsec:Semigroup}.  In \cref{subsec:wp} we collect regularity estimates for $u$ and the remainder $v=u-O$. In \cref{subsec:aux} we collect some auxiliary lemmata that do not fit in the other subsections, such as estimates on $g_h$ from \eqref{eq:gh} and on the composition of regular functions with Besov or Sobolev functions.

Although many of the results in this section will be familiar to most readers, let us highlight two estimates that are perhaps less standard and relate to the choice of spatial discretisation employed by our scheme.
\cref{lem:operator} provides an error estimate on the discretisation viewed as an operator on functions on $\T$. This estimate gives the same error order as a standard Fourier truncation (employed as the spatial discretisation in, e.g. \cite{Jentzen-Kloden, Jentzen12,BGJK, Wang,becker2023strong, DjGK}), but with more restrictions on the exponents. \cref{lem:Otreg} is an estimate of $\Theta_NO$ in Sobolev spaces of negative regularity. This is crucial to retain the temporal rate $1$ from \cite{DjGK} (see Remark \ref{rem:rates}), but it does not follow from bounds on $O$ as $\Theta_N$ does not retain negative regularity norms. In fact, our estimates, if sharp, would suggest that $\Theta_N O_t$ does not satisfy all the estimates $O$ does, but rather that there is a tradeoff between the time increments considered and the spatial mesh $N^{-1}.$

In this section and throughout the paper we often use the notation $A\lesssim B$, which means that there exists a constant $C$ with $A\leq C B$. The dependence of the constant $C$ will be clear from the context.

\subsection{Function spaces and their basic properties}\label{subsec:Besov-space}
We define an extension operator $\Psi_N:C(\Pi_N)\to C(\T)$ by setting
\begin{equ}
 \Psi_N e_j^N=e_j,\quad j\in J_N,   
\end{equ}
and extending it by linearity.
Denote by $C_N(\T)$ the image of $\Psi_N$, that is
 $C_N(\T)=\mathrm{span}\{e_j:\,j\in J_N\}$.
Denote by $L^2_N(\T)$ the space $C_N(\T)$ equipped with the $L^2(\T)$ norm.
Define the continuous and discrete Sobolev norms for $\beta\in\R$ by
\begin{equ}
    \norm{v}_{H^{\beta}(\T)}^2=\sum_{j\in \Z}\abs{\langle v, e_{j}\rangle_{L^{2}(\T)}}^2 (1+\abs{j})^{2\beta},
\end{equ}
\begin{equ}
    \norm{v}_{H^{\beta}(\Pi_N)}^2=\sum_{j\in J_N}\abs{\langle v, e_{j}^{N}\rangle_{L^{2}(\Pi_N)}}^2 (1+\abs{j})^{2\beta},
\end{equ}
and denote by $H^\beta_N(\T)$ the space $C_N(\T)$ equipped with the $H^\beta(\T)$ norm. 
Note that the $L^2$ product extends to a duality between $H^\beta$ and $H^{-\beta}$, both in the case of $\T$ and $\Pi_N$.

Next, we define Besov spaces on the torus $\T$ and discrete torus $\Pi_N$.
For the definitions, we stick to the ones from \cite{GS}.
First we choose a partition of unity as follows.
Fix $\eps_0\in(0,1/10)$ and take a smooth even function $\phi^0:\mathbb{R}\to [0,1]$, such that $\phi^0|_{B_{1-\eps_0}}\equiv 1$ and $\supp \phi^0\subset B_{1}$, where $B_r=\{x\in\R:\,|x|\leq r\}$.
Set $\rho_0=\frac{3-2\eps_0}{2}$.
For $\rho\in[1,2]$ define $\phi_\rho^0(x)=\phi^0\big((\rho_0/\rho)x\big)$.
Further, set for $k\in\N$,
\begin{equ}
\phi^{k}_\rho(x)= \phi^{0}_\rho(2^{-k}x)-\phi^{0}_\rho(2^{-k+1}x)=\phi_\rho^1(2^{-k+1}x)\ .
\end{equ}
For any $\rho\in[1,2]$, the functions $\{\phi^{k}_\rho\}_{k\in \mathbb{N}}$ form a partition of unity.
Furthermore, for any Schwartz distribution $f\in \mathcal{S}'(\T)$, we define the Littlewood-Paley blocks
\begin{equ}
f^{[k]}=\sum_{j\in \Z}\phi^k_1(j)\scal{f, e_j}e_j.
\end{equ}
We then define the (inhomogeneous) Besov spaces via the norms
\begin{equ}
\|f\|_{B^{\alpha}_{p,q}(\T)}=\big\|k\mapsto 2^{\alpha k}\|f^{[k]}\|_{L^p(\T)}\big\|_{\ell^q}.
\end{equ}
Replacing $\phi^k_1$ with $\phi^k_\rho$ results in an equivalent norm for any $\rho\in[1,2]$. For the definition of discrete Besov spaces, we will choose a convenient $\rho$ following \cite{GS}.

Indeed, for the discrete analogues of Besov norms, for any integer $N\geq 2$, we now define $K_N=\lfloor \log_2 (N/2)\rfloor$ and $\rho_N=N2^{-K_N-1}\in[1,2]$.
For $f\in C(\Pi_N)$, we then define the discrete Littlewood-Paley blocks 
\begin{equ}
f^{[k],N}=\sum_{j\in J_N} \phi^k_{\rho_N}(j)\scal{f, e_j^N}e_j^N
\end{equ}
and the discrete Besov norms
\begin{equ}
\|f\|_{B^{\alpha}_{p,q}(\Pi_N)}=\big\|k\mapsto 2^{\alpha k}\|f^{[k],N}\|_{L^p(\Pi_N)}\big\|_{\ell^q}.
\end{equ}
We refer to \cite[Remark 1.5]{GS} in order to motivate the specific choice of $\rho_N$ in the partition of unity.
Two important special cases of parameters are $p=q=2$ and $p=q=\infty$. In the latter case, we use the shorthand $\cC^\alpha\coloneqq B^\alpha_{\infty,\infty}$ both in the discrete and continuous cases. 
We summarize some of their properties in the following lemma.
\begin{lemma}\label{lem:spaces-equivalence}
    \begin{enumerate}
        \item[a)] For any $\alpha\in\R$, the spaces $B^\alpha_{2,2}(\T)$ and $H^\alpha(\T)$ coincide and the norms are equivalent. Similarly, the spaces $B^\alpha_{2,2}(\Pi_N)$ and $H^\alpha(\Pi_N)$ coincide and the norms are equivalent uniformly in $N$. 
        \item[b)] For $\alpha\in(0,1)$, the space $\cC^\alpha(\T)=B^\alpha_{\infty,\infty}(\T)$ coincides with the space of $\alpha$-H\"older continuous functions and the norms are equivalent. Similarly, the $\cC^\alpha(\Pi_N)=B^\alpha_{\infty,\infty}(\Pi_N)$-norm is equivalent to the norm
        \begin{equ}
        C(\Pi_N)\ni u\,\mapsto\,    \sup_{x\in\Pi_N}|u(x)|+\sup_{x\neq y\in \Pi_N}\frac{|u(x)-u(y)|}{|x-y|^\alpha}
        \end{equ}
        uniformly in $N$.
        \item[c)] For $\alpha\in (0,1)$, the Sobolev norm in $H^{\alpha}(\T)$ is equivalent to the Slobodeckij-norm 
        \begin{align*}
            \norm{u}_{L^2(\T)} + \paren[\bigg]{\int_{\T}\int_{\T} \frac{\abs{u(x+y)-u(x)}^2}{\abs{y}^{1+2\alpha}} dx dy}^{1/2}.
        \end{align*}
        Similarly, the norm in $H^{\alpha}(\Pi_N)$ is equivalent to 
        \begin{align*}
            \norm{u}_{L^2(\Pi_N)} + \paren[\bigg]{N^{-2}\sum_{x\in\Pi_N}\sum_{y\in\Pi_N} \frac{\abs{u(x+y)-u(x)}^2}{\abs{y}^{1+2\alpha}}}^{1/2}.
        \end{align*}
    \item[d)] Let $1 \le p_1 \le p_2 \le \infty$, $1 \le q_1 \le q_2 \le \infty$, and $\alpha \in \mathbb{R}$. Then there exists a constant $C = C(p_1,p_2)$ such that for $f \in C(\Pi_N)$,
$$\|f\|_{B^{\alpha - (1/p_1 - 1/p_2)}_{p_2,q_2}(\Pi_N)}
\le
C\|f\|_{B^{\alpha}_{p_1,q_1}(\Pi_N)},
$$
that is $B^{\alpha}_{p_1,q_1}(\Pi_N)\hookrightarrow B^{\alpha - (1/p_1 - 1/p_2)}_{p_2,q_2}(\Pi_N)$.
    \item[e)] Moreover for $\alpha\in\R$ and $\epsilon>0$, $\cC^{\alpha+\eps}(\T)\hookrightarrow H^\alpha(\T)$ and $\cC^{\alpha+\eps}(\Pi_N)\hookrightarrow H^\alpha(\Pi_N)$.
\end{enumerate}
\end{lemma}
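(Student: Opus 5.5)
The plan is to treat the continuous and discrete statements in parallel. For the discrete ones, the key structural fact is that on $\Pi_N$ the modes $j\in J_N$ satisfy $|j|\le N/2$. Hence only blocks $k\le K_N+1$ are nonzero. Because $\rho_N$ is chosen as in \cite{GS}, the partition $\{\phi^k_{\rho_N}\}$ restricted to $J_N$ behaves like a dyadic partition with a full top block. All constants will therefore be tracked to depend only on $\phi^0$ and $\eps_0$, never on $N$.

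\emph{Parts a), e) and d).} For a), I will use Parseval/orthonormality, both of $(e_j)_{j\in\Z}$ in $L^2(\T)$ and of $(e^N_j)_{j\in J_N}$ in $L^2(\Pi_N)$. This gives $\|f^{[k]}\|_{L^2}^2=\sum_j|\phi^k(j)|^2|\langle f,e_j\rangle|^2$. On $\supp\phi^k$ one has $(1+|j|)\sim 2^k$. Moreover $\sum_k|\phi^k|^2$ is bounded above and below by positive constants, since $\sum_k\phi^k=1$, $0\le\phi^k\le1$, and at most two blocks overlap. Together these give equivalence with the $H^\alpha$ norm, uniformly in $\rho\in[1,2]$ and hence in $N$.

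For e), I will combine a) with the bound $\|f^{[k]}\|_{L^2}\le\|f^{[k]}\|_{L^\infty}$ (the measures are probability measures). This yields $\sum_k2^{2\alpha k}\|f^{[k]}\|_{L^2}^2\le\|f\|_{\cC^{\alpha+\eps}}^2\sum_k2^{-2\eps k}$.

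For d), the $q$-monotonicity is just $\ell^{q_1}\subset\ell^{q_2}$. The $p$-part requires a discrete Bernstein inequality $\|f^{[k],N}\|_{L^{p_2}(\Pi_N)}\lesssim2^{k(1/p_1-1/p_2)}\|f^{[k],N}\|_{L^{p_1}(\Pi_N)}$. I will prove it by writing $f^{[k],N}=\tilde K_k*_N f^{[k],N}$, where $\tilde K_k$ is the discrete kernel of a slightly enlarged bump $\tilde\phi^k_{\rho_N}$ that equals $1$ on $\supp\phi^k_{\rho_N}$. Young's inequality on $\Pi_N$ then reduces the claim to $\|\tilde K_k\|_{L^r(\Pi_N)}\lesssim2^{k(1-1/r)}$. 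This kernel bound follows by Poisson summation, comparing the discrete kernel to the continuum one $2^k\check{\tilde\phi}(2^k\cdot)$ periodised. I expect this to be the \textbf{main obstacle}: for the top block $k=K_N+1$, the cutoff meets the boundary of $J_N$, which causes aliasing. The choice of $\rho_N$ is designed so that the block symbol extends smoothly and periodically in $j$ modulo $N$. I would follow the corresponding argument of \cite{GS}, where I believe these embeddings are proven.

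\emph{Parts b) and c).} These are classical on $\T$: b) is the standard Littlewood--Paley characterisation of Hölder spaces, and c) follows from Plancherel. Indeed, $\int|u(x+y)-u(x)|^2dx=\sum_j|e^{2\pi ijy}-1|^2|\hat u(j)|^2$, and $\int_\T|e^{2\pi ijy}-1|^2|y|^{-1-2\alpha}dy\sim|j|^{2\alpha}$ for $j\ne0$. The discrete version of c) uses the same computation. There the integral becomes $N^{-1}\sum_{y\in\Pi_N\setminus\{0\}}|e^{2\pi ijy}-1|^2|y|^{-1-2\alpha}$, where $|y|$ is the torus distance. For $1\le|j|\le N/2$ this Riemann sum is still comparable to $|j|^{2\alpha}$: the region $|y|\gtrsim1/|j|$ contributes order $|j|^{2\alpha}$ with a bounded number of grid effects, and the region $|y|\le1/|j|$ is bounded via $|e^{2\pi ijy}-1|\lesssim|jy|$, using $\alpha<1$.

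For the discrete part of b), the direction from Besov to Hölder will be proved by the usual splitting. Write $u(x)-u(y)=\sum_{k}(u^{[k],N}(x)-u^{[k],N}(y))$, and use $2^k\le|x-y|^{-1}$ to split the sum. The low blocks are controlled by a discrete derivative estimate $\|\partial^\pm f^{[k],N}\|_{L^\infty}\lesssim2^k\|f^{[k],N}\|_{L^\infty}$, which comes from the same kernel bounds as in d). The high blocks are bounded by $2\|u^{[k],N}\|_{L^\infty}$. Since $|x-y|\ge1/N$, the high sum is finite and the splitting works uniformly in $N$. For the converse direction, I will write $u^{[k],N}(x)=\sum_y K_k(y)(u(x-y)-u(x))$, using that $\sum_yK_k=0$ for $k\ge1$, and combine this with the moment bound $\sum|K_k(y)||y|^\alpha\lesssim2^{-k\alpha}$. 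This again rests on the discrete kernel estimate, so the whole lemma reduces to uniform-in-$N$ bounds on the discrete Littlewood--Paley kernels.
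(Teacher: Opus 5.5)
Your proposal is correct and matches the paper on a), e), and the discrete part of c), where you use the same Parseval reduction to the two-sided bound $N^{-1}\sum_{y\in\Pi_N\setminus\{0\}}|1-e^{2\pi ijy}|^2|y|^{-1-2\alpha}\asymp|j|^{2\alpha}$.

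The paper does not prove b), d) or the continuum case of c). It cites them: \cite{Bahouri2011} for $\T$, \cite[Lemma~2.2]{GS} and \cite[Lemma~2.8]{GS} for $\Pi_N$, and \cite{triebel78} for the continuum Slobodeckij equivalence. You instead rebuild them from uniform-in-$N$ bounds on the discrete Littlewood--Paley kernels: Bernstein via Young on $\Pi_N$, and Hölder via low/high splitting plus the moment bound. This buys a self-contained argument and shows exactly where $\rho_N$ matters. The top block is identically $1$ near $\pm N/2$, so its kernel is $N\mathbf{1}_{\{0\}}$ minus a smooth kernel. However, that aliasing analysis is the real content, and you defer it to \cite{GS} anyway. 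So in the end you rest on the same source as the paper.

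Your Plancherel argument for the continuum c) is the same computation the paper does on $\Pi_N$, and it is a fine replacement for the citation.

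One point in the discrete c) concerns the lower bound, which you take from the region $|y|\gtrsim 1/|j|$. That needs an extra counting step: because the grid step is $|j|/N\le 1/2$, a fixed proportion of grid points in each period of $\sin^2(\pi jy)$ satisfy $\sin^2\ge 1/2$. The paper's route is simpler. Restrict to $y=k/N$ with $1\le k\le N/(2|j|)$, where $|1-e^{2\pi ijy}|^2\ge 4j^2y^2$. This gives a lower bound of order $j^2N^{2\alpha-2}\sum_{k\le N/(2|j|)}k^{1-2\alpha}\gtrsim|j|^{2\alpha}$.

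Also note that the upper bound on the region $|y|\gtrsim 1/|j|$ uses $\alpha>0$. This is in addition to the $\alpha<1$ you invoke for the small-$|y|$ region.
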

\begin{proof}
    Claim  a) follows immediately from the definitions. Claim b) is contained in \cite[Section~2.7]{Bahouri2011} in the case of $\T$ and can be found in \cite[Lemma~2.2]{GS} in the case of $\Pi_N$. Claim c) follows from a) and \cite[section 2.5.1, Theorem and Remark 4]{triebel78} in the case of $\T$. 
    In the case of $\Pi_N$, c) follows from a similar argument as in the continuous case, for the sake of completeness we provide a proof. Recall that $u=\sum_{j\in J_N}\langle u, e_j^N\rangle_{L^2(\Pi_N)}e_j^N$, and for $y\in \Pi_N$, $\langle u(\cdot + y), e_j^N\rangle_{L^2(\Pi_N)}= \langle u, e_j^N\rangle_{L^2(\Pi_N)} e^{2\pi i jy}$. Thus, we can write
    \begin{align}\label{eq:repre}
        N^{-2}\sum_{x\in\Pi_N}\sum_{y\in\Pi_N} \frac{\abs{u(x+y)-u(x)}^2}{\abs{y}^{1+2\alpha}}&= N^{-1} \sum_{y\in\Pi_N} \frac{\norm{u(\cdot+y)-u(\cdot)}_{L^2(\Pi_N)}^2}{\abs{y}^{1+2\alpha}}\nonumber
        \\&= \sum_{j\in J_N\setminus\{0\}}  \abs{\langle u, e_j^N\rangle_{L^2(\Pi_N)}}^2 N^{-1} \sum_{y\in\Pi_N\setminus\{0\}} \frac{\abs{1-e^{2\pi ijy}}^2}{\abs{y}^{1+2\alpha}}.
    \end{align}
    To prove the claim it suffices to show that for $j\in J_N\setminus\{0\}$,
    \begin{align}\label{eq:double-bound}
        \abs{j}^{2\alpha}\lesssim N^{-1}\sum_{y\in\Pi_N\setminus\{0\}} \frac{\abs{1-e^{2\pi ijy}}^2}{\abs{y}^{1+2\alpha}}
        \lesssim \abs{j}^{2\alpha}.
    \end{align}   
    The upper bound in \eqref{eq:double-bound} follows from 
    \begin{align*}
        N^{-1}\sum_{y\in\Pi_N\setminus\{0\}} \frac{\abs{1-e^{2\pi ijy}}^2}{\abs{y}^{1+2\alpha}} \lesssim \int_{\mathbb{T}} \frac{\abs{1-e^{2\pi ij x}}^2}{\abs{x}^{1+2\alpha}} dx \leq \abs{j}^{2\alpha}\int_{\R} \frac{\abs{1-e^{2\pi iz}}^2}{\abs{z}^{1+2\alpha}} dz\lesssim \abs{j}^{2\alpha},
    \end{align*}
    where the integral is finite since for $\abs{z}\geq 1$, $\abs{1-e^{2\pi ij z}}^2= 4 \sin^2 (\pi z)\leq 4 z^2$ and $\alpha<1$ and for $\abs{z}\leq 1$, we may use that $\abs{1-e^{2\pi ij z}}^2\lesssim 1$ and $\alpha>0$.   
The lower bound in \eqref{eq:double-bound} can be inferred as follows.
We have that for $\abs{y}\leq 1/2$,
$|1-e^{2\pi iy}|^2=4\sin^2(\pi y)\geq 4 y^2$,
which gives, restricting the sum to $y=\frac{k}{N}$ with $k\leq \frac{N}{2|j|}$,
\begin{align*}
N^{-1}\sum_{y\in\Pi_N\setminus\{0\}} \frac{\abs{1-e^{2\pi ijy}}^2}{\abs{y}^{1+2\alpha}}
&\gtrsim
\frac{j^2}{N^3}\sum_{1\leq k\leq \frac{N}{2|j|}}
\left(\frac{k}{N}\right)^{-1-2\alpha}k^2\\
&=
j^2N^{2\alpha-2}
\sum_{1\leq k\leq \frac{N}{2|j|}}k^{1-2\alpha}
\\&\gtrsim j^2N^{2\alpha-2}
\left(\frac{N}{|j|}\right)^{2-2\alpha}
=
|j|^{2\alpha},
\end{align*}
using that $\sum_{1\leq k\le M}k^{1-2\alpha}\geq M^{2-2\alpha}$, which holds true since for $\alpha\leq\frac{1}{2}$,
$\sum_{1\leq k\le M}k^{1-2\alpha}=1+\sum_{2\leq k\le M}k^{1-2\alpha} \geq 1+\int_{1}^{M}x^{1-2\alpha} dx\geq M^{2-2\alpha}$ and for $\alpha>\frac{1}{2}$, $\sum_{1\leq k\leq M} k^{1-2\alpha}\geq M M^{1-2\alpha}=M^{2-2\alpha}$.
Combining \eqref{eq:repre} with \eqref{eq:double-bound}, we see that c) is true in the discrete case. Claim d) can be found in \cite[Lemma~2.8]{GS}. The claim follows from summability of $(2^{-2\epsilon k})_k$ for $\epsilon>0$ and $\norm{f^{[k],N}}_{L^2}\leqslant \norm{f^{[k],N}}_{L^{\infty}}$, both in the case of $\T$ and $\Pi_N$. 
\end{proof}
Next, we collect some useful product estimates.
\begin{lemma}\label{lem:product-est}
Let $\vartheta > 0$.  Then there exists a constant $C=C(\vartheta)$, such that for all $u, v\in L^{\infty}(\T)\cap H^{\vartheta}(\T) $ one has
\begin{align}\label{eq:product1}
    \norm{uv}_{H^{\vartheta}(\T)}\leq C \norm{u}_{L^{\infty}(\T)}\norm{v}_{H^{\vartheta}(\T)}+ \norm{v}_{L^{\infty}(\T)}\norm{u}_{H^{\vartheta}(\T)}.
\end{align}
If furthermore $\eps>0$, then there exists a constant $C=C(\epsilon,\vartheta)$ such that
 for all $v\in H^{\vartheta}(\T)$ and $u\in\calC^{\vartheta+\epsilon}(\T)$ one has 
\begin{align}\label{eq:product}
    \norm{uv}_{H^{\vartheta}(\T)}\leq C \norm{u}_{\calC^{\vartheta+\epsilon}(\T)}\norm{v}_{H^{\vartheta}(\T)}.
\end{align}
\end{lemma}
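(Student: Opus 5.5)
The plan is to prove both estimates by Bony's paraproduct decomposition, working with the Littlewood--Paley blocks and the identification $H^\vartheta(\T)=B^\vartheta_{2,2}(\T)$ from \cref{lem:spaces-equivalence} a). Let $S_{k}g=\sum_{l\leq k-2} g^{[l]}$ and write
\begin{align*}
uv=\sum_{k} S_{k}u\, v^{[k]}+\sum_k S_k v\, u^{[k]}+\sum_{|k-l|\leq 1} u^{[k]}v^{[l]} =: \pi_<(u,v)+\pi_>(u,v)+\pi_0(u,v).
\end{align*}
The first two sums are paraproducts. The Fourier support of each summand $S_ku\,v^{[k]}$ lies in an annulus of size $\sim 2^k$, so only boundedly many Littlewood--Paley blocks of it are nonzero. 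The summands of the resonant term $\pi_0$ are supported instead in balls of radius $\sim 2^k$. These support properties, together with uniform $L^\infty\to L^\infty$ bounds on $S_k$ (its kernels are bounded in $L^1$ since $\phi^0$ is smooth), are the only inputs needed.

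For \eqref{eq:product1}, the paraproducts are handled directly. Using $\|S_ku\|_{L^\infty}\lesssim\|u\|_{L^\infty}$ and the annulus support, one gets
\[
\|(\pi_<(u,v))^{[j]}\|_{L^2}\lesssim \sum_{|k-j|\leq 2}\|u\|_{L^\infty}\|v^{[k]}\|_{L^2}.
\]
Multiplying by $2^{\vartheta j}$ and taking $\ell^2$ gives $\|u\|_{L^\infty}\|v\|_{H^\vartheta}$. The term $\pi_>$ is symmetric and gives $\|v\|_{L^\infty}\|u\|_{H^\vartheta}$.

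The resonant term is where $\vartheta>0$ is needed. Because of the ball support,
\[
\|(\pi_0)^{[j]}\|_{L^2}\lesssim \sum_{k\geq j-3}\|u^{[k]}\|_{L^\infty}\|\tilde v^{[k]}\|_{L^2}\lesssim \|u\|_{L^\infty}\sum_{k\geq j-3}\|\tilde v^{[k]}\|_{L^2},
\]
where $\tilde v^{[k]}=\sum_{|l-k|\leq1}v^{[l]}$. Then
\[
2^{\vartheta j}\sum_{k\geq j-3}\|\tilde v^{[k]}\|_{L^2}=\sum_{k\geq j-3}2^{-\vartheta(k-j)}\,2^{\vartheta k}\|\tilde v^{[k]}\|_{L^2}.
\]
This is a convolution of the $\ell^2$ sequence $(2^{\vartheta k}\|\tilde v^{[k]}\|_{L^2})_k$ with the $\ell^1$ sequence $(2^{-\vartheta m}\mathbf 1_{m\geq -3})_m$. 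Young's inequality then gives the bound $C(\vartheta)\|u\|_{L^\infty}\|v\|_{H^\vartheta}$. The geometric summation here is the one real step to check; it is the reason $C$ depends on $\vartheta$. As stated, \eqref{eq:product1} has $C$ only on the first term, which is harmless since the constant can be moved to the outside.

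For \eqref{eq:product}, the terms $\pi_<(u,v)$ and $\pi_0(u,v)$ are bounded exactly as above, now with $\|u\|_{L^\infty}\leq\|u\|_{\calC^{\vartheta+\eps}}$. The term $\pi_>(u,v)=\sum_k S_kv\,u^{[k]}$ must now avoid $\|v\|_{L^\infty}$, which is not available since $v$ is only assumed in $H^\vartheta$. Using $\|S_kv\|_{L^2}\lesssim\|v\|_{L^2}$ and $\|u^{[k]}\|_{L^\infty}\lesssim 2^{-(\vartheta+\eps)k}\|u\|_{\calC^{\vartheta+\eps}}$ gives
\[
2^{\vartheta j}\|(\pi_>)^{[j]}\|_{L^2}\lesssim \sum_{|k-j|\leq 2}2^{-\eps k}\|u\|_{\calC^{\vartheta+\eps}}\|v\|_{L^2}.
\]
This sequence is square-summable in $j$ precisely because $\eps>0$, which yields $C(\eps,\vartheta)\|u\|_{\calC^{\vartheta+\eps}}\|v\|_{H^\vartheta}$.

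Alternatively, for $\vartheta\in(0,1)$ one could argue via the Slobodeckij characterisation of \cref{lem:spaces-equivalence} c), writing $u(x+y)v(x+y)-u(x)v(x)$ and splitting. The paraproduct route covers all $\vartheta>0$ uniformly, so I would use it.
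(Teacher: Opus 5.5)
Your argument is correct. For \eqref{eq:product1} it is the same proof the paper relies on, and for \eqref{eq:product} it takes a different route.

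For \eqref{eq:product1} the paper just cites \cite[Corollary~2.86]{Bahouri2011}. The proof behind that corollary is exactly the Bony decomposition you write out: the paraproduct is bounded on $L^\infty\times B^\vartheta_{2,2}$, and the resonant term needs $\vartheta>0$. So on this part you are only making the citation self-contained. Your remark on the placement of $C$ is fine in substance: the cited corollary also puts the constant in front of the whole right-hand side, and that is all the paper uses. Strictly, though, pulling $C$ outside weakens the inequality as typeset rather than reproducing it.

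For \eqref{eq:product} the paper splits into two cases:
\begin{itemize}
\item for $\vartheta>1/2$ it combines \eqref{eq:product1} with the embeddings $\calC^{\vartheta+\eps}\hookrightarrow H^\vartheta$ and $H^\vartheta\hookrightarrow L^\infty$;
\item for $\vartheta\in(0,1)$ it works with the Slobodeckij norm of \cref{lem:spaces-equivalence}~c), bounding $|u(x+y)-u(x)|\,|v(x)|$ by $\|u\|_{\calC^{\vartheta+\eps}}|y|^{\vartheta+\eps}|v(x)|$, so that the $y$-integral of $|y|^{-1+2\eps}$ converges against $\|v\|_{L^2}$.
\end{itemize}
Your treatment of $\pi_>(u,v)=\sum_k S_kv\,u^{[k]}$, via $\|S_kv\|_{L^2}\|u^{[k]}\|_{L^\infty}$ and the $2^{-\eps k}$ decay, is the frequency-space version of the same trade: an extra $\eps$ of regularity on $u$ so that only $\|v\|_{L^2}$ is needed instead of $\|v\|_{L^\infty}$.

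What your route buys is uniformity: it covers all $\vartheta>0$ in one argument, with no case split and no appeal to the H\"older or Slobodeckij characterisations, which the paper states only for exponents in $(0,1)$. What the paper's elementary real-space computation buys is portability. It is reused almost verbatim on the discrete torus in \cref{lem:productestnegative}, through the discrete Slobodeckij norm. Your argument would instead require redoing the Fourier-support bookkeeping of the paraproducts on $\Pi_N$, where frequencies alias modulo $N$.
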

\begin{proof}
    The first bound \eqref{eq:product1} follows from \cite[Corollary 2.86]{Bahouri2011}. If $\vartheta>1/2$, \eqref{eq:product} follows from \eqref{eq:product1} and the embeddings $\calC^{\vartheta+\epsilon}(\T)\hookrightarrow H^{\vartheta}(\T)$ and $H^{\vartheta}(\T)\hookrightarrow L^{\infty}(\T)$. For $\vartheta\in (0,1)$, we can infer \eqref{eq:product} from \cref{lem:spaces-equivalence} b) and c) as follows 
    \begin{align*}
        \norm{uv}_{H^{\vartheta}(\T)}&\lesssim \norm{uv}_{L^{2}(\T)}+\paren[\bigg]{\int_{\T}\int_{\T} \frac{\abs{(u(x+y)-u(x))v(x) + (v(x+y)-v(x))u(x+y)}^2}{\abs{y}^{1+2\vartheta}}\, dx\,dy}^{1/2}
        \\&\lesssim \norm{u}_{L^{\infty}(\T)}\norm{v}_{L^{2}(\T)} + \norm{u}_{\calC^{\vartheta+\epsilon}(\T)} \norm{v}_{L^2(\T)} + \norm{v}_{H^{\vartheta}(\T)}\norm{u}_{L^{\infty}(\T)}
        \\&\lesssim \norm{u}_{\calC^{\vartheta+\epsilon}(\T)}\norm{v}_{H^{\vartheta}(\T)}.\qedhere
    \end{align*}
\end{proof}

\begin{lemma}\label{lem:productestnegative}
   Let $\theta\in (0,1)$ $\epsilon>0$. Then there exists a constant $C=C(\epsilon,\theta)$, such 
    that for all $u\in H^{-\theta}(\Pi_N)$ and $v\in\calC^{\theta+\epsilon}(\Pi_N)$ 
   one has
   \begin{align}\label{eq:neg-product}
       \norm{uv}_{H^{-\theta}(\Pi_N)}\leq C\norm{u}_{H^{-\theta}(\Pi_N)}\norm{v}_{\calC^{\theta+\epsilon}(\Pi_N)}.
   \end{align}
\end{lemma}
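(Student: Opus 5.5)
Multiplication by $v$ is self-adjoint with respect to the $L^2(\Pi_N)$ pairing (up to complex conjugation of $v$), so I will prove the bound by duality from a positive-regularity product estimate. This is the discrete analogue of the route to \eqref{eq:product} in \cref{lem:product-est}.

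\textbf{Duality.} Since $(e_j^N)_{j\in J_N}$ is an orthonormal basis and the weights $(1+|j|)^{\pm\theta}$ are dual, we have for $w\in C(\Pi_N)$
\begin{equ}
\norm{w}_{H^{-\theta}(\Pi_N)}=\sup\big\{|\langle w,\varphi\rangle_{L^2(\Pi_N)}|:\ \norm{\varphi}_{H^{\theta}(\Pi_N)}\leq 1\big\}.
\end{equ}
Applying this with $w=uv$ gives $\langle uv,\varphi\rangle=\langle u,\bar v\varphi\rangle$. Hence
\begin{equ}
|\langle uv,\varphi\rangle|\leq \norm{u}_{H^{-\theta}(\Pi_N)}\norm{\bar v\varphi}_{H^{\theta}(\Pi_N)}.
\end{equ}
It therefore suffices to prove the discrete positive estimate
\begin{equ}
\norm{v\varphi}_{H^{\theta}(\Pi_N)}\lesssim \norm{v}_{\calC^{\theta+\eps}(\Pi_N)}\norm{\varphi}_{H^{\theta}(\Pi_N)},\qquad \theta\in(0,1),
\end{equ}
with a constant uniform in $N$. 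The norm of $\bar v$ equals that of $v$, since the Hölder characterisation in \cref{lem:spaces-equivalence} b) is invariant under conjugation. If $\theta+\eps\geq 1$, I first shrink $\eps$ so that $\theta+\eps<1$; this is harmless because the $\calC^\alpha$ norms are monotone in $\alpha$ by their Besov definition.

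\textbf{Positive estimate.} I will mimic the proof of \eqref{eq:product}, using the discrete Slobodeckij characterisation from \cref{lem:spaces-equivalence} c) and the discrete Hölder characterisation from \cref{lem:spaces-equivalence} b). Write
\begin{equ}
v(x+y)\varphi(x+y)-v(x)\varphi(x)=(v(x+y)-v(x))\varphi(x)+v(x+y)(\varphi(x+y)-\varphi(x)).
\end{equ}
The second term contributes at most $\sup|v|^2$ times the Slobodeckij seminorm of $\varphi$ squared. For the first term I use $|v(x+y)-v(x)|\lesssim \norm{v}_{\calC^{\theta+\eps}}|y|^{\theta+\eps}$, which bounds its contribution by
\begin{equ}
\norm{v}_{\calC^{\theta+\eps}}^2\, N^{-1}\sum_{y\in\Pi_N\setminus\{0\}}|y|^{-1+2\eps}\ \norm{\varphi}_{L^2(\Pi_N)}^2 .
\end{equ}
Here $|y|$ is the torus distance, so $y=k/N$ with $|k|\leq N/2$. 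The Riemann sum $N^{-1}\sum_{1\le k\le N/2}(k/N)^{-1+2\eps}$ is bounded by a constant multiple of $\int_0^{1/2}x^{-1+2\eps}\,dx<\infty$, uniformly in $N$. The $L^2$ part is trivial, since $\norm{v\varphi}_{L^2(\Pi_N)}\leq \sup|v|\,\norm{\varphi}_{L^2(\Pi_N)}$.

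\textbf{Main point of care.} The only genuinely discrete issue is uniformity in $N$. This comes in two places: the norm equivalences in \cref{lem:spaces-equivalence} b) and c), which are already uniform, and the Riemann sum bound above. One must also use the periodic distance when applying the Hölder bound to $v(x+y)-v(x)$, which \cref{lem:spaces-equivalence} b) provides on $\Pi_N$. Combining the duality step with the positive estimate gives \eqref{eq:neg-product}.
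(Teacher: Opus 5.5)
Your proof is correct and follows the paper's argument: the paper also first proves the discrete positive estimate $\|v\varphi\|_{H^{\theta}(\Pi_N)}\lesssim \|v\|_{\calC^{\theta+\epsilon}(\Pi_N)}\|\varphi\|_{H^{\theta}(\Pi_N)}$ using the discrete Slobodeckij and H\"older characterisations from \cref{lem:spaces-equivalence} b), c), and then concludes by duality. Your additional remarks make explicit details the paper leaves implicit: conjugating $v$ in the pairing, shrinking $\epsilon$ so that $\theta+\epsilon<1$, and the $N$-uniform Riemann-sum bound on $N^{-1}\sum_{y\neq 0}|y|^{-1+2\epsilon}$.
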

\begin{proof}
    First we claim that if $\varphi\in H^{\theta}(\Pi_N)$,then
    $$\norm{v\varphi}_{H^{\theta}(\Pi_N)}\lesssim \norm{v}_{\calC^{\theta+\epsilon}(\Pi_N)}\norm{\varphi}_{H^{\theta}(\Pi_N)}.$$ 
    This follows similar as for the proof of \eqref{eq:product} above. Indeed from \cref{lem:spaces-equivalence} b) and c) we find
    \begin{align*}
        \norm{v\varphi}_{H^{\theta}(\Pi_N)}&\lesssim \norm{v\varphi}_{L^{2}(\Pi_N)}+\paren[\bigg]{N^{-2}\sum_{x\in\Pi_N}\sum_{y\in\Pi_N} \frac{\abs{[v(x+y)-v(x)]\varphi(x) + [\varphi(x+y)-\varphi(x)]v(x+y)}^2}{\abs{y}^{1+2\theta}}}^{1/2}
        \\&\lesssim \norm{v}_{L^{\infty}(\Pi_N)}\norm{\varphi}_{L^{2}(\Pi_N)} + \norm{v}_{\calC^{\theta+\epsilon}(\Pi_N)} \norm{\varphi}_{L^2(\Pi_N)} + \norm{\varphi}_{H^{\theta}(\Pi_N)}\norm{v}_{L^{\infty}(\Pi_N)}
        \\&\lesssim \norm{v}_{\calC^{\theta+\epsilon}(\Pi_N)}\norm{\varphi}_{H^{\theta}(\Pi_N)}.
    \end{align*}
    Now \eqref{eq:neg-product} follows by duality, since
    \begin{align*}
       \norm{uv}_{H^{-\theta}(\Pi_N)}&= 
       \sup\{\abs{\langle uv,\varphi\rangle_{L^2(\Pi_N)}}:\,\norm{\varphi}_{H^{\theta}(\Pi_N)}\leq 1\}
       \\&=
       \sup\{\abs{\langle u,v\varphi\rangle_{L^2(\Pi_N)}}:\,\norm{\varphi}_{H^{\theta}(\Pi_N)}\leq 1\}
       \\&\leq
       \sup\{\abs{\langle u,\tilde{\varphi}\rangle_{L^2(\Pi_N)}}:\norm{\tilde\varphi}_{H^{\theta}(\Pi_N)}\leq C\norm{v}_{\calC^{\theta+\epsilon}(\Pi_N)}\}
       \\&=C\norm{u}_{H^{-\theta}(\Pi_N)}\norm{v}_{\calC^{\theta+\epsilon}(\Pi_N)}.\qedhere
    \end{align*}
\end{proof}

\subsection{Stochastic estimates and stochastic sewing}\label{subsec:sewing}
In this section we recall some probabilistic ingredients used in the article. First, we have the following regularity estimates for the Ornstein-Uhlenbeck process.
\begin{lemma}\label{lem:OUreg}
    Let $p\geq 1$ and $\lambda\in (0,1)$, $\epsilon\in (0,\frac{1}{2})$. Then there exists a constant $C=C(T,p,\eps,\lambda)$ such that one has
    \begin{align}\label{eq:Otimespace1}
        \E\norm{O}_{C_T^{\frac{\lambda}{2}}\calC^{\frac{1}{2}-\lambda-\epsilon}(\T)}^p\leq C.
    \end{align}
    Moreover, for any $\theta\in (0,1)$, $\alpha\in (0,\frac{1}{2})$, $\epsilon\in (0,\frac{1}{4})$, there exists $C=C(T,p,\eps,\theta,\alpha)$
    such that for all $s,t\in[0,T]$ one has
    \begin{align}\label{eq:trade-off-est}
        \big(\E\sup_{r\in[0,T]} \norm{(P_{t+s}-P_t)O_r}_{\calC^{-\alpha}(\T)}^p\big)^{1/p}\leq C  t^{-\frac{\theta}{2}}\, s^{\frac{1}{4}+\frac{\alpha}{2}+\frac{\theta}{2}-\varepsilon}.
    \end{align}
\end{lemma}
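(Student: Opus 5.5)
The plan is to work with the explicit Gaussian structure of $O$ in Fourier variables and convert second-moment bounds into $L^p$ bounds for the relevant norms, via Gaussian hypercontractivity and Kolmogorov-type arguments. Write $O_t=\sum_{j\in\Z}\hat O_t(j)e_j$. Here $\hat O_t(j)=\int_0^t e^{-4\pi^2j^2(t-s)}\,dW_s^j$ are independent complex Ornstein--Uhlenbeck processes, with $W^j(t)=\xi(\mathbf 1_{[0,t]}\otimes \bar e_j)$.

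For the first bound \eqref{eq:Otimespace1}, compute the Littlewood--Paley blocks. Stationarity and independence of modes give
\[
\E|O_t^{[k]}(x)|^2\lesssim\sum_{|j|\sim 2^k}j^{-2}\lesssim 2^{-k},
\]
and
\[
\E|O_t^{[k]}(x)-O_s^{[k]}(x)|^2\lesssim \sum_{|j|\sim2^k}j^{-2}\big(1\wedge j^2|t-s|\big)\lesssim 2^{-k}\big(1\wedge 2^{2k}|t-s|\big).
\]
Interpolating, the second quantity is at most $2^{-k(1-2\lambda)}|t-s|^{\lambda}$. Since each block is Gaussian, hypercontractivity upgrades these to $p$-th moments. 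Taking the supremum over $x$ costs only $2^{k\eps}$: bound $\|\cdot\|_{L^\infty}$ by $\|\cdot\|_{L^q}$ times $2^{k/q}$ (Bernstein), with $q$ large. Summing over $k$ with the weight $2^{(1/2-\lambda-\eps)k}$ gives
\[
\E\|O_t-O_s\|^p_{\calC^{1/2-\lambda-\eps}}\lesssim |t-s|^{p\lambda/2}.
\]
The Kolmogorov continuity theorem, in the Banach-valued version stated in this section and with $p$ large, then yields the Hölder-in-time bound. This part is standard.

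For \eqref{eq:trade-off-est}, observe that $(P_{t+s}-P_t)O_r$ has Fourier coefficients $e^{-4\pi^2j^2t}(e^{-4\pi^2j^2s}-1)\hat O_r(j)$. The multiplier satisfies
\[
|e^{-4\pi^2 j^2 t}(e^{-4\pi^2j^2s}-1)|\lesssim (j^2t)^{-\theta/2}(j^2s\wedge1)\lesssim t^{-\theta/2}|j|^{-\theta}\,(j^2s)^{\gamma},
\]
valid for any $\gamma\in[0,1]$. Rather than re-doing the estimate, I would apply the multiplier bound as an operator estimate. For fixed $r$, $(P_{t+s}-P_t)$ maps $\calC^{\beta}$ to $\calC^{-\alpha}$ with norm $\lesssim t^{-\theta/2}s^{(\beta+\alpha+\theta)/2}$, provided $\beta+\alpha+\theta\le 2$. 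This follows from the standard Besov block estimate, since the multiplier is a smooth symbol on each dyadic annulus with the stated size. Using $\sup_r\|O_r\|_{\calC^{1/2-2\eps}}$, which has finite moments by the first part with $\lambda$ small, choose $\beta=1/2-2\eps$. This gives the exponent $\tfrac14+\tfrac\alpha2+\tfrac\theta2-\eps$. The constraint $\tfrac12+\alpha+\theta\le 2$ holds since $\alpha<1/2$ and $\theta<1$. The supremum over $r$ is harmless because the operator bound is deterministic and uniform in $r$.

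The main obstacle is justifying the dyadic multiplier estimate on $\calC^\beta$ (an $L^\infty$-based space), uniformly in $t,s$. I would handle it by noting that on the $k$-th block the operator is convolution with a kernel whose $L^1$ norm is bounded by $\sup$ of finitely many derivatives of the rescaled symbol, which is of the claimed size. If this becomes delicate, a fallback is to prove the statement in Gaussian moments directly, through the variance sum $\sum_j j^{-2}|m(j)|^2$. One then controls the supremum over $r$ by a chaining/Kolmogorov argument in $r$, using the temporal Hölder bound already obtained, at the cost of an extra $\eps$.
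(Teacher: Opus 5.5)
Your proposal is correct and matches the paper. The paper simply cites the first bound from \cite[Proposition~3.1]{DjGK}, whose proof is the Fourier-variance, hypercontractivity and Kolmogorov argument you sketch. For the second bound the paper writes $P_{t+s}-P_t=P_t(P_s-\Id)$ and chains \eqref{eq:heatkernelH} (gaining $t^{-\theta/2}$ from $\calC^{-\alpha-\theta}$ to $\calC^{-\alpha}$) with \eqref{eq:heatalpha2} (with $\delta=\tfrac12+\alpha+\theta-2\eps\le 2$) applied to $\sup_r\|O_r\|_{\calC^{1/2-2\eps}}$; this is exactly your multiplier bound, and the factorisation removes the ``main obstacle'' you flag, since \cref{lem:semigroup1} already supplies it.
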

\begin{proof}
    The bound \eqref{eq:Otimespace1} is classical, in this exact form it follows from \cite[Proposition 3.1]{DjGK}.
    The bound \eqref{eq:trade-off-est} follows from \eqref{eq:Otimespace1} as by the semigroup estimates in \cref{lem:semigroup1}, we have
    \begin{align*}
        \|(P_{t+s}-P_t)O_r\|_{\cC^{-\alpha}(\T)}&=\|P_t(P_s-\operatorname{Id})O_r\|_{\cC^{-\alpha}(\T)}\\&\lesssim t^{-\frac{\theta}{2}}\|(P_s-\operatorname{Id})O_r\|_{\cC^{-\alpha-\theta}(\T)}\lesssim t^{-\frac{\theta}{2}}s^{\frac{\theta}{2}+\frac{1}{4}+\frac{\alpha}{2}-\epsilon}\|O_r\|_{\cC^{\frac{1}{2}-2\varepsilon}(\T)}.\qedhere
    \end{align*}
\end{proof}
\begin{remark}\label{rem:OUreg-Sobolev}
The corresponding bounds \eqref{eq:Otimespace1} and \eqref{eq:trade-off-est} for $H^\beta(\T)$ in place of $\calC^{\beta}(\T)$ for $\beta=1/2-\lambda-\epsilon$ or $\beta=-\alpha$ follow due to the embedding $\mathcal{C}^{\beta+\epsilon/2}(\T) \hookrightarrow H^{\beta}(\T)$ for $\beta \in \mathbb{R}$, $\epsilon>0$ together with \eqref{eq:Otimespace1} and \eqref{eq:trade-off-est}.
\end{remark}

For $t\geq 0$ define the function
\begin{equ}\label{eq:Q}
    Q(t)=\int_0^t\norm{p_{u}}_{L^{2}(\T)}^2du
\end{equ}

\begin{lemma} \label{lem:QQN}
    For, $s\leqslant r$ and any $x \in \T$, it holds that
\begin{align}
    \mathrm{Var}( O_r(x)- P_{r-s} O_s(x)).\label{eq:Q-is-var}
\end{align}
Moreover there exists a $C=C(T)$ such that for all $0\leq s<l<r\leq T$ and $\eps\in [0,1/2]$,
\begin{align}\label{eq:Qtwostep}
Q(r-s)-Q(l-s)\leq C (r-l)^{1-\eps}(l-s)^{-\frac{1}{2}+\eps}.
\end{align}
\end{lemma}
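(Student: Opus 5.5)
The statement \eqref{eq:Q-is-var} is evidently truncated; the intended claim is $\mathrm{Var}(O_r(x)-P_{r-s}O_s(x))=Q(r-s)$. I will prove that identity first and then the increment bound \eqref{eq:Qtwostep}.

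\textbf{The variance identity.} By the semigroup property $p_{r-u}=p_{r-s}\ast p_{s-u}$ and the stochastic Fubini theorem (or simply by linearity of the Wiener integral, since the integrands are deterministic),
\begin{equ}
P_{r-s}O_s(x)=\int_0^s\int_\T p_{r-u}(x-y)\,\xi(du,dy).
\end{equ}
Subtracting this from $O_r(x)$ gives
\begin{equ}
O_r(x)-P_{r-s}O_s(x)=\int_s^r\int_\T p_{r-u}(x-y)\,\xi(du,dy).
\end{equ}
This is a centred Gaussian random variable. By the It\^o isometry its variance equals
\begin{equ}
\int_s^r\norm{p_{r-u}(x-\cdot)}_{L^2(\T)}^2\,du=\int_0^{r-s}\norm{p_v}_{L^2(\T)}^2\,dv=Q(r-s),
\end{equ}
using translation invariance of the $L^2(\T)$ norm and the substitution $v=r-u$.

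\textbf{The increment bound.} By Parseval, $\norm{p_v}_{L^2(\T)}^2=\sum_{j\in\Z}e^{-8\pi^2j^2v}$. For $v\in(0,T]$ this is bounded by $C(T)v^{-1/2}$: the $j=0$ term is $1\le T^{1/2}v^{-1/2}$, and the remaining terms are controlled by comparison with the Gaussian integral, which is of order $v^{-1/2}$. Hence
\begin{equ}
Q(r-s)-Q(l-s)=\int_{l-s}^{r-s}\norm{p_v}_{L^2(\T)}^2\,dv\lesssim\int_{a}^{a+b}v^{-1/2}\,dv,
\end{equ}
where $a=l-s>0$ and $b=r-l>0$.

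I then bound the last integral in two ways. First, $\int_a^{a+b}v^{-1/2}\,dv\le b\,a^{-1/2}$, since the integrand is at most $a^{-1/2}$ on the interval. Second, $\int_a^{a+b}v^{-1/2}\,dv=2(\sqrt{a+b}-\sqrt a)\le 2\sqrt b$, by subadditivity of the square root. Interpolating with weights $1-2\eps$ and $2\eps$ for $\eps\in[0,1/2]$ gives
\begin{equ}
\min\big(b\,a^{-1/2},\,2\sqrt b\big)\le 2\,(b\,a^{-1/2})^{1-2\eps}(\sqrt b)^{2\eps}=2\,b^{1-\eps}a^{-\frac12+\eps},
\end{equ}
which is exactly \eqref{eq:Qtwostep}.

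No step is a real obstacle. The only point needing care is that the heat-kernel bound $\norm{p_v}_{L^2}^2\lesssim v^{-1/2}$ holds uniformly on $(0,T]$, which is where the constant depends on $T$ through the zero mode.
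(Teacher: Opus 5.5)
Your proof is correct and follows the same route as the paper: you apply the It\^o isometry to the representation $O_r(x)-P_{r-s}O_s(x)=\int_s^r\int_\T p_{r-u}(x-y)\,\xi(du,dy)$ to get the variance identity (which you rightly read as $=Q(r-s)$), and you use the heat kernel bound $\|p_v\|_{L^2(\T)}^2\lesssim v^{-1/2}$ on $(0,T]$ for the increment estimate. The paper leaves the final interpolation between $b\,a^{-1/2}$ and $2\sqrt b$ implicit, and you carry it out correctly.
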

\begin{proof}
The identity \eqref{eq:Q-is-var} follows from It\^o's isometry and
\begin{equ}
    O_r(x)- P_{r-s} O_s(x)=\int_s^r \int_{\T} p_{r-u}(x-y)\xi(du,dy).
\end{equ}
The bound \eqref{eq:Qtwostep} follows from the bound $\|p_u\|_{L^2(\T)}^2\lesssim u^{-1/2}$ uniformly for $u\in[0,T]$.
\end{proof}
Throughout the paper, we often use stochastic sewing arguments. Hence, we recall the following special case of the stochastic sewing lemma in Banach spaces from \cite{LeBanach}. In \cite{LeBanach}, the statement allows for Banach spaces with martingale type $\mathfrak{p}\in[1,2]$ and general control functions instead of increments. For us usual increments are sufficient and any Hilbert space has martingale type $\mathfrak{p}=2$.
We only give the statement in this simplified setting, but include weight as in e.g. \cite{ABLM}.
Let for $S<T$, $$[S,T]^{2}_{\leq}:=\{(s,t)\in[S,T]^2\mid s\leq t\}, \quad [S,T]^3_{\leq}:=\{(u,w,v)\in[S,T]^3\mid u\leq w\leq v\}.$$
We call a two-parameter process $A:[S,T]_\leq^2\to H$ adapted if $A_{s,t}$ is $\F_t$-measurable for all $(s,t) \in [0,T]_\leq^2 $. Moreover, we use the notation $\delta A_{u,w,v}\coloneqq A_{u,v}-A_{u,w}-A_{w,v}$ for $(u,w,v) \in [0,T]_{\leq}^3$.
Finally, we use the shorthand $\E_s X=\E(X|\mathcal{F}_s)$ for $s\in[0,T]$ and integrable random variables $X$.

\begin{lemma}\label{lem:SSLHilbert}
Let $T_0>0$, $(S,T)\in[0,T_0]_\leq^2$, $p\geqslant 2$, $(H,\|\cdot\|_H)$ be a separable Hilbert space and $A:\Omega \times [S,T]_\leq^2 \rightarrow H$ be an adapted two-parameter process. Assume there exists $\eps_1,\eps_2>0$, $\delta_2<1/2+\eps_2$, $\delta_1<1+\eps_1$, and $\Gamma_1,\Gamma_2>0$ such that, for all $(u,v) \in [S,T]^2_\leq$ with $v<T$ and for all $w\in(u,v)$ one has the bounds
\begin{align}
    \|\|\E_u \delta A_{u,w,v}\|_H\|_{L^p(\Omega)}\leqslant \Gamma_1 (T-v)^{-\delta_1}(v-u)^{1+\eps_1},\label{eq:sewingassump1}\\
    \|\| A_{u,v}\|_{H}\|_{L^p(\Omega)}\leqslant \Gamma_2 (T-v)^{-\delta_2}(v-u)^{1/2+\eps_2}.\label{eq:sewingassump2}
\end{align}
Then, there exists an $H$-valued stochastic process $(\mathcal{A}_v)_{v \in [S,T)}$ such that, for any $v \in [S,T)$ and any sequence of partitions $\Pi_k=\{t_n^k\}_{i=0}^{N_k}$ of $[S,v]$ with mesh size going to $0$,
\begin{align}\label{eq:cA-convergence}
    \mathcal{A}_v=\lim_{k \rightarrow \infty} \sum_{i=0}^{N_k} A_{t_i^k,t_{i+1}^k} \text{ in }L^p(\Omega; H).
\end{align}
Moreover, $\mathcal{A}$ is the (up to modifications) unique adapted $H$-valued process such that $\mathcal{A}_S=0$, $(\mathcal{A}_v)_{v\in[S,T)}$ is continuous as a function of $v$ with values in $L^p(\Omega;H)$, and that there exist constants $C_1,C_2$, such that for all $(u,v)\in[S,T]_\leq^2$ with $v<T$,
\begin{align}
    \squeeze[1]{\|\|\mathcal{A}_v-\mathcal{A}_u-A_{u,v}\|_H\|_{L^p(\Omega)}}&\!\leqslant\! \squeeze[1]{C_1 (T-v)^{-\delta_1}(v-u)^{1+\eps_1} + C_2 (T-v)^{-\delta_2}(v-u)^{1/2+\eps_2}}\label{eq:SSL-conc1}\\
    \|\|\E_u(\mathcal{A}_v-\mathcal{A}_u-A_{u,v})\|_H\|_{L^p(\Omega)}&\leqslant C_3 (T-v)^{-\delta_1}(v-u)^{1+\eps_1}.\label{eq:SSL-conc2}
\end{align}
Furthermore, there exists a constant $C'=C'(T_0,\eps_1,\eps_2,p, \delta_1,\delta_2)$, such that the bounds \eqref{eq:SSL-conc1} and \eqref{eq:SSL-conc2} hold for $C_1=C' \Gamma_1, C_2=C'\Gamma_2$, $C_3=C'\Gamma_1$.
Finally, $\mathcal{A}$ extends continuously to $[S,T]$ and there exists a constant $C''=C''(T_0,\eps_1,\eps_2,p, \delta_1,\delta_2)$, such that $\mathcal{A}$ satisfies for all $(u,v)\in [S,T]^{2}_{\leq}$,
\begin{align} \label{eq:resultSewing}
    \norm{\norm{\mathcal{A}_{v}-\mathcal{A}_u}_H}_{L^{p}(\Omega)}\leq C'' (\Gamma_1 (v-u)^{1+\eps_1-\delta_1} + \Gamma_2 (v-u)^{1/2+\eps_2-\delta_2}).
\end{align}
\end{lemma}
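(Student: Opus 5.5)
The plan is to reduce \cref{lem:SSLHilbert} to the stochastic sewing lemma in Banach spaces of \cite{LeBanach}, applied on subintervals $[S,T']$ with $T'<T$, and to handle the weight $(T-v)^{-\delta}$ by the dyadic localisation used in \cite{ABLM}. On any $[S,T']$ with $T'<T$ the weights are bounded by $(T-T')^{-\delta_i}$. Hence \eqref{eq:sewingassump1}--\eqref{eq:sewingassump2} become the unweighted hypotheses of \cite{LeBanach}, with $\mathfrak p=2$ since $H$ is Hilbert. That result gives existence of $\mathcal A$ on $[S,T']$, the convergence \eqref{eq:cA-convergence}, uniqueness among adapted $L^p$-continuous processes satisfying the two local bounds, and unweighted versions of \eqref{eq:SSL-conc1}--\eqref{eq:SSL-conc2}. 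By uniqueness, the processes built on increasing $T'\uparrow T$ are consistent, which defines $\mathcal A$ on $[S,T)$.

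To get the sharp weights, I fix $(u,v)$ with $v<T$. Every $r\in[u,v]$ has $T-r\ge T-v$, so on $[u,v]$ the germ satisfies the unweighted hypotheses with constants $\Gamma_1(T-v)^{-\delta_1}$ and $\Gamma_2(T-v)^{-\delta_2}$. This uses only that the weight is monotone in $v$. Applying the unweighted conclusion on $[u,v]$, with $C'$ depending only on $T_0,\eps_i,p$, yields exactly \eqref{eq:SSL-conc1} and \eqref{eq:SSL-conc2} with $C_1=C'\Gamma_1$, $C_2=C'\Gamma_2$ and $C_3=C'\Gamma_1$. Note that the stated conditions $\delta_1<1+\eps_1$ and $\delta_2<1/2+\eps_2$ are not needed for this step; they only enter the extension to $T$.

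The main obstacle is the final step: extending $\mathcal A$ to $T$ and proving \eqref{eq:resultSewing}, where $v$ may equal $T$ and the weights blow up. I plan a dyadic decomposition toward $T$. Set $v_n=T-2^{-n}(T-u)$ and write $\mathcal A_{v_N}-\mathcal A_u=\sum_{n<N}(\mathcal A_{v_{n+1}}-\mathcal A_{v_n})$. Each increment is bounded by \eqref{eq:SSL-conc1} plus the bound \eqref{eq:sewingassump2} on $A_{v_n,v_{n+1}}$. On the $n$-th piece we have $T-v_{n+1}\sim v_{n+1}-v_n\sim 2^{-n}(T-u)$, so the $n$-th term is of order
\begin{align*}
\Gamma_1\big(2^{-n}(T-u)\big)^{1+\eps_1-\delta_1}+\Gamma_2\big(2^{-n}(T-u)\big)^{1/2+\eps_2-\delta_2}.
\end{align*}
These exponents are positive precisely because $\delta_1<1+\eps_1$ and $\delta_2<1/2+\eps_2$. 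The series is therefore geometric, so the sum converges in $L^p(\Omega;H)$; this defines $\mathcal A_T$ and gives \eqref{eq:resultSewing} for $v=T$. For general $v$, I would split into two cases. If $v-u\ge (T-v)$, I decompose dyadically in the same way around the point $v$, using $T-r\ge T-v$ on $[u,v]$ together with the dyadic bound. If $v-u<T-v$, the weights are at most $(v-u)^{-\delta_i}$ up to constants, since $T-v>v-u$, and I apply \eqref{eq:SSL-conc1} directly. Finally, continuity at $T$ follows from the tails of the geometric series.
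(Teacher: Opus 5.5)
Your proposal is correct and follows essentially the same route as the paper: apply the stochastic sewing lemma of \cite{LeBanach} on subintervals ending at some $\bar v<T$ with the weights frozen at $(T-\bar v)^{-\delta_i}$ to obtain \eqref{eq:SSL-conc1}--\eqref{eq:SSL-conc2}, then combine \eqref{eq:SSL-conc1} with \eqref{eq:sewingassump2} and use $\delta_1<1+\eps_1$, $\delta_2<1/2+\eps_2$ to pass to $T$. The only difference is that you carry out by hand the dyadic summation that the paper outsources to \cite[Lemma 2.3]{BFG}; like the paper, you implicitly assume $\delta_i\ge 0$ so that the weights are monotone in the right direction.
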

\begin{proof}
    The statement can be easily deduced from \cite{LeBanach}. First, for any $\bar v\in[S,T)$, we apply the stochastic sewing lemma from \cite[Theorem 3.1]{LeBanach} on the interval $[S,\bar v]$ with constants $\Gamma_1(T-\bar v)^{-\delta_1}$ and $\Gamma_2(T-\bar v)^{-\delta_2}$. This implies the convergence in \eqref{eq:cA-convergence} for $v\in[S,\bar v]$ as well as the bounds \eqref{eq:SSL-conc1}-\eqref{eq:SSL-conc2} with $(T-v)$ replaced by $(T-\bar v)$ for  $(u,v)\in[S,\bar v]$. Moreover it follows that the bounds also hold with $C_i=C'(T_0,\eps_1,\eps_2,p)\Gamma_i$ for $i=1,2$. Choosing $v=\bar v$, and recalling that $\bar v$ is arbitrary, we get \eqref{eq:SSL-conc1}-\eqref{eq:SSL-conc2} as claimed.
    The bounds \eqref{eq:sewingassump2} and \eqref{eq:SSL-conc1} for $C_i=C'(T_0,\eps_1,\eps_2,p)\Gamma_i$ for $i=1,2$ together imply
    \begin{equ}
           \norm{\norm{\mathcal{A}_{v}-\mathcal{A}_u}_H}_{L^{p}(\Omega)}\leq  (C'+1)\big(\Gamma_1 (T-v)^{-\delta_1}(v-u)^{1+\eps_1} +  \Gamma_2 (T-v)^{-\delta_2}(v-u)^{1/2+\eps_2}\big)
    \end{equ}
    for $(u,v)\in[S,T]_\leq^2$ with $v<T$. Using the assumptions $\delta_1<1+\eps_1$ and $\delta_2<1/2+\eps_2$, this implies \eqref{eq:resultSewing} by \cite[Lemma 2.3]{BFG}.
\end{proof}
Next, we recall a version of Kolmogorov's continuity theorem in a mild form, cf. \cite[Proposition 4.6]{DjGK}, where this version was also employed. 
\begin{proposition}\label{prop:vKolmogorov}
Let $(X_t)_{t\in[0,T]}$ be a continuous stochastic process with $X_0=0$ and values in a Banach space $V$ and let $(S_t)_{t\geq 0}$ be strongly continuous semigroup. Suppose that for some $p>0$, $\alpha>0$, $C'<\infty$ it holds for all $0\leq s\leq t\leq T$ that
\begin{align}
    \E\|X_t-S_{t-s}X_s\|^p\leq C'|t-s|^{1+\alpha}.
\end{align}
Then for all $\gamma\in(0,\alpha/p)$
\begin{align}\label{eq:Kolmogorov-conclusion}
    \E\sup_{s< t\in[0,T]}|t-s|^{-\gamma p }\|X_t-S_{t-s}X_s\|^p\leq C''C',
\end{align}
where $C''$ depends only on $p,\gamma,\alpha,T$, and the semigroup $(S_t)_{t\geq0}$.
\end{proposition}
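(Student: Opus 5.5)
The plan is to run the classical dyadic chaining proof of Kolmogorov's theorem, with the ordinary increment $X_t-X_s$ replaced by the \emph{mild increment} $\delta X_{s,t}\coloneqq X_t-S_{t-s}X_s$. The algebraic fact that makes this work is a telescoping identity. For any partition $s=r_0<r_1<\dots<r_k=t$ one has
\begin{equ}
\delta X_{s,t}=\sum_{i=0}^{k-1}S_{t-r_{i+1}}\,\delta X_{r_i,r_{i+1}}.
\end{equ}
To see this, note that $S_{t-r_{i+1}}\big(X_{r_{i+1}}-S_{r_{i+1}-r_i}X_{r_i}\big)=S_{t-r_{i+1}}X_{r_{i+1}}-S_{t-r_i}X_{r_i}$ by the semigroup property, so the sum telescopes to $X_t-S_{t-s}X_s$. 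Strong continuity together with the uniform boundedness principle gives $C_S\coloneqq\sup_{r\in[0,T]}\|S_r\|_{V\to V}<\infty$. Hence
\begin{equ}
\|\delta X_{s,t}\|\le C_S\sum_{i}\|\delta X_{r_i,r_{i+1}}\|,
\end{equ}
which is the only property of ordinary increments that the chaining argument actually uses. By rescaling time we may assume $T=1$; the constants then depend on $T$.

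\textbf{Step 1 (one level of the dyadic grid).} Let $D_n=\{k2^{-n}:0\le k\le 2^n\}$ and set
\begin{equ}
K_n\coloneqq\max_{0\le k<2^n}\|\delta X_{k2^{-n},(k+1)2^{-n}}\|.
\end{equ}
Bounding the maximum by the sum and using the hypothesis on each of the $2^n$ terms gives
\begin{equ}
\E K_n^p\le 2^n C'2^{-n(1+\alpha)}=C'2^{-n\alpha}.
\end{equ}

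\textbf{Step 2 (chaining over dyadic points).} Take dyadic $s<t$ with $2^{-m-1}<t-s\le 2^{-m}$. The standard construction partitions $[s,t]$ into consecutive dyadic intervals, using at most two intervals of length $2^{-n}$ for each $n>m$. Feeding this partition into the telescoping identity yields
\begin{equ}
\|\delta X_{s,t}\|\le 2C_S\sum_{n>m}K_n.
\end{equ}
Since $2^{-m-1}<t-s$, multiplying by $|t-s|^{-\gamma}$ gives
\begin{equ}
|t-s|^{-\gamma}\|\delta X_{s,t}\|\le 2^{1+\gamma}C_S\sum_{n\ge 1}2^{n\gamma}K_n,
\end{equ}
and the right-hand side is uniform over all dyadic $s<t$.

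\textbf{Step 3 (moment bound).} For $p\ge1$, Minkowski's inequality and Step 1 give
\begin{equ}
\Big\|\sum_n 2^{n\gamma}K_n\Big\|_{L^p}\le (C')^{1/p}\sum_n 2^{n(\gamma-\alpha/p)}.
\end{equ}
For $p<1$, apply $\E|\sum_n a_n|^p\le\sum_n\E|a_n|^p$ instead. In both cases the series converges exactly because $\gamma<\alpha/p$, and we obtain \eqref{eq:Kolmogorov-conclusion} restricted to dyadic $s<t$, with constant $C''C'$.

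\textbf{Step 4 (removing the dyadic restriction).} It remains to pass from dyadic pairs to all $s<t$. I expect this to be the only step needing any care. The point is that $(s,t)\mapsto\delta X_{s,t}$ is continuous. Indeed, if $r_n\to r$ and $x_n\to x$, then
\begin{equ}
\|S_{r_n}x_n-S_rx\|\le C_S\|x_n-x\|+\|S_{r_n}x-S_rx\|\to0,
\end{equ}
by the uniform bound $C_S$ and strong continuity; combined with the continuity of $X$, this gives joint continuity of $\delta X$. Therefore the supremum over dyadic pairs equals the supremum over all $s<t$, and the bound from Step 3 carries over. The hypothesis $X_0=0$ is not needed for this argument.
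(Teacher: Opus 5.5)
Your proof is correct. The telescoping identity $X_t-S_{t-s}X_s=\sum_i S_{t-r_{i+1}}(X_{r_{i+1}}-S_{r_{i+1}-r_i}X_{r_i})$, together with $\sup_{r\le T}\|S_r\|<\infty$, is what lets the dyadic chaining go through. Steps 1--4 are sound, including the count of at most two dyadic intervals per level $n>m$, the separate treatment of $p<1$, and the continuity/density argument for non-dyadic pairs.

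The paper gives no proof of its own and only refers to \cite[Proposition 4.6]{DjGK}, so there is nothing to compare against. Your chaining argument is a complete, self-contained proof of the statement.

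Your remark that $X_0=0$ is not needed is also right. In the paper's applications its role is only to turn \eqref{eq:Kolmogorov-conclusion} into a bound on $\sup_t\|X_t\|$, via $X_t=X_t-S_tX_0$.
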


\subsection{Properties of the extension and restriction operators}\label{subsec:disc-op}

We first collect a few simple properties of the restriction and extension operators, all of which follow immediately from the definitions, with the only other input is \cref{lem:spaces-equivalence} b) for the point f) below.
\begin{lemma}\label{lem:Theta-Psi}
    \begin{enumerate}
    \item[a)] $\Psi_N$ is a right inverse of $\Theta_N$, that is, $\Theta_N\Psi_N=\mathrm{Id}_{C(\Pi_N)}$.
    \item[b)] $\Psi_N$ is a left inverse of $\Theta_N$ restricted to $C_N(\T)$, that is, $\Psi_N\Theta_N|_{C_N(\T)}=\mathrm{Id}_{C_N(\T)}$.
    \item[c)] For any $\beta\in\R$, $\Psi_N$ and $\Theta_N$ are isometries from $H^\beta(\Pi_N)$ to $H^\beta_N(\T)$ and from $H^\beta_N(\T)$ to $H^\beta(\Pi_N)$, respectively.
    \item[d)] For all $f\in C(\Pi_N)$ one has  $P_t\Psi_N f=\Psi_N \tilde P^N_t f$.
    \item[e)] For any continuous function $g:\C\to\C$ and any $w\in C(\T)$ one has $g(\Theta_N w)=\Theta_Ng(w)$.
    \item[f)] For all $\alpha \in (0,1)$ there exists $C=C(\alpha)$ such that for $f \in \calC^{\alpha}(\T)$ it holds that  
    $\|\Theta_N f\|_{\calC^\alpha(\Pi_N)}\leqslant C\|f\|_{\calC^\alpha(\T)}$. Similarly, $\|\Theta_N f\|_{L^\infty(\Pi_N)}\leqslant \|f\|_{L^\infty(\T)}$ for $f \in C(\T)$.
\end{enumerate}

\end{lemma}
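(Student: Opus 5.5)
We verify each item directly from the definitions, working on the Fourier basis $(e_j)_{j\in J_N}$ of $C_N(\T)$ and $(e_j^N)_{j\in J_N}$ of $C(\Pi_N)$, and using linearity throughout.

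For a), it suffices to check $\Theta_N\Psi_N e_j^N=\Theta_N e_j=e_j^N$ for $j\in J_N$; this holds by the definition $e_j^N=\Theta_N e_j$. For b), every $g\in C_N(\T)$ can be written $g=\sum_{j\in J_N}c_je_j$, and then $\Psi_N\Theta_N g=\sum c_j\Psi_N e_j^N=\sum c_j e_j=g$. For c), orthonormality of $(e_j^N)_{j\in J_N}$ in $L^2(\Pi_N)$ and of $(e_j)$ in $L^2(\T)$ gives $\langle \Psi_N f,e_j\rangle_{L^2(\T)}=\langle f,e_j^N\rangle_{L^2(\Pi_N)}$ for $j\in J_N$. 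The coefficients with $j\notin J_N$ vanish, because $\Psi_N f\in C_N(\T)$. Since the weights $(1+|j|)^{2\beta}$ coincide in the two norms, $\Psi_N$ is an isometry. By a) and b), $\Theta_N|_{C_N(\T)}$ is its inverse and is therefore also an isometry. For d), we have $P_t\Psi_N e_j^N=P_te_j=e^{-4\pi^2j^2t}e_j=\Psi_N(e^{-4\pi^2j^2t}e_j^N)=\Psi_N\tilde P^N_te_j^N$, and the identity for general $f$ follows by linearity.

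Item e) is pointwise evaluation. For $x\in\Pi_N$,
\begin{equ}
g(\Theta_Nw)(x)=g(w(x))=(g\circ w)(x)=\Theta_N g(w)(x).
\end{equ}

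For f), the $L^\infty$ statement is immediate, since a supremum over $\Pi_N\subset\T$ is at most the supremum over $\T$. For the Hölder statement we use \cref{lem:spaces-equivalence} b) twice. First, $\|f\|_{\calC^\alpha(\T)}$ is comparable to $\sup|f|+[f]_\alpha$, where $[f]_\alpha$ denotes the Hölder seminorm. Second, the discrete $\calC^\alpha(\Pi_N)$ norm is comparable, uniformly in $N$, to the discrete supremum plus the Hölder quotient over $x\neq y\in\Pi_N$. To compare the two quotients, view $\Pi_N$ as a subset of $\T$ and measure $|x-y|$ by the torus distance, which is the same distance that appears in the continuum Hölder seminorm. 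The discrete quotient is then a supremum over a subset of the pairs used in the continuous one, so it is dominated by $[f]_\alpha$. The constant comes only from the two norm equivalences and so depends only on $\alpha$.

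None of these steps is a genuine obstacle. The one point needing care is f): the constant must be uniform in $N$, and this is exactly what the uniform equivalence in \cref{lem:spaces-equivalence} b) provides.
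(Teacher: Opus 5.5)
Your proof is correct and takes the same approach as the paper. Items a)--e) are verified directly on the bases $(e_j)_{j\in J_N}$ and $(e_j^N)_{j\in J_N}$. For f), the only additional input is the norm equivalence of \cref{lem:spaces-equivalence} b), which is uniform in $N$; the paper states exactly this and gives no further argument.
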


The operator $\Psi_N\Theta_N$ is quite similar to a projection, but instead of sending certain Fourier modes to $0$, the operator sends them to smaller ones: for all $j\in J_N$ and all $k\in \Z$
\begin{equ}\label{eq:operator-action}
    \Psi_N\Theta_N e_{j+k N}=e_{j}.
\end{equ}
Recall that for the Fourier projection $\cQ_N$ with $\cQ_N e_j = e_j$ for all $\abs{j}\leq N$ and $\cQ_N e_j =0$ for $\abs{j}>N$, one has the bound
\begin{equ}\label{eq:projection-bound}
   \norm{(\operatorname{Id}-\cQ_N) v}_{H^{\beta}(\T)}= \Big\|\sum_{|k|> N}\hat v(k)e_k\Big\|_{H^{\beta}(\T)}\leq C(\alpha,\beta) N^{\beta-\alpha}\|v\|_{H^\alpha(\T)}
\end{equ}
for any $\alpha,\beta\in\R$ with $\beta\leq\alpha$. In the case of the operator $\Psi_N\Theta_N$, the corresponding estimate holds only in  a more restricted range of exponents.
This is the content of the following lemma.
\begin{lemma}\label{lem:operator}
For any $\alpha>1$ there exists $C=C(\alpha)$ such that for all $N\in\N$ and $v\in H^\alpha(\T)$ one has the bounds
\begin{align}
    \big\|(\Id-\Psi_N\Theta_N)v\big\|_{L^2(\T)}\leq C N^{-\alpha} \|v\|_{H^\alpha(\T)}\label{eq:pos-est}\\
    \big\|\Psi_N \Theta_N v\big\|_{H^\alpha(\T)}\leqslant C \|v\|_{H^\alpha}. \label{eq:PsiThetabounded}
\end{align}
\end{lemma}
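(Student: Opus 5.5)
We work entirely on the Fourier side, using the aliasing identity \eqref{eq:operator-action}. For $v\in H^\alpha(\T)$ with $\alpha>1$, the Fourier series is absolutely summable. Indeed, by Cauchy--Schwarz, $\sum_k|\hat v(k)|\le \|v\|_{H^\alpha}\big(\sum_k(1+|k|)^{-2\alpha}\big)^{1/2}$. Hence $v$ is continuous, $\Theta_N v$ is well defined, and $\Theta_N$ may be applied termwise. This gives
\begin{equ}
\Psi_N\Theta_N v=\sum_{j\in J_N}\Big(\sum_{k\in\Z}\hat v(j+kN)\Big)e_j .
\end{equ}
Consequently
\begin{equ}
(\Id-\Psi_N\Theta_N)v=\sum_{|j|\notin J_N}\hat v(j)e_j-\sum_{j\in J_N}\Big(\sum_{k\neq0}\hat v(j+kN)\Big)e_j .
\end{equ}
The first term is a standard tail. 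By \eqref{eq:projection-bound}, adapted to the index set $J_N$ (which contains all $|j|<N/2$), its $L^2$ norm is bounded by $C N^{-\alpha}\|v\|_{H^\alpha}$.

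The main step is the aliased term. For fixed $j\in J_N$, we apply Cauchy--Schwarz with weights:
\begin{equ}
\Big|\sum_{k\neq0}\hat v(j+kN)\Big|^2\le \Big(\sum_{k\neq 0}|\hat v(j+kN)|^2(1+|j+kN|)^{2\alpha}\Big)\Big(\sum_{k\neq0}(1+|j+kN|)^{-2\alpha}\Big).
\end{equ}
Since $|j|\le N/2$, we have $|j+kN|\ge |k|N/2$ for $k\neq0$. The second factor is therefore at most $C N^{-2\alpha}\sum_{k\ge1}k^{-2\alpha}\lesssim N^{-2\alpha}$, and this is where $\alpha>1/2$ is used (and $\alpha>1$ certainly suffices). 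We then sum over $j\in J_N$. Each frequency $m=j+kN$ occurs exactly once, so the first factor sums to at most $\|v\|_{H^\alpha}^2$. This yields \eqref{eq:pos-est}.

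For \eqref{eq:PsiThetabounded}, we repeat the same computation with the weight $(1+|j|)^{2\alpha}$ included:
\begin{equ}
\|\Psi_N\Theta_N v\|_{H^\alpha}^2=\sum_{j\in J_N}(1+|j|)^{2\alpha}\Big|\sum_{k}\hat v(j+kN)\Big|^2 .
\end{equ}
We split off $k=0$, which contributes at most $\|v\|_{H^\alpha}^2$. The remaining part is at most twice $\sum_{j\in J_N}(1+|j|)^{2\alpha}\big|\sum_{k\neq0}\hat v(j+kN)\big|^2$. By the previous paragraph this is bounded by $C\sum_j (1+|j|)^{2\alpha}N^{-2\alpha}\sum_{k\ne0}|\hat v(j+kN)|^2(1+|j+kN|)^{2\alpha}$. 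Because $(1+|j|)^{2\alpha}\le (1+N)^{2\alpha}\lesssim N^{2\alpha}$ for $j\in J_N$, the factors of $N$ cancel and the sum is $\lesssim\|v\|_{H^\alpha}^2$.

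The only point that needs care is justifying the termwise application of $\Theta_N$, which relies on the absolute convergence established at the start. Everything else is the weighted Cauchy--Schwarz estimate. This also explains why the estimate fails for small $\alpha$: the aliasing sum $\sum_k(1+|j+kN|)^{-2\alpha}$ must converge. The paper's hypothesis $\alpha>1$ is stronger than strictly needed, but it comfortably ensures both continuity and convergence.
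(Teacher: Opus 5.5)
Your proof is correct, but it controls the aliased part differently from the paper.

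The paper writes $(\Id-\Psi_N\Theta_N)v=\sum_{m\neq0}\sum_{k\in J_N+mN}\hat v(k)(e_k-e_{k-mN})$ and applies the triangle inequality over the shift index $m$. It then bounds each shifted block by the tail estimate \eqref{eq:projection-bound}, which gives $\sum_{m\neq0}(|m|N)^{-\alpha}\|v\|_{H^\alpha}$. The same Minkowski-type summation over $m$ is used for \eqref{eq:PsiThetabounded}, after bounding the weight $(1+|j|)^{\alpha}$ by a multiple of $N^{\alpha}$. This is exactly where $\alpha>1$ enters, since $\sum_{m}|m|^{-\alpha}$ must converge.

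You instead fix the output frequency $j\in J_N$, apply a weighted Cauchy--Schwarz inequality across the aliases $j+kN$, and resum using that $J_N$ is a complete residue system mod $N$. This only requires $\sum_{k\neq0}|k|^{-2\alpha}<\infty$, so your argument proves both bounds for every $\alpha>1/2$. That is the natural threshold: it is precisely where absolute summability makes $\Theta_N$ well defined on $H^\alpha(\T)$, a point you justify explicitly and the paper leaves implicit.

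The paper's route is marginally quicker because it reuses \eqref{eq:projection-bound} verbatim, but it gives up the range $1/2<\alpha\le 1$. This loss is harmless there, since the paper only ever applies the lemma with $\alpha>1$.

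Two cosmetic slips: the first sum should run over $j\notin J_N$ rather than $|j|\notin J_N$. Also, after splitting with $|a+b|^2\le 2|a|^2+2|b|^2$, the $k=0$ contribution in the second bound is at most $2\|v\|_{H^\alpha}^2$, not $\|v\|_{H^\alpha}^2$.
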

\begin{proof}
Using \eqref{eq:operator-action} one can write
\begin{align*}
    (\Id-\Psi_N \Theta_N)v&=\sum_{k\in\Z} \hat{v}(k)(e_k-\Psi_N \Theta_N e_k)
    =\sum_{m\in \mathbb{Z}\setminus\{0\}} \sum_{k \in J_N+mN}\hat{v}(k)(e_k-e_{k-mN}).
\end{align*}
The latter difference is bounded by the triangle inequality and using \eqref{eq:projection-bound} (with $\beta=0$)
\begin{align*}
   \Big\|\sum_{m \in \mathbb{Z}\setminus \{0\}}\sum_{k \in J_N+mN}\hat{v}(k)(e_k-e_{k-mN})\Big\|_{L^2(\T)}&\leqslant \Big\|\sum_{k \notin J_N}\hat{v}(k)e_k\Big\|_{L^2(\T)}+\sum_{m \in \mathbb{Z}\setminus \{0\}}\Big\|\sum_{k \in J_N+mN}\hat{v}(k)e_{k-mN}\Big\|_{L^2(\T)}
   \\ 
   &\leq \Big(\sum_{|k|\geq N/2}|\hat v(k)|^2\Big)^{1/2}+\sum_{m\in\Z\setminus\{0\}}\Big(\sum_{\abs{k}\geq(mN)/2}|\hat v(k)|^2\Big)^{1/2}
   \\
   &\lesssim N^{-\alpha} \|v\|_{H^{\alpha}(\T)}+\sum_{m \in \mathbb{Z}\setminus \{0\}} (mN)^{-\alpha}\|v\|_{H^{\alpha}(\T)}.
\end{align*}
Since $\alpha>1$, the proof of \eqref{eq:pos-est} is finished. 
Similarly we see that
\begin{align*}
   \|\Psi_N \Theta_N v\|_{H^\alpha(\T)}&\leqslant \sum_{m\in \Z} \|\sum_{k \in J_N+mN} \hat{v}(k) e_{k-mN}\|_{H^\alpha(\T)}\\
    &= \sum_{m\in \Z} \Big(\sum_{j \in \Z}|\langle \sum_{k \in J_N+mN}\hat{v}(k)e_{k-mN},e_j\rangle|^2(1+|j|)^{2\alpha}\Big)^{1/2}\\
    &= \sum_{m\in \Z} \Big(\sum_{j \in J_N}|\langle \sum_{k \in J_N+mN}\hat{v}(k)e_{k-mN},e_j\rangle|^2(1+|j|)^{2\alpha}\Big)^{1/2}\\
    &\leq  \|v\|_{H^{\alpha}(\T)}+N^{\alpha}\sum_{m \in \Z\setminus\{0\}}\Big( \sum_{\abs{k}\geqslant (mN)/2}|\hat{v}(k)|^2\Big)^{1/2}\\
    &\leqslant \|v\|_{H^{\alpha}(\T)}+ N^{\alpha}\sum_{m \in \mathbb{Z}\setminus\{0\}} (mN)^{-\alpha}\|v\|_{H^{\alpha}(\T)}
    \leq \|v\|_{H^{\alpha}(\T)}(1+\sum_{m \in \mathbb{Z}\setminus\{0\}} m^{-\alpha}).\qedhere
\end{align*}
\end{proof}

We would like to prove similar bounds for $\Theta_N O$ as for $O$ in \cref{lem:OUreg}. This is fairly trivial in spaces with positive regularity, given  \cref{lem:Theta-Psi}, but not at all obvious in spaces of negative regularity, since $\Theta_N$ is of course not bounded there. This is the content of the following lemma.

\begin{lemma} \label{lem:Otreg}
Let $\alpha\in [0,\frac{1}{2})$,  $\eps\in (0,\frac{1}{2}-\alpha)$, and $p\geq2$. Then, there exist constants $C=C(T,p,\alpha, \epsilon)$ and  such that for $(s,t) \in [0,T]_{\leq}^2$,  one has
    \begin{align}
    \big\| \| \Theta_N O_t - \Theta_N O_{s}\|_{H^{-\alpha-\eps}(\Pi_N)}\big\|_{L^p(\Omega)}&\leqslant C\max\big\{N^{-\frac{1}{2}-\alpha-\epsilon}, (t-s)^{\frac{1}{4}+ \frac{\alpha}{2}}\big\}.\label{eq:Ot-Os}
    \end{align}
Furthermore, if $\beta\in[0,1]$, then there exists a constant $C=C(T,p,\alpha,\beta, \epsilon)$    such that for all $0\leq s<u\leq t\leq T$, $N\in\N$, one has that 
    \begin{align}
    \MoveEqLeft
    \big\| \| \Theta_N P_{t-s} O_s - \Theta_N P_{u-s} O_{s}\|_{H^{-\alpha-\epsilon}(\Pi_N)}\big\|_{L^p(\Omega)}\nonumber\\&\leqslant C (u-s)^{-\beta/2}\max\big\{N^{-\frac{1}{2}-\beta-\alpha-\epsilon}, (t-u)^{\frac{1}{4}+\frac{\beta}{2}+\frac{\alpha}{2}}\big\}.\label{eq:Ot-Os2}
    \end{align}
\end{lemma}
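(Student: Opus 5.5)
The approach is a direct second-moment computation in discrete Fourier variables, followed by Gaussian hypercontractivity. Both random fields in \eqref{eq:Ot-Os} and \eqref{eq:Ot-Os2} are centred Gaussian, linear in $\xi$. It therefore suffices to treat $p=2$, since all $L^p(\Omega)$ norms of the $H^{-\alpha-\eps}(\Pi_N)$-norm of a Gaussian field are equivalent to its $L^2(\Omega)$ norm. By definition,
\begin{equ}
\E\|\Theta_N X\|_{H^{-\alpha-\eps}(\Pi_N)}^2=\sum_{j\in J_N}(1+|j|)^{-2\alpha-2\eps}\,\E\big|\langle \Theta_N X,e_j^N\rangle_{L^2(\Pi_N)}\big|^2 .
\end{equ}
By aliasing \eqref{eq:operator-action}, $\langle\Theta_N X,e_j^N\rangle=\sum_{m\in\Z}\hat X(j+mN)$. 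For $X=O_t-O_s$ or $X=(P_{t-s}-P_{u-s})O_s$, the Fourier coefficients $\hat X(k)$ are independent for distinct $|k|$, up to the real-valuedness pairing $k\leftrightarrow -k$; for the aliased sum this pairing only matters when $j+mN=-(j+m'N)$, and it costs at most a factor $2$. This gives
\begin{equ}
\E\big|\langle \Theta_N X,e_j^N\rangle\big|^2\lesssim\sum_{m\in\Z}\E|\hat X(j+mN)|^2 .
\end{equ}
So I would bound $\sum_{j\in J_N}(1+|j|)^{-2\alpha-2\eps}\sum_m \sigma(j+mN)$, where $\sigma(k)=\E|\hat X(k)|^2$. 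The term $m=0$ is the continuous $H^{-\alpha-\eps}$ contribution of low modes, and the terms $m\neq 0$ are high modes $|k|\gtrsim |m|N$ weighted only by $(1+|j|)^{-2\alpha-2\eps}$, not by $(1+|k|)^{-2\alpha-2\eps}$.

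Next I would compute $\sigma(k)$ explicitly. Writing $\lambda=4\pi^2k^2$, for $O_t-O_s$ one has
\begin{equ}
\sigma(k)=\frac{(1-e^{-\lambda(t-s)})^2(1-e^{-2\lambda s})}{2\lambda}+\frac{1-e^{-2\lambda(t-s)}}{2\lambda}\lesssim \min\{(t-s),k^{-2}\}.
\end{equ}
For the second quantity,
\begin{equ}
\sigma(k)=e^{-2\lambda(u-s)}(1-e^{-\lambda(t-u)})^2\,\frac{1-e^{-2\lambda s}}{2\lambda}\lesssim e^{-2\lambda(u-s)}\min\{\lambda(t-u),1\}^2\lambda^{-1}.
\end{equ}
Using $e^{-2\lambda(u-s)}\lesssim ((u-s)\lambda)^{-\beta}$, this becomes $\sigma(k)\lesssim (u-s)^{-\beta}k^{-2-2\beta}\min\{k^2(t-u),1\}^2$.

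For the $m=0$ part in the first estimate, I would sum $\sum_j(1+|j|)^{-2\alpha-2\eps}\min\{t-s,j^{-2}\}$, splitting at $|j|\sim(t-s)^{-1/2}$. This gives $(t-s)^{1/2+\alpha}$ up to the $\eps$, i.e.\ the square of $(t-s)^{1/4+\alpha/2}$; indeed the extra $\eps$ of decay only helps, and the result is at most $(t-s)^{1/2+\alpha+\eps}\lesssim (t-s)^{1/2+\alpha}$ since $T$ is finite.

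For $m\neq0$, I would use $|j+mN|\gtrsim |m|N$ and $\sigma\le \min\{t-s,(mN)^{-2}\}$, and sum $\sum_{m\neq0}\min\{t-s,(mN)^{-2}\}$ times $\sum_{j\in J_N}(1+|j|)^{-2\alpha-2\eps}\lesssim N^{1-2\alpha-2\eps}$. Two regimes appear:
\begin{itemize}
\item If $t-s\le N^{-2}$, the $m$-sum has about $(t-s)^{-1/2}N^{-1}$ terms of size $t-s$, plus a tail, giving $\lesssim (t-s)^{1/2}N^{-1}$. Multiplied by $N^{1-2\alpha}$ this is $(t-s)^{1/2}N^{-2\alpha}\le (t-s)^{1/2+\alpha}$, since $N^{-1}\ge (t-s)^{1/2}$.
\item If $t-s\ge N^{-2}$, the $m$-sum is $\lesssim N^{-2}$, and we get $N^{-1-2\alpha-2\eps}$, which is the square of the first entry of the max.
\end{itemize}
Taking square roots gives \eqref{eq:Ot-Os}.

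The second estimate \eqref{eq:Ot-Os2} follows the same scheme with $k^{-2-2\beta}\min\{k^2(t-u),1\}^2$, the prefactor $(u-s)^{-\beta}$ being pulled out. For $m=0$, the sum $\sum_j j^{-2\alpha-2\eps-2-2\beta}\min\{j^4(t-u)^2,1\}$ splits at $|j|\sim (t-u)^{-1/2}$ and yields $(t-u)^{1/2+\beta+\alpha}$; convergence of the low-mode part uses $2-2\alpha-2\beta>-1$, which holds since $\beta\le1$ and $\alpha<1/2$. For $m\neq0$, the sum $\sum_{m\neq0}(mN)^{-2-2\beta}\min\{(mN)^4(t-u)^2,1\}\cdot N^{1-2\alpha-2\eps}$ is again split according to whether $t-u\le N^{-2}$ or not. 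When $t-u\ge N^{-2}$ it gives $N^{-1-2\beta-2\alpha-2\eps}$. When $t-u\le N^{-2}$, the terms with $|m|N\le (t-u)^{-1/2}$ give $(t-u)^2\sum_m (mN)^{2-2\beta}$, which is bounded by $(t-u)^{1/2+\beta}N^{-1}$ whether or not the sum grows. Times $N^{1-2\alpha}$ this is $\le (t-u)^{1/2+\beta+\alpha}$, again using $N^{-1}\ge(t-u)^{1/2}$.

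The main obstacle I anticipate is the bookkeeping of the aliased high-mode sums: making sure the weight $(1+|j|)^{-2\alpha-2\eps}$ on the folded index is handled correctly, and that the $\eps$ is only needed for summability in $j$ and not lost in the rate. A secondary point is to justify the reduction to diagonal covariance, i.e.\ the independence of distinct $|k|$ and the pairing $k\leftrightarrow -k$, rather than expanding the full covariance.
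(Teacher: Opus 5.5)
Your proposal follows the paper's proof in all essentials. Both arguments reduce to $p=2$ by Gaussianity, use the aliasing identity $\langle \Theta_N X, e_j^N\rangle=\sum_{m}\hat X(j+mN)$ together with orthogonality of the Fourier coefficients, and treat the unfolded modes ($m=0$) and the folded modes ($m\neq 0$) separately. The differences are cosmetic:
\begin{itemize}
\item For the $m=0$ part (its $\cQ_N^-A$), the paper quotes the continuum bounds of \cref{lem:OUreg} and \cref{rem:OUreg-Sobolev}, whereas you compute it by hand.
\item For $m\neq0$, the paper integrates the lattice sums $\sum_{m\ne 0} e^{-8\pi^2(k+mN)^2(t-r)}$ in time via \eqref{eq:expsum}. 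You instead use the cruder pointwise bound $\sigma(k)\lesssim \min\{t-s,k^{-2}\}$ together with $|j+mN|\geq |m|N/2$, which is simpler and suffices.
\end{itemize}

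There is one wrong step, in the regimes $t-s\le N^{-2}$ and $t-u\le N^{-2}$. There you assert $(t-s)^{1/2}N^{-2\alpha}\le (t-s)^{1/2+\alpha}$ (resp.\ $(t-u)^{1/2+\beta}N^{-2\alpha}\le (t-u)^{1/2+\beta+\alpha}$) ``since $N^{-1}\geq (t-s)^{1/2}$''. That hypothesis gives exactly the reverse inequality when $\alpha>0$.

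The fix is to land in the other branch of the maximum, keeping the $\eps$:
\begin{itemize}
\item $(t-s)^{1/2}N^{-2\alpha-2\eps}\le N^{-1-2\alpha-2\eps}$;
\item $(t-u)^{1/2+\beta}N^{-2\alpha-2\eps}\le N^{-1-2\beta-2\alpha-2\eps}$.
\end{itemize}

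In fact the case split is unnecessary. Since $\sigma(k)\lesssim k^{-2}$, resp.\ $\sigma(k)\lesssim (u-s)^{-\beta}|k|^{-2-2\beta}$, you get $\sum_{m\neq0}\sigma(j+mN)\lesssim N^{-2}$, resp.\ $\lesssim (u-s)^{-\beta}N^{-2-2\beta}$, uniformly in $j\in J_N$. So the folded modes always contribute the square of the $N$-branch, and the $m=0$ modes always contribute the square of the time branch.

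Also, the $k\leftrightarrow -k$ pairing costs nothing. Since $\E\,\hat X(k)\overline{\hat X(k')}=0$ for $k\neq k'$, the identity $\E|\langle\Theta_N X,e_j^N\rangle|^2=\sum_m\sigma(j+mN)$ is exact.
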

\begin{proof}
Since the two statements share some similarities, we begin with some remarks that will simplify the proof in both cases. 
First, note that since $O$ is Gaussian, it suffices to consider the case $p=2$. Fixing the time points, both statements are then of the form
\begin{equ}
    \E \|\Theta_N A\|_{H^{\alpha-\eps}(\Pi_N)}^2\leq B,
\end{equ}
where $A$ is a random function with complex orthogonal Fourier modes, that furthermore satisfy 
\begin{equ}
    \E \|A\|_{H^{\alpha-\eps}(\T)}^2\leq B.
\end{equ}
Indeed, this follows from \cref{lem:OUreg} and \cref{rem:OUreg-Sobolev}. Denote by $\cQ^-_N$ the projection (analogous but not equal to $\cQ_N$) defined by $\cQ_N^- e_j = e_j$ for all $j\in J_N$ and $\cQ_N^-e_j =0$ for $j\notin J_N$. It is trivial that $\cQ_N^-$ has norm $1$ in every Sobolev space $H^\gamma(\T)$.
Denote also $\cQ^\perp_N=\mathrm{Id}-\cQ_N^-$. Using \cref{lem:Theta-Psi}, we write
\begin{equs}
     \E \|\Theta_N A\|_{H^{\alpha-\eps}(\Pi_N)}^2=
     \E \|\Psi_N\Theta_N A\|_{H^{\alpha-\eps}(\T)}^2&\leq
     2\E \|\cQ^-_N A\|_{H^{\alpha-\eps}(\T)}^2+
     2\E \|\Psi_N\Theta_N\cQ_N^\perp A\|_{H^{\alpha-\eps}(\T)}^2
     \\
     &\leq 2B+2\E \|\Theta_N\cQ_N^\perp A\|_{H^{\alpha-\eps}(\Pi_N)}^2,
\end{equs}
and we can conclude that it suffices to prove the bound for $\cQ_N^\perp A$ instead of $A$.
We write
\begin{align*}
    \Theta_N \cQ_N^\perp A = \sum_{j\notin J_N\in \Z} \langle A, e_j\rangle_{L^2(\T)} e_j^N= \sum_{k\in J_N}e_k^N\sum_{m\in\Z\setminus\{0\}} \langle A, e_{k+mN}\rangle_{L^2(\T)}.
\end{align*}
By complex orthogonality, we get
\begin{equ}\label{eq:prepare}
I(A):= \E\|\Theta_N \cQ_N^\perp A\|_{H^{\alpha-\eps}(\Pi_N)}^2=\sum_{k\in J_N}(1+|k|)^{-2\alpha-2\eps}\sum_{m\in\Z\setminus\{0\}}\E\big|\langle A, e_{k+mN}\rangle_{L^2(\T)}\big|^2.
\end{equ}

    We first prove \eqref{eq:Ot-Os} and bound the right-hand side of \eqref{eq:prepare} for $A=O_t-O_s$.    Since
    \begin{align*}
        \langle (O_t-O_s),e_{k+mN}\rangle_{L^2(\T)} &= \int_{s}^{t}\int_\T e^{-4\pi^2 (k+mN)^2 (t-r)} e_{k+mN}(y) \xi(dr,dy)
        \\&\quad + \int_{0}^{s}\int_\T e^{-4\pi^2 (k+mN)^2 (s-r)}(e^{-4\pi^2 (k+mN)^2 (t-s)}-1) e_{k+mN}(y) \xi(dr,dy),
    \end{align*}
    we obtain by It\^o isometry that
    \begin{align*}
    \MoveEqLeft
        \E\abs{\langle (O_t-O_s),e_{k+mN}\rangle_{L^2(\T)}}^2
        \\&=\int_{s}^{t} e^{-8\pi^2 (k+mN)^2 (t-r)} \, dr+(1-e^{-4\pi^2 (k+mN)^2 (t-s)})^2\int_{0}^{s}e^{-8\pi^2 (k+mN)^2 (s-r)} \,dr.
    \end{align*}
    We first claim that the second summand can be bounded by the first. To see this, simply use the fact that for $a,b\geq 0$ with $b\leq a$, $(a-b)^2\leq a^2-b^2$, and integrate the exponentials to get
    \begin{align}
        (1-e^{-4\pi^2 (k+mN)^2 (t-s)})^2&\int_{0}^{s}e^{-8\pi^2 (k+mN)^2 (s-r)} \,dr\nonumber
        \\&\leqslant (1-e^{-8\pi^2 (k+mN)^2 (t-s)})\int_{0}^{\infty}e^{-8\pi^2 (k+mN)^2 r} \,dr\nonumber\\
    &=\int_s^t e^{-8 \pi^2(k+mN)^2(t-r)}\, dr. \label{eq:rewritingintegral}
    \end{align}
    Hence, we obtain that
    \begin{align}\label{eq:IOtOs}
        I (O_t-O_s)\leqslant 2\sum_{k\in J_N}(1+\abs{k})^{-2\alpha-2\epsilon}\int_s^t\sum_{m\in\Z\setminus\{0\}} e^{-8 \pi^2(k+mN)^2(t-r)}\, dr.
    \end{align}
   We proceed by bounding the sum of the exponentials.
    To that aim, we claim that if $\tau>0$, $k\in[-N/2,N/2]$, $\hat{\alpha}\in [0,1/2]$, then
    \begin{align}\label{eq:expsum}
        \sum_{m\in\Z\setminus\{0\}} e^{-(k+mN)^2\tau}\lesssim \begin{cases}
            \frac{1}{N\sqrt{\tau}}\quad&\,
            \\
            (1+|k|)^{-1+2\hat{\alpha}}\tau^{-1/2+\hat{\alpha}}\quad&\text{if }N\sqrt{\tau}> 1.
        \end{cases}
    \end{align}
Indeed, with the notation $u=k\sqrt\tau$, 
 \begin{equ}
     \sum_{m\in\Z\setminus\{0\}} e^{-(k+mN)^2\tau}= \frac{1}{N\sqrt{\tau}}N\sqrt{\tau}\sum_{m\in\Z\setminus\{0\}} e^{-(u+mN\sqrt{\tau})^2}\lesssim \frac{2}{N\sqrt{\tau}}\int_\R e^{-(u+x)^2}\,dx\lesssim\frac{1}{N\sqrt{\tau}},
\end{equ}
where the penultimate step follows by noting that both $m>0$ and $m<0$ halves of the sum (together with the prefactor $N\sqrt{\tau}$) give a lower Riemann sum approximation of the integral.
When $N\sqrt{\tau}> 1$, we use that $|u|\leq N\sqrt{\tau}/2$ implies $(u+mN\sqrt{\tau})^2\geq u^2+(m^2-m)N\sqrt{\tau}$, and so
\begin{align}
    \sum_{m\in\Z\setminus\{0\}} e^{-(u+mN\sqrt{\tau})^2}
    &\leq e^{-u^2}  \sum_{m\in\Z\setminus\{0\}} e^{-(m^2-m)N^2\tau}\lesssim u^{-1/2+\hat\alpha}\lesssim (1+|k|)^{-1+2\hat{\alpha}}\tau^{-\frac{1}{2}+\hat{\alpha}}.\nonumber
\end{align}
We split the estimate of \eqref{eq:IOtOs} into two cases. First assume that $|t-s|\leqslant N^{-2}$. 
    Hence, by the first inequality in \eqref{eq:expsum} for $\tau=t-r$, $r\in(s,t)$, using that $2\alpha+2\epsilon <1$, we have
\begin{align*}
I(O_t-O_s)\lesssim \int_{s}^{t}\sum_{k \in J_N} (1+|k|)^{-2\alpha-2\epsilon}\frac{1}{N(t-r)^{1/2}}\, dr
&\lesssim N^{-1} (t-s)^{1/2} \int_0^{N/2} (1+x)^{-2\alpha-2\epsilon}\,dx
\\&\lesssim N^{-1} (t-s)^{1/2} N^{1-2\alpha-2\epsilon} 
\lesssim N^{-1-2\alpha-2\epsilon}.
\end{align*}
For the second case $|t-s|>N^{-2}$ we split the integral into two regions: $[s,t-N^{-2}]$ and $[t-N^{-2},t]$. The former can be treated precisely as in the first case and using the assumption that $N^{-2}<|t-s|$. Hence, we focus on the latter for which we have by \eqref{eq:expsum} for $\tau=t-r\geq N^{-2}$ for $r\in (s,t-N^{-2})$  and $\hat{\alpha}=\alpha$ that
\begin{align*}
    \int_{s}^{t-N^{-2}}\sum_{m \in \Z} e^{-8\pi^2 (k+mN)^2 (t-r)} \, dr &\lesssim \int_s^{t-N^{-2}} (t-r)^{-\frac{1}{2}+\alpha}  (1+|k|)^{-1+2\alpha} \\
    &\lesssim  (t-s)^{1/2+\alpha}(1+|k|)^{-1+2\alpha}.
\end{align*}
This implies that since $\epsilon>0$,
\begin{align*}
I(O_t-O_s)\lesssim \sum_{k \in J_N} (1+|k|)^{-1-2\eps}(t-s)^{1/2+\alpha}\lesssim (t-s)^{1/2+\alpha}.
\end{align*}
Combining both cases, we find that
\begin{align*}
   I(O_t-O_s)\lesssim \max \{N^{-1-2\alpha-2\epsilon}, (t-s)^{1/2+\alpha}\},
\end{align*}
Together with the initial remarks at the beginning of the proof, this concludes the proof of \eqref{eq:Ot-Os}.

We proceed by showing \eqref{eq:Ot-Os2} via a similar line of proof as for \eqref{eq:Ot-Os}. Now we want to bound  $A=P_{t-s}O_s-P_{u-s}O_s$ in \eqref{eq:prepare}. We have that
    \begin{align*}
        \langle (P_{t-s}O_s&-P_{u-s}O_s),e_{k+mN}\rangle_{L^2(\T)} \\
        &= (e^{-4\pi^2 (k+mN)^2 (t-u)}-1)e^{-4\pi^2 (k+mN)^2 (u-s)}\int_{0}^{s}\int_\T e^{-4\pi^2 (k+mN)^2 (s-r)} e_{k+mN}(y) \xi(dr,dy),
    \end{align*}
    and thus by the isometry property of the white noise integral we obtain
    \begin{align}\label{eq:sk-pr}
        \E\big|&\langle (P_{t-s}O_s-P_{u-s}O_s),e_{k+mN}\rangle_{L^2(\T)}\big|^2\nonumber
        \\&=e^{-8\pi^2 (k+mN)^2 (u-s)}(e^{-4\pi^2 (k+mN)^2 (t-u)}-1)^2\int_{0}^{s} e^{-8\pi^2 (k+mN)^2 (s-r)} \, dr.
    \end{align}
    Similar to \eqref{eq:rewritingintegral}, we have
    \begin{align}\label{eq:rewritingintegral2}
        (1-e^{-4\pi^2 (k+mN)^2 (t-u)})^2\int_{0}^{s}e^{-8\pi^2 (k+mN)^2 (s-r)} \,dr&\leqslant \int_u^t e^{-8 \pi^2(k+mN)^2(t-r)}\, dr
    \end{align}
    and thus obtain together with \eqref{eq:sk-pr} that 
    \begin{align}\label{eq:IOtOs2}
        I(P_{t-s}O_s-P_{u-s}O_s)
        &\leqslant  \sum_{k\in J_N}(1+\abs{k})^{-2\alpha-2\epsilon}\int_u^t \sum_{m\in\Z\setminus\{0\}}e^{-8\pi^2 (k+mN)^2 (u-s)}e^{-8 \pi^2(k+mN)^2(t-r)}\, dr.
    \end{align}
To bound the inner sum, we combine \eqref{eq:expsum} together with the bound
$$e^{-(k+mN)^2\eta}\lesssim(1+|k+mN|)^{-2\beta}\eta^{-\beta}\lesssim N^{-2\beta}\eta^{-\beta}$$ for $\abs{m}\geq 1$, $\eta>0$ and $\beta\in[0,1]$, in order to obtain that for $\eta,\tau>0$, $k\in[-N/2,N/2]$ and $\hat{\alpha}\in [0,\frac{1}{2}]$, $\beta\in [0,1]$,
    \begin{align}\label{eq:expsum2}
        \sum_{m\in\Z\setminus\{0\}} e^{-(k+mN)^2\tau}e^{-(k+mN)^2\eta}\lesssim \begin{cases}
             \frac{1}{N^{1+2\beta}\sqrt{\tau}}\eta^{-\beta}\quad&\,
             \\
            N^{-2\beta}(1+|k|)^{-1+2\hat{\alpha}}\tau^{-1/2+\hat{\alpha}}\eta^{-\beta}\quad&\text{if }N\sqrt{\tau}> 1.
        \end{cases}
    \end{align}
We split the estimate of \eqref{eq:IOtOs2} into two cases. First assume that $t-u\leqslant N^{-2}$. 
Combining \eqref{eq:expsum2} 
with $\eta=u-s$ and $\tau=t-r\leq N^{-2}$ for $r\in (u,t)$ and \eqref{eq:rewritingintegral2}, we obtain, since $2\alpha+2\epsilon<1$ 
\begin{align*}
I(P_{t-s}O_s-P_{u-s}O_s)
&\lesssim (u-s)^{-\beta}N^{-1-2\beta}\sum_{k \in J_N} (1+|k|)^{-2\alpha-2\epsilon} \int_{u}^{t}(t-r)^{-1/2}\, dr
\\&\lesssim N^{-2\alpha-2\beta-2\epsilon}(u-s)^{-\beta}(t-u)^{1/2}\lesssim N^{-1-2\alpha-2\epsilon-2\beta}(u-s)^{-\beta}.
\end{align*}
If $t-u> N^{-2}$, we split the integral from $u$ to $t$ into two regions: $[u,t-N^{-2}]$ and $[t-N^{-2},t]$. The former integral can be treated precisely as in the first case. 
For the latter, by \eqref{eq:expsum2} applied for $\tau=t-r\geq N^{-2}$ with $r\in (u,t-N^{-2})$, $\eta=u-s$, $\hat{\alpha}=\alpha$, we find
\begin{align*}
\MoveEqLeft
    \int_{u}^{t-N^{-2}}\sum_{k\in J_N}(1+\abs{k})^{-2\alpha-2\epsilon}\sum_{m \in \Z\setminus\{0\}}e^{-8\pi^2 (k+mN)^2 (u-s)} e^{-8\pi^2 (k+mN)^2 (t-r)} \, dr \\&\lesssim N^{-2\beta}(u-s)^{-\beta}\sum_{k\in J_N}(1+|k|)^{-1-2\epsilon}\int_u^{t-N^{-2}} (t-r)^{-1/2+\alpha}\, dr \\
    &\lesssim (u-s)^{-\beta}(t-u)^{1/2+\alpha} N^{-2\beta}\lesssim (u-s)^{-\beta}(t-u)^{1/2+\alpha+\beta} 
\end{align*}
using that $N^{-2}< t-u$.
Overall we obtain
\begin{align*}
I(P_{t-s}O_s-P_{u-s}O_s)\lesssim (u-s)^{-\beta}\max(N^{-1-2\beta-2\alpha-2\epsilon},(t-u)^{1/2+\alpha+\beta}).
\end{align*}
Together with the initial remarks at the beginning of the proof, this concludes the proof of \eqref{eq:Ot-Os2}.
\end{proof}

\subsection{Semigroup estimates}\label{subsec:Semigroup}

For function spaces on $\R$, we define $C^{k}_{b}(\R)$ for $k=0,1,2,\dots$ as the space of bounded measurable functions whose distributional derivatives up to order $k$ are essentially bounded, equipped with the canonical norm (note that $C^{0}_{b}(\R)$ functions are not assumed to be continuous). We denote $C_b^k(\R)$ shortly as $C_b^k$. Let for $v\in \mathcal{S}'(\R)$, $$P_{t}^{\R}v\coloneqq p_{t}^{\R}\ast v, \quad p_{t}^{\R}(x)=\frac{1}{\sqrt{2\pi t}}e^{-x^2/2t}.$$ 
We have the immediate semigroup estimate for $0\leq s\leq t\leq 1$:
\begin{align}\label{eq:R-semigroup}
    \norm{(P^{\R}_t-P^{\R}_{s})v}_{C^{0}_b}\leq \abs{t-s}\norm{v}_{C^2_b}.
\end{align}
Furthermore, for a polynomial weight $\omega(x)=(1+\abs{x}^2)^{-\beta/2}$, $\beta\geq 1$, we define the weighted space 
$$C^{k}_{\omega}=\{f\in \mathcal{S}'(\R)\mid \omega f\in C^{k}_{b}\}, \quad\norm{f}_{C^{k}_{\omega}}=\norm{\omega f}_{C^{k}_b}.$$
In the following we recall basic heat kernel estimates. 
For a proof of \cref{lem:semigroup-w}, \cref{lem:semigroup1} and \cref{lem:semigroup2} see \cref{appendix}.

\begin{lemma}\label{lem:semigroup-w}
Let $(P^\R_t)$ be the heat semigroup and $\omega$ a polynomial weight for $\beta\geq 1$ as above. Then there exits a constant $C=C(\beta,T)$, such that
for $ t\in [0,T]$:
\begin{align}\label{eq:R-semigroup-weighted}
    \norm{(P^{\R}_t-\operatorname{Id})v}_{C^{0}_\omega}\leq C t\norm{v}_{C^2_\omega}, \quad \norm{P^{\R}_t v}_{C^{0}_\omega}\leq C \norm{v}_{C^{0}_{\omega}}.
\end{align}
\end{lemma}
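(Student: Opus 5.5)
The plan is to reduce both bounds in \eqref{eq:R-semigroup-weighted} to a single pointwise comparison of the weight at two points, namely the Peetre-type inequality
\begin{equ}
\frac{\omega(x)}{\omega(y)}=\Big(\frac{1+|y|^2}{1+|x|^2}\Big)^{\beta/2}\leq 2^{\beta/2}(1+|x-y|^2)^{\beta/2},\qquad x,y\in\R,
\end{equ}
which follows from $1+|y|^2\leq 2(1+|x|^2)(1+|x-y|^2)$. Combined with the fact that the Gaussian kernel $p^\R_t$ has finite moments of every order, uniformly for $t\in[0,T]$, this controls everything we need.

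\emph{Boundedness.} For $v\in C^0_\omega$ write
\begin{equ}
\omega(x)P^\R_tv(x)=\int_\R p^\R_t(x-y)\frac{\omega(x)}{\omega(y)}\,(\omega v)(y)\,dy .
\end{equ}
Bound $|\omega v(y)|$ by $\|v\|_{C^0_\omega}$ and apply the Peetre inequality. This gives
\begin{equ}
|\omega(x)P^\R_tv(x)|\lesssim\|v\|_{C^0_\omega}\int_\R p^\R_t(z)(1+|z|^2)^{\beta/2}\,dz .
\end{equ}
The last integral is bounded by $C(\beta,T)$ for $t\leq T$, which proves the second estimate.

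\emph{Difference estimate.} Use the identity
\begin{equ}
(P^\R_t-\Id)v=\int_0^t P^\R_r\,\tfrac12 v''\,dr .
\end{equ}
This holds because $\partial_rP^\R_r=\tfrac12\partial_x^2P^\R_r$, and $P^\R_r$ commutes with derivatives; both facts are justified for tempered $v$ with polynomially weighted bounds. Applying the boundedness part then gives
\begin{equ}
\|(P^\R_t-\Id)v\|_{C^0_\omega}\leq Ct\,\|v''\|_{C^0_\omega}.
\end{equ}
It remains to bound $\|v''\|_{C^0_\omega}=\|\omega v''\|_{C^0_b}$ by $\|\omega v\|_{C^2_b}$. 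Expand
\begin{equ}
\omega v''=(\omega v)''-2\omega'v'-\omega''v .
\end{equ}
The weight satisfies $|\omega'|\lesssim\omega(1+|x|)^{-1}$ and $|\omega''|\lesssim\omega$, so $\omega''v$ is controlled by $\|\omega v\|_{C^0_b}$. For the middle term write
\begin{equ}
\omega'v'=\frac{\omega'}{\omega}\big((\omega v)'-\omega' v\big),
\end{equ}
where $\omega'/\omega$ and $(\omega')^2/\omega$ are again bounded by constants times $1$ and $\omega$ respectively. Hence $\|\omega v''\|_{C^0_b}\lesssim\|\omega v\|_{C^2_b}$.

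The main obstacle is purely technical: justifying the commutation and the Duhamel identity for merely distributional $v$ with essentially bounded weighted derivatives, since these are not continuous functions. I would do this by mollifying $v$ and passing to the limit pointwise, using dominated convergence with the Gaussian moment bound above. Alternatively one can avoid the identity altogether with a Taylor expansion
\begin{equ}
P^\R_tv(x)-v(x)=\int_\R p^\R_t(z)\big(v(x-z)-v(x)+zv'(x)\big)\,dz ,
\end{equ}
where the first-order term vanishes by symmetry of the kernel. The remainder is then bounded by $z^2\sup|v''|$ over the segment, and weighted using $\omega(x)/\omega(x-\theta z)\lesssim(1+|z|)^\beta$. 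Integrating against $p^\R_t(z)$ yields $\int_\R z^2(1+|z|)^\beta p^\R_t(z)\,dz\lesssim t$, which gives the same conclusion.
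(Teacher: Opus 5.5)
Your proposal is correct and follows essentially the same route as the paper. Boundedness comes from the Peetre-type bound $\omega(x)/\omega(y)\le 2^{\beta/2}(1+|x-y|^2)^{\beta/2}$ together with the weighted Gaussian moment $\int p^\R_t(z)(1+|z|^2)^{\beta/2}\,dz\le C(\beta,T)$. The difference estimate comes from the Duhamel identity $(P^\R_t-\Id)v=\int_0^tP^\R_r\tfrac12 v''\,dr$ combined with $|\omega'|,|\omega''|\lesssim\omega$. Your Leibniz expansion just spells out the step $\|v''\|_{C^0_\omega}\lesssim\|v\|_{C^2_\omega}$, which the paper leaves implicit.
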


\begin{lemma}\label{lem:semigroup1}
Let $(P_t), (P_t^{N}), (\tilde{P}^N_t)$ be the semigroups defined in \cref{subsec:setting}. Then we have the continuity bounds, for any $v\in L^{2}(\Pi_N)$, $$\norm{P_{t}^{N}v}_{L^{2}(\Pi_N)}\leq\norm{v}_{L^{2}(\Pi_N)},\quad \norm{\tilde{P}_{t}^{N}v}_{L^{2}(\Pi_N)}\leq\norm{v}_{L^{2}(\Pi_N)},
$$ and for any $v\in L^{2}(\T)$, $$\norm{P_{t}v}_{L^{2}(\T)}\leq\norm{v}_{L^{2}(\T)}.$$ 
Let $\alpha\in\R$ and $\delta\geqslant 0$. Then there exists a constant $C=C(\delta,T)$ such that for all $t\in(0,T]$ one has 
\begin{align}
\|P_t f\|_{\calC^{\alpha+\delta}(\T)}&\leq Ct^{-\delta/2}\|f\|_{\calC^{\alpha}(\T)}\label{eq:heatkernelH}
\\
\|P_t f\|_{\calC^{\delta}(\T)}&\leqslant C t^{-\delta/2}\|f\|_{L^\infty(\T)}\label{eq:heatLinfty}\\
\|P^N_t f\|_{\calC^{\alpha+\delta}(\Pi_N)}&\leq Ct^{-\delta/2}\|f\|_{\calC^{\alpha}(\Pi_N)}\label{eq:Calpha}\\
\|P_t^N f\|_{\cC^{\delta}(\Pi_N)}&\leqslant C t^{-\delta/2}\|f\|_{L^\infty(\Pi_N)}\label{eq:Linfty}
\end{align}
If $\delta\in[0,2]$, then there exists a constant $C=C(\delta, T)$ such that for all $t\in[0,T]$ one has
\begin{align}
\|P_t f-f\|_{\calC^{\alpha}(\T)}&\leq Ct^{\delta/2}\|f\|_{\calC^{\alpha+\delta}(\T)} \label{eq:heatalpha2}
\\
\|P_t f-f\|_{L^{\infty}(\T)}&\leq Ct^{\delta/2}\|f\|_{\calC^{\delta}(\T)}\label{eq:heatLinfty2}
\\
\|P^N_t f-f\|_{\calC^{\alpha}(\Pi_N)}&\leq Ct^{\delta/2}\|f\|_{\calC^{\alpha+\delta}(\Pi_N)}\label{eq:Calpha2}\\
\|P_t^N f-f\|_{L^{\infty}(\Pi_N)}&\leq Ct^{\delta/2}\|f\|_{\calC^{\delta}(\Pi_N)}\label{eq:Linfty2}
\end{align}
\end{lemma}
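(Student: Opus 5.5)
The plan is to diagonalise every operator in the Fourier basis and reduce each bound to a uniform multiplier estimate combined with Littlewood--Paley localisation. The $L^2$ contraction bounds are immediate. $P_t$, $P^N_t$ and $\tilde P^N_t$ all act diagonally on the orthonormal bases $(e_j)_{j\in\Z}$ of $L^2(\T)$ and $(e_j^N)_{j\in J_N}$ of $L^2(\Pi_N)$, with multipliers $e^{-4\pi^2j^2t}$, $e^{\lambda_j^Nt}$ and $e^{-4\pi^2j^2t}$. Each multiplier lies in $(0,1]$ because $\lambda_j^N=-4N^2\sin^2(j\pi/N)\le 0$, so Parseval gives the three contraction bounds.

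For the continuum Besov estimates \eqref{eq:heatkernelH} and \eqref{eq:heatalpha2}, I would use the Littlewood--Paley blocks $f^{[k]}$, which are supported in frequencies $|j|\sim 2^k$ for $k\ge1$. The task is then to show that on such a block $\|P_tf^{[k]}\|_{L^\infty}\lesssim e^{-ct2^{2k}}\|f^{[k]}\|_{L^\infty}$ and $\|(P_t-\Id)f^{[k]}\|_{L^\infty}\lesssim \min(1,t2^{2k})\|f^{[k]}\|_{L^\infty}$. This is the standard Bernstein-type lemma, proved by writing $P_t f^{[k]}=K_{t,k}\ast f^{[k]}$ with $K_{t,k}$ the periodisation of the inverse transform of $e^{-4\pi^2\xi^2t}\tilde\phi(2^{-k}\xi)$, where $\tilde\phi$ is a slightly enlarged annulus cutoff. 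The kernel's $L^1$ norm is controlled by integrating by parts twice in $\xi$; I would cite \cite[Lemma~2.4]{Bahouri2011}. Multiplying by $2^{(\alpha+\delta)k}$ and using $\sup_k (t2^{2k})^{\delta/2}e^{-ct2^{2k}}\lesssim 1$ (respectively $\min(1,t2^{2k})\le (t2^{2k})^{\delta/2}$ for $\delta\le2$) gives both bounds, with the low-frequency block $k=0$ handled trivially for $t\le T$. The $L^\infty$ versions \eqref{eq:heatLinfty} and \eqref{eq:heatLinfty2} follow from the same argument. For \eqref{eq:heatLinfty} I would use $\|f\|_{\cC^0}\lesssim\|f\|_{L^\infty}$ together with the case $\alpha=0$; for \eqref{eq:heatLinfty2} I would sum the block bounds $\sum_k\min(1,t2^{2k})2^{-\delta k}\|f\|_{\cC^\delta}\lesssim t^{\delta/2}\|f\|_{\cC^\delta}$, valid for $\delta\in(0,2)$. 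The endpoint cases $\delta\in\{0,2\}$ follow from $P_t$ being an $L^\infty$ contraction (positivity of the kernel) and from $P_tf-f=\int_0^t P_s\Delta f\,ds$.

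The discrete estimates \eqref{eq:Calpha}--\eqref{eq:Linfty2} are where I expect the main obstacle. The multiplier $e^{\lambda_j^Nt}$ is not a dilate of a fixed smooth symbol, and the discrete blocks use $\phi^k_{\rho_N}$ with $k\le K_N+1$, whose top block touches the frequency $N/2$. My first step is to note that on $J_N$ one has $\lambda_j^N\le -16 j^2$, since $\sin x\ge 2x/\pi$ on $[0,\pi/2]$, and also $\lambda_j^N\ge-4\pi^2j^2$. Hence $e^{\lambda_j^Nt}$ is comparable to a Gaussian multiplier. Next I would write $\lambda_j^N=-4\pi^2j^2 m(j/N)$ with $m(x)=\sin^2(\pi x)/(\pi x)^2$, which is smooth, bounded above and below on $[-1/2,1/2]$, and has derivatives of order $N^{-1}\lesssim 2^{-k}$ in $j$. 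The discrete block multiplier $e^{\lambda_j^Nt}\phi^k_{\rho_N}(j)$ therefore satisfies symbol bounds of the form $|\Delta_j^{\ell}(\cdot)|\lesssim 2^{-k\ell}e^{-ct2^{2k}}$ for finite differences with $\ell\le 2$. A discrete summation by parts then bounds the $\ell^1(\Pi_N)$ norm of the associated convolution kernel. This is exactly the discrete Bernstein-type input used in \cite{GS}, and the choice of $\rho_N$ there ensures that the top block is a genuine annulus in $J_N$. With the block bounds $\|P^N_tf^{[k],N}\|_{L^\infty(\Pi_N)}\lesssim e^{-ct2^{2k}}\|f^{[k],N}\|_{L^\infty(\Pi_N)}$ and the analogous $\min(1,t2^{2k})$ bound for $P^N_t-\Id$ in hand, the remainder of the argument is identical to the continuum case. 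The only care needed is that all constants are uniform in $N$, since everything depends on $m$ only through bounds uniform on $[-1/2,1/2]$.
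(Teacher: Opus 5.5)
Away from the endpoint cases, your argument is essentially the paper's. For the continuum bounds the paper cites \cite{GIP,MW} via periodic extension to $\R$, which is your periodised Bernstein-type block estimate. For $P^N_t$ it checks first- and second-order derivative bounds on the symbol $e^{t\lambda^N(z)}$ and reruns the kernel argument of \cite{GS}, as you propose. It proves \eqref{eq:heatLinfty2} by summing Littlewood--Paley blocks of $P_tf-f$, inserting \eqref{eq:heatalpha2} with $\delta\mp\eps$ into each block; this amounts to your direct sum of $\min(1,t2^{2k})2^{-\delta k}$.

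The step that fails is your claim that the cases $\delta\in\{0,2\}$ of \eqref{eq:heatLinfty2} and \eqref{eq:Linfty2} follow from $L^\infty$-contractivity and from $P_tf-f=\int_0^tP_s\Delta f\,ds$. Those arguments give $\|P_tf-f\|_{L^\infty}\le 2\|f\|_{L^\infty}$ and $\|P_tf-f\|_{L^\infty}\le t\|\Delta f\|_{L^\infty}$. The right-hand sides of the lemma, however, are $\|f\|_{\cC^0}$ and $\|f\|_{\cC^2}$ with $\cC^\delta=B^\delta_{\infty,\infty}$. These norms control neither $\|f\|_{L^\infty}$ nor $\|\Delta f\|_{L^\infty}$ (on $\Pi_N$, not uniformly in $N$), so the same objection applies to your discrete endpoint claims.

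The endpoint inequalities on $\T$ are in fact false, so no argument can supply them. Let $\chi$ be a smooth cutoff near $0$. Then $f=\chi\log|\cdot|$ lies in $\cC^0(\T)$ but is unbounded, while $P_tf$ is smooth, so $P_tf-f\notin L^\infty$. Likewise $f(x)=\chi(x)x^2\log|x|$ lies in $\cC^2(\T)$ (its derivative is in the Zygmund class), yet $|P_tf(0)-f(0)|\sim t\log(1/t)$ as $t\to0$. Your own block sum reflects this: $\sum_k\min(1,t2^{2k})2^{-\delta k}$ diverges for $\delta=0$ and is of order $t\log(1/t)$ for $\delta=2$.

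You should therefore claim \eqref{eq:heatLinfty2} and \eqref{eq:Linfty2} only for $\delta\in(0,2)$. This is exactly the range the paper's argument covers, since its shift to $\delta\mp\eps$ needs $0<\delta<2$. It is also all the later applications use (e.g.\ $\delta=2-2\eps$). The Besov-to-Besov bounds \eqref{eq:heatalpha2} and \eqref{eq:Calpha2} are unaffected and hold on all of $[0,2]$.
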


\begin{lemma}\label{lem:semigroup2}
    Let $(\tilde{P}_{t}^{N}), (P_t)$ be defined as in \cref{subsec:setting}.  For $\alpha\in\R$ and $\delta\in [0,2]$, there exists a constant $C=C(\delta,T)$, such that for all $t\in (0,T]$,
    \begin{align}
        \norm{P_{t}f}_{H^{\alpha+\delta}(\T)}&\leq C t^{-\delta/2}\norm{f}_{H^{\alpha}(\T)}
        \\\norm{\tilde{P}_{t}^{N}f}_{H^{\alpha+\delta}(\Pi_N)}&\leq C t^{-\delta/2}\norm{f}_{H^{\alpha}(\Pi_N)}\label{eq:tildeP1}\\
        \norm{P_{t}f-f}_{H^{\alpha}(\T)}&\leq C t^{\delta/2}\norm{f}_{H^{\alpha+\delta}(\T)}
        \\\norm{\tilde{P}_{t}^{N}f-f}_{H^{\alpha}(\Pi_N)}&\leq C t^{\delta/2}\norm{f}_{H^{\alpha+\delta}(\Pi_N)}.
    \end{align}
\end{lemma}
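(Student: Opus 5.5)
Everything here is diagonal in the Fourier basis, so the plan is to compute both Sobolev norms mode by mode. On $\T$ write $f=\sum_{j\in\Z}\hat f(j)e_j$; then $P_tf$ has coefficients $e^{-4\pi^2j^2t}\hat f(j)$. On $\Pi_N$ use the orthonormal basis $(e^N_j)_{j\in J_N}$, for which $\tilde P^N_t$ acts by the same multipliers $e^{-4\pi^2j^2t}$. The Sobolev norms $\norm{\cdot}_{H^\beta(\T)}$ and $\norm{\cdot}_{H^\beta(\Pi_N)}$ are by definition weighted $\ell^2$ norms of these coefficients with weight $(1+|j|)^{2\beta}$. Hence all four bounds reduce to scalar multiplier estimates that are uniform in $j\in\Z$; since $J_N\subset\Z$, the scalar estimates automatically hold uniformly in $N$.

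For the smoothing bounds, $\norm{P_tf}_{H^{\alpha+\delta}(\T)}^2=\sum_j(1+|j|)^{2\alpha}|\hat f(j)|^2\,(1+|j|)^{2\delta}e^{-8\pi^2j^2t}$. It therefore suffices to show
\begin{equ}
\sup_{j}(1+|j|)^{\delta}e^{-4\pi^2j^2t}\leq C(\delta,T)\,t^{-\delta/2},\qquad t\in(0,T].
\end{equ}
I would split $(1+|j|)^\delta\lesssim 1+|j|^\delta$. The first term is bounded by $1\leq T^{\delta/2}t^{-\delta/2}$. For the second, write $|j|^\delta e^{-4\pi^2j^2t}=t^{-\delta/2}(|j|\sqrt t)^\delta e^{-4\pi^2(|j|\sqrt t)^2}$ and use $\sup_{x\geq0}x^\delta e^{-4\pi^2x^2}<\infty$. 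The identical computation over $j\in J_N$ gives \eqref{eq:tildeP1}.

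For the continuity bounds the multiplier is $1-e^{-4\pi^2j^2t}$, and I need
\begin{equ}
\big|1-e^{-4\pi^2j^2t}\big|\leq C(\delta)\,t^{\delta/2}(1+|j|)^{\delta},\qquad \delta\in[0,2].
\end{equ}
This follows by interpolating two elementary facts: $|1-e^{-x}|\leq\min(1,x)$ and $\min(1,x)\leq x^{\delta/2}$ for $x\geq0$ and $\delta/2\in[0,1]$. Taking $x=4\pi^2j^2t$ gives $|1-e^{-4\pi^2j^2t}|\leq(2\pi)^{\delta}|j|^\delta t^{\delta/2}$. Squaring and summing against $(1+|j|)^{2\alpha}|\hat f(j)|^2$ yields the third and fourth bounds, the discrete one again verbatim with $j\in J_N$.

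No step is a genuine obstacle. The only points needing care are the restriction $\delta\leq 2$ in the interpolation $\min(1,x)\leq x^{\delta/2}$ (which requires $\delta/2\leq1$) and the dependence of the constant on $T$ in the smoothing bound, which enters only through the low-frequency term $1\leq T^{\delta/2}t^{-\delta/2}$.
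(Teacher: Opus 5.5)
Your proof is correct and is essentially the paper's argument: the paper also reduces everything to the scalar multiplier bounds for $e^{-4\pi^2 j^2 t}$ and $1-e^{-4\pi^2 j^2 t}$, mode by mode in the Fourier basis. The only difference is cosmetic. The paper transfers the bounds to $\Pi_N$ via the isometry $\Psi_N$ and the identity $\Psi_N\tilde P^N_t=P_t\Psi_N$, whereas you repeat the same computation over $j\in J_N$, which amounts to the same thing.
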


Next we collect a few properties of the discrete Laplacian $\Delta_N$, its eigenvalues, and its semigroup. The proofs of these are elementary and can be found in e.g. \cite{Gy, BDG-SPDE, GS}. 
\begin{lemma}\label{lem:eigenvalues}
\begin{enumerate} 
    \item[a)] For all $j\in J_N$ one has $\frac{4}{\pi^2}|\lambda_j|\leq|\lambda_j^N|\leq |\lambda_j|$.
    \item[b)] For all $j\in J_N\setminus\{0\}$ one has $\big|1-\frac{\lambda_j^N}{\lambda_j}\big|\leq\frac{1}{3}(\frac{j\pi}{N})^2$.
    \item[c)] For all $p\geq 1$, $f\in C(\Pi_N)$, one has $\langle |f|^{p-1} f,\Delta_N f\rangle_{L^2(\Pi_N)}\leq 0$.
    \item[d)] The operator $\Delta_N$ is the generator of the continuous time random walk on $\Pi_N$ that jumps to a neighbor with equal probability after an exponential time with parameter $2N^2$. As a consequence, $P^N_t:L^\infty(\Pi_N)\to L^\infty(\Pi_N)$ has operator norm $1$.
    \end{enumerate}    
\end{lemma}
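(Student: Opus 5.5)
The four claims are elementary facts about the eigenvalues $\lambda_j^N=-4N^2\sin^2(j\pi/N)$ and about the structure of $\Delta_N$ as a nearest-neighbour difference operator. The plan is to handle a) and b) by scalar inequalities for $\sin$, c) by a summation-by-parts and monotonicity argument, and d) by reading off the generator.

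\textbf{Parts a) and b).} For $j\in J_N$ we have $|j|\leq N/2$, so $x:=|j|\pi/N\in[0,\pi/2]$. Since $|\lambda_j^N|/|\lambda_j|=\sin^2 x/x^2$, part a) reduces to the inequality
\[
\frac{2}{\pi}\leq\frac{\sin x}{x}\leq 1 \quad\text{on } (0,\pi/2].
\]
This is Jordan's inequality, which follows from concavity of $\sin$ on $[0,\pi/2]$. Squaring it gives exactly the constants $4/\pi^2$ and $1$. For part b), note that
\[
1-\frac{\lambda_j^N}{\lambda_j}=1-\frac{\sin^2 x}{x^2}\geq 0 .
\]
Using $\sin x\geq x-x^3/6$, we get $\sin^2 x/x^2\geq (1-x^2/6)^2\geq 1-x^2/3$, which is the claimed bound $\tfrac13 (j\pi/N)^2$.

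\textbf{Part c).} Write $\Delta_N=\partial^-\partial^+$ and sum by parts on the periodic grid. Since $\langle g,\partial^- h\rangle=-\langle \partial^+ g,h\rangle$, we obtain
\[
\langle \phi(f),\Delta_N f\rangle_{L^2(\Pi_N)}=-N^{-1}\sum_n N^2\big(\phi(f(x_{n+1}))-\phi(f(x_n))\big)\big(f(x_{n+1})-f(x_n)\big),
\]
where $\phi(r)=|r|^{p-1}r$. Because $\phi$ is nondecreasing on $\R$, every summand $(\phi(a)-\phi(b))(a-b)$ is nonnegative, so the whole expression is $\leq 0$. The functions here are real-valued, as in the application to the energy estimates. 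I expect this part to be the only mildly delicate step: the index bookkeeping in the summation by parts has to respect periodicity, and one must check that the boundary terms cancel on $\Pi_N$.

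\textbf{Part d).} Expanding the definition of $\Delta_N$ gives
\[
\Delta_N v(x_n)=N^2\big(v(x_{n+1})-v(x_n)\big)+N^2\big(v(x_{n-1})-v(x_n)\big).
\]
This is the generator of the continuous-time random walk with total jump rate $2N^2$ that moves to each neighbour with probability $1/2$. Hence $P^N_t f(x)=\E_x f(X_t)$ is a Markov transition operator. Therefore $\|P^N_tf\|_{L^\infty(\Pi_N)}\leq \|f\|_{L^\infty(\Pi_N)}$, with equality for constant functions, so the operator norm is exactly $1$.
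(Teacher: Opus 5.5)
Your proposal is correct. The paper does not prove this lemma itself but defers to \cite{Gy,BDG-SPDE,GS}, and your arguments are precisely the standard elementary ones: Jordan's inequality and $\sin x\geq x-x^3/6$ for parts a) and b), periodic summation by parts plus monotonicity of $r\mapsto |r|^{p-1}r$ for part c), and reading off the $Q$-matrix of the walk for part d).

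Three details are worth stating explicitly. In b), note that $1-x^2/6\geq 1-\pi^2/24>0$ on $[0,\pi/2]$ before you square. Part c) must be read for real-valued $f$, since for complex $f$ and $p>1$ the pairing need not even be real. In d), stochasticity of $P^N_t$ can be seen directly from $e^{t\Delta_N}=e^{-2N^2t}\exp\big(tN^2(S+S^{-1})\big)$, where $S$ is the periodic shift; this exponential has nonnegative entries and row sums $e^{2N^2t}$.
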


Next, we are concerned with a commutator-type estimate of the discrete semigroup $(P_{t}^{N})$ and continuous heat semigroups $(P_t)$ with $\Psi_N$.
Recall that for $(\tilde{P}_t^{N})$ the commutator vanishes in the sense that $\Psi_N\tilde{P}_t^{N}f=P_{t}\Psi_N f$ by Lemma \ref{lem:Theta-Psi} (d). 
\begin{lemma}\label{lem:commutator}
   Let $\alpha,\beta,\gamma\in\R$ be such that $\gamma\in[0,2]$ and $\beta-\alpha-\gamma\leq 2$. Then there exists a constant $C=C(\alpha,\beta,\gamma)$ such that for all $f\in C(\Pi_N)$, $t> 0$ one has
    \begin{equ}
        \big\|(\Psi_N P_t^N-P_t \Psi_N) f\big\|_{H^\alpha(\T)}\leq C N^{-\gamma} t^{(\beta-\alpha-\gamma)/2}\|f\|_{H^\beta(\Pi_N)}.
    \end{equ}
\end{lemma}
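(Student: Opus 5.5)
The plan is to diagonalise everything in the Fourier basis and reduce the estimate to a scalar bound on the eigenvalue difference. For $f\in C(\Pi_N)$ write $f=\sum_{j\in J_N}\hat f_N(j)e_j^N$ with $\hat f_N(j)=\langle f,e_j^N\rangle_{L^2(\Pi_N)}$. Since $P^N_t e_j^N=e^{\lambda_j^N t}e_j^N$ and $\Psi_N e_j^N=e_j$, we get $\Psi_N P^N_t f=\sum_{j\in J_N}e^{\lambda_j^N t}\hat f_N(j)e_j$. On the other hand $P_t\Psi_N f=\sum_{j\in J_N}e^{\lambda_j t}\hat f_N(j)e_j$. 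Hence
\begin{equ}
\big\|(\Psi_N P_t^N-P_t\Psi_N)f\big\|_{H^\alpha(\T)}^2=\sum_{j\in J_N}(1+|j|)^{2\alpha}\big|e^{\lambda_j^N t}-e^{\lambda_j t}\big|^2|\hat f_N(j)|^2 .
\end{equ}
Comparing with $\|f\|_{H^\beta(\Pi_N)}^2=\sum_{j\in J_N}(1+|j|)^{2\beta}|\hat f_N(j)|^2$, it suffices to prove the uniform scalar bound
\begin{equ}
\big|e^{\lambda_j^N t}-e^{\lambda_j t}\big|\lesssim N^{-\gamma}t^{(\beta-\alpha-\gamma)/2}(1+|j|)^{\beta-\alpha}\qquad j\in J_N,\ t>0.
\end{equ}
The case $j=0$ is trivial since $\lambda_0=\lambda_0^N=0$.

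For the scalar bound with $j\neq0$, I will use the mean value theorem together with \cref{lem:eigenvalues}. Part a) gives $\lambda_j\le\lambda_j^N\le \frac{4}{\pi^2}\lambda_j<0$, so
\begin{equ}
|e^{\lambda_j^N t}-e^{\lambda_j t}|\le t|\lambda_j-\lambda_j^N|e^{-c j^2 t},\qquad c=16.
\end{equ}
By part b), $|\lambda_j-\lambda_j^N|\le \frac13|\lambda_j|(j\pi/N)^2\lesssim j^4N^{-2}$. There is also the trivial bound $|e^{\lambda_j^N t}-e^{\lambda_j t}|\le 2e^{-cj^2t}$.

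I then interpolate between these two bounds with exponent $\gamma/2\in[0,1]$:
\begin{equ}
|e^{\lambda_j^N t}-e^{\lambda_j t}|\lesssim (tj^4N^{-2})^{\gamma/2}e^{-cj^2t}=N^{-\gamma}|j|^{2\gamma}t^{\gamma/2}e^{-cj^2t}.
\end{equ}
Set $\kappa=(\beta-\alpha-\gamma)/2\le 1$. It remains to show $|j|^{2\gamma}t^{\gamma/2}e^{-cj^2t}\lesssim t^{\kappa}|j|^{\beta-\alpha}$, which is the same as $(j^2t)^{\gamma/2-\kappa}\cdot|j|^{\gamma}\cdot|j|^{-(\beta-\alpha)+2\kappa}\, e^{-cj^2t}\lesssim 1$ after rewriting. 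Expanding $2\kappa=\beta-\alpha-\gamma$, the powers of $|j|$ collapse and the inequality reduces to $(j^2t)^{\gamma-\kappa}e^{-cj^2t}\lesssim1$. This holds whenever $\gamma-\kappa\ge0$, because $x^ae^{-cx}$ is bounded on $[0,\infty)$ for $a\ge0$.

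The step I expect to be the obstacle is the complementary regime $\kappa>\gamma$, where the exponent $\gamma-\kappa$ is negative and the previous argument fails as $j^2t\to0$. There I will use the full-strength bound $\gamma'=2$ instead, since $|e^{\lambda_j^Nt}-e^{\lambda_jt}|\lesssim N^{-2}j^4t\,e^{-cj^2t}$ holds unconditionally. Then I use $|j|\le N$ (valid for $j\in J_N$) to trade the surplus factor $N^{-(2-\gamma)}$ for $|j|^{-(2-\gamma)}$. This gives $N^{-\gamma}|j|^{2+\gamma}te^{-cj^2t}$, and the required bound becomes $(j^2t)^{1-\kappa}e^{-cj^2t}\lesssim1$. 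That is valid because $\kappa\le1$ by the hypothesis $\beta-\alpha-\gamma\le2$. So in every case one picks whichever of the two interpolations has a nonnegative exponent. Summing over $j$ then yields the lemma with $C$ depending only on $\alpha,\beta,\gamma$.
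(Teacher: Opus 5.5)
Your second argument is exactly the paper's proof, and it works for every admissible choice of exponents. That argument uses the bound $N^{-2}j^4t\,e^{-cj^2t}$, trades $N^{-(2-\gamma)}\le |j|^{-(2-\gamma)}$ via $|j|\le N$, and concludes with $(j^2t)^{1-\kappa}e^{-cj^2t}\lesssim 1$. It only uses $\kappa=(\beta-\alpha-\gamma)/2\le 1$ and never the assumption $\kappa>\gamma$. So the lemma follows from it alone, and the case distinction is unnecessary.

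The case distinction also rests on an arithmetic slip in your first (interpolation) branch. With $\beta-\alpha=2\kappa+\gamma$, the ratio $|j|^{2\gamma}t^{\gamma/2}e^{-cj^2t}/(t^{\kappa}|j|^{\beta-\alpha})$ equals $(j^2t)^{\gamma/2-\kappa}e^{-cj^2t}$, not $(j^2t)^{\gamma-\kappa}e^{-cj^2t}$. Your own intermediate expression already shows the factor $(j^2t)^{\gamma/2-\kappa}$, with the $|j|$-powers cancelling exactly. Hence the interpolation bound suffices only for $\kappa\le\gamma/2$. For $\gamma/2<\kappa\le\gamma$ it blows up as $j^2t\to 0$, and that is precisely the range you assigned to this branch. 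Dropping the interpolation branch and using the second argument throughout gives a complete proof.
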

\begin{proof}

Let us use the shorthands 
$\hat f_N(j)=\langle f, e_{j}^{N}\rangle_{L^{2}(\Pi_N)}$.
    By definition of the Sobolev norms and the action of the semigroups
    \begin{equs}
        \big\|(\Psi_N P_t^N-P_t \Psi_N) f\big\|_{H^\alpha(\T)}^2\lesssim\sum_{j\in J_N} |j|^{2\alpha} \big|\big(e^{\lambda^N_j t}-e^{\lambda_j t}\big)\hat f_N(j)\big|^2.
    \end{equs}
    Note that here and below in the proof $j=0$ can be excluded from the summation.
    Using that for $x\geq 0$, $e^x-1\leq xe^x$, we can write for $\gamma\in [0,2]$ and $\abs{j}\leq N$
    \begin{equ}
        e^{\lambda^N_j t}-e^{\lambda_j t}
        =e^{\lambda_j t}(e^{(\lambda_j^N-\lambda_j)t}-1)
        \leq t(\lambda_j^N-\lambda_j)e^{\lambda_j^N t}
        \lesssim t j^4 N^{-2}e^{-c j^2 t}
        \leq 
        t|j|^{2+\gamma}N^{-\gamma}e^{-cj^2t},
    \end{equ}
    where we furthermore used that $\lambda_j^{N}\geq \lambda_j$ and Lemma \ref{lem:eigenvalues} and where the constant $c>0$ is chosen such that $\sin^2(x)\geq c x^2$ for $x\in [0,1]$.
    Combining this with the estimate $e^{-x}\leq C(\theta) x^{\theta}$ for any $\theta\leq 0$ uniformly in $x$, applied with $x=2cj^2 t$ and $\theta=\beta-\alpha-\gamma-2$, we get 
    \begin{equs}
         \big\|(\Psi_N P_t^N-P_t \Psi_N) f\big\|_{H^\alpha(\T)}^2&\lesssim t^2N^{-2\gamma}\sum_{j\in J_N}|j|^{2\alpha+4+2\gamma}e^{-2cj^2t}|\hat f_N(j)|^2
         \\
         &\lesssim t^{\beta-\alpha-\gamma}N^{-2\gamma}\sum_{j\in J_N}|j|^{2\beta}|\hat f_N(j)|^2,
    \end{equs}
    which proves the claim.
\end{proof}

\subsection{Well-posedness}\label{subsec:wp}

In the following we recall well-posedness of \eqref{eq:main} and regularity estimates of the solution. For notational simplicity we denote $\|\cdot\|_{\cC^s(\T)}$ by $\|\cdot\|_{\cC^s}$.
\begin{proposition}\label{prop:wp}
    Let \cref{ass1} or \cref{ass2} hold, where we let $m=0$ if $f$ satisfies \cref{ass1} and $m$ is given by \cref{ass2} otherwise. Let \cref{asn:u0} hold. Then there exists a unique mild solution $u$ to \eqref{eq:main}. Moreover, for any $\lambda\in (0,1),\, \epsilon\in (0,1/2)$, $p\geq 1$, there exists a constant $C=C(T,p,\lambda,\epsilon, m, K)$ such that
    \begin{align}
        \E\norm{u}^p_{C_T^{\lambda/2}\calC^{1/2-\lambda-\epsilon}(\T)}\leq C (1+ \E\norm{u_0}_{\calC^{1/2-\epsilon}(\T)}^{(m+1)p}).
    \end{align}
    Furthermore, with the notation $v=u-O$, for any $\lambda\in (0,2],\,\epsilon\in (0,1/2)$, $p\geq 1$, there exists a constant $C=C(T,p,\lambda,\epsilon, m, K)$ such that
    \begin{align}\label{eq:apriori-v}
        \E\norm{v}^p_{C_T^{\lambda/2}\calC^{5/2-\lambda-\epsilon}(\T)}\leq C (1+ \E\norm{u_0}_{\calC^{5/2-\epsilon}(\T)}^{(m+1)^2 p}).
    \end{align}

\end{proposition}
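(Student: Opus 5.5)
We prove \cref{prop:wp} in three stages: existence and uniqueness of the mild solution, the a priori bound on $u$, and the improved regularity of $v=u-O$. Throughout we work pathwise with the decomposition $u=v+O$, treating $O$ as a given random input whose moments are controlled by \cref{lem:OUreg}.

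\textbf{Existence and uniqueness.} We solve the random PDE \eqref{eq:v} for $v$ pathwise. Under \cref{ass1}, $f$ is globally Lipschitz and bounded, so a Picard iteration in $C_T C(\T)$ for the map
\[
v\mapsto P_\cdot u_0+\int_0^\cdot P_{\cdot-s}f(v_s+O_s)\,ds
\]
gives a unique fixed point. Under \cref{ass2}, the Picard iteration only gives a local solution. To make it global we need an $L^\infty$ a priori bound, which we obtain from the one-sided Lipschitz bound \eqref{eq:onesidedLip} via a comparison argument. Write
\[
f(v+O)=\big[f(v+O)-f(O)\big]+f(O).
\]
The first bracket has the sign of $v$ up to $K|v|$, so at a maximum point of $|v|$ (or via an $L^p$ energy estimate with $p\to\infty$) one gets
\[
\|v_t\|_{L^\infty}\le e^{Kt}\Big(\|u_0\|_{L^\infty}+\int_0^t\|f(O_s)\|_{L^\infty}\,ds\Big)\lesssim 1+\|u_0\|_{L^\infty}+\sup_s\|O_s\|_{L^\infty}^{2m+1}.
\]
Uniqueness follows from the same one-sided estimate applied to the difference of two solutions. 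Predictability of $u$ is inherited from the fixed-point construction.

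\textbf{Bound on $u$.} Since $f(u)$ is bounded in $L^\infty$ by a polynomial of degree $2m+1$ in $\|v\|_{L^\infty}+\|O\|_{L^\infty}$, the drift term $\int_0^t P_{t-s}f(u_s)\,ds$ lies in $C_T^{\lambda/2}\cC^{2-\lambda-\eps}$ by \eqref{eq:heatLinfty} and \eqref{eq:heatLinfty2}. The term $P_tu_0$ is handled by \cref{asn:u0} together with \eqref{eq:heatalpha2}. Combined with \cref{lem:OUreg} for $O$, this gives the first claimed bound. The moment exponent $(m+1)p$ is a matter of bookkeeping: it comes from tracking the polynomial dependence on $u_0$ and using Hölder's inequality, since $O$ has moments of all orders.

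\textbf{Regularity of $v$.} This is the main step. A naive bootstrap gives only $\cC^{5/2-}$ in space if $f(u_s)\in\cC^{1/2-}$ uniformly in $s$. Indeed, by \eqref{eq:heatkernelH} with $\delta=2-\lambda'$,
\[
\Big\|\int_0^t P_{t-s}f(u_s)\,ds\Big\|_{\cC^{5/2-\lambda-\eps}}\lesssim\int_0^t (t-s)^{-1+\lambda/2+\eps/2}\,\|f(u_s)\|_{\cC^{1/2-\eps/2}}\,ds,
\]
which is integrable for $\lambda>0$. Here we need $\|f(u_s)\|_{\cC^{1/2-\eps/2}}\lesssim\|f'\|_{L^\infty(\text{range})}\|u_s\|_{\cC^{1/2-\eps/2}}$, which is fine since $f\in C^1$ and the range of $u$ is controlled by the previous step. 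For the time regularity we write
\[
v_t-v_s=(P_{t-s}-\Id)v_s+\int_s^tP_{t-r}f(u_r)\,dr.
\]
The first term is bounded via \eqref{eq:heatalpha2}, giving $(t-s)^{\lambda/2}\|v_s\|_{\cC^{5/2-\eps}}$. We therefore need $v_s\in\cC^{5/2-\eps'}$ uniformly in $s$; for the endpoint $\lambda\to0$ this requires $f(u_s)\in\cC^{1/2-}$ together with a small loss absorbed by $\eps$, and since the statement only asks for $\lambda>0$ the argument closes. The second term is bounded by $(t-s)^{\lambda/2}$ using the same kernel integrability. Finally, the moment exponent $(m+1)^2p$ arises from composing the polynomial bound on $\|u\|$ in terms of $u_0$ with the factor $(1+|u|^{2m})$ appearing in $\|f(u)\|_{\cC^{1/2-}}$, followed by Hölder's inequality.

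I expect the main obstacle to be the global $L^\infty$ a priori bound in the superlinear case, where a rigorous maximum-principle argument for mild solutions requires smoothing (for example via Galerkin truncation or mollified noise) followed by a passage to the limit.
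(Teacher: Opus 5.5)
Your argument is correct, and its third stage is essentially the paper's own proof of \eqref{eq:apriori-v}. The paper also puts $f(u_r)$ into $\calC^{1/2-\epsilon}$ via \eqref{eq:comp-B2}. It then applies \eqref{eq:heatkernelH} and \eqref{eq:heatalpha2} to $v_t-v_s=(P_t-P_s)u_0+\int_s^tP_{t-r}f(u_r)\,dr+\int_0^sP_{s-r}(P_{t-s}-\Id)f(u_r)\,dr$, which is your splitting $(P_{t-s}-\Id)v_s+\int_s^tP_{t-r}f(u_r)\,dr$ written out.

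The difference lies in your first two stages, which the paper does not prove: well-posedness is quoted from \cite{Cerrai}, and the $C_T^{\lambda/2}\calC^{1/2-\lambda-\epsilon}$ bound on $u$ from \cite[Proposition 3.3]{DjGK}. You instead give a self-contained pathwise argument. You take a local fixed point, then use the one-sided-Lipschitz comparison bound $\|v_t\|_{L^\infty}\le e^{Kt}(\|u_0\|_{L^\infty}+\int_0^t\|f(O_s)\|_{L^\infty}\,ds)$. That single estimate both globalises the solution and controls $\|f(u)\|_{L^\infty}$, after which the $u$-bound is heat-kernel smoothing of an $L^\infty$ drift plus \cref{lem:OUreg}. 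This is the standard route and it works. It buys independence from the literature, at the price of justifying the maximum principle. The $L^q$-energy version with $q\to\infty$ that you mention is the cleanest way to do this; it is the continuum analogue of the computation the paper does for the discrete equation in \cref{lem:apriori-WMN}.

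One claim in your write-up is not right: the moment exponents do not come out of ``bookkeeping''. Tracked honestly, your bounds give $\sup_s\|f(u_s)\|_{L^\infty}\lesssim 1+\|u\|_{C_TL^\infty}^{2m+1}$, with $\|u\|_{C_TL^\infty}$ linear in $\|u_0\|_{L^\infty}$. Hence what you actually prove is the $u$-bound with $\E\|u_0\|^{(2m+1)p}$ rather than $\E\|u_0\|^{(m+1)p}$, and the exponent in the $v$-bound must be recomputed accordingly (note that $f$ and $f'$ grow like $|x|^{2m+1}$ and $|x|^{2m}$).

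Reaching $(m+1)p$ needs an additional idea. For instance, interpolate between two bounds on $v-P_\cdot u_0$:
\begin{itemize}
\item its $C_TL^\infty$ bound, which is linear in $\|u_0\|_{L^\infty}$ by your comparison estimate;
\item its $C_T\calC^{2-\epsilon}$ and $C_T^{1-\epsilon}L^\infty$ bounds, which are of order $1+\|u\|_{C_TL^\infty}^{2m+1}$.
\end{itemize}
This has no effect downstream, since \cref{asn:u0} gives moments of all orders. The paper's own composition step is equally loose: it writes $1+\|u\|_{L^\infty}^m$ although under \cref{ass2} the relevant growth exponent is $2m+1$. Still, you should either prove the stated exponents or state the weaker ones your argument yields.
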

\begin{proof}
    The well-posedness of \eqref{eq:main} is classical (see e.g. \cite[Proposition 6.2.2]{Cerrai}) and 
    regularity bounds for $u$ in the stated form follow from e.g. \cite[Proposition 3.3]{DjGK}. 
    Given the regularity bounds for $u$, we infer the bound for $v$. 
    First we note that by \eqref{eq:comp-B2} below, with $\theta=1/2-\epsilon>0$, $\kappa=1$ and $u\in\calC^{1/2-\epsilon}(\T)$,
    \begin{align}
        \norm{f(u)}_{\calC^{1/2-\epsilon}(\T)}\leqslant C K (1+\norm{u}_{L^{\infty}(\T)}^m)(1+\norm{u}_{\calC^{1/2-\epsilon}(\T)})\lesssim (1+\norm{u}_{\calC^{1/2-\epsilon}(\T)}^{m+1}).
    \end{align}
    Using the latter bound and the semigroup estimates \eqref{eq:heatkernelH}, \eqref{eq:heatalpha2} from \cref{lem:semigroup1}, we see that with $u_r=v_r+O_r$ and $\lambda\in (0,2]$,
    \begin{align*}
        \norm{v_t-v_s}_{\calC^{5/2-\lambda-\epsilon}(\T)}&\leq \norm{(P_{t} -P_s)u_0}_{\calC^{5/2-\lambda-\epsilon}(\T)} + \int_{s}^{t} \norm{P_{t-r} f(v_r+ O_r)}_{\calC^{5/2-\lambda-\epsilon}(\T)}\, dr \\&\qquad+ \int_{0}^{s}\norm{P_{s-r}(P_{t-s}-\operatorname{Id})f(v_r+ O_r)}_{\calC^{5/2-\lambda-\epsilon}(\T)} \, dr
        \\&\lesssim (t-s)^{\lambda/2}\norm{u_0}_{\calC^{5/2-\epsilon}(\T)} + (1+\norm{u}_{C_T\calC^{1/2-\epsilon}(\T)}^{m+1})\int_{s}^{t} (t-r)^{-1+\lambda/2}\, dr \\&\qquad + (t-s)^{\lambda/2} (1+\norm{u}_{C_T\calC^{1/2-\epsilon}(\T)}^{m+1})\int_{0}^{s} (s-r)^{-1+\epsilon/4}\, dr
        \\&\lesssim (t-s)^{\lambda/2}(1+\norm{u_0}_{\calC^{5/2-\epsilon}(\T)} + \norm{u}_{C_T\calC^{1/2-\epsilon}(\T)}^{m+1})
    \end{align*}
    and the claim follows after taking the $p$-th moment and using the bound for the solution $u$.
    \end{proof}

\subsection{Auxiliary lemmata}\label{subsec:aux}
First, we recall the properties of the flow $(\Phi_t)_{t\geq 0}$ and the approximate nonlinearities $(g_t)_{t\geq0}$. Such properties are commonplace in the literature on splitting schemes, the exact form stated below can be found in \cite[Lemma~5.1]{DjGK}.
    \begin{lemma}\label{lem:g_h-bounds}
Let $f$ satisfy Assumption \ref{ass2}. Let $\big(\Phi_t(x)\big)_{t\geq 0,x\in\R}$ and $\big(g_t(x)\big)_{t\geq 0,x\in\R}$ be given as in \eqref{eq:Phi} and \eqref{eq:gh}.
Then there exist constants $C, \tilde K>0$ and $\tilde{m}\geq m$, depending only on $K$ and $m$, such that for all $i=0,1,2,3$,  $t\in[0,1]$, $x,y\in\R$ the following bounds hold
\begin{align}
    \abs{\Phi_{t}(x)-\Phi_{t}(y)}&\leq e^{Kt/2}\abs{x-y},\label{eq:PhiLip}
    \\
    \abs{\partial^{i}g_{t}(x)}&\leq \tilde K(1+\abs{x}^{2\tilde{m}+1-i}),
    \label{eq:ghgrowth}\\
    \partial g_{t}(x)&\leq K,
    \\
    \abs{g_{t}(x)-g_{0}(x)}&\leq C t(1+\abs{x}^{4m+2}).
\end{align}   
In particular, one has 
\begin{align}
    (g_{t}(x)-g_{t}(y))(x-y)&\leq K (x-y)^2,
    \label{eq:g-bound1}\\
    \abs{g_{t}(x)-g_{t}(y)}&\leq \tilde K(1+\abs{x}^{2\tilde m}+\abs{y}^{2\tilde m})\abs{x-y}.\label{eq:g-bound2}
\end{align}
\end{lemma}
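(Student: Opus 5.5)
Since $g_t(x)=t^{-1}\int_0^t f(\Phi_s(x))\,ds$ for $t>0$, every bound on $g_t$ reduces to a bound on the flow $\Phi_s$ and its $x$-derivatives for $s\in[0,t]$. The plan is therefore to first control $\Phi$ and its variational equations by Gronwall-type arguments using only \cref{ass2}, and then transfer these bounds to $g_t$. The statement is the one from \cite[Lemma~5.1]{DjGK}, so I would reproduce that argument; the main thing to get right is the exact constant in the one-sided bounds.

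\emph{Step 1: linear growth and Lipschitz continuity of the flow.} For the growth, compute $\partial_s \Phi_s(x)^2=2\Phi_s f(\Phi_s)$. Since $\partial f\le K$, we have $f(z)z\le Kz^2+|f(0)||z|$, and Gronwall gives $|\Phi_s(x)|\le C(1+|x|)$ uniformly for $s\in[0,1]$. For \eqref{eq:PhiLip}, differentiate in $x$: $\partial_s\partial_x\Phi_s=f'(\Phi_s)\partial_x\Phi_s$ with $\partial_x\Phi_0=1$, so
\begin{equation*}
\partial_x\Phi_s=\exp\Big(\int_0^s f'(\Phi_r)\,dr\Big)\in(0,e^{Ks}].
\end{equation*}
This yields a Lipschitz bound with an exponential constant. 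Equivalently one can differentiate $|\Phi_s(x)-\Phi_s(y)|^2$ and use \eqref{eq:onesidedLip}. I expect the main bookkeeping issue here: matching the precise factor $e^{Kt/2}$ stated in \eqref{eq:PhiLip} (and the constant $K$ in $\partial g_t\le K$) may require the normalisation used in \cite{DjGK}. For all later uses in the paper only some bound $e^{Ct}$ matters.

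\emph{Step 2: derivatives of $\Phi$ up to order three.} Differentiate the variational equation again. For example, $\partial_x^2\Phi$ solves a linear ODE with coefficient $f'(\Phi_s)$ and forcing $f''(\Phi_s)(\partial_x\Phi_s)^2$. By Duhamel's formula, the bound $\partial_x\Phi_s\le e^{K}$, the growth bounds on $\partial^i f$, and Step 1, we get $|\partial_x^i\Phi_s(x)|\lesssim 1+|x|^{q_i}$ for some powers $q_i$. Then $\partial_x^i g_t=t^{-1}\int_0^t\partial_x^i[f(\Phi_s)]\,ds$ is expanded by Fa\`a di Bruno, giving \eqref{eq:ghgrowth} for a suitable $\tilde m\ge m$. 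Here $\tilde m$ absorbs the polynomial losses, which is why one cannot keep $\tilde m=m$.

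\emph{Step 3: one-sided bound and the remaining estimates.} For the one-sided bound, write $\partial_x g_t=t^{-1}\int_0^t f'(\Phi_s)\partial_x\Phi_s\,ds$. Since $\partial_x\Phi_s>0$ and $f'\le K$, this is bounded by $K$ times an average of $\partial_x\Phi_s$, which is controlled by Step 1. For the consistency estimate, write
\begin{equation*}
g_t(x)-g_0(x)=t^{-1}\int_0^t\big(f(\Phi_s(x))-f(x)\big)\,ds .
\end{equation*}
Combine the local Lipschitz bound $|f(a)-f(b)|\le K(1+|a|^{2m}+|b|^{2m})|a-b|$ with $|\Phi_s(x)-x|\le\int_0^s|f(\Phi_r)|\,dr\lesssim s(1+|x|^{2m+1})$. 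This gives the order $t(1+|x|^{4m+1})\le Ct(1+|x|^{4m+2})$. Finally, \eqref{eq:g-bound1} and \eqref{eq:g-bound2} follow from the mean value theorem, using respectively the one-sided bound on $\partial g_t$ and the case $i=1$ of \eqref{eq:ghgrowth}.
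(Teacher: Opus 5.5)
Your argument is the standard one: bound the flow and its $x$-derivatives through the variational equations and Gronwall, then transfer to $g_t=t^{-1}\int_0^t f(\Phi_s)\,ds$. The paper gives no proof of its own and only cites \cite[Lemma~5.1]{DjGK}. Your Steps 1--3 are sound: the growth bounds \eqref{eq:ghgrowth} with an enlarged $\tilde m$, the consistency estimate (you even get the exponent $4m+1$), and \eqref{eq:g-bound2} via the mean value theorem. Implicitly, the a priori bound in Step 1 also gives global existence of the flow, and $f\in C^3$ makes $x\mapsto\Phi_s(x)$ a $C^3$ map, which justifies differentiating the ODE in $x$.

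Where you should be decisive rather than hedge is the constants. What Steps 1 and 3 actually prove is
\begin{equation*}
\abs{\Phi_t(x)-\Phi_t(y)}\le e^{Kt}\abs{x-y},\qquad \partial_x g_t(x)=\frac{\partial_x\Phi_t(x)-1}{t}\le \frac{e^{Kt}-1}{t}\le Ke^{K},
\end{equation*}
and these bounds are sharp. The constants $e^{Kt/2}$ in \eqref{eq:PhiLip} and $K$ in $\partial g_t\le K$ and \eqref{eq:g-bound1} cannot be reached by any normalisation. Indeed, $f(x)=Kx$ satisfies \cref{ass2} with $m=0$. Then $\Phi_t(x)=e^{Kt}x$ and $\partial_x g_t=(e^{Kt}-1)/t>K$ for $t\in(0,1]$, so all three inequalities fail as stated.

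So state and prove the lemma with $e^{Kt}$ in \eqref{eq:PhiLip}, and with $Ke^{K}$ (or simply $\tilde K$) in the one-sided bound on $\partial g_t$ and in \eqref{eq:g-bound1}. Then add the remark you began to make, extended to the one-sided constant: every later use only needs some constant depending on $K$. This covers the iteration in \cref{prop:apriori-XMN}, which now produces $e^{KT}$ instead of $e^{KT/2}$. It also covers the energy estimates in \cref{lem:apriori-WMN}, \cref{lem:XtildeX}, \cref{lem:vhatz} and \cref{prop:temporal}. Nothing downstream is affected.
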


The next lemma deals with estimates on the composition of a Hölder function with a regular function $g$, that satisfies certain growth bounds. The proof of \cref{lem:composition} is straightforward and can be found in the Appendix \ref{appendix}. 
\begin{lemma}\label{lem:composition}
Let $K>0$, $m\geq 0$ and $g:\R\to\R$ with $g\in C^{\kappa}$ for some $\kappa\in \{1,2\}$.
Assume that $g$ admits polynomial growth in the sense that there exist $K> 0$, $m\geq 0$, such that for all $i=0,\dots,\kappa$, $x\in\R$,
\begin{align*}
    \abs{(\partial^{i}g) (x)}\leq K (1+\abs{x}^{m}).
\end{align*}
Then there exist constants $C_1=C_1(m,\kappa), C_2=C_2(m,\kappa), C_3=C_3(m,\kappa)$ such that the composition of $g$ with Besov, respectively Sobolev, functions respects the following bounds. If $\kappa=2$, $\theta\in (1,2)$ and $v\in \calC^{\theta}(\T)$, we have 
\begin{align}\label{eq:comp-B}
\norm{g(v)}_{\calC^{\theta}(\T)}&\leq C_1 K (1+\norm{v}_{L^{\infty}(\T)}^{m})(1+\|v\|_{C^1_b(\T)}\|v\|_{\cC^{\theta-1}(\T)}+\norm{v}_{\calC^{\theta}(\T)})\\
    &\leqslant 2 C_1 K (1+\norm{v}_{L^{\infty}(\T)}^{m})(1+\|v\|_{\cC^\theta(\T)}^2).\nonumber
\end{align}
For $\kappa=1$, $\theta \in (0,1)$ and $v\in \calC^{\theta}(\T)$, we have 
\begin{align}\label{eq:comp-B2}
    \|g(v)\|_{\cC^\theta (\T)}\leqslant C_2K(1+\|v\|_{L^\infty (\T)}^{m})(1+\|v\|_{\cC^\theta (\T)}).
\end{align}
Moreover, if $\theta\in (0,1)$ and $v\in H^{\theta}(\T)\cap L^{\infty}(\T)$, we have
\begin{align}\label{eq:comp-H}
    \norm{g(v)}_{H^{\theta}(\T)}\leq C_3 K (1+\norm{v}_{L^{\infty}(\T)}^{m})(1+\norm{v}_{H^{\theta}(\T)}).
\end{align}
In particular, if all $\partial_i g$ are bounded for $i=0,\dots,\kappa$ (i.e. $m=0$), we find for $\theta\in (1,2)$ and $\kappa=2$ and $v\in \calC^{\theta}(\T)$, 
\begin{align}
    \norm{g(v)}_{\calC^{\theta}(\T)}\leq C_1 K (1+\|v\|_{C^1_b(\T)}\|v\|_{\cC^{\theta-1}(\T)}+\norm{v}_{\calC^{\theta}(\T)}),
\end{align}
as well as, for $\theta \in (0,1)$ and $\kappa=1$ and $v\in H^{\theta}(\T)$, 
\begin{align}
    \norm{g(v)}_{H^{\theta}(\T)}\leq C_3 K (1+\norm{v}_{H^{\theta}(\T)})
\end{align}
and for $\theta \in (0,1)$ and $\kappa=1$ and $v\in \calC^{\theta}(\T)$,
\begin{align}
    \norm{g(v)}_{\calC^{\theta}(\T)}\leq C_2 K (1+\norm{v}_{\calC^{\theta}(\T)}).
\end{align}
Moreover, if we let $v\in \calC^{\theta}(\Pi_N)$, respectively $v\in H^{\theta}(\Pi_N)$, the same composition estimates as above hold true when replacing  $\calC^{\theta}(\T)$ and $H^\theta (\T)$ by $\calC^{\theta}(\Pi_N)$ and $H^{\theta}(\Pi_N)$.
\end{lemma}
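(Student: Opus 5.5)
The plan is to work with the Littlewood--Paley / Hölder characterisations and the chain rule, treating the three estimates separately. The norm equivalences of \cref{lem:spaces-equivalence} b), c) are available both on $\T$ and on $\Pi_N$, so each argument transfers verbatim to the discrete setting by replacing integrals with normalised sums.

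\textbf{The estimate \eqref{eq:comp-B2} ($\kappa=1$, $\theta\in(0,1)$).} By \cref{lem:spaces-equivalence} b) it suffices to bound the sup norm and the $\theta$-Hölder seminorm of $g(v)$. The sup norm is immediate: $|g(v(x))|\le K(1+\|v\|_{L^\infty}^m)$. For the increments, the mean value theorem gives
\[
|g(v(x))-g(v(y))|\le \sup_{|z|\le\|v\|_{L^\infty}}|\partial g(z)|\,|v(x)-v(y)|\le K(1+\|v\|_{L^\infty}^m)\,|v(x)-v(y)|,
\]
which yields the claim.

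\textbf{The estimate \eqref{eq:comp-H} ($\theta\in(0,1)$).} I use the Slobodeckij norm from \cref{lem:spaces-equivalence} c). The $L^2$ part is bounded by $K(1+\|v\|_{L^\infty}^m)$, since $\T$ has measure $1$. For the double integral, the same pointwise mean value bound controls $|g(v(x+y))-g(v(x))|^2$ by $K^2(1+\|v\|_{L^\infty}^m)^2|v(x+y)-v(x)|^2$. Integrating against $|y|^{-1-2\theta}$ gives $K(1+\|v\|_{L^\infty}^m)\|v\|_{H^\theta}$.

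\textbf{The estimate \eqref{eq:comp-B} ($\kappa=2$, $\theta\in(1,2)$).} Here I use that for $\theta\in(1,2)$ the $\cC^\theta$ norm is equivalent to $\|w\|_{L^\infty}+\|\partial w\|_{\cC^{\theta-1}}$. On $\Pi_N$ the analogue uses the discrete derivative $\partial^+$, with equivalence uniform in $N$, which follows from the Littlewood--Paley definition and the multiplier $N(e^{2\pi i j/N}-1)\sim j$ on $J_N$. On $\T$ the chain rule gives $\partial g(v)=g'(v)\,\partial v$. I then apply the $\cC^{\theta-1}$ product estimate (Hölder, exponent in $(0,1)$):
\[
\|g'(v)\partial v\|_{\cC^{\theta-1}}\le \|g'(v)\|_{L^\infty}\|\partial v\|_{\cC^{\theta-1}}+\|g'(v)\|_{\cC^{\theta-1}}\|\partial v\|_{L^\infty}.
\]
Applying \eqref{eq:comp-B2} to $g'$, whose derivative has growth $K(1+|x|^m)$, gives
\[
\|g'(v)\|_{\cC^{\theta-1}}\lesssim K(1+\|v\|_{L^\infty}^m)(1+\|v\|_{\cC^{\theta-1}}).
\]
Together with $\|\partial v\|_{L^\infty}\le\|v\|_{C^1_b}$, this produces exactly the term $\|v\|_{C^1_b}\|v\|_{\cC^{\theta-1}}$ plus $\|v\|_{\cC^\theta}$. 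The second inequality in \eqref{eq:comp-B} follows from $\|v\|_{C^1_b},\|v\|_{\cC^{\theta-1}}\lesssim\|v\|_{\cC^\theta}$.

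\textbf{Main obstacle.} I expect the discrete case of \eqref{eq:comp-B} to be the hardest step, since there is no exact chain rule on $\Pi_N$. I would write
\[
\partial^+ g(v)(x)=\int_0^1 g'\big(v(x)+r(v(x+\tfrac1N)-v(x))\big)\,dr\;\partial^+v(x)
\]
and bound the Hölder seminorm of the averaged coefficient. By the mean value theorem for $g'$, its increments are controlled by $K(1+\|v\|_{L^\infty}^m)$ times increments of $v$, and the same product estimate then applies. The bounded special cases ($m=0$) follow directly from the general ones.
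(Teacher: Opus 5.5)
Your proof is correct and follows essentially the paper's route. For $\theta\in(1,2)$ the paper also reduces $\|g(v)\|_{\cC^\theta}$ to $\|g(v)\|_{L^\infty}$, $\|g'(v)v'\|_{L^\infty}$ and the $(\theta-1)$-H\"older seminorm of $g'(v)v'$, splits the latter exactly as your product estimate does and uses the mean value theorem with $g''$; \eqref{eq:comp-B2} and \eqref{eq:comp-H} are obtained from the H\"older and Slobodeckij characterisations and the mean value theorem, just as you do.

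The paper does not write out the discrete case, so your averaged discrete chain rule is an addition. The uniform-in-$N$ equivalence $\|w\|_{\cC^\theta(\Pi_N)}\sim\|w\|_{L^\infty(\Pi_N)}+\|\partial^+w\|_{\cC^{\theta-1}(\Pi_N)}$ is true, but it rests on symbol estimates for $\phi^k_{\rho_N}(j)/\bigl(N(e^{2\pi ij/N}-1)\bigr)$ that are uniform in $N$, not merely on $|N(e^{2\pi ij/N}-1)|\sim|j|$. On the top block this uses that the choice of $\rho_N$ makes the cutoff smooth across $\pm N/2$.
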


\begin{lemma}\label{lem:composition-applies}
    Let $h\in C^{\kappa}_{\omega}(\R)$ for a polynomial weight $\omega(x)=(1+\abs{x}^2)^{-\beta/2}$ for some $\beta\geq 0$ and $\kappa\in\{1,2\}$. Let again $(P^{\R}_t)$ be the heat semigroup acting on functions on $\R$. Then it follows that for any $t\in [0,T]$, $P_{t}^{\R}h\in C^{\kappa}_{\omega}$ satisfies the assumptions of \cref{lem:composition} for $m=\beta$ and a constant $K=K(\beta,T)$, that is uniform in $t\in[0,T]$.
\end{lemma}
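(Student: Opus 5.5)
We need to show that $P^\R_t h$ and its derivatives up to order $\kappa$ satisfy $|\partial^i (P^\R_t h)(x)|\le K(1+|x|^\beta)$ for $i\le\kappa$, with $K$ uniform in $t\in[0,T]$. The statement also asserts $P^\R_t h\in C^\kappa_\omega$. The plan is to exploit two facts: the heat semigroup commutes with derivatives, and it preserves polynomially weighted sup norms by the second bound in \eqref{eq:R-semigroup-weighted} of \cref{lem:semigroup-w}. (That lemma is stated for $\beta\ge1$; for $\beta\in[0,1)$ the elementary argument below works verbatim.)

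\textbf{Step 1: unweighted pointwise bounds on derivatives of $h$.} From $h\in C^\kappa_\omega$ we get $\omega h\in C^\kappa_b$. We write $h=\omega^{-1}(\omega h)$, where $\omega^{-1}(x)=(1+|x|^2)^{\beta/2}$. Its derivatives satisfy $|\partial^j\omega^{-1}(x)|\lesssim(1+|x|)^{\beta-j}\le(1+|x|)^\beta$. The Leibniz rule then gives
\[
|\partial^i h(x)|\lesssim(1+|x|^\beta)\|h\|_{C^\kappa_\omega}, \qquad i\le\kappa .
\]
In particular $\partial^i h\in C^0_\omega$ for each $i\le\kappa$.

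\textbf{Step 2: commuting derivatives with the semigroup.} Since $h$ has polynomial growth, $P^\R_t h=p^\R_t*h$ is well defined. Differentiating under the integral is justified by the Gaussian decay of $p^\R_t$, and yields $\partial^i P^\R_t h=P^\R_t\partial^i h$; for $t=0$ this is trivial. Next we bound
\[
|P^\R_t g(x)|\le\int p^\R_t(y)\,|g(x-y)|\,dy
\]
for $|g(z)|\le A(1+|z|^\beta)$. Using $(1+|x-y|^\beta)\lesssim(1+|x|^\beta)(1+|y|^\beta)$ together with $\int p^\R_t(y)(1+|y|^\beta)\,dy\le C(\beta,T)$ for $t\le T$, we obtain
\[
|P^\R_t g(x)|\le C(\beta,T)\,A\,(1+|x|^\beta).
\]
Equivalently, this is $\|P^\R_t g\|_{C^0_\omega}\lesssim\|g\|_{C^0_\omega}$, which is exactly the content of \cref{lem:semigroup-w}.

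\textbf{Step 3: conclusion.} Applying Step 2 with $g=\partial^i h$ for $i=0,\dots,\kappa$ gives
\[
|\partial^i P^\R_t h(x)|\le K(1+|x|^\beta),\qquad K=C(\beta,T)\|h\|_{C^\kappa_\omega},
\]
uniformly in $t\in[0,T]$. These are precisely the hypotheses of \cref{lem:composition} with $m=\beta$; $P^\R_t h$ is $C^\kappa$ by smoothness of the heat kernel for $t>0$ and because $h\in C^\kappa$ at $t=0$.

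For membership $P^\R_t h\in C^\kappa_\omega$, we reverse Step 1. By Leibniz, $\partial^i(\omega P^\R_t h)$ is a sum of terms $\partial^j\omega\cdot\partial^{i-j}P^\R_t h$. Each satisfies $|\partial^j\omega|\lesssim(1+|x|)^{-\beta}$, so each term is bounded and $\omega P^\R_t h\in C^\kappa_b$.

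The only mildly delicate point is the dependence of $K$ on $h$: the statement writes $K=K(\beta,T)$, so I would track the factor $\|h\|_{C^\kappa_\omega}$ explicitly as above. No step is a genuine obstacle; the main care goes into the Leibniz bookkeeping with the weight in Step 1.
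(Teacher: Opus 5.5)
Your proof is correct and follows essentially the paper's route: the pointwise bound $|P^\R_t g(x)|\le\int p^\R_t(y)|g(x-y)|\,dy$, combined with $(1+|x-y|^\beta)\lesssim(1+|x|^\beta)(1+|y|^\beta)$ and the Gaussian moment bound $\int p^\R_t(y)(1+|y|^\beta)\,dy\le C(\beta,T)$, applied to $h$ and its derivatives. Your Leibniz step (turning $\omega h\in C^\kappa_b$ into $|\partial^i h(x)|\lesssim(1+|x|^\beta)\|h\|_{C^\kappa_\omega}$), the commutation $\partial^i P^\R_t h=P^\R_t\partial^i h$, and the explicit factor $\|h\|_{C^\kappa_\omega}$ in $K$ make precise what the paper leaves implicit in ``the estimate is similar using the specific bound for $\partial^i h$''. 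One small imprecision, also present in the paper's statement, is your claim that ``$h\in C^\kappa$ at $t=0$'': the paper's $C^\kappa_b$ only requires essentially bounded distributional derivatives, so this does not follow from the hypothesis, though it plays no role in how the lemma is used.
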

\begin{proof}
Let $\omega_{-\beta}(x):=(1+x^2)^{\beta/2}$, $x\in\R$. An easy estimate shows that for $x,y\in\R$, $\frac{\omega_{-\beta}(x-y)}{\omega_{-\beta}(x)}\leq 2^{\beta/2}\omega_{-\beta}(y)$.
Since $h\in C^{0}_\omega$, we thus have that for almost all $x\in\R$, 
\begin{align*}
   \squeeze[1]{\abs{P_{t}^\R h (x)} =\abs{p_t\ast h (x)} \leq \!\!\int \!\! p_{t}(y) \abs{h(x-y)} \, dy \leq \!\!\int\!\! p_t (y) \omega_{-\beta}(x-y) dy \leq 2^{\beta/2} \paren[\bigg]{\int\!\! p_{t}(y) \omega_{-\beta}(y)\, dy}\omega_{-\beta}(x)}
\end{align*}
and the claim follows since for $t\in (0,T]$, $$\norm{p_t\omega_{-\beta}}_{L^1}=\E(1+B_t^2)^{\beta/2}\leq c(\beta) (1+t^{\beta/2})\leq c(\beta)(1+T^{\beta/2})$$ for a constant $c(\beta)>0$, where $B$ denotes a standard Brownian motion. For $\partial^i P_{t}^\R h $ for $i=1,\dots,\kappa$ the estimate is similar using the specific bound for $\partial^{i} h$.
\end{proof}

\section{Bounded nonlinearity}\label{bigsec:bounded}
Throughout the section \cref{ass:tildeO}, \cref{asn:u0}, and \cref{ass1} are imposed.

We decompose the error into a spatial and a temporal error, that we bound in the respective subsections. The proof of the main \cref{thm:main1} immediately follows from the bound for the spatial error, \cref{lem:spatial} in \cref{sec:spatialerror}, and the bound for the temporal error, \cref{lem:temporal} in \cref{sec:temporalerror}.

In the following we introduce some processes used throughout the section. Recall the spatial discretisation $v^N$ defined in \eqref{eq:tildevN}. Let $z^N$ be the extension of $v^N$ from $\Pi_N$ to $\mathbb{T}$, that is $z^N=\Psi_N v^N$, which satisfies
\begin{align*}
     z^{N}_t&= \Psi_N \tilde{P}^N_t\Theta_N u_0 + \int_{0}^{t} \Psi_N\tilde{P}^N_{t-s} \Theta_N \big(f(z^N_s+O_s)\big)\, ds
     \\&= P_t\Psi_N\Theta_N u_0 + \int_{0}^{t} P_{t-s} \Psi_N\Theta_N \big(f(z^N_s+O_s)\big)\, ds,
\end{align*}
using Lemma~\ref{lem:Theta-Psi} a), d) and e). 
\begin{lemma} \label{lem:wpvNzN}
The equation \eqref{eq:tildevN} is well posed and there exists $C=C(T,K)$ such that almost surely,
\begin{align}\label{eq:L2tildev}
    \sup_{t \in [0,T]} \|v^N_t\|_{L^2(\Pi_N)}=\sup_{t \in [0,T]} \|z^N_t\|_{L^2(\T)}\leqslant C(1+\|u_0\|_{L^\infty(\T)}).
\end{align}
\end{lemma}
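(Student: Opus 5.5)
Since $C(\Pi_N)$ is finite dimensional, \eqref{eq:tildevN} is, pathwise, an ODE in $\R^N$ (or $\C^N$):
\begin{equation*}
\dot v^N=\tilde\Delta_N v^N+F_t(v^N), \qquad F_t(w)(x)=f\big(w(x)+O_t(x)\big).
\end{equation*}
Here $\tilde\Delta_N$ is a fixed linear map. By \cref{ass1}, $F_t$ is globally Lipschitz in $w$ (constant $K$ in the $L^2(\Pi_N)$ norm, since $f'$ is bounded by $K$) and continuous in $t$, because $O$ has continuous paths by \cref{lem:OUreg}. Hence the mild formulation has a unique global solution for every realisation, obtained by a standard Picard iteration or Banach fixed point in $C([0,T],L^2(\Pi_N))$. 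The semigroup $\tilde P^N$ is a contraction on $L^2(\Pi_N)$ by \cref{lem:semigroup1}, so no blow-up issues arise. Measurability and adaptedness follow because the solution is the limit of Picard iterates, each of which is adapted.

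For the a priori bound, I would work directly with the mild form. By \cref{lem:semigroup1} and $|f|\le K$,
\begin{equation*}
\|v^N_t\|_{L^2(\Pi_N)}\le \|\tilde P^N_t\Theta_N u_0\|_{L^2(\Pi_N)}+\int_0^t\|\tilde P^N_{t-s}f(v^N_s+\Theta_NO_s)\|_{L^2(\Pi_N)}\,ds\le \|\Theta_N u_0\|_{L^2(\Pi_N)}+Kt.
\end{equation*}
The first term is bounded by $\|\Theta_N u_0\|_{L^\infty(\Pi_N)}\le\|u_0\|_{L^\infty(\T)}$ (\cref{lem:Theta-Psi} f)). For the integrand, $\|g\|_{L^2(\Pi_N)}\le\|g\|_{L^\infty(\Pi_N)}\le K$ pointwise. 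This gives the bound with $C=\max(1,KT)$, and no Gronwall argument is needed since $f$ is bounded.

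Finally, the equality of the two suprema follows from $z^N=\Psi_N v^N$ and \cref{lem:Theta-Psi} c) with $\beta=0$: $\Psi_N$ is an isometry from $L^2(\Pi_N)$ onto $L^2_N(\T)$, because $(e^N_j)_{j\in J_N}$ and $(e_j)_{j\in J_N}$ are orthonormal in the respective spaces. There is no real obstacle here. The only point to handle carefully is that the well-posedness is pathwise and that measurability is preserved, which the Picard construction guarantees.
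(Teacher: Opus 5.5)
Your proposal is correct and follows essentially the same route as the paper. Well-posedness comes from viewing \eqref{eq:tildevN} as a finite-dimensional ODE with globally Lipschitz coefficients. The bound then follows from the mild form, using the $L^2(\Pi_N)$-contraction of $\tilde P^N$ from \cref{lem:semigroup1}, boundedness of $f$, and \cref{lem:Theta-Psi} f) for the initial datum, and the equality of the two suprema follows from the isometry of $\Psi_N$ in \cref{lem:Theta-Psi} c).
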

\begin{proof}
    Note that both $x\mapsto u_0(x)$ as well as $x\mapsto O_t (x)$ are continuous, so that $\Theta_N$ is well-defined when applied to $u_0$ and $O_t$. Note that \eqref{eq:tildevN} is a finite dimensional ODE with globally Lipschitz coefficients (by Assumption~\ref{ass1} and boundedness of $\tilde{\Delta}^N$ on $L^2(\Pi_N)$)   well posedness almost surely is immediate. The mild form is simply the finite dimensional variation of constants formula.
    Moreover, using the contraction property of $\tilde{P}^N$ from Lemma~\ref{lem:semigroup1} and boundedness of $f$,
    \begin{align*}
     \|v^N_t\|_{L^2(\Pi_N)} &\lesssim  \|\Theta_N u_0\|_{L^2(\Pi_N)}+K
     \lesssim \|u_0\|_{L^\infty(\T)}+1,
    \end{align*}  
    where we used Lemma~\ref{lem:Theta-Psi} f) in the second inequality. The isometry property of $\Psi_N$ (see Lemma 2.1 c)), gives the equality in \eqref{eq:L2tildev}.
\end{proof}
One may further upgrade temporal and spatial regularity for the solutions $v^N, z^N$ to obtain similar bounds as for $v=u-O$, however we do not bother to do so, since it won't be relevant in our error analysis below.

Recall the processes $V^{M,N}$, $\tilde{V}^{M,N}$ given in mild form in \eqref{eq:V-mild} and \eqref{eq:tildeV-mild}.
We decompose the error of $\Theta_N v-\tilde{V}^{M,N}$ into an error of the spatial discretisation, an error of a temporal discretisation (which, however, will still depend on $N$ due to \cref{lem:Otreg}), and an error coming from the approximation of the noise $O$, as follows. More precisely, we write
\begin{align} \label{eq:errordecomposed}
\norm{\Theta_N v_t - \tilde{V}^{M,N}_t}_{L^{2}(\Pi_N)}\leq \norm{\Theta_Nv_t-v^N_t}_{L^2(\Pi_N)}+ \norm{v_t^N - V_t^{M,N}}_{L^{2}(\Pi_N)}+\norm{V^{M,N}_t - \tilde{V}^{M,N}_t}_{L^{2}(\Pi_N)}.
\end{align}

We treat the first two of the terms separately in the subsequent sections, the third term can be easily dealt with via the following lemma:

\begin{lemma}\label{lem:approximation}
Let $p\geq 1$ and $\eps\in (0, 1/2)$. Then there exists $C=C(T,\eps, p,K, \mathcal{M})$, that does not depend on $M,N$, such that 
\begin{align}
\MoveEqLeft
    (\E\sup_{t\in[0,T]}\norm{V^{M,N}_t - \tilde{V}^{M,N}_t}_{L^{2}(\Pi_N)}^p)^{1/p}
    \leq C (M^{-1+\eps}+N^{-\frac{3}{2}+\eps}).
\end{align}    
\end{lemma}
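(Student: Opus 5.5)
The approach is a discrete Gronwall argument on the time grid, using that $f$ is globally Lipschitz and that $\tilde P^N$ is an $L^2(\Pi_N)$-contraction. Write $E_t\coloneqq V^{M,N}_t-\tilde V^{M,N}_t$. Subtracting the mild forms \eqref{eq:V-mild} and \eqref{eq:tildeV-mild} (the initial conditions coincide) gives
\begin{align*}
E_t=\int_0^t\tilde P^N_{t-s}\Big(f\big(V^{M,N}_{\kappa_M(s)}+\Theta_N O_{\kappa_M(s)}\big)-f\big(\tilde V^{M,N}_{\kappa_M(s)}+\Theta_N\tilde O_{\kappa_M(s)}\big)\Big)\,ds .
\end{align*}
By \cref{ass1}, $|f(a)-f(b)|\leq K|a-b|$ pointwise on $\Pi_N$. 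Combining this with the contraction property of $\tilde P^N_t$ on $L^2(\Pi_N)$ from \cref{lem:semigroup1}, we obtain for every $t\in[0,T]$
\begin{align*}
\norm{E_t}_{L^2(\Pi_N)}\leq K\int_0^t\Big(\norm{E_{\kappa_M(s)}}_{L^2(\Pi_N)}+\norm{\Theta_N O_{\kappa_M(s)}-\Theta_N\tilde O_{\kappa_M(s)}}_{L^2(\Pi_N)}\Big)\,ds .
\end{align*}

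Next define $e_k\coloneqq\max_{j\leq k}\norm{E_{t_j}}_{L^2(\Pi_N)}$ and $R\coloneqq\max_{t\in I_M}\norm{\Theta_N(O_t-\tilde O_t)}_{L^2(\Pi_N)}$. Restricting the previous display to $t=t_k$ yields $e_k\leq KTR+Kh\sum_{j<k}e_j$. The discrete Gronwall inequality then gives $e_M\leq KTe^{KT}R$, pathwise. Feeding this back into the display for general $t\in[0,T]$ gives
\begin{align*}
\sup_{t\in[0,T]}\norm{E_t}_{L^2(\Pi_N)}\leq C(K,T)\,R ,
\end{align*}
again almost surely. So the supremum over continuous time costs nothing, because all the integrands only involve grid times.

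It remains to control $\E R^p$. The bound \eqref{eq:O-tildeO-close} in \cref{ass:tildeO} only controls each grid time separately, so the maximum over $I_M$ needs care, and this is the main obstacle. I would bound the maximum by the sum and use the assumption with exponent $q\geq p$:
\begin{align*}
\big(\E R^q\big)^{1/q}\leq\Big(\sum_{t\in I_M}\E\norm{\Theta_N(O_t-\tilde O_t)}_{L^2(\Pi_N)}^q\Big)^{1/q}\leq (M+1)^{1/q}\,C(q,\eps')\big(M^{-1+\eps'}+N^{-3/2+\eps'}\big).
\end{align*}
Choosing $\eps'=\eps/2$ and $q\geq 2/\eps$ makes $(M+1)^{1/q}\lesssim M^{\eps/2}$, which handles the $M^{-1+\eps}$ term directly. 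For the spatial term this only gives $M^{\eps/2}N^{-3/2+\eps/2}$, which is not yet of the required form.

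To handle the spatial term I would split into cases. If $M\leq N^{3/2}$, then $M^{\eps/2}N^{-3/2+\eps/2}\leq N^{-3/2+\eps}$ up to the exponent bookkeeping, which can be absorbed by taking $q$ larger. If $M>N^{3/2}$, then $N^{-3/2}<M^{-1}$, so $M^{\eps/2}N^{-3/2+\eps/2}\lesssim M^{-1+\eps}$. In both cases $(\E R^p)^{1/p}\leq(\E R^q)^{1/q}\lesssim M^{-1+\eps}+N^{-3/2+\eps}$, and combining this with the pathwise bound on $\sup_t\norm{E_t}_{L^2(\Pi_N)}$ concludes the proof.
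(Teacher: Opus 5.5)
Your argument is correct, but it takes a different route from the paper.

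The paper never puts a maximum over grid times inside the expectation. From $\sup_{t\le r}\norm{E_t}_{L^2(\Pi_N)}\le K\int_0^r(\ldots)\,ds$ it applies Jensen's inequality to the time integral, $\big(\int_0^r a_s\,ds\big)^p\le r^{p-1}\int_0^r a_s^p\,ds$. It then takes expectations under the integral and runs the continuous Gr\"onwall inequality on $r\mapsto\E\sup_{t\le r}\norm{E_t}_{L^2(\Pi_N)}^p$. The noise then enters only through $\max_k\E\norm{\Theta_N(O_{t_k}-\tilde O_{t_k})}_{L^2(\Pi_N)}^p$, which is exactly what \cref{ass:tildeO}~\ref{en:b} controls.

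So the obstacle you identify is created by bounding $\int_0^t\norm{\Theta_N(O-\tilde O)_{\kappa_M(s)}}_{L^2(\Pi_N)}\,ds$ by $TR$. It disappears even in your pathwise framework. Write $r_j=\norm{\Theta_N(O_{t_j}-\tilde O_{t_j})}_{L^2(\Pi_N)}$ and keep the forcing as the nondecreasing average $Kh\sum_{j<k}r_j$. Discrete Gr\"onwall then gives $\sup_{t}\norm{E_t}_{L^2(\Pi_N)}\le C\,h\sum_{j<M}r_j$ almost surely. Jensen gives $\E\big(h\sum_{j<M}r_j\big)^p\le T^p\max_j\E r_j^p$, with no union bound and no case distinction.

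Your workaround is nonetheless valid. Take the union bound with a high moment $q$ and split into $M\le N^{3/2}$ and $M>N^{3/2}$. With, e.g., $\eps'=\eps/2$ and $q\ge\max(p,3/\eps)$, the factor $(M+1)^{1/q}$ is absorbed in both cases; with only $q\ge 2/\eps$ the first case would give $N^{-3/2+5\eps/4}$. This absorption relies on the assumption holding for all moments and with an arbitrary $\eps$-loss. If the rate in \eqref{eq:O-tildeO-close} were sharp, or only finitely many moments were assumed, the $M^{1/q}$ loss could not be absorbed.

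What your route buys is an almost-sure bound of the scheme discrepancy by the noise discrepancy. That is slightly stronger than the moment bound the paper proves, but it is not needed here.
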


\begin{proof}
    Using that $f$ is globally Lipschitz and the contraction property of $\tilde{P}^N$ on $L^2(\Pi_N)$, we have that for $0\leq r\leq T$, since $k_{M}(s)\leq s$
\begin{align*}
\MoveEqLeft
    \E\sup_{t\in[0,r]}\|V^{M,N}_t-\tilde{V}^{M,N}_t\|_{L^2(\Pi_N)}^p\\&\lesssim \int_0^r \E\|V^{M,N}_{k_M(s)}-\tilde{V}^{M,N}_{k_M(s)}\|_{L^2(\Pi_N)}^p\, ds+\int_0^r \E\|\Theta_N(O_{k_M(s)}-\tilde{O}_{k_M(s)})\|_{L^2(\Pi_N)}^p\, ds
    \\&\lesssim \int_0^r \E\sup_{t\in [0,s]}\|V^{M,N}_{t}-\tilde{V}^{M,N}_{t}\|_{L^2(\Pi_N)}^p\, ds+\max_{k=0,\dots,M} \E\|\Theta_N(O_{t_k}-\tilde{O}_{t_k})\|_{L^2(\Pi_N)}^p.
\end{align*}
The result then follows after applying  Gr\"onwall's inequality and using Assumption~\ref{ass:tildeO} \ref{en:b}. 
\end{proof}

\subsection{Spatial error}\label{sec:spatialerror}
We start by giving an outline of how we treat the spatial error term in \eqref{eq:errordecomposed} in order to motivate the intermediate Lemma~\ref{lem:space-ss}. Together with its Corollary~\ref{cor:girsanov}; this allows to state and prove the main lemma -- Lemma~\ref{lem:spatial}.
We bound the spatial error on the continuum $\mathbb{T}$. Using the isometry property of the extension operator $\Psi_N$ and triangle inequality, we arrive at
\begin{align*}
   \norm{\Theta_N v_t - v^N_t}_{L^{2}(\Pi_N)}&\leq  \norm{v_t - z^N_t}_{L^{2}(\mathbb{T})} +  \norm{(\operatorname{Id}-\Psi_N\Theta_N)v_t}_{L^{2}(\mathbb{T})}.
\end{align*}
The crucial part is the first summand above. We have that 
\begin{align}\label{eq:error-decomp}
    \|v_t-z^N_t\|_{L^2(\mathbb{T})}&\leqslant \|P_t(\operatorname{Id}-\Psi_N \Theta_N)u_0\|_{L^2(\mathbb{T})}+ \Big\|\int_0^t P_{t-s} f(v_s+O_s)- P_{t-s} \Psi_N\Theta_N f(z^N_s+O_s) \,ds\Big\|_{L^2(\mathbb{T})}\nonumber\\
    &\leqslant \|P_t(\operatorname{Id}-\Psi_N \Theta_N)u_0\|_{L^2(\mathbb{T})}+ \Big\|\int_0^t P_{t-s} f(v_s+O_s)-P_{t-s}f(z_s^N+O_s)\, ds\Big\|_{L^2(\mathbb{T})}\nonumber\\
    &\quad + \Big\|\int_0^t P_{t-s} (\operatorname{Id}-\Psi_N \Theta_N) f(z_s^N+O_s) \, ds\Big\|_{L^2(\mathbb{T})}\\
    &\eqqcolon E_0(t)+E_1(t)+E_2(t).
\end{align}
The terms $E_0(t)$ and $E_1(t)$ are treated directly in the proof of \cref{lem:spatial}. After an application of Girsanov's theorem (see \cref{cor:girsanov} below), bounding $E_2(t)$ reduces to bounding
\begin{align}
    \Big\|\int_0^t P_{t-s} (\operatorname{Id}-\Psi_N \Theta_N) f(O_s+P_s\Psi_N\Theta_N u_0)  \, ds\Big\|_{L^2(\mathbb{T})}.
\end{align}
Bounding the above is the content of the next lemma.

\begin{lemma}\label{lem:space-ss}
Let $p\geq 1$ and $\alpha\in (1,3/2)$. Then there exists $C=C(T,\alpha, p,K, \mathcal{M})$, that does not depend on $N$, such that for all $0\leq s\leq t\leq T$,
\begin{align}
   \paren[\bigg]{\E \norm[\bigg]{\int_{s}^{t} P_{t-r} (\operatorname{Id}- \Psi_N \Theta_N)f(O_r+P_r\Psi_N\Theta_N u_0) \,dr}_{L^2(\T)}^p}^{1/p}\leq C (t-s)^{1/2} N^{-\alpha}.
\end{align}
In particular, with a possibly different constant $C$
\begin{align}
   \paren[\bigg]{\E \sup_{t\in[0,T]}\norm[\bigg]{\int_{0}^{t} P_{t-s} (\operatorname{Id}- \Psi_N \Theta_N)f(O_s+P_s\Psi_N\Theta_N u_0) \,ds}_{L^2(\T)}^p}^{1/p}\leq C  N^{-\alpha}.
\end{align}
\end{lemma}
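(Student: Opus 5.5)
We will prove the first bound by stochastic sewing (\cref{lem:SSLHilbert}) in $H=L^2(\T)$, with the germ
\begin{equ}
A_{s,t}=\int_s^t P_{t-r}(\Id-\Psi_N\Theta_N)\,\E_s f\big(O_r+P_r\Psi_N\Theta_N u_0\big)\,dr ,
\end{equ}
where the endpoint $t$ in $P_{t-r}$ is the fixed final time of the sewing interval, so that the weights $(T-v)^{-\delta_i}$ absorb any singularities. The sewn process is the integral in the statement, since $\E\|\int_s^t P_{t-r}(\Id-\Psi_N\Theta_N)(F_r-\E_sF_r)\,dr\|^2$ is of order $(t-s)^2$ times a small factor. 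Writing $O_r=P_{r-s}O_s+(O_r-P_{r-s}O_s)$, where the second summand is Gaussian, independent of $\F_s$, and has variance $Q(r-s)$ by \cref{lem:QQN}, we obtain the explicit formula
\begin{equ}
\E_s f(O_r+P_r\Psi_N\Theta_N u_0)=(P^\R_{2Q(r-s)}f)\big(P_{r-s}O_s+P_r\Psi_N\Theta_N u_0\big)
\end{equ}
(up to the normalisation of $P^\R$). This function is smooth, with $\mathcal C^{\alpha}$ regularity controlled through the spatial smoothing of $P_{r-s}$.

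For the germ bound \eqref{eq:sewingassump2}, we apply \eqref{eq:pos-est} with exponent $\alpha$. This requires an $H^{\alpha}$ (hence $\mathcal C^{\alpha+\eps}$) bound on $g(P_{r-s}O_s+P_r\Psi_N\Theta_N u_0)$ with $g=P^\R_{2Q}f\in C^2_b$. By \cref{lem:composition} (the $\kappa=2$ case, $\theta\in(1,2)$), this is bounded by $1+\|P_{r-s}O_s\|_{\mathcal C^{\theta}}^2+\|P_r\Psi_N\Theta_Nu_0\|_{\mathcal C^\theta}^2$. Using \eqref{eq:heatkernelH} and \cref{lem:OUreg}, $\|P_{r-s}O_s\|_{\mathcal C^{\alpha+\eps}}\lesssim (r-s)^{-(\alpha-1/2+2\eps)/2}$, and the square gives $(r-s)^{-(\alpha-1/2)-2\eps}$. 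Since $\alpha<3/2$, the exponent is $>-1$ and integrable, so $\|A_{s,t}\|\lesssim N^{-\alpha}(t-s)^{1-(\alpha-1/2)-2\eps}=N^{-\alpha}(t-s)^{3/2-\alpha-2\eps}$. This falls short of the required $1/2+\eps_2$ when $\alpha$ is near $3/2$. To fix this, we instead move regularity onto the semigroup: we bound $\|(\Id-\Psi_N\Theta_N)G\|_{L^2}$ with $G$ only in $\mathcal C^{1+\eps}$, which gives $N^{-1-\eps}$, and gain the extra $N^{-(\alpha-1)}$ by combining with the smoothing of $P_{t-r}$. For that we need a negative-norm version of \eqref{eq:pos-est}, i.e. $\|(\Id-\Psi_N\Theta_N)G\|_{H^{-\gamma}}\lesssim N^{-\alpha}\|G\|_{H^{\alpha-\gamma}}$. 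This fails in general, because aliasing maps high modes to low ones. We therefore keep $L^2$ and accept a singular weight: the product structure $(r-s)^{-a}$ with $a<1$ together with $(t-s)$-integration gives $\|A_{s,t}\|_{L^p}\lesssim N^{-\alpha}(t-s)^{1/2+\eps_2}$ as long as the squared norm exponent $\alpha-1/2<1/2$ is arranged. When it is not, we interpolate: we use the $\mathcal C^{1+\eps}$ bound (giving $N^{-1}$, exponent near $1$) and the $\mathcal C^{3/2-}$ bound, and take a convex combination to balance the $N$-power against the time exponent $1/2+\eps_2$. The case $\alpha$ close to $3/2$ is the genuinely delicate point.

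For \eqref{eq:sewingassump1}, we have $\E_u\delta A_{u,w,v}=\int_w^v P_{v-r}(\Id-\Psi_N\Theta_N)\E_u[\E_wF_r-\E_uF_r]\,dr$, and the difference of conditional expectations is an expression of the form $(P^\R_{2Q(r-w)}f)(\cdots)-(P^\R_{2Q(r-u)}f)(\cdots)$ evaluated after averaging. Using \eqref{eq:Qtwostep}, \eqref{eq:R-semigroup} and a first-order Taylor expansion, this gains a factor $(w-u)^{1-\eps}(r-w)^{-1/2+\eps}$ relative to the germ estimate. We again use \eqref{eq:pos-est} together with the composition bound, which yields $(v-u)^{1+\eps_1}$ with $N^{-\alpha}$. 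The main obstacle, as indicated, is arranging the regularity needed to gain $N^{-\alpha}$ with $\alpha$ up to $3/2$ from \eqref{eq:pos-est} while keeping the time singularities integrable. Second derivatives of $g$ produce terms like $|\nabla P_{r-s}O_s|^2$, and these are exactly where the restriction $\alpha<3/2$ enters.

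Finally, \eqref{eq:resultSewing} gives the first claim. For the supremum claim, we write the integral as $X_t$ with $X_t-P_{t-s}X_s=\int_s^t\cdots$, which is the first bound, and apply \cref{prop:vKolmogorov} with $S=P$ and $p$ large, so that $\gamma$ exists with $p/2>1$. Since $X_0=0$, the sup bound follows.
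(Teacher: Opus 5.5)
Your sewing set-up is the same as the paper's: the germ $\E_u\int_u^vP_{t-r}(\Id-\Psi_N\Theta_N)F_r\,dr$ with $t$ frozen, Gaussian conditioning on $\F_u$, identification of the sewn process, and \cref{prop:vKolmogorov} for the supremum. The gap is that the estimate carrying the lemma, the germ bound \eqref{eq:sewingassump2}, is never established, and none of your proposed repairs closes it.

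With the form $1+\|v\|_{\cC^\theta}^2$ of \cref{lem:composition}, your time exponent $3/2-\alpha-2\eps$ is below $1/2$ for \emph{every} $\alpha>1$, not only near $3/2$. The ``singular weight'' variant needs $\alpha<1$, which \eqref{eq:pos-est} excludes. Interpolating between $\theta$ slightly above $1$ and $\theta$ near $3/2$ cannot help either. Both the $N$-power $\theta$ and the time exponent $3/2-\theta-2\eps$ are affine in $\theta$, so every convex combination reproduces $N^{-\alpha}(t-s)^{3/2-\alpha-2\eps}$. At the $\cC^{1+}$ endpoint the exponent is slightly below $1/2$, not near $1$. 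Likewise, bounding the $g''$-contribution by $\|\nabla P_{r-u}O_u\|_{L^\infty}^2\sim(r-u)^{-1/2-2\eps}$ already yields only $(v-u)^{1/2-2\eps}$. That term must be avoided; it is not where $\alpha<3/2$ enters.

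The missing idea is the sharp first line of \eqref{eq:comp-B}. For $\theta=\alpha+\eps\in(1,2)$,
\[
\|g(v)\|_{\cC^{\theta}}\lesssim 1+\|v\|_{C^1_b}\|v\|_{\cC^{\theta-1}}+\|v\|_{\cC^{\theta}},
\]
so the second-order term pairs only one derivative with the low seminorm $\cC^{\theta-1}$. Take $\eps<\min(3/4-\alpha/2,\alpha-1)$, so that $\theta-1\le 1/2-\eps$. Then:
\begin{itemize}
\item $\|P_{r-u}O_u\|_{\cC^{\theta-1}}\lesssim\|O_u\|_{\cC^{1/2-\eps}}$ has no singularity;
\item $\|P_{r-u}O_u\|_{C^1_b}\lesssim(r-u)^{-1/4-\eps}\|O_u\|_{\cC^{1/2-\eps}}$;
\item $\|P_{r-u}O_u\|_{\cC^{\alpha+\eps}}\lesssim(r-u)^{-\alpha/2+1/4-\eps}\|O_u\|_{\cC^{1/2-\eps}}$.
\end{itemize}
Integrating over $r\in[u,v]$ gives $N^{-\alpha}(v-u)^{5/4-\alpha/2-\eps}$. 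This is \eqref{eq:sewingassump2} with $\delta_2=0$ and $\eps_2=3/4-\alpha/2-\eps>0$, and it is exactly where $\alpha<3/2$ is used. For the initial-data part you also need a bound on $\|\Psi_N\Theta_N u_0\|_{\cC^{\alpha+\eps}}$. Get it from \eqref{eq:PsiThetabounded} in $H^{5/2-\eps}$ together with Sobolev embedding.

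Your verification of \eqref{eq:sewingassump1} is unnecessary. By the tower property $\E_u\delta A_{u,w,v}=0$ exactly, since your own expression $\E_u[\E_wF_r-\E_uF_r]$ vanishes identically, so $\Gamma_1=0$. In that formula the operator should be $P_{t-r}$, not $P_{v-r}$. With the paper's normalisation of $P^\R$, the conditional expectation is $(P^\R_{Q(r-u)}f)(\cdot)$ rather than $P^\R_{2Q(r-u)}f$.
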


\begin{proof}
 Throughout the proof we follow the convention of omitting $\T$ in the norms appearing. Fix arbitrary $(s,t) \in [0,T]_\leq$. We aim to apply the stochastic sewing lemma for Hilbert space valued processes, see Lemma~\ref{lem:SSLHilbert} with $H=L^2(\T)$, on $[s,t]$. 
 Let the germ be defined by 
 \begin{align}\label{eq:first-SS-A-def}
     A_{u,v}&\coloneqq \E_u\int_u^v P_{t-r}(\operatorname{Id}-\Psi_N\Theta_N)f(O_r+P_r\Psi_N\Theta_N u_0)\, dr
 \end{align}
 for $(u,v) \in [s,t]_\leq^2$.
 In order compute the conditional expectation, we use the following rule: for $F:\R\to\R$ measurable and random variables $X,Y$, where $X$ is $\F_u$-measurable and $Y\sim N(0,\sigma)$ (that is, $Y$ has a normal distribution with mean $0$ and variance $\sigma$) with $Y$ independent of $\F_u$, we have almost surely
 \begin{align}\label{eq:rule}
     \E_{u}[F(X+Y)]=(P^{\R}_{\sigma}F) (X)
 \end{align}
 for the heat semigroup $P^{\R}$ acting on functions on $\R$.
 Let $u\leq r$, $x\in\T$, and let $w$ be a $\mathcal{F}_0$-measurable function-valued random variable with arbitrary high finite moments in $\mathcal{C}^\theta(\T)$ for some $\theta>0$. 
 Using\footnote{We remark that at this point the fact that $f$ is Nemytskii type is used.}  \eqref{eq:rule} with $X=P_{r-u}O_u(x)+w(x)$ and $Y=O_r(x)-P_{r-u}O_u(x)$, we get
 \begin{align*}
     \E_u f(O_r+w) (x)= \E_u f(O_r(x)+w (x)) = (P^{\mathbb{R}}_{Q(r-u)}f)(P_{r-u}O_u (x)+w (x)),
 \end{align*}
 where we recall $Q$ from \cref{lem:QQN} and in particular that it does not depend on $x\in\T$. 
 Since both sides have a continuous modification, the equality holds when viewing both sides as an element of $H$.
  We apply this with $w=P_r\Psi_N\Theta_N u_0$  
  and thus we see that, almost surely,
\begin{align*}
    A_{u,v} =\int_{u}^{v} P_{t-r}(\operatorname{Id}-\Psi_N\Theta_N)(P^{\mathbb{R}}_{Q(r-u)}f)(P_{r-u}O_u+P_r\Psi_N\Theta_N u_0)\, dr.
\end{align*}
From \eqref{eq:first-SS-A-def} and the tower property of the conditional expectation we immediately see  $\E_u \delta A_{u,\xi,v} =0$ for $\xi \in [u,v]$, and thus
\eqref{eq:sewingassump1} is fulfilled with $\Gamma_1=0$.

It remains to check \eqref{eq:sewingassump2}.  
Let $\epsilon\in (0, \min(\frac{3}{4}-\frac{\alpha}{2},\alpha-1))$, which is possible since $\alpha\in (1,3/2)$ and which in particular implies that $\alpha+\eps-1\leq \frac{1}{2}-\eps$. Then, using the semigroup estimate $\norm{P_{t-r}v}_{L^{2}}\leq \norm{v}_{L^{2}}$, as well as \cref{lem:operator} and the embedding $\calC^{\alpha+\epsilon}\hookrightarrow H^{\alpha}$, we arrive at
\begin{align*}
    \|A_{u,v}\|_{L^2}&\leqslant \int_{u}^{v}\|(\operatorname{Id}-\Psi_N \Theta_N)(P^{\mathbb{R}}_{Q(r-u)} f)(P_{r-u} O_u+P_{r}\Psi_N\Theta_N u_0)\|_{L^2}\, dr
     \\&\lesssim N^{-\alpha} \int_{u}^{v} \norm{(P^{\mathbb{R}}_{Q(r-u)} f)(P_{r-u} O_u+P_{r}\Psi_N\Theta_N u_0)}_{\cC^{\alpha+\varepsilon}}\, dr.
\end{align*}
To simplify the calculation, we call $\mathfrak{a}_{u,r}:=P_{r-u}O_u$ for $u\leq r$ and $\mathfrak{b}:=P_{r}\Psi_N\Theta_N u_0$. Due to the semigroup estimate $\norm{P_tv}_{\calC^{\theta}}\lesssim \norm{v}_{\calC^{\theta}}$ for $\theta\in\R$ from \cref{lem:semigroup1}, the embedding $H^{s}(\T)\hookrightarrow \cC^{s-1/2}(\T)$ together with $1+\epsilon<\alpha<2$, as well as $\calC^{\theta}\hookrightarrow C^{1}_{b}$ for $\theta>1$ and $\calC^{\theta+\hat\epsilon}\hookrightarrow H^{\theta}$ for $\hat\epsilon>0$, and \cref{lem:operator} in the prenultimate inequality below, we find that
\begin{align}\label{eq:b-bound}
    \norm{\mathfrak{b}}_{C^{1}_{b}}\lesssim \norm{\mathfrak{b}}_{\calC^{\alpha-\epsilon}}\lesssim \norm{\Psi_N\Theta_N u_0}_{\calC^{\alpha-\epsilon}}\lesssim \norm{\Psi_N\Theta_N u_0}_{H^{\frac{5}{2}-\epsilon}}\lesssim \norm{u_0}_{H^{\frac{5}{2}-\epsilon}}\lesssim \norm{u_0}_{\calC^{\frac{5}{2}-\frac{\epsilon}{2}}},
\end{align}
where we have bounds on the $p$-th moment of the norm of the initial condition by \cref{asn:u0}.
Furthermore, due to \cref{lem:semigroup1} we have that 
\begin{align}\label{eq:a-bound}
    \norm{\mathfrak{a}_{u,r}}_{C^{1}_b}\lesssim\norm{\mathfrak{a}_{u,r}}_{\calC^{1+\epsilon}}\lesssim (r-u)^{-\frac{1}{4}-\epsilon}\norm{O_u}_{\calC^{\frac{1}{2}-\epsilon}}, \quad \norm{\mathfrak{a}_{u,r}}_{\calC^{\alpha+\epsilon}}\lesssim (r-u)^{-\frac{\alpha}{2}+\frac{1}{4}-\epsilon}\norm{O_u}_{\calC^{\frac{1}{2}-\epsilon}},
\end{align}
where we have bounds on the norm of the OU process on the right-hand side by \cref{lem:OUreg}.
With the above bounds on $\mathfrak{a},\mathfrak{b}$, we can further bound $A_{u,v}$ as follows. By the composition estimate from \cref{lem:composition} together with \cref{lem:composition-applies}, triangle inequality and using that $\alpha+\epsilon-1\leq\frac{1}{2}-\epsilon$ and $\alpha>1+\epsilon>1/2$, we obtain
\begin{align*}
\MoveEqLeft
\|A_{u,v}\|_{L^2}
   \\& \lesssim  N^{-\alpha} \int_{u}^{v} \norm{P^{\mathbb{R}}_{Q(r-u)} f}_{C^{2}_{b}}\Big(1+\norm{\mathfrak{a}_{u,r}+\mathfrak{b}}_{\cC^{\alpha+\varepsilon-1}}\norm{\mathfrak{a}_{u,r}+\mathfrak{b}}_{C^{1}_b}  +\norm{\mathfrak{a}_{u,r}}_{\cC^{\alpha+\varepsilon}} +\norm{\mathbf{b}}_{\cC^{\alpha+\varepsilon}}\Big) \,dr
   \\ &\lesssim  N^{-\alpha} \int_{u}^{v} \norm{P^{\mathbb{R}}_{Q(r-u)} f}_{C^{2}_{b}}\Big(1+\norm{\mathfrak{a}_{u,r}}_{\calC^{\frac{1}{2}-\epsilon}}\norm{\mathfrak{a}_{u,r}}_{C^{1}_b}
 + \norm{\mathfrak{b}}_{\cC^{\frac{1}{2}-\epsilon}}\norm{\mathfrak{a}_{u,r}}_{C^{1}_b}
     + \norm{\mathfrak{a}_{u,r}}_{\cC^{\frac{1}{2}-\epsilon}}\norm{\mathfrak{b}}_{C^1_b}
    \\
    &\qquad\qquad  +\norm{\mathfrak{a}_{u,r}}_{\cC^{\alpha+\varepsilon}} +\norm{\mathfrak{b}}_{\cC^{\alpha+\varepsilon}}+  \norm{\mathfrak{b}}_{\cC^{\frac{1}{2}-\epsilon}}\norm{\mathfrak{b}}_{C^{1}_b} \Big) \,dr
    \\ &\lesssim  N^{-\alpha} \int_{u}^{v} \norm{P^{\mathbb{R}}_{Q(r-u)} f}_{C^{2}_{b}}\Big(1+\norm{\mathfrak{a}_{u,r}}_{\calC^{\frac{1}{2}-\epsilon}}\norm{\mathfrak{a}_{u,r}}_{C^{1}_b}
 + \norm{\mathfrak{b}}_{\cC^{\alpha-\epsilon}}\norm{\mathfrak{a}_{u,r}}_{C^{1}_b}
     + \norm{\mathfrak{a}_{u,r}}_{\cC^{\frac{1}{2}-\epsilon}}\norm{\mathfrak{b}}_{C^1_b}
    \\
    &\qquad\qquad  +\norm{\mathfrak{a}_{u,r}}_{\cC^{\alpha+\varepsilon}} +\norm{\mathfrak{b}}_{\cC^{\alpha+\varepsilon}}^2 \Big) \,dr.
\end{align*}
Using that  $\norm{P^{\R}_{Q(r-s)}f}_{C^{2}_{b}}\lesssim \norm{f}_{C^{2}_{b}}$ together with the bounds \eqref{eq:b-bound} and \eqref{eq:a-bound} and that $\alpha>1$, we find
\begin{align*}
\MoveEqLeft
  \|A_{u,v}\|_{L^2} \\&\lesssim  \norm{f}_{C^{2}_{b}} N^{-\alpha} \int_{u}^{v} \Big[1+(r-u)^{-\frac{1}{4}-\varepsilon}\|O_u\|_{\cC^{\frac{1}{2}-\varepsilon}}^2
 + (r-u)^{-\frac{1}{4}-\varepsilon}\|O_u\|_{\cC^{\frac{1}{2}-\varepsilon}}\norm{u_0}_{H^{\frac{5}{2}-\varepsilon}}
    \\&\qquad\qquad\qquad + \|O_u\|_{\cC^{\frac{1}{2}-\varepsilon}}\norm{u_0}_{H^{\frac{5}{2}-\varepsilon}}
  +(r-u)^{-\frac{\alpha}{2}+\frac{1}{4}-\epsilon}\norm{O_u}_{\cC^{\frac{1}{2}-\epsilon}}+\norm{u_0}_{H^{\frac{5}{2}-\varepsilon}}^2\Big] \, dr
    \\&\lesssim \norm{f}_{C^{2}_{b}} N^{-\alpha} (v-u)^{\frac{5}{4}-\frac{\alpha}{2}-\epsilon}(1+\norm{O}^2_{C_T\calC^{\frac{1}{2}-\epsilon}}+\norm{u_0}_{H^{\frac{5}{2}-\epsilon}}^2).
\end{align*}
By our choice of $\eps$,  $0<\frac{5}{4}-\frac{\alpha}{2}-\epsilon- \frac{1}{2}=:\eps_2$. After taking the $p$-th moment, using the  estimates on $O$ from \cref{lem:OUreg}, and the assumed regularity of $u_0$ from \cref{asn:u0}, we see that \eqref{eq:sewingassump2} is fulfilled with $\delta_2=0$ and $\Gamma_2=CN^{-\alpha}$.
Then the stochastic sewing lemma applies and  \eqref{eq:resultSewing} yields the desired bound for the integral $\mathcal{A}$ corresponding to the germ $A$ above. 

It remains to check that $(\mathcal{A}_u)_{u \in [s,t]}$, is given by $\tilde{\mathcal{A}}_{u}=\int_{s}^{u}P_{t-r}(\operatorname{Id}-\Psi_N\Theta_N)f(O_r+P_r\Psi_N\Theta_N u_0)\,dr$.
This follows immediately from the characterisation through \eqref{eq:SSL-conc1}-\eqref{eq:SSL-conc2}. Indeed, $\tilde{\mathcal{A}}$ satisfies \eqref{eq:SSL-conc2} with $C_3=0$ and satisfies \eqref{eq:SSL-conc1} with $C_1=0$ and $C_2=K$, using the boundedness of $f$.
Thus, $\mathcal{A}=\tilde{\mathcal{A}}$ and this finishes the proof of the first inequality in the lemma. The second inequality in the lemma follows from an application of a version of Kolmogorov's continuity, see \cref{prop:vKolmogorov}.
\end{proof}
The content of the next corollary is to infer a bound for $E_2$ in \eqref{eq:error-decomp}.
\begin{corollary}\label{cor:girsanov}
Let $p\geq 1$ and $\alpha\in (1,3/2)$. Then there exists $C=C(T,\alpha, p)$, such that 
\begin{align}
   \paren[\bigg]{\E \sup_{t\in[0,T]}\norm[\bigg]{\int_{0}^{t} P_{t-s} (\operatorname{Id}- \Psi_N \Theta_N)f(z^N_s+O_s) \,ds}_{L^2(\T)}^p}^{1/p}\leq C N^{-\alpha}.
\end{align}
\end{corollary}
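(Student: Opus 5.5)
The plan is to reduce \cref{cor:girsanov} to \cref{lem:space-ss} by a Girsanov change of measure. The process $z^N$ solves
\begin{equ}
z^N_t=P_t\Psi_N\Theta_N u_0+\int_0^t P_{t-s}\Psi_N\Theta_N f(z^N_s+O_s)\,ds .
\end{equ}
Hence $Z_t:=z^N_t+O_t$ is the mild solution of
\begin{equ}
\partial_t Z=\Delta Z+\Psi_N\Theta_N f(Z)+\xi,\qquad Z_0=\Psi_N\Theta_N u_0 .
\end{equ}
This is an Ornstein--Uhlenbeck process started from $\Psi_N\Theta_N u_0$ and perturbed by the drift $b_s:=\Psi_N\Theta_N f(Z_s)$. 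Under $\mathbb{P}$, the reference process with zero drift is exactly $O_s+P_s\Psi_N\Theta_N u_0$.

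\textbf{Step 1: bound the drift.} Since $|f|\le K$ and $\Psi_N$ is an isometry from $L^2(\Pi_N)$ (\cref{lem:Theta-Psi} c), f)), we get $\|b_s\|_{L^2(\T)}=\|\Theta_N f(Z_s)\|_{L^2(\Pi_N)}\le K$ uniformly in $s$, $\omega$ and $N$. Moreover $b$ is predictable and $L^2(\T)$-valued.

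\textbf{Step 2: change the measure.} Define
\begin{equ}
\frac{d\mathbb{Q}}{d\mathbb{P}}=\exp\Big(-\int_0^T\!\!\int_\T b_s(y)\,\xi(ds,dy)-\frac12\int_0^T\|b_s\|_{L^2}^2ds\Big).
\end{equ}
Novikov's condition holds trivially because $\int_0^T\|b_s\|^2_{L^2}\,ds\le K^2T$. By Girsanov's theorem for cylindrical Wiener processes (e.g.\ \cite{DPZ}), $\tilde\xi:=\xi+b$ is a space-time white noise under $\mathbb{Q}$. Under $\mathbb{Q}$ we then have $Z_t=P_t\Psi_N\Theta_N u_0+\tilde O_t$, where $\tilde O$ is the OU process built from $\tilde\xi$. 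Since $u_0$ is $\F_0$-measurable and $\mathbb{Q}=\mathbb{P}$ on $\F_0$, the pair $(u_0,\tilde O)$ has the same law under $\mathbb{Q}$ as $(u_0,O)$ under $\mathbb{P}$. Writing $F(Z):=\sup_t\|\int_0^tP_{t-s}(\Id-\Psi_N\Theta_N)f(Z_s)\,ds\|_{L^2}$, we therefore get
\begin{equ}
\E_{\mathbb{Q}}F(Z)^{q}=\E_{\mathbb{P}}F(O+P_\cdot\Psi_N\Theta_Nu_0)^{q}\le C N^{-\alpha q}
\end{equ}
for every $q$, by \cref{lem:space-ss}. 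The filtration conditions used in that lemma transfer as well: $\tilde\xi$ restricted to $[t,T]$ is independent of $\F_t$ under $\mathbb{Q}$.

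\textbf{Step 3: return to $\mathbb{P}$.} By Cauchy--Schwarz,
\begin{equ}
\E_{\mathbb{P}}F(Z)^p=\E_{\mathbb{Q}}\Big[F(Z)^p\frac{d\mathbb{P}}{d\mathbb{Q}}\Big]\le (\E_{\mathbb{Q}}F(Z)^{2p})^{1/2}\Big(\E_{\mathbb{Q}}\Big(\frac{d\mathbb{P}}{d\mathbb{Q}}\Big)^2\Big)^{1/2}.
\end{equ}
The second moment of the inverse density $\exp(\int b\,d\tilde\xi-\frac12\int\|b\|^2)$ under $\mathbb{Q}$ is bounded by $e^{K^2T}$, using the deterministic bound on $\int\|b\|^2$. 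This yields $(\E F(Z)^p)^{1/p}\le CN^{-\alpha}$.

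The main obstacle is Step 2. One has to verify carefully that the identity in law holds jointly with $u_0$ and at the level of the whole path functional. The drift lives only in the finitely many modes $j\in J_N$, which is harmless, but the constant must not depend on $N$. This is exactly why the uniform bound $\|b\|_{L^2}\le K$ from the boundedness of $f$ (\cref{ass1}) is essential.
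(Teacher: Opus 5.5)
Your proposal is correct and follows essentially the same route as the paper. Both arguments use a Girsanov change of measure to remove the deterministically bounded drift $\Psi_N\Theta_N f(z^N+O)$, which turns $z^N+O$ into (a copy in law of) $O+P_\cdot\Psi_N\Theta_N u_0$, and then return to the original measure via Cauchy--Schwarz and the moments of the density, applying \cref{lem:space-ss} with exponent $2p$; your explicit checks of the density's second moment and of the joint law with $u_0$ simply spell out what the paper compresses into the single inequality \eqref{eq:Girsanov-ineq}.
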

\begin{proof}
The argument is almost identical to \cite[Corollary~3.3.2]{BDG-SPDE} or  \cite[Corollary~4.8]{DjGK}.
Denote $D_t=\Psi_N\Theta_N f(z^N_t+O_t)$. Since deterministically
$$
\int_{0}^{T}\|D_t\|_{L^2(\T)}\, dt\lesssim \int_{0}^{T}\| f(z^N_t+O_t)\|_{L^{\infty}(\T)}\, dt\lesssim 1,
$$
by an application of Girsanov's theorem \cite[Theorem 10.14]{DPZ},
$\xi(dy,ds)+ D_s(y)\,dyds$ defines a space-time white noise under a measure $\mathbb{Q}$ whose Radon-Nikodym derivative is explicit and has finite moments of any positive or negative order. As a consequence, for any nonnegative Borel function $g$ defined on the space of continuous functions on $[0,T]\times T$ one has
\begin{equ}\label{eq:Girsanov-ineq}
   \E\big|g(O+z^N)|^p= \E\Big|g\Big(O+\int_0^\cdot P_{\cdot-s}D_s\,ds+P_{\cdot}\Psi_N\Theta_N u_0\Big)\Big|^p\lesssim  \big(\E\abs{g(O+P_{\cdot}\Psi_N\Theta_N u_0)}^{2p}\big)^{1/2}.
\end{equ}
Applying this with the functional
\begin{align*}
        g(Z)\coloneqq \sup_{t\in[0,T]}\norm[\bigg]{\int_{0}^{t} P_{t-s} (\operatorname{Id}- \Psi_N \Theta_N)f(Z_s) \, ds}_{L^2(\T)}, 
    \end{align*}
    and using \cref{lem:space-ss}, we get the claim.
\end{proof}

We conclude on the spatial error bound in the following lemma.
\begin{lemma}\label{lem:spatial}
Let $p\geq 1$ and $\alpha\in (1,3/2)$. Then there exists $C=C(T,\alpha, p,K, \mathcal{M})$, that does not depend on $N$, such that 
\begin{align}
    (\E\norm{\Theta_N v_t - v^N_t}_{L^{2}(\Pi_N)}^p)^{1/p}\leq C N^{-\alpha}.
\end{align}    
\end{lemma}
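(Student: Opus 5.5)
We follow the decomposition sketched at the beginning of \cref{sec:spatialerror}. By the isometry property of $\Psi_N$ (\cref{lem:Theta-Psi} c)), $\Psi_N\Theta_N\Psi_N=\Psi_N$ and $z^N=\Psi_N v^N$, we have
\begin{equation*}
\norm{\Theta_N v_t-v^N_t}_{L^2(\Pi_N)}\le \norm{(\Id-\Psi_N\Theta_N)v_t}_{L^2(\T)}+\norm{v_t-z^N_t}_{L^2(\T)} .
\end{equation*}
By \eqref{eq:error-decomp}, the second term is bounded by $E_0(t)+E_1(t)+E_2(t)$.

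\textbf{Terms not involving the nonlinearity's feedback.}
\begin{itemize}
\item For the first term, apply \eqref{eq:pos-est} with exponent $\alpha$, followed by the embedding $\calC^{\alpha+\eps}\hookrightarrow H^\alpha$. This gives a bound $N^{-\alpha}\norm{v}_{C_T\calC^{\alpha+\eps}}$. By \eqref{eq:apriori-v} (take $\lambda$ small, possible since $\alpha+\eps<5/2$), this has finite $p$-th moments in terms of $\mathcal{M}$.
\item For $E_0(t)$, use the $L^2$-contraction of $P_t$ and again \eqref{eq:pos-est}. This gives $E_0(t)\lesssim N^{-\alpha}\norm{u_0}_{H^\alpha}\lesssim N^{-\alpha}\norm{u_0}_{\calC^{5/2-\eps}}$, controlled by \cref{asn:u0}.
\item For $E_2$, \cref{cor:girsanov} gives directly $(\E\sup_t E_2(t)^p)^{1/p}\lesssim N^{-\alpha}$.
\end{itemize}

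\textbf{The term $E_1$.} Since $f$ is globally Lipschitz with constant $K$ (\cref{ass1}), and $P_{t-s}$ contracts on $L^2$, we get
\begin{equation*}
E_1(t)\le K\int_0^t\norm{v_s-z^N_s}_{L^2(\T)}\,ds .
\end{equation*}
Set $R(t)=\sup_{s\le t}\norm{v_s-z^N_s}_{L^2}$. Then pathwise
\begin{equation*}
R(t)\le \sup_{s\le T}\big(E_0(s)+E_2(s)\big)+K\int_0^t R(s)\,ds .
\end{equation*}
Gronwall's inequality yields $R(T)\le e^{KT}\sup_{s}(E_0+E_2)(s)$. Taking $L^p(\Omega)$ norms and combining with the bounds above gives the claimed $CN^{-\alpha}$, even with the supremum in time inside the expectation.

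\textbf{Main obstacle.} The only delicate ingredient is the $E_2$ term. Controlling it needs the regularisation-by-noise argument of \cref{lem:space-ss} together with Girsanov's theorem, and it is precisely where the restriction $\alpha<3/2$ enters. Since these are already established in \cref{lem:space-ss} and \cref{cor:girsanov}, the remaining work is bookkeeping. One should check that the moment bounds for $v$ in $\calC^{\alpha+\eps}$ from \cref{prop:wp} hold uniformly (they do, with $m=0$). One should also check that the constants depend only on $(T,\alpha,p,K,\mathcal{M})$.
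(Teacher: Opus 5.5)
Your proof is correct and follows essentially the paper's argument: the same triangle inequality via the isometry of $\Psi_N$, the decomposition \eqref{eq:error-decomp} with $E_2$ handled by \cref{cor:girsanov}, $E_0$ and the projection error bounded by \cref{lem:operator}, and a Gr\"onwall argument for $E_1$ using the global Lipschitz bound on $f$. The differences are cosmetic: the paper applies \cref{lem:operator} with exponent $5/2-\eps$ rather than $\alpha$, which gives a better but unneeded rate for those two terms, and it runs Gr\"onwall on the $L^p(\Omega)$-norms of the running supremum rather than pathwise.
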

\begin{proof}

Recall that by \cref{prop:wp} it holds that for any $\epsilon>0$, $$(\E\sup_{t\in[0,T]}\norm{v_t}_{\calC^{5/2-\epsilon}(\mathbb{T})}^p)^{1/p}<\infty.$$
Using the isometry property of the extension operator $\Psi_N$, triangle inequality and \cref{lem:operator} for $\alpha=5/2-\epsilon$, we arrive at
\begin{align}\label{eq:aprioriv}
   (\E\norm{\Theta_N v_t - v^N_t}_{L^{2}(\Pi_N)}^p)^{1/p}&\leq  (\E\norm{v_t - z^N_t}_{L^{2}(\mathbb{T})}^p)^{1/p} +  (\E\norm{(\operatorname{Id}-\Psi_N\Theta_N)v_t}_{L^{2}(\mathbb{T})}^p)^{1/p}
   \\&\lesssim (\E\norm{v_t - z^N_t}_{L^{2}(\mathbb{T})}^p)^{1/p} +  N^{-5/2+\epsilon}(\E\norm{v_t}_{H^{5/2-\epsilon}(\mathbb{T})}^p)^{1/p},
\end{align}
where the second summand is finite by the recalled regularity bound of $v$ above. Further recall the error decomposition \eqref{eq:error-decomp} into $E_0, E_1, E_2$.
We are left to bound the terms $E_0$ and $E_1$, as $E_2$ is bounded by \cref{cor:girsanov}.
To bound $E_0$, we use again \cref{lem:operator} for $\alpha=5/2-\eps$ and the assumed regularity on the initial condition, as well as the bound $\norm{P_t v}_{L^{2}}\leq\norm{v}_{L^{2}}$, which yields, for any $T'\in[0,T]$
\begin{align*}
    (\E\sup_{t\in[0,T']}\abs{E_0(t)}^p)^{1/p}\leq (\E\norm{(\operatorname{Id}-\Psi_N\Theta_N)u_0}_{L^2(\T)}^p)^{1/p}\lesssim N^{-5/2+\epsilon}(\E\norm{u_0}_{H^{5/2-\epsilon}(\T)}^p)^{1/p}.
\end{align*}
The term $E_1$ is bounded by
\begin{align*}
    (\E\sup_{t\in[0,T']}\abs{E_1(t)}^p)^{1/p}\leq K\int_{0}^{T'}(\E\sup_{r\in[0,s]}\norm{v_r-z^N_r}_{L^2(\T)}^p)^{1/p}ds
\end{align*}
using the global Lipschitz bound on $f$. 
Together, we arrive at
\begin{align}
   (\E\sup_{t\in[0,T']}\|v_t&-z^N_t\|_{L^2(\mathbb{T})}^p)^{1/p}\leqslant C N^{-\alpha}+    K\int_{0}^{T'}(\E\sup_{r\in[0,s]}\norm{v_r-z^N_r}_{L^2(\T)}^p)^{1/p}\, ds
\end{align}
and an application of Grönwall's inequality yields the claim.
\end{proof}

\subsection{Temporal error} \label{sec:temporalerror}

Now we estimate the temporal error in \eqref{eq:errordecomposed}. Note that
\begin{align*}
    \|v^N_t-V^{M,N}_t\|_{L^2(\Pi_N)}&= \Big\|\int_0^t {P}^N_{t-s}\big[ f(v^N_s +\Theta_N O_s) - f(V^{M,N}_{k_M(s)} + \Theta_N O_{k_M(s)})\big]\, ds\Big\|_{L^2(\Pi_N)}\\
    &\leqslant \Big\| \int_0^t \tilde{P}_{t-s}^N \big[f(v_s^N +\Theta_N O_s) - f (v^N_{k_M(s)}+ \Theta_N O_{k_M(s)})\big]\, ds \Big\|_{L^2(\Pi_N)}\\
    &\quad + \Big\|\int_0^t \tilde{P}_{t-s}^N \big[ f(v^N_{k_M(s)} + \Theta_N O_{k_M(s)})- f(V^{M,N}_{k_M(s)} + \Theta_N O_{k_M(s)})\big]\, ds\Big\|_{L^2(\Pi_N)}
\end{align*}
 Our strategy in bounding the temporal error is as follows. First, by an application of Girsanov's theorem, in \cref{cor:Girsanovtemporal}, bounding the first summand in the above can be reduced to bounding
\begin{align}
    \norm[\bigg]{\int_0^t \tilde{P}_{t-r}^N \big[f(\Theta_N O_r+\tilde{P}_r^N\Theta_N u_0) - f (\Theta_N O_{k_M(r)}+\tilde{P}_{k_{M}(r)}^N\Theta_N u_0)\big]\, dr}_{L^2(\Pi_N)},
\end{align}
which is done in \cref{lem:temp-ss}. Then, for the overall temporal error, we will conclude with a version of Gr\"onwall's lemma in \cref{lem:temporal} below.
\begin{lemma}\label{lem:temp-ss}
Let $p\geqslant 1$, $\eps\in (0,\frac{1}{4})$. Then there exists a constant $C=C(\epsilon, p, T, K, \mathcal{M})$, such that,
\begin{align*}
    \paren[\bigg]{\E\sup_{t\in[0,T]}\norm[\bigg]{\int_0^t \tilde{P}_{t-r}^N \big[f&(\Theta_N O_r+\tilde{P}_r^N\Theta_N u_0) - f (\Theta_N O_{k_M(r)}+\tilde{P}^N_{k_{M}(r)}\Theta_N u_0)\big]\, dr}_{L^2(\Pi_N)}^p}^{1/p}\\
    &\leqslant C (N^{-2+\eps}+M^{-1+\eps}).
\end{align*}
\end{lemma}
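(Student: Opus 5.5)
The plan is a stochastic sewing argument via \cref{lem:SSLHilbert} with $H=L^2(\Pi_N)$, in the spirit of \cite{DjGK}. Fix $t$ and write $w_r:=\tilde P^N_r\Theta_N u_0$. Define the germ
\begin{align*}
A_{u,v}:=\E_u\int_u^v \tilde P^N_{t-r}\big[f(\Theta_N O_r+w_r)-f(\Theta_N O_{k_M(r)}+w_{k_M(r)})\big]\,dr .
\end{align*}
The reduction to $u,v$ lying in a single mesh interval, or being separated by at least $h$, is standard. For $v-u\le 2h$ we use the trivial bound $\|A_{u,v}\|\lesssim K(v-u)\lesssim h^{1/2-\eps}(v-u)^{1/2+\eps}$. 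For $v-u\ge 2h$ we split $\int_u^v=\int_u^{u+2h}+\int_{u+2h}^v$, so that on the second part $k_M(r)>u$. By \eqref{eq:rule},
\begin{align*}
\E_u f(\Theta_NO_r+w_r)=(P^\R_{Q(r-u)}f)(\Theta_N P_{r-u}O_u+w_r),
\end{align*}
and similarly with $r$ replaced by $k_M(r)$. Since the germ is a conditional expectation, $\E_u\delta A_{u,w,v}=0$, so $\Gamma_1=0$ and only \eqref{eq:sewingassump2} needs checking. For that bound I would allow the weight $(T-v)^{-\delta_2}$ or simply $\delta_2=0$, and aim for $\Gamma_2\lesssim h^{1-\eps}+N^{-2+\eps}$ times $(v-u)^{1/2+\eps}$.

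For $r\ge u+2h$ I split the difference of the two conditional expectations into three terms. First, $(P^\R_{Q(r-u)}-P^\R_{Q(k_M(r)-u)})f$ is bounded in $C_b^0$ by $|Q(r-u)-Q(k_M(r)-u)|\|f\|_{C^2_b}$ via \eqref{eq:R-semigroup}, and \eqref{eq:Qtwostep} gives $\lesssim h^{1-\eps}(k_M(r)-u)^{-1/2+\eps}$. This is integrable in $r$ and yields $h^{1-\eps}(v-u)^{1/2+\eps}$. Second, the change in the argument from $w_r$ to $w_{k_M(r)}$ costs $\|f\|_{C^1_b}\|\tilde P^N_r-\tilde P^N_{k_M(r)})\Theta_N u_0\|$. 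By \cref{lem:semigroup2} and the $H^{2-}$-regularity of $\Theta_N u_0$, which follows from \cref{lem:operator}, this is $\lesssim h^{1-\eps}$. Third, the change from $\Theta_NP_{r-u}O_u$ to $\Theta_NP_{k_M(r)-u}O_u$ is the delicate term. A naive Lipschitz bound in $L^2$ only gives $h^{1/4}$, which is not enough.

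The main obstacle is this third term. The plan is to Taylor expand the smooth function $F=P^\R_{Q(k_M(r)-u)}f$ to first order. The difference becomes
\begin{align*}
F'(\Theta_NP_{k_M(r)-u}O_u+w)\,\Theta_N(P_{r-u}-P_{k_M(r)-u})O_u+\mathcal O\big(|\Theta_N(P_{r-u}-P_{k_M(r)-u})O_u|^2\big).
\end{align*}
The quadratic remainder is controlled by $L^4$ moments via \eqref{eq:trade-off-est}, which is also positive regularity and so passes through $\Theta_N$ easily. It gives $(r-u)^{-\theta}h^{1/2+\theta-}$ with $\theta$ close to $1/2$, i.e.\ $h^{1-\eps}(r-u)^{-1/2+\eps}$. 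For the linear term, the first factor is smoothed and so is $\calC^{\alpha+\eps}(\Pi_N)$-regular, with a negative power of $r-u$. I therefore measure the product in $H^{-\alpha-\eps}(\Pi_N)$ using \cref{lem:productestnegative}, and then use \cref{lem:semigroup2} to gain regularity from $\tilde P^N_{t-r}$ at the price $(t-r)^{-(\alpha+\eps)/2}$. The negative norm of $\Theta_N(P_{r-u}-P_{k_M(r)-u})O_u$ is exactly what \eqref{eq:Ot-Os2} provides: with $\beta$ near $1$, it is $(k_M(r)-u)^{-\beta/2}\max\{N^{-3/2-\alpha-\eps},h^{3/4+\alpha/2}\}$. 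Choosing $\alpha$ close to $1/2$ gives $N^{-2+\eps}+h^{1-\eps}$. The singularities in $k_M(r)-u$ and $t-r$ are integrable, but the one in $t-r$ is why I allow $\delta_2>0$ in the sewing lemma. This balance of exponents, and the constraint $\alpha<1/2$ in \cref{lem:Otreg}, is where I expect the bookkeeping to be tight.

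Finally, I identify the sewing limit with the actual integral, as in \cref{lem:space-ss}: the trivial bounds give \eqref{eq:SSL-conc1}--\eqref{eq:SSL-conc2} with $C_1=C_3=0$. This yields the bound for fixed $t$ with $(v-u)\to t$. To obtain the supremum over $t$, I apply the same estimate to increments of the form $X_t-\tilde P^N_{t-s}X_s$ and invoke \cref{prop:vKolmogorov}.
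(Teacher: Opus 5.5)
\textbf{Gap: the short-scale bounds lose a factor $h^{1/2}$.} For $v-u\le 2h$ you use $\|A_{u,v}\|\lesssim K(v-u)\le K(2h)^{1/2-\eps}(v-u)^{1/2+\eps}$. This only gives \eqref{eq:sewingassump2} with $\Gamma_2\gtrsim h^{1/2-\eps}$, not your target $\Gamma_2\lesssim h^{1-\eps}+N^{-2+\eps}$. Since \eqref{eq:resultSewing} is linear in $\Gamma_2$, the argument as written yields only the rate $M^{-1/2+\eps}$.

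The same loss occurs in your long-interval case for the piece $\int_u^{u+2h}$. Any bound of order $h$ on it has to be compared with $\Gamma_2(v-u)^{1/2+\eps}$ for $v-u$ just above $2h$, which again forces $\Gamma_2\gtrsim h^{1/2-\eps}$. Replacing the trivial bound by the $L^2$-Lipschitz bound $\|\hat O_r-\hat O_{k_M(r)}\|_{L^2(\Pi_N)}\lesssim h^{1/4}$, where $\hat O_r:=\Theta_NO_r+\tilde P^N_r\Theta_Nu_0$, still only gives $h^{3/4}$. On these scales conditioning buys nothing, so the smallness has to come from the integrand itself, measured in a negative norm.

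\textbf{How the paper handles these scales.} This is its first case ($|v-u|\le 3M^{-1}$), reused for $[u,t']$ in the second case.
\begin{itemize}
\item Write $f(\hat O_r)-f(\hat O_{k_M(r)})$ in mean-value form.
\item Pay $(t-v)^{-1/4+\eps/2}$ via \eqref{eq:tildeP1} to measure it in $H^{-1/2+\eps}(\Pi_N)$.
\item Apply \cref{lem:productestnegative}, using the $\calC^{1/2-\eps/2}(\Pi_N)$-bound on $\int_0^1 f'(\lambda\hat O_r+(1-\lambda)\hat O_{k_M(r)})\,d\lambda$.
\item Use \eqref{eq:Ot-Os} with $\alpha=1/2-2\eps$, which gives $\|\Theta_N(O_r-O_{k_M(r)})\|_{H^{-1/2+\eps}(\Pi_N)}\lesssim\max\{N^{-1+\eps},h^{1/2-\eps}\}$ in $L^{2p}(\Omega)$. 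The $u_0$-part is $O(h)$.
\end{itemize}
Together these give $\|\|A_{u,v}\|_{L^2(\Pi_N)}\|_{L^p(\Omega)}\lesssim (t-v)^{-1/4+\eps/2}(v-u)\max\{N^{-1+\eps},h^{1/2-\eps}\}$. Then $(v-u)^{1/2-\eps}\lesssim h^{1/2-\eps}$ and $ab\le a^2+b^2$ turn this into $(v-u)^{1/2+\eps}(N^{-2+2\eps}+h^{1-2\eps})$. This is in fact where the $N^{-2}$ term first enters: the product $h^{1/2}N^{-1}$ cannot be absorbed into $h$ alone.

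\textbf{The rest of your plan.} Once the short-scale bound is repaired, the remainder is sound and close to the paper's $E_1$/$E_2$ argument. That is, the three-term split on $[u+2h,v]$ using \eqref{eq:Qtwostep}, the $u_0$ term, and \eqref{eq:Ot-Os2} with $\beta$ near $1$, followed by the identification of $\mathcal A$ and \cref{prop:vKolmogorov}. The paper avoids your quadratic Taylor remainder by applying the product estimate directly to the mean-value form. Also note that \eqref{eq:trade-off-est} is stated in $\calC^{-\alpha}$ with $\alpha>0$. For the $L^\infty$/$L^4$ control of your remainder you therefore have to rerun its semigroup argument with a positive target regularity rather than cite it directly.
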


\begin{proof}
We follow similar arguments as in the proof of \cite[Proposition 4.5]{DjGK}. Note that in contrast to the proof of \cite[Proposition 4.5]{DjGK}, we are now estimating in discrete Sobolev spaces $H^{\alpha}(\Pi_N)$ and the relevant a priori bounds, which are being used, are the ones for the OU noise $\Theta_N O$ restricted to the spatial grid, which follow from the estimates for $O$ from \cref{lem:OUreg} in spaces of positive regularity, while in spaces of negative regularity, the estimates follow from \cref{lem:Otreg}.

Let $\hat{O}_r\coloneqq \Theta_N O_r+\tilde{P}_r^N\Theta_N u_0$ and fix arbitrary $(s,t) \in [0,T]^2_\leq$. We aim to apply Lemma~\ref{lem:SSLHilbert}. To that aim, we define the germ for $(u,v) \in [s,t]_\leq$,
    \begin{align*}
        A_{u,v}\coloneqq \E_u \int_u^v \tilde{P}_{t-r}^N\big[ f(\hat{O}_r)-f(\hat{O}_{k_M(r)})]\, dr.
    \end{align*}
We immediately see that $\E_u[\delta A_{u,\xi,v}]=0$ for $\xi \in [u,v]$ and thus \eqref{eq:sewingassump1} is satisfied with $\Gamma_1=0$.
In order to verify \eqref{eq:sewingassump2}, we first assume that $|v-u|\leqslant 3 M^{-1}$. Using the semigroup estimate \eqref{eq:tildeP1} from \cref{lem:semigroup2} for $\tilde{P}^N_{t-r}$, the product estimate from Lemma~\ref{lem:productestnegative} and Cauchy-Schwarz, we get 
\begin{align}\label{eq:A-bound}
    \|\|&A_{u,v}\|_{L^2(\Pi_N)}\|_{L^p(\Omega)}\leqslant (t-v)^{-1/4+\eps/2}\int_u^v \|\|f(\hat{O}_r)-f(\hat{O}_{k_M(r)})\|_{H^{-1/2+\eps}(\Pi_N)}\|_{L^p(\Omega)}\, dr\nonumber\\
    &\lesssim  (t-v)^{-1/4+\eps/2}(v-u) \sup_{t \in [0,T]}\Big\|\Big\|\int_0^1 f^\prime(\lambda \hat{O}_t+ (1-\lambda) \hat{O}_{k_M(t)})\, d\lambda \Big\|_{\cC^{1/2-\eps/2}(\Pi_N)}\Big\|_{L^{2p}(\Omega)}\nonumber \\
    &\quad \quad \quad \times \sup_{t \in [0,T]}\| \|\hat{O}_t-\hat{O}_{k_M(t)}\|_{H^{-1/2+\eps}(\Pi_N)}\|_{L^{2p}(\Omega)}.
\end{align}
The first of the two factors to bound in the above is bounded by the composition estimate Lemma~\ref{lem:composition} with $f'\in C^{1}_{b}$ and \cref{lem:OUreg}, as well as \cref{lem:Theta-Psi} f), as follows
\begin{align}\label{eq:f-bound}
\MoveEqLeft
    \sup_{t \in [0,T]}\Big\|\Big\|\int_0^1 f^\prime(\lambda \hat{O}_t+ (1-\lambda) \hat{O}_{k_M(t)})\, d\lambda \Big\|_{\cC^{1/2-\eps/2}(\Pi_N)}\Big\|_{L^{2p}(\Omega)} \nonumber
    \\&\lesssim \norm{f'}_{C^{1}_{b}}(1+ \norm{\norm{\hat{O}}_{\calC^{1/2-\epsilon/2}(\Pi_N)}}_{L^{2p}(\Omega)})\nonumber 
    \\&\lesssim \norm{f'}_{C^{1}_{b}}(1+ \norm{\norm{O}_{\calC^{1/2-\epsilon/2}(\T)}}_{L^{2p}(\Omega)}+\norm{\norm{\tilde{P}_t^{N}\Theta_Nu_0}_{\calC^{1/2-\epsilon/2}(\Pi_N)}}_{L^{2p}(\Omega)}) 
\end{align}
and we further bound, using the embedding $H^{1}(\Pi_N)\hookrightarrow\calC^{\frac{1}{2}-\frac{\eps}{2}}(\Pi_N)$, the semigroup estimates for $\tilde{P}^{N}$, the embedding $\calC^{1+\eps}(\Pi_N)\hookrightarrow H^{1}(\Pi_N)$ and \cref{lem:Theta-Psi} f),
\begin{align*}
    \norm{\tilde{P}_t^{N}\Theta_Nu_0}_{\calC^{1/2-\epsilon/2}(\Pi_N)}\lesssim \norm{\tilde{P}_t^{N}\Theta_Nu_0}_{H^1(\Pi_N)}\lesssim \norm{\Theta_Nu_0}_{H^1(\Pi_N)}\lesssim \norm{\Theta_N u_0}_{\calC^{1+\eps}(\T)}\lesssim \norm{u_0}_{\calC^{\frac{5}{2}}(\T)}.
\end{align*}
Additionally using Lemma~\ref{lem:Otreg} for $\alpha=1/2-2\epsilon$ for the first summand in the below, the isometry property of $\Psi_N$ and that $P_t \Psi_N=\Psi_N \tilde{P}_t^N$ (see \cref{lem:Theta-Psi}) with semigroup estimates for the second summand in the below, we have
\begin{align}\label{eq:tildeO}
    &\sup_{r \in [0,T]}\| \|\hat{O}_r-\hat{O}_{k_M(r)}\|_{H^{-1/2+\eps}(\Pi_N)}\|_{L^{2p}(\Omega)}\nonumber\\
    &\leqslant \sup_{r \in [0,T]}\| \|\Theta_N O_r-\Theta_N O_{k_M(r)}\|_{H^{-1/2+\eps}(\Pi_N)}\|_{L^{2p}(\Omega)}+\sup_{r \in [0,T]}\| \|\tilde{P}^N_r\Theta_Nu_0-\tilde{P}^N_{k_M(r)}\Theta_Nu_0\|_{L^2(\T)}\|_{L^{2p}(\Omega)}\nonumber\\
    &\leqslant \squeeze[1]{\sup_{r \in [0,T]}\| \|\Theta_N O_r-\Theta_N O_{k_M(r)}\|_{H^{-1/2+\eps}(\Pi_N)}\|_{L^{2p}(\Omega)}+\sup_{r \in [0,T]}\| \|P_r \Psi_N\Theta_Nu_0-P_{k_M(r)}\Psi_N\Theta_Nu_0\|_{L^2(\T)}\|_{L^{2p}(\Omega)}}\nonumber\\
    &\lesssim \max\{N^{-1+\eps},M^{-\frac{1}{2}+\eps}\}+\sup_{r \in [0,T]}(r-k_M(r))\|\|\Psi_N \Theta_N u_0\|_{H^{2}(\T)}\|_{L^{2p}(\Omega)}\nonumber\\
    &\lesssim \max\{N^{-1+\eps},M^{-\frac{1}{2}+\eps}\}+M^{-1}\|\| u_0\|_{\cC^{5/2-\varepsilon}(\T)}\|_{L^{2p}(\Omega)},
\end{align}
where we used \eqref{eq:PsiThetabounded} together with a Besov embedding in the last line.
Hence, inserting the bounds \eqref{eq:f-bound} and \eqref{eq:tildeO} into the estimate \eqref{eq:A-bound}, we see that together with $|v-u|\lesssim M^{-1}$,
\begin{align*} \|\|A_{u,v}\|_{L^2(\Pi_N)}\|_{L^p(\Omega)}&\lesssim (t-v)^{-1/4+\eps/2} (v-u) \max\{N^{-1+\eps},M^{-\frac{1}{2}+\eps}\}\\
&\lesssim (t-v)^{-1/4+\eps/2} (v-u)^{1/2+\eps}M^{-\frac{1}{2}+\eps}\Big(N^{-1+\eps}+M^{-\frac{1}{2}+\eps}\Big)\\
&\lesssim (t-v)^{-1/4+\eps/2} (v-u)^{1/2+\eps}\Big(N^{-2+2\eps}+M^{-1+2\eps}\Big),
\end{align*}
where we used that $ab\leq a^2 +b^2$ for $a, b \in \R$ in the last line. Thus \eqref{eq:sewingassump2} is satisfied in the case of $\abs{v-u}\leq 3 M^{-1}$.
It is left to consider the case $|v-u|\geqslant 3 M^{-1}$. Let $t^\prime=k_{M}(u)+3M^{-1}$. We decompose
\begin{align*}
\int_u^v \tilde{P}_{t-r}^N[f(\hat{O}_r)-f(\hat{O}_{k_M(r)})]\, dr= \int_u^{t^\prime} \tilde{P}_{t-r}^N[f(\hat{O}_r)-f(\hat{O}_{k_M(r)})]\, dr + \int_{t^\prime}^{v} \tilde{P}_{t-r}^N[f(\hat{O}_r)-f(\hat{O}_{k_M(r)})]\, dr.
\end{align*}
The first summand in the above can be bounded like in the first case as by construction $|t^\prime - u|\leqslant 3 M^{-1}$. Hence, it remains to bound the second summand.

By Lemma~\ref{lem:QQN}, we have that $\mathrm{Var}(\Theta_N O_r-\Theta_N P_{r-u} O_u)=Q(r-u)$, since $Q$ does not depend on $x$. Further, we can compute the conditional expectation as follows, using again the rule \eqref{eq:rule} for each fixed $x\in\Pi_N$, together with the assumption that $f$ is of Nemytskii type, to deduce that almost surely
\begin{align*}
    \E_u&\int_{t^\prime}^v \tilde{P}_{t-r}^N\big[f(\hat{O}_r)-f(\hat{O}_{k_M(r)})\big]\, dr\\
    &=\int_{t^\prime}^v \tilde{P}_{t-r}^N \big[(P^\mathbb{R}_{Q(r-u)} f)(Y^N_{u,r}+\tilde{P}_r^N\Theta_N u_0)-(P^\mathbb{R}_{Q(r-u)} f)(Y^N_{u,k_{M}(r)}+\tilde{P}_{k_{M}(r)}^N\Theta_N u_0)\big]\, dr\\
    &\quad +\int_{t^\prime}^v \tilde{P}_{t-r}^N \paren[\Big]{[P^\mathbb{R}_{Q(r-u)} f- P^\mathbb{R}_{Q(k_M(r)-u)} f](Y^N_{u,k_{M}(r)}+\tilde{P}_{k_{M}(r)}^N\Theta_N u_0)}\, dr 
    \\&\eqqcolon E_1+E_2,
\end{align*}
where we employed the simplified notation $Y^N_{u,r}=\Theta_N P_{r-u}O_u$ for $u\leq r$.
We first bound $E_1$. Proceeding similarly as in the first case of the proof, we have
\begin{align*}
    &\|\|E_1\|_{L^2(\Pi_N)}\|_{L^p(\Omega)} \lesssim (t-v)^{-1/4+\eps/2}\times \\
    &\squeeze[1]{\int_{t^\prime}^v\!\!\Big\|\Big\|\int_0^1 \!\!(P^{\mathbb{R}}_{Q(r-u)} f)^\prime (\lambda (Y^N_{u,r}+\tilde{P}_r^N\Theta_N u_0) + (1-\lambda) (Y^N_{u,k_{M}(r)}+\tilde{P}_{k_{M}(r)}^N\Theta_N u_0))\, d\lambda\Big\|_{\cC^{1/2-\eps/2}(\Pi_N)}\Big\|_{L^{2p}(\Omega)}}\\
    &\times \|\|Y^N_{u,r}+\tilde{P}_r^N\Theta_N u_0-Y^N_{u,k_{M}(r)}-\tilde{P}_{k_{M}(r)}^N\Theta_N u_0) \, \|_{H^{-1/2+\eps}(\Pi_N)}\|_{L^{2p}(\Omega)}\,dr.
\end{align*}
 Again, the first factor is bounded as $f \in C^2_b$, the composition estimate from Lemma~\ref{lem:composition} and \eqref{eq:Otimespace} in Assumption~\ref{ass:tildeO}. Recall that for $v \in C(\T)$, $\|\Theta_N v\|_{L^\infty(\Pi_N)}\leqslant \|v\|_{L^\infty(\T)}$ (see Lemma~\ref{lem:Theta-Psi}). Combining this with \eqref{eq:Ot-Os2} for $\beta=1-2\eps$, $\alpha=\frac{1}{2}-2\epsilon$, we find that
\begin{align}\label{eq:semigroupSSL}
  \norm{\norm{Y^N_{u,r}-Y^N_{u,k_M(r)}}_{H^{-1/2+\eps}(\Pi_N)}}_{L^{2p}(\Omega)}
  \lesssim (k_M(r)-u)^{-1/2+\eps}(N^{-2+\eps}+M^{-1+\eps})
\end{align}
Furthermore, by the same arguments as in \eqref{eq:tildeO}, we obtain that
\begin{align*}
    \norm{\tilde{P}_r^N\Theta_N u_0-\tilde{P}_{k_{M}(r)}^N\Theta_N u_0}_{L^2(\Pi_N)}
    &\lesssim (r-k_{M}(r))\norm{u_0}_{\calC^{5/2-\epsilon}(\T)}.
\end{align*}
After integration in $r$ and using that $k_{M}(r)-u\geq (r-u)/2$, we thus find
\begin{align*}
    \|\|E_1\|_{L^2(\Pi_N)}\|_{L^p(\Omega)} 
    \lesssim  (t-v)^{-1/4+\eps/2}(v-u)^{1/2+\epsilon}(N^{-2+\eps}+M^{-1+\eps}).
\end{align*}
We estimate $E_2$ as follows
\begin{align*}
   \norm{\norm{E_2}_{L^2(\Pi_N)}}_{L^{p}(\Omega)}&\leq \int_{t^\prime}^v \|\norm{[P^\mathbb{R}_{Q(r-u)} f-P^\mathbb{R}_{Q(k_M(r)-u)} f](Y^N_{u,k_{M}(r)}+\tilde{P}_{k_{M}(r)}^N\Theta_N u_0)}_{L^{2}(\Pi_N)}\|_{L^p(\Omega)} \, dr
   \\&\lesssim \int_{t^\prime}^v \norm{P^\mathbb{R}_{Q(r-u)} f-P^\mathbb{R}_{Q(k_M(r)-u)} f}_{C^{0}_b(\R)} \, dr
\end{align*}
and we use the semigroup estimates for $P^{\R}$ from \eqref{eq:R-semigroup}, as well as the bounds for $Q$ from \cref{lem:QQN} to bound 
\begin{align*}
    \norm{P^\mathbb{R}_{Q(r-u)} f-P^\mathbb{R}_{Q(k_M(r)-u)} f}_{C^{0}_b(\R)} &\lesssim (Q(r-u)-Q(k_{M}(r)-u)) \norm{f}_{C^{2}_{b}(\R)}
    \\&\lesssim (r-k_{M}(r))^{1-\epsilon}(k_{M}(r)-u)^{-1/2+\epsilon}.
\end{align*}
After integration in $r$ and using that $k_{M}(r)-u\geq (r-u)/2$, we thus find
\begin{align*}
    \|\|E_2\|_{L^2(\Pi_N)}\|_{L^p(\Omega)} 
    \lesssim  M^{-1+\epsilon}(v-u)^{1/2+\epsilon}.
\end{align*}
Together we obtain \eqref{eq:sewingassump2} for $\delta_2=\frac{1}{4}-\frac{\epsilon}{2}$, $\Gamma_2= C(T,\epsilon, p, \mathcal{M}, K) (N^{-2+2\eps}+M^{-1+2\eps}) $, $\epsilon_2=\epsilon$.
Thus, we can apply the stochastic sewing lemma to the germ $A_{u,v}$. Since $f$ is bounded, by arguments as in \cref{lem:space-ss}, the corresponding process $\mathcal{A}$ is given by $\mathcal{A}_u=\int_{s}^{u}\tilde{P}^{N}_{t-r} \big[f(\hat{O}_r) - f (\hat{O}_{k_M(r)})\big]\, dr$. Thus we obtain from \eqref{eq:resultSewing},
\begin{align*}
    \paren[\bigg]{\E\norm[\bigg]{\int_s^t \tilde{P}_{t-r}^N \big[f(\hat{O}_r) - f (\hat{O}_{k_M(r)})\big]\, dr}_{L^2(\Pi_N)}^p}^{1/p}\leqslant C (t-s)^{\frac{1}{4}+\frac{3\eps}{2}}(N^{-2+2\eps}+M^{-1+2\eps}).
\end{align*}
An application of a version of Kolmogorov's continuity theorem, Proposition~\ref{prop:vKolmogorov}, yields the result.
\end{proof}

\begin{corollary}\label{cor:Girsanovtemporal}
Let $p\geqslant 1$, $\epsilon\in (0,1/2)$. Then there exists a constant $C=C(\epsilon, p, T, K, \mathcal{M})$, such that 
\begin{align*}
    \paren[\bigg]{\E\sup_{t\in[0,T]}\norm[\bigg]{\int_0^t \tilde{P}_{t-r}^N \big[f(v_r^N+\Theta_N O_r) - f (v_r^N+\Theta_N O_{k_M(r)})\big]\, dr}_{L^2(\Pi_N)}^p}^{1/p}\leqslant C (N^{-2+\eps}+M^{-1+\eps}).
\end{align*}
\end{corollary}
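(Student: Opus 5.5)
The plan is to reduce the statement to \cref{lem:temp-ss} by a Girsanov argument, exactly as \cref{cor:girsanov} was deduced from \cref{lem:space-ss}. The Girsanov argument only works for a functional of the single path $Z=v^N+\Theta_N O$. The integrand in the statement does not have this form, since it carries $v^N_r$ in both terms while the noise is frozen at $k_M(r)$. So I would first split
\begin{align*}
f(v_r^N+\Theta_N O_r) - f (v_r^N+\Theta_N O_{k_M(r)})
&=\big[f(Z_r)-f(Z_{k_M(r)})\big]\\
&\quad+\big[f(v^N_{k_M(r)}+\Theta_N O_{k_M(r)})-f(v^N_r+\Theta_N O_{k_M(r)})\big],
\end{align*}
with $Z_r:=v^N_r+\Theta_N O_r$.

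The second bracket is handled deterministically. The global Lipschitz bound on $f$ and the contractivity of $\tilde P^N$ on $L^2(\Pi_N)$ (\cref{lem:semigroup1}) reduce it to bounding $\sup_r\|v^N_r-v^N_{k_M(r)}\|_{L^2(\Pi_N)}$ by $M^{-1+\eps}$. From the mild form,
\[
v^N_r-v^N_{k}=(\tilde P^N_{r-k}-\Id)v^N_k+\int_k^r\tilde P^N_{r-s}f(\cdot)\,ds .
\]
The integral is $O(h)$ because $f$ is bounded. For the first term I would use \cref{lem:semigroup2} with $\delta=2-2\eps$. This needs $\|v^N_k\|_{H^{2-2\eps}(\Pi_N)}$ to be bounded uniformly in $N$ in every moment. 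That bound follows from the mild form: the initial datum is handled by $\tilde P^N$ together with \cref{lem:operator} and \cref{asn:u0}. The Duhamel term is handled by $\|\tilde P^N_{k-s}g\|_{H^{2-2\eps}}\lesssim (k-s)^{-1+\eps}\|g\|_{L^2}$ with $\|f\|_\infty\le K$, and this singularity is integrable.

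For the first bracket I would repeat the Girsanov step. Set $D_t=f(Z_t)$, which is bounded deterministically, and pass through $\Psi_N$, using that it is an isometry intertwining $\tilde P^N$ and $P$ (\cref{lem:Theta-Psi}). Then $\xi+\Psi_N D\,dy\,ds$ is a space-time white noise under an equivalent measure $\mathbb{Q}$ whose density has all moments. Under this change, the path $\Theta_N O+v^N$ is mapped to $\Theta_N O+\tilde P^N_\cdot\Theta_N u_0$, because $\Theta_N\int_0^\cdot P_{\cdot-s}\Psi_N D_s\,ds=\int_0^\cdot\tilde P^N_{\cdot-s}D_s\,ds$. Apply the analogue of \eqref{eq:Girsanov-ineq} to the functional
\[
g(Z)=\sup_{t\in[0,T]}\Big\|\int_0^t\tilde P^N_{t-r}\big[f(Z_r)-f(Z_{k_M(r)})\big]dr\Big\|_{L^2(\Pi_N)}.
\]
Together with Cauchy--Schwarz and \cref{lem:temp-ss} with exponent $2p$, this gives the bound $N^{-2+\eps}+M^{-1+\eps}$.

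The main obstacle is the uniform-in-$N$ $H^{2-2\eps}(\Pi_N)$ regularity of $v^N$, which \cref{lem:wpvNzN} does not provide. I would prove it directly from the mild form as above, since with bounded $f$ only $L^2$ control of the forcing is needed. A smaller point to check is that the Girsanov change of measure is legitimate for the finite-dimensional process $\Theta_N O$. It is, because $\Theta_N O$ is a measurable functional of $\xi$ and $\Psi_N D$ is an adapted, bounded drift on $\T$.
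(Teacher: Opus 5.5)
Your proposal is correct. Its core, the Girsanov reduction of the first bracket to \cref{lem:temp-ss}, is exactly the paper's proof, which reruns \cref{cor:girsanov} with $g(Z)=\sup_t\|\int_0^t\tilde P^N_{t-r}[f(\Theta_N Z_r)-f(\Theta_N Z_{k_M(r)})]\,dr\|_{L^2(\Pi_N)}^p$.

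The one real difference is what gets proved. Evaluated at $Z=O+z^N$, the paper's functional controls the integrand with $v^N_{k_M(r)}$ in the second term. That is the form actually used in \cref{lem:temporal}, not the displayed one with $v^N_r$ in both terms, and the paper does not address this mismatch. Your split, together with the deterministic bound $\sup_r\|v^N_r-v^N_{k_M(r)}\|_{L^2(\Pi_N)}\lesssim M^{-1+\eps}(1+\|u_0\|_{\calC^{5/2-\eps}(\T)})$, closes the gap correctly. That bound comes from the mild form via \cref{lem:semigroup2}, \eqref{eq:PsiThetabounded} (valid since $2-2\eps>1$) and $\|f\|_\infty\le K$.

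So your argument establishes the statement as literally written, while the paper's establishes the variant it later needs. Your extra step costs only the elementary uniform $H^{2-2\eps}(\Pi_N)$ bound on $v^N$.
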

\begin{proof}
The proof follows that of \cref{cor:girsanov}, we only change the choice of $g$ to:
\begin{align}
    g(Z)\coloneqq \sup_{t\in[0,T]}\norm[\bigg]{\int_0^t \tilde{P}_{t-r}^N \big[f(\Theta_N Z_r) - f (\Theta_N Z_{k_M(r)})\big]\, dr}_{L^2(\Pi_N)}^p. 
\end{align}
By \eqref{eq:Girsanov-ineq} and \cref{lem:temp-ss} we get the claim.
\end{proof}

\begin{lemma}\label{lem:temporal}
Let $p\geqslant 1$, $\epsilon\in (0,1/2)$. Then there exists a constant $C=C(\epsilon, p, T, K, \mathcal{M})$, such that 
\begin{align*}
    (\E \sup_{t\in[0,T]}\norm{v^N_t-V^{M,N}_t}_{L^2(\Pi_N)}^p)^{1/p}\leq C (N^{-2+\eps}+M^{-1+ \eps}).
\end{align*}
\end{lemma}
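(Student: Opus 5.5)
Writing both processes in mild form, I will split the error for each $t\in[0,T]$ as already indicated at the start of \cref{sec:temporalerror}:
\begin{align*}
\|v^N_t-V^{M,N}_t\|_{L^2(\Pi_N)}&\le \Big\|\int_0^t \tilde P^N_{t-s}\big[f(v^N_s+\Theta_N O_s)-f(v^N_{k_M(s)}+\Theta_N O_{k_M(s)})\big]ds\Big\|_{L^2(\Pi_N)}\\
&\quad+K\int_0^t \|v^N_{k_M(s)}-V^{M,N}_{k_M(s)}\|_{L^2(\Pi_N)}\,ds .
\end{align*}
The second term uses the $L^2$-contraction of $\tilde P^N$ from \cref{lem:semigroup1} and the global Lipschitz bound on $f$. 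Taking $\sup_{t\le T'}$ and $L^p(\Omega)$ norms, it is at most $K\int_0^{T'}(\E\sup_{r\le s}\|v^N_r-V^{M,N}_r\|^p)^{1/p}ds$. So, once the first term is shown to be $O(N^{-2+\eps}+M^{-1+\eps})$ uniformly in $t$, Grönwall's inequality finishes the proof.

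For the first term I split further into the ``noise increment'' part and the ``drift increment'' part:
\begin{align*}
&f(v^N_s+\Theta_N O_s)-f(v^N_{s}+\Theta_N O_{k_M(s)}) &&\text{(noise increment)}\\
&f(v^N_s+\Theta_N O_{k_M(s)})-f(v^N_{k_M(s)}+\Theta_N O_{k_M(s)}) &&\text{(drift increment)}
\end{align*}
The noise-increment integral is exactly the quantity controlled by \cref{cor:Girsanovtemporal}, which in turn rests on \cref{lem:temp-ss} and Girsanov. It gives the bound $C(N^{-2+\eps}+M^{-1+\eps})$ for the supremum over $t$.

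For the drift-increment part, the Lipschitz bound alone gives $K\|v^N_s-v^N_{k_M(s)}\|_{L^2(\Pi_N)}$. This would require temporal regularity of $v^N$ of order almost $1$. The paper explicitly says it does not upgrade the regularity of $v^N$, so I would instead prove directly an $L^p(\Omega)$ bound
\[
\|v^N_s-v^N_{k_M(s)}\|_{L^2(\Pi_N)}\lesssim M^{-1+\eps}.
\]
By \cref{lem:Theta-Psi} d), $\Psi_N$ is an isometry and $v^N_s-v^N_{k_M(s)}=(\tilde P^N_{s-k_M(s)}-\Id)v^N_{k_M(s)}+\int_{k_M(s)}^s\tilde P^N_{s-r}f(\cdot)\,dr$. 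The integral is trivially $O(M^{-1})$ by boundedness of $f$. For the first piece, \cref{lem:semigroup2} gives $\lesssim M^{-1+\eps}\|v^N_{k_M(s)}\|_{H^{2-2\eps}(\Pi_N)}$. I therefore need an a priori bound on $\sup_r\|v^N_r\|_{H^{2-2\eps}(\Pi_N)}$ in $L^p$. This comes from the mild form:
\begin{itemize}
\item the initial datum is handled by \eqref{eq:PsiThetabounded} and \cref{asn:u0};
\item the convolution is handled by \eqref{eq:tildeP1} with smoothing exponent $2-2\eps-\theta$ from $\|f(v^N_r+\Theta_N O_r)\|_{H^{\theta}(\Pi_N)}$, $\theta\in(0,1/2)$.
\end{itemize}
By the discrete composition estimate of \cref{lem:composition}, the latter norm is $\lesssim 1+\|v^N_r\|_{H^\theta}+\|\Theta_N O_r\|_{H^\theta}$, where $\|\Theta_N O_r\|_{H^\theta}$ is controlled by \cref{lem:OUreg} and \cref{lem:Theta-Psi} f). Taking $\theta=1/4$, say, the singularity $(t-r)^{-1+\eps+\theta/2}$ is integrable. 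A preliminary bound on $\|v^N_r\|_{H^{\theta}}$ follows by the same argument from the $L^2$ bound of \cref{lem:wpvNzN} (smoothing from $L^\infty$, as $f$ is bounded), so a two-step bootstrap suffices.

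The main obstacle is this drift-increment step: extracting the full rate $M^{-1+\eps}$ from the increment of $v^N$ requires almost two spatial derivatives of $v^N$ uniformly in $N$, which must come from the discrete regularity estimates. If the $H^{2-2\eps}$ bound turned out delicate, I would instead feed the drift increment into the Grönwall structure. Summing the contributions and applying Grönwall then yields the claim.
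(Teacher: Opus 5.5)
Your argument is correct. Its skeleton matches the paper's: a Lipschitz term closed by Grönwall, and a time-increment term handled by the Girsanov/sewing corollary. The difference is that you split the time-increment term once more, and the paper does not need to.

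In the paper, the proof of \cref{cor:Girsanovtemporal} applies \eqref{eq:Girsanov-ineq} to the functional built from $f(\Theta_N Z_r)-f(\Theta_N Z_{k_M(r)})$ with $Z=O+z^N$. It therefore controls the \emph{full} increment $f(v^N_r+\Theta_N O_r)-f(v^N_{k_M(r)}+\Theta_N O_{k_M(r)})$, and that is the form used in the paper's proof of this lemma. The $v^N_r$ in the second term of the corollary's statement should read $v^N_{k_M(r)}$.

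The full increment is what the method naturally handles. After the change of measure, $v^N$ is replaced by $\tilde P^N_\cdot\Theta_N u_0$, whose time increments are already absorbed in \cref{lem:temp-ss} (the $M^{-1}\|u_0\|$ term in \eqref{eq:tildeO}). So the paper's proof is simply this corollary plus Grönwall, with no regularity theory for $v^N$.

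Your noise-only increment is not what the Girsanov reduction delivers directly. Writing $O_r-O_{k_M(r)}=(Z_r-Z_{k_M(r)})-(z^N_r-z^N_{k_M(r)})$, it differs from the full increment by exactly a drift-increment term. So in your route the real work is done by the full-increment bound together with your drift estimate. This is fine, because your drift estimate is correct.

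The drift estimate is also easier than you expect. Since $f$ is bounded, \eqref{eq:tildeP1} with $\alpha=0$, $\delta=2-2\eps$ gives directly
$\sup_t\|v^N_t\|_{H^{2-2\eps}(\Pi_N)}\lesssim \|u_0\|_{H^{2-2\eps}(\T)}+K\int_0^t(t-r)^{-1+\eps}\,dr$,
using \eqref{eq:PsiThetabounded} for the initial datum. No composition estimate or bootstrap is needed. Then \cref{lem:semigroup2} gives $\|v^N_s-v^N_{k_M(s)}\|_{L^2(\Pi_N)}\lesssim M^{-1+\eps}$.

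Your route buys an explicit, elementary treatment of the drift part, and in effect also proves the corollary as literally stated. The paper's route buys brevity and avoids any a priori regularity of $v^N$.
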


\begin{proof}
Recall that
\begin{align*}
 v^N_t-&V^{M,N}_t=\int_0^t \tilde{P}_{t-s}^N \big[f(v_s^N +\Theta_N O_s) - f (v^N_{k_M(s)}+ \Theta_N O_{k_M(s)})\big]\, ds\\
    &\quad + \int_0^t \tilde{P}_{t-s}^N \big[ f(v^N_{k_M(s)} + \Theta_N O_{k_M(s)})- f(V^{M,N}_{k_M(s)} + \Theta_N O_{k_M(s)})\big]\, ds.
\end{align*}
After applying a version of Gr\"onwall's lemma \cite[Proposition 3.4]{DjGK}, which is applicable since $f$ is globally Lipschitz and \cref{cor:Girsanovtemporal}, we get
\begin{align*}
    \|&\sup_{t \in [0,T]}\|v_t^N-V_t^{M,N}\|_{L^2(\Pi_N)}\|_{L^p(\Omega)}\\
    &\leqslant C \Big\|\Big\|\sup_{t \in [0,T]}\int_0^t \tilde{P}_{t-s}^N \big[f(v_s^N +\Theta_N O_s) - f (v^N_{k_M(s)}+ \Theta_N O_{k_M(s)})\big]\, ds\Big\|_{L^2(\Pi_N)}\Big\|_{L^p(\Omega)}\\
    &\leqslant C (N^{-2+\eps}+M^{-1+\eps}).\qedhere
\end{align*}
\end{proof}

\begin{proof}[Proof of \cref{thm:main1}]
    The result follows by Lemma~\ref{lem:approximation}, Lemma~\ref{lem:spatial},  Lemma~\ref{lem:temporal} and Assumption~\ref{ass:tildeO} \ref{en:b} due to the error decomposition in \eqref{eq:errordecomposed}.
\end{proof}

\begin{remark}\label{rem:rates}
    Comparing the two stochastic sewing arguments in the proofs of \cref{lem:space-ss} and \cref{lem:temp-ss}, respectively, one can notice the subtle difference leading to the error rates not showing a parabolic scaling.
    Indeed, the integrand in the spatial error estimate in the proof of \cref{lem:space-ss} is estimated in $L^2(\T)$ and not in a weaker space, which is an effect of the restrictions on the exponents in \cref{lem:operator} (as opposed to, for example \eqref{eq:projection-bound}).
    In contrast, the integrand after the semigroup in the proof of \cref{lem:temp-ss} is estimated in $H^{-1/2+\eps}(\Pi_N)$, which is possible thanks to the regularity bounds on $\Theta_N O$ from \cref{lem:Otreg}. This extra room of regularity is leveraged via stochastic sewing to a temporal rate that is better than half of the spatial one.
\end{remark}

\section{Nonlinearity with superlinear growth}\label{bigsec:superlinear}
Throughout the section \cref{ass:tildeO}, \cref{asn:u0}, and \cref{ass2} are imposed.

Although the proof of Theorem \ref{thm:main2} shares some features with that of Theorem \ref{thm:main1}, the implementation of the strategy is quite a bit more involved. Several further intermediate processes will be introduced, which we first list and briefly describe for the reader's convenience. Next, we will define those processes in detail and provide the main error decomposition.
\begin{itemize}
    \item The process $v^M$ is the solution of the continuum equation with modified nonlinearity $g_h=g_{T/M}$ from \eqref{eq:gh} in place of $f$ and noise $O$.
    \item The process $w^{M,N}$ is the spatially discrete approximation of $v^M$ and $w^{N}$ the spatially discrete approximation of $v$ with nonlinearity $f$ and noise $O$.
    \item The process $z^{M,N}$ is the continuum extension of $w^{M,N}$. 
    \item To define $\mathbf{z}^{M,N}$, we replace in the mild form of $z^{M,N}$ the operator $\Psi_N P^N$ by $P\Psi_N$. Unlike in the case of the semigroup $\tilde P^N$, here these two operators are different.
    \item To define $\hat{\mathbf{z}}^{M,N}$, we remove in the mild form of $\mathbf{z}^{M,N}$ all operators relating to the grid.
\end{itemize}

To make the above more precise, define $v^M$ as the solution of
\begin{equ}
    \label{eq:step1}
     \partial_t v^M=\Delta  v^M+ g_h( v^M+O), \qquad  v^M(0,x)=u_0(x).
\end{equ}
In mild form, the equation reads as
\begin{equ}
     v^M_t=P_t u_0+\int_0^t P_{t-s}\big(g_h( v^M_s+O_s)\big)\,ds.
\end{equ}
Define $w^{M,N}$ as the spatial discretisation for the equation for $v^M$,  which solves
\begin{equ}\label{eq:step2}
    \partial_t w^{M,N}=\Delta_N w^{M,N}+g_h(w^{M,N}+\Theta_N O), \qquad w^{M,N}(0,x)=\Theta_N u_0(x).
\end{equ}
In mild form, the equation reads as
\begin{equ}
   w^{M,N}_t= P^N_t (\Theta_N u_0)+\int_0^t  P^N_{t-s}\Big( g_h(w^{M,N}_s+\Theta_N O_s)\Big)\,ds.
\end{equ}
Define the extension of $w^{M,N}$ by $z^{M,N}\coloneqq\Psi_N w^{M,N}$. By definition it satisfies
\begin{equ}
    z^{M,N}_t=\Psi_N P_t^N(\Theta_Nu_0)+\int_0^t\Psi_N P_{t-s}^N\Big(\Theta_N \big(g_h( z^{M,N}_s+O_s)\big)\Big)\,ds.
\end{equ}
For the well-posedness of the equations for $v^M$ and $w^{M,N}$ (and thus $z^{M,N}$) and suitable a priori estimates, we refer to \cref{lem:wpvM} and \cref{lem:apriori-WMN} below.\\
The expression for $z^{M,N}$ does not correspond to any continuum equation, so that we introduce two auxiliary processes $\mathbf{z}^{M,N}$ and $\hat{\mathbf{z}}^{M,N}$ that correspond to continuum equations and are therefore easier to compare to $v^M$. Set
\begin{equ}
    \mathbf{z}^{M,N}_t= P_t(\Psi_N\Theta_Nu_0)+\int_0^tP_{t-s}\Big(\Psi_N\Theta_N \big(g_h( z^{M,N}_s+O_s)\big)\Big)\,ds.
\end{equ}
and
\begin{equ}
    \hat{\mathbf{z}}^{M,N}_t= P_t(\Psi_N\Theta_Nu_0 )+\int_0^tP_{t-s}\big(g_h( z^{M,N}_s+O_s)\big)\,ds.
\end{equ}
Recall the splitting scheme $\tilde{X}^{M,N}$ from \eqref{eq:scheme1-mild} and the auxiliary scheme $X^{M,N}$ from \eqref{eq:auxscheme}.
The decomposition of the error in Theorem~\ref{thm:main2} then takes the following form:
\begin{equs}
\MoveEqLeft
    \|\Theta_N u_t-(\tilde{X}_t^{M,N}+\Theta_N O_t)\|_{L^2(\Pi_N)}
    \\&\leqslant \|\Theta_N v_t - X^{M,N}_t\|_{L^2(\Pi_N)}+ \norm{X^{M,N}-\tilde{X}^{M,N}}_{L^2(\Pi_N)}+\|\Theta_N(O_t-\tilde{O}_t)\|_{L^2(\Pi_N)}\\
    &=\|\Psi_N\Theta_N v_t -\Psi_N X^{M,N}_t \|_{L^2(\T)}+ \norm{X^{M,N}-\tilde{X}^{M,N}}_{L^2(\Pi_N)}+\|\Theta_N(O_t-\tilde{O}_t)\|_{L^2(\Pi_N)}
    \\
    &\leq 
\|(\Psi_N\Theta_N -\Id) v_t\|_{L^2(\T)}+\|v_t-v^M_t\|_{L^2(\T)}+ \|v^M_t-\hat{\mathbf{z}}^{M,N}_t\|_{L^2(\T)}    \\
    &\quad+ \|\hat{\mathbf{z}}^{M,N}_t-\mathbf{z}^{M,N}_t\|_{L^2(\T)}+\|\mathbf{z}^{M,N}_t-z^{M,N}_t\|_{L^2(\T)}
    +\|z^{M,N}_t -\Psi_N X^{M,N}_t\|_{L^2(\T)}\nonumber\\
    &\quad+ \norm{X^{M,N}-\tilde{X}^{M,N}}_{L^2(\Pi_N)}+\|\Theta_N(O_t-\tilde{O}_t)\|_{L^2(\Pi_N)}\label{BIG-decomposition}.
\end{equs}
The terms in the last line on the right-hand-side in \eqref{BIG-decomposition} are error terms coming from the noise replacement $O$ by $\tilde{O}$. We bound them by \cref{ass:tildeO} and \cref{lem:XtildeX} in the end of \cref{subsec:apriori}.
In Section~\ref{subsec:spatial}, we bound the spatial error terms in \eqref{BIG-decomposition} and in Section~\ref{subsec:temporal} the temporal error terms. We conclude with a proof of \cref{thm:main2} in the very end of \cref{subsec:temporal}. But first, we collect all needed a priori estimates in \cref{subsec:apriori} below.

\subsection{A priori estimates and approximation error}\label{subsec:apriori}

We start by showing well-posedness and a priori estimates for the corresponding solutions to \eqref{eq:step1}, \eqref{eq:step2} and \eqref{eq:bfvN}.
\begin{lemma} \label{lem:wpvM}
    There exists a unique mild solution to \eqref{eq:step1}, which is also a weak solution in the PDE sense.
    Moreover for all $p\geq 1$ and $\eps>0$, $\lambda\in (0,2]$, there exists a constant $C=C(p,\eps,\lambda, T,K,m,\mathscr{M})$, that does not depend on $M$, such that
    \begin{align}
    \big\|\|v^M\|_{C_T^{\lambda/2}\cC^{5/2-\eps-\lambda}(\T)}\big\|_{L^p(\Omega)}&\leq C,\label{eq:aprioritildevM}
    \end{align}
\end{lemma}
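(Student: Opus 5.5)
The natural idea is to view $u^M:=v^M+O$ as the mild solution of the stochastic equation $\partial_t u^M=\Delta u^M+g_h(u^M)+\xi$, $u^M_0=u_0$. By \cref{lem:g_h-bounds}, $g_h$ satisfies exactly the structural conditions of \cref{ass2}, namely \eqref{eq:ghgrowth} with constants $\tilde K,\tilde m$ and the one-sided bound $\partial g_h\le K$, uniformly in $h\in(0,1]$. Two consequences follow. First, well-posedness (existence and uniqueness of a mild solution) comes from the same classical result invoked in \cref{prop:wp}, e.g. \cite[Proposition 6.2.2]{Cerrai}. Second, the regularity bound for $u^M$ in $C_T^{\lambda/2}\calC^{1/2-\lambda-\eps}$ holds with a constant depending only on $(T,p,\lambda,\eps,\tilde m,\tilde K)$ and moments of $u_0$, hence uniformly in $M$. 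The point I would verify carefully is that the constants in \cite[Proposition 3.3]{DjGK} depend on the nonlinearity only through its growth and one-sided Lipschitz constants. If one prefers a self-contained argument, I would redo the a priori $L^\infty$ estimate for $v^M$ directly. This uses the energy/maximum-principle argument for $\partial_t v^M=\Delta v^M+g_h(v^M+O)$: test with $|v^M|^{q-2}v^M$ and use $(g_h(v+O)-g_h(O))v\le K|v|^2$ together with the polynomial bound on $g_h(O)$. This gives $\sup_t\|v^M_t\|_{L^q}\lesssim 1+\|u_0\|_\infty+\sup_t\|O_t\|_\infty^{2\tilde m+1}$, uniformly in $q$ and hence in $L^\infty$, with all moments finite by \cref{lem:OUreg}.

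The weak-solution property is routine. Once $v^M$ is known to be bounded and Hölder, the map $s\mapsto g_h(v^M_s+O_s)$ is in $L^\infty([0,T]\times\T)$, and the mild formula with such a forcing yields a weak solution by testing against smooth functions.

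For the higher regularity \eqref{eq:aprioritildevM}, I would copy the bootstrap in the proof of \prefcref{prop:wp}. By \eqref{eq:comp-B2} applied to $g_h$ with $\kappa=1$, $\theta=1/2-\eps$ and polynomial degree $2\tilde m+1$, we get
\[
\|g_h(u^M_r)\|_{\calC^{1/2-\eps}}\lesssim 1+\|u^M\|_{C_T\calC^{1/2-\eps}}^{2\tilde m+2},
\]
uniformly in $h$. I would then split $v^M_t-v^M_s$ into the three terms $(P_t-P_s)u_0$, $\int_s^tP_{t-r}g_h(u^M_r)\,dr$, and $\int_0^sP_{s-r}(P_{t-s}-\Id)g_h(u^M_r)\,dr$. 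Applying \eqref{eq:heatkernelH} and \eqref{eq:heatalpha2} yields the gain from $1/2-\eps$ to $5/2-\eps-\lambda$ at temporal cost $(t-s)^{\lambda/2}$, exactly as in \prefcref{prop:wp}. The sup-norm part is handled the same way with $s=0$. Taking $p$-th moments and using the uniform moment bounds on $u^M$ from the first step then concludes.

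The main obstacle is ensuring uniformity in $M$: every constant must pass through $g_h$ only via the $h$-independent bounds of \cref{lem:g_h-bounds}. A subtlety is that \eqref{eq:ghgrowth} only holds for $h\le 1$, so for the finitely many $M<T$ I would either restrict to $M\ge T$ or absorb those cases into the constant.
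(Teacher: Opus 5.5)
Your proposal is correct and follows essentially the paper's route. The paper applies \cref{prop:wp} with $g_h$ in place of $f$, uses \cref{lem:g_h-bounds} to get growth and one-sided Lipschitz constants independent of $M$, and cites \cite{bell} for the mild-to-weak equivalence. Your extra details (the explicit bootstrap, the self-contained $L^q$ energy bound, and the caveat that \cref{lem:g_h-bounds} is only stated for $h\le 1$) are sound refinements of points the paper leaves implicit.
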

\begin{proof}
The wellposedness and bound \eqref{eq:aprioritildevM} follows from \cref{prop:wp} by changing the nonlinearity $f$ to $g_h$. Due to \cref{lem:g_h-bounds}, $g_h$ respects similar one-sided Lipschitz and growth bounds as $f$ with constants $K,\tilde{K}$ that do not depend on $M=h^{-1}$. The equivalence with weak solutions follows from \cite{bell}. 
\end{proof}

\begin{lemma}\label{lem:apriori-WMN}
    There exist unique mild (and weak) solutions to the equations \eqref{eq:step2} for $w^{M,N}$ and \eqref{eq:bfvN} for $w^N$. Furthermore the solutions fulfill the following regularity bounds. For any $p, q\geq 1$ and $\eps>0$ there exists a constant $C=C(p,q,T,\eps,K,m,\mathscr{M})$, that does not depend on $M,N$, such that the following bound holds:
    \begin{align}
        &
\big\|\sup_{t\in[0,T]}\|w^{M,N}_{t}\|_{L^{q}(\Pi_N)}\big\|_{L^p(\Omega)}\leq C,
\\&
\big\|\sup_{t\in[0,T]}\|w^{N}_{t}\|_{L^{q}(\Pi_N)}\big\|_{L^p(\Omega)}\leq C.
    \end{align}
\end{lemma}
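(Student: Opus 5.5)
We will handle $w^{M,N}$ and $w^N$ at the same time. The only difference between them is the nonlinearity, $g_h$ versus $f$, and by \cref{lem:g_h-bounds} both satisfy the one-sided Lipschitz bound \eqref{eq:g-bound1} and the polynomial growth bound \eqref{eq:ghgrowth} with constants $K,\tilde K,\tilde m$ that do not depend on $h$.

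\textbf{Well-posedness.} For fixed $N$, equation \eqref{eq:step2} is a finite dimensional ODE on $C(\Pi_N)\cong\R^N$. The drift $w\mapsto \Delta_N w+g_h(w+\Theta_N O_t)$ is locally Lipschitz and continuous in $t$, since $O$ is continuous. This gives a unique local solution. It is global once we have the a priori bound below, which rules out blow-up. The mild form is the variation of constants formula, and weak and mild solutions coincide because the system is finite dimensional.

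\textbf{A priori bound.} We first remove the noise from the nonlinearity in the energy estimate. Write $\omega_t=w_t+\Theta_NO_t$ and test the equation for $w$ with $|w|^{q-2}w$, taking $q\ge2$ even without loss of generality. This gives
\[
\tfrac1q\tfrac{d}{dt}\|w_t\|_{L^q(\Pi_N)}^q=\langle |w|^{q-2}w,\Delta_N w\rangle+\langle |w|^{q-2}w, g(w+\Theta_NO_t)\rangle .
\]
By \cref{lem:eigenvalues} c) the first term is $\le0$. For the second term we split
\[
g(w+\Theta_NO)=\big[g(w+\Theta_NO)-g(\Theta_NO)\big]+g(\Theta_NO).
\]
The bracket is handled by the one-sided bound \eqref{eq:g-bound1} applied with $x=w+\Theta_NO$ and $y=\Theta_NO$. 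Since $x-y=w$, this gives pointwise $w\,[g(w+\Theta_NO)-g(\Theta_NO)]\le K|w|^2$, so its contribution is at most $K\|w\|_{L^q}^q$. For the remaining term we use \eqref{eq:ghgrowth} and Young's inequality:
\[
\langle |w|^{q-2}w,g(\Theta_NO)\rangle\le \|w\|_{L^q}^q+C\big(1+\|\Theta_NO_t\|_{L^\infty(\Pi_N)}^{(2\tilde m+1)q}\big).
\]
Grönwall's inequality then yields
\[
\sup_{t\le T}\|w_t\|_{L^q(\Pi_N)}^q\lesssim \|\Theta_Nu_0\|_{L^q(\Pi_N)}^q+1+\sup_{t\le T}\|O_t\|_{L^\infty(\T)}^{(2\tilde m+1)q}.
\]
On the right-hand side we use $\|\Theta_N\cdot\|_{L^\infty(\Pi_N)}\le\|\cdot\|_{L^\infty(\T)}$ from \cref{lem:Theta-Psi} f). We then take $L^{p/q}(\Omega)$ norms, bound $O$ with \cref{lem:OUreg}, and bound $u_0$ with \cref{asn:u0}. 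The case of general $q\ge1$ follows from Jensen's inequality on the probability space $\Pi_N$.

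\textbf{Main obstacle.} The delicate point is justifying the energy identity rigorously and making sure no constant depends on $N$ or $M$. The justification is unproblematic here: $t\mapsto\|w_t\|_{L^q}^q$ is $C^1$ because $w$ is a classical ODE solution, so no regularisation is needed. Uniformity in $N$ and $M$ comes from two facts. First, \cref{lem:eigenvalues} c) holds uniformly in $N$. Second, \cref{lem:g_h-bounds} gives bounds uniform in $h\le1$; for $h>1$, that is $M<T$, there are only finitely many values of $M$, each of which can be treated directly. Finally, the statement requires a $\sup$ over $t$ inside the expectation. This causes no difficulty, because the Grönwall bound is pathwise and the only random input, $\sup_t\|O_t\|_{L^\infty}$, has all moments.
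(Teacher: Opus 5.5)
Your proposal is correct and follows essentially the same route as the paper. Both test the equation with $w^{q-1}$ for even $q$ and discard the discrete Laplacian term: you cite \cref{lem:eigenvalues} c), while the paper checks it directly by summation by parts with $\Delta_N=\partial^-\partial^+$. Both then split $g_h(w+\Theta_N O)=[g_h(w+\Theta_N O)-g_h(\Theta_N O)]+g_h(\Theta_N O)$ and use the one-sided Lipschitz bound, Young's inequality and the growth bound, before closing with Gr\"onwall and the moment bounds on $O$ and $u_0$. Your extra remarks are harmless refinements the paper glosses over: reducing general $q$ to large even $q$ by Jensen on $\Pi_N$, and noting that \cref{lem:g_h-bounds} only covers $h\le 1$.
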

\begin{proof}
We only prove the result for $w^{M,N}$, since the one for $w^{N}$ is analogue replacing the nonlinearity $g_h$ by $f$, since $g_h$ and $f$ satisfy similar growth and one-sided Lipschitz bounds by \cref{lem:g_h-bounds}.\\
The well-posedness follows since the finite-dimensional PDE collapses to a system of ODEs with one-sided Lipschitz nonlinearity $g_h$, for which the wellposedness is classical.
Next, we prove the energy bounds on $w^{M,N}$.
    Recall $\tilde{m}$ introduced in Lemma~\ref{lem:g_h-bounds}. Let $q$ be an even integer greater than $2(2\tilde m+1)$.
    Recall that $\Delta_N=\partial^{-}\partial^{+}$ with the discrete derivatives defined in \eqref{eq:discrete-derivatives}. Since $-\partial^+$ is the adjoint of $\partial^-$ with respect to the scalar product $\scal{\cdot,\cdot}_{L^2(\Pi_N)}$,
    by the chain rule we get
    \begin{equs}
        \partial_t\|w^{M,N}_t\|_{L^q(\Pi_N)}^q&=-q\scal{\partial^+\big((w^{M,N}_t)^{q-1})\big),\partial^+ w^{M,N}_t}_{L^2(\Pi_N)}
        \\
        &\qquad+q\scal{(w^{M,N}_t)^{q-1},g_h(w^{M,N}_t+\Theta_N O_t)}_{L^2(\Pi_N)}.\label{eq:semidiscrete-energy}
    \end{equs}
    Note that for $w:\Pi_N\to\R$ one has
    \begin{equ}
        \scal{\partial^+(w^{q-1}),\partial^+ w}_{L^2(\Pi_N)}=\frac{1}{N}\sum_{n=0}^{N-1} |\partial^+w(x_n)|^2\int_0^1(q-1)\big(\lambda w(x_{n+1})^{q-2}+(1-\lambda)w(x_{n})^{q-2}\big) \,d\lambda\geq 0.
    \end{equ}
    Furthermore, for $x,y\in\R$ one has due to the one-sided Lipschitz bound of $g_h$,
    \begin{equs}
        x^{q-1}g_h(x+y)&=x^{q-2}(x+y-y)(g_h(x+y)-g_h(y))+x^{q-1}g_h(y)
        \\
        &\leq K x^{q}+x^{q-1}g_h(y) \leq (K+1)x^q+g_h(y)^q.
    \end{equs}
    Using these two estimates in \eqref{eq:semidiscrete-energy}  and the growth of $g_h$ from \cref{lem:g_h-bounds}, we get
    \begin{equ}
        \partial_t\|w^{M,N}_t\|_{L^q(\Pi_N)}^q\leq q(K+1)\|w^{M,N}_t\|_{L^q(\Pi_N)}^q +q\tilde K\big(1+\|\Theta_N O_t\|_{L^{\infty}(\Pi_N)}^{(2\tilde m+1)q}\big)
    \end{equ}
and thus by Gronwall's lemma and taking expectations we obtain.
\begin{equ}
    \E\sup_{t\in[0,T]}\|w^{M,N}_t\|_{L^q(\Pi_N)}^q\lesssim 1.
\end{equ}
This proves the claim for $p=q\in\mathbb{N}$ for $q\geq 2(2\tilde{m}+1)$, which is of course sufficient.
\end{proof}

\begin{corollary}\label{cor:wMN1}
     Let $p \geq 1$, $\eps\in (0, 1/2)$ and $\lambda\in (0,2]$. Then there exists a constant $C=C(p,\eps,\lambda,K,T,m,\mathscr{M})$, that does not depend on $M,N$, such that the following bounds hold:
    \begin{align}
       \Big\|\|w^{M,N}\|_{C_T^{\lambda/2}\calC^{\frac{5}{2}-\eps-\lambda}(\Pi_N)}\Big\|_{L^p(\Omega)}&\leq C;\label{eq:apriori-WMN}\\
        \Big\|\|z^{M,N}\|_{C_T^{\lambda/2}H^{\frac{5}{2}-\eps-\lambda}(\T)}\Big\|_{L^p(\Omega)}&\leq C;\label{eq:apriori-zMN}\\
        \Big\|\|\hat{\mathbf{z}}^{M,N}\|_{C_T^{\lambda/2}H^{\frac{5}{2}-\eps-\lambda}(\T)}\Big\|_{L^p(\Omega)}&\leq C;\label{eq:apriorihatz}
    \end{align}
    In particular, due to embeddings, we have that for any $p\geq 1, \epsilon>0,$
    \begin{align}\label{eq:apriori-zMN-emb}
         \Big\|\|z^{M,N}\|_{C_T\calC^{2-\eps}(\T)}\Big\|_{L^p(\Omega)}&\leq C.
    \end{align}
\end{corollary}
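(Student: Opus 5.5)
The plan is to bootstrap from the $L^q$ bounds of \cref{lem:apriori-WMN} to Hölder regularity through the mild formulation, in the same way as in the proof of \cref{prop:wp}.

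\textbf{Step 1: low-regularity bound for $w^{M,N}$.} Fix $q$ large. By \cref{lem:apriori-WMN}, $\sup_t\|w^{M,N}_t\|_{L^q(\Pi_N)}$ has all moments. By the growth bound \eqref{eq:ghgrowth}, the same holds for $\|g_h(w^{M,N}_t+\Theta_N O_t)\|_{L^{q'}(\Pi_N)}$ with $q'=q/(2\tilde m+1)$. I would then apply the smoothing of $P^N$ from $L^{q'}\subset B^0_{q',\infty}$ into $\calC^{\theta}(\Pi_N)$. This combines the analogue of \eqref{eq:Calpha} for $B_{q',\infty}$ spaces with the embedding of \cref{lem:spaces-equivalence} d), at a cost $t^{-(\theta+1/q')/2}$, which is integrable for $\theta<2-1/q'$. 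The initial datum is handled by \cref{lem:Theta-Psi} f) and \cref{asn:u0}. This gives moments of $\sup_t\|w^{M,N}_t\|_{\calC^{1/2}(\Pi_N)}$, and in particular of $\sup_t\|w^{M,N}_t\|_{L^\infty(\Pi_N)}$.

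\textbf{Step 2: full bootstrap.} With the $L^\infty$ bound in hand, let $\bar u=w^{M,N}+\Theta_N O$. By \cref{lem:Theta-Psi} f) and \cref{lem:OUreg}, $\bar u\in C_T\calC^{1/2-\eps}(\Pi_N)$ with all moments. The discrete version of \eqref{eq:comp-B2}, from \cref{lem:composition}, then gives $\|g_h(\bar u_t)\|_{\calC^{1/2-\eps}(\Pi_N)}\lesssim 1+\|\bar u_t\|^{2\tilde m+1}_{\calC^{1/2-\eps}}$, uniformly in $h$. Now I repeat verbatim the computation in \cref{prop:wp}, splitting $w_t-w_s$ into the initial-datum term, $\int_s^t$, and $\int_0^s P^N_{s-r}(P^N_{t-s}-\Id)$. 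The estimates needed are \eqref{eq:Calpha} and \eqref{eq:Calpha2}, and for the initial datum $\|\Theta_N u_0\|_{\calC^{5/2-\eps}(\Pi_N)}$.

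\textbf{Main obstacle: the initial-datum bound.} For $\alpha=5/2-\eps>1$, \cref{lem:Theta-Psi} f) does not cover $\|\Theta_N u_0\|_{\calC^{5/2-\eps}(\Pi_N)}$. I would instead use the difference-quotient characterisation of discrete Hölder norms. Since $\Theta_N$ commutes with translations by multiples of $1/N$, discrete second differences of $\Theta_N u_0$ are controlled by the continuum $\calC^{5/2-\eps}$ seminorm. If this turns out to be delicate, I would fall back on the Sobolev route: \cref{lem:operator} gives $\Psi_N\Theta_N u_0\in H^{5/2-\eps/2}$, and Sobolev embedding into $\calC^{2-\eps}$ costs $1/2$. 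That loss is acceptable for \eqref{eq:apriori-zMN} but not for \eqref{eq:apriori-WMN}. This is the step I expect to need most care; also, every constant must be tracked to be uniform in $N$ and $h$.

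\textbf{Step 3: the continuum extensions.} For $z^{M,N}=\Psi_N w^{M,N}$, the isometry in \cref{lem:Theta-Psi} c) transfers $H^\beta(\Pi_N)$ bounds to $H^\beta(\T)$. The $H^\beta(\Pi_N)$ bounds come from \eqref{eq:apriori-WMN} via \cref{lem:spaces-equivalence} e), with an arbitrarily small loss of $\eps$ absorbed by adjusting $\eps$. This proves \eqref{eq:apriori-zMN}.

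For $\hat{\mathbf z}^{M,N}$ I use its mild form. The term $P_t\Psi_N\Theta_N u_0$ is bounded in $C_T^{\lambda/2}H^{5/2-\eps-\lambda}$ using \eqref{eq:PsiThetabounded} and \cref{lem:semigroup2}. The integrand $g_h(z^{M,N}_s+O_s)$ is bounded in $H^{1/2-\eps}(\T)$ by \eqref{eq:comp-H}. Here I use the $L^\infty$ bound on $z^{M,N}$ from \eqref{eq:apriori-zMN} together with Sobolev embedding, and the bound on $O$ from \cref{rem:OUreg-Sobolev}. The same three-term splitting with the Sobolev semigroup bounds of \cref{lem:semigroup2} then yields \eqref{eq:apriorihatz}.

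Finally, \eqref{eq:apriori-zMN-emb} follows from \eqref{eq:apriori-zMN} with $\lambda$ small, using the embedding $H^{s}(\T)\hookrightarrow\calC^{s-1/2}(\T)$ and renaming $\eps$.
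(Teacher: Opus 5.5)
Your proposal is correct and follows essentially the paper's own route. First, a smoothing step from the $L^q$ bounds of \cref{lem:apriori-WMN}, via the mild form, gives $\calC^{1/2}$ and hence $L^\infty$ control of $w^{M,N}$. Then come the three-term splitting of \cref{prop:wp} with \eqref{eq:Calpha}--\eqref{eq:Calpha2} for \eqref{eq:apriori-WMN}, the isometry of $\Psi_N$ plus $\calC^{\alpha+\eps}(\Pi_N)\hookrightarrow H^{\alpha}(\Pi_N)$ for \eqref{eq:apriori-zMN}, and the Sobolev version of the same splitting, using \eqref{eq:PsiThetabounded} and \eqref{eq:comp-H}, for \eqref{eq:apriorihatz}.

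The obstacle you flag is real, but it is not a departure from the paper. The paper uses $\|\Theta_N u_0\|_{\calC^{5/2-\eps}(\Pi_N)}\lesssim\|u_0\|_{\calC^{5/2-\eps}(\T)}$ without comment, even though \cref{lem:Theta-Psi} f) only covers exponents in $(0,1)$. Your difference-quotient argument, in which discrete second differences of $\Theta_N u_0$ are $\Theta_N$ of averaged translates of $u_0''$, is a reasonable way to supply that missing detail.
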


\begin{proof}   
In the following we prove \eqref{eq:apriori-WMN}. The bound \eqref{eq:apriori-zMN} then follows promptly as
\begin{align*}
    \|z^{M,N}\|_{H^{\frac{5}{2}-\eps-\lambda}(\T)}=\|\Psi_N w^{M,N}\|_{H^{\frac{5}{2}-\eps-\lambda}(\T)}=\| w^{M,N}\|_{H^{\frac{5}{2}-\eps-\lambda}(\Pi_N)}\lesssim \| w^{M,N}\|_{\cC^{\frac{5}{2}-\eps/2-\lambda}(\Pi_N)},
\end{align*}
where we used that $\Psi_N$ is an isometry from $H^{\alpha}(\Pi_N)$ to $H^{\alpha}_{N}(\mathbb{T})$, for any $\alpha\geq 0$, and $\calC^{\alpha+\delta}(\Pi_N)\hookrightarrow H^{\alpha}(\Pi_N)$ for $\delta>0$ by \cref{lem:spaces-equivalence} d). Alternatively, one can also show the bound directly following the same lines as below for $w^{M,N}$; hence we refrain from proving \eqref{eq:apriori-zMN}. Furthermore \eqref{eq:apriorihatz} follows from \eqref{eq:apriori-zMN-emb} together with \cref{lem:OUreg}, \cref{lem:composition} and \eqref{eq:PsiThetabounded} (which implies that $\norm{\Psi_N\Theta_N u_0}_{H^{5/2-\eps}}\lesssim \norm{u_0}_{H^{5/2-\eps}}\lesssim  \norm{u_0}_{\calC^{5/2-\eps/2}} $) and the semigroup estimates for $(P_t)$ following the same argument as for proving \eqref{eq:apriori-WMN} below, replacing $\Theta_N u_0$ by $\Psi_N\Theta_N u_0$, $P^{N}$ by $P$ and $w^{M,N}+\Theta_N O$ inside the nonlinearity $g_h$ by $z^{M,N}+O$.

We are left to prove \eqref{eq:apriori-WMN}. First, we bound the $B^{2-\epsilon}_{q,\infty}$-norm of $w^{M,N}$ for any $q\geq 1$ as follows, using semigroup estimates for $(P_t^N)$ on $B^{\theta}_{q,\infty}(\Pi_N)$- spaces. That is, we bound, for any $\epsilon\in (0,2)$,
\begin{equs}
\MoveEqLeft
    \sup_{t\in[0,T]}\|w^{M,N}_t\|_{B^{2-\eps}_{q,\infty}(\Pi_N)}
    \\&\lesssim\norm{P_{t}^{N}\Theta_N u_{0}}_{B^{2-\eps}_{q,\infty}(\Pi_N)}+\int_{0}^{t}\norm{P_{t-s}^{N}g_h(w^{M,N}_s+\Theta_N O_s)}_{B^{2-\eps}_{q,\infty}(\Pi_N)}\, ds 
    \\&\lesssim \norm{\Theta_N u_{0}}_{B^{2-\eps}_{q,\infty}(\Pi_N)}+\int_{0}^{t}(t-s)^{-1+\frac{\eps}{2}}\norm{g_h(w^{M,N}_s+\Theta_N O_s)}_{L^{q}(\Pi_N)}\, ds 
    \\&\lesssim \|\Theta_N u_0\|_{\calC^{2-\eps}(\Pi_N)}+\sup_{t\in[0,T]}\|g_h(w^{M,N}_t+\Theta_NO_t)\|_{L^q (\Pi_N)}
 \\   
&\lesssim \| u_0\|_{\calC^{2-\epsilon}(\T)}+1+\sup_{t\in[0,T]}\|w^{M,N}_t\|_{L^{q(2\tilde m+1)}(\Pi_N)}^{2\tilde{m}+1}.\label{eq:w-intermediate}
\end{equs}
Let $\epsilon\in (0, 1/2)$ be arbitrary. Let $q \in [1,\infty)$. Then, using the embedding $ B^{\frac{1-\epsilon}{2}+\frac{1}{q}}_{q,\infty}\hookrightarrow B^{\frac{1-\epsilon}{2}}_{\infty,\infty}=\calC^{\frac{1-\epsilon}{2}}$, the composition estimate Lemma~\ref{lem:composition} as well as the semigroup estimates for $(P^N_t)$, we deduce
\begin{align*}
    \norm{w^{M,N}_t}_{\cC^{\frac{5}{2}-\epsilon}(\Pi_N)}
    &\lesssim\norm{P_{t}^{N}\Theta_N u_{0}}_{\cC^{\frac{5}{2}-\epsilon}(\Pi_N)}+\int_{0}^{t}\norm{P_{t-s}^{N}g_h(w^{M,N}_s+\Theta_N O_s)}_{\cC^{\frac{5}{2}-\epsilon}(\Pi_N)}\, ds 
    \\&\lesssim \norm{u_{0}}_{\cC^{\frac{5}{2}-\epsilon}(\T)}+\int_{0}^{t}(t-s)^{-1+\frac{\eps}{2}}\norm{g_h(w^{M,N}_s+\Theta_N O_s)}_{\cC^{\frac{1-\epsilon}{2}}(\Pi_N)}\, ds 
    \\&\lesssim \norm{u_{0}}_{\cC^{\frac{5}{2}-\epsilon}(\T)}+1+\sup_{t\in[0,T]}\norm{w^{M,N}_t+\Theta_N O_t}_{ \cC^{\frac{1-\epsilon}{2}}(\Pi_N)}^{2\tilde{m}+1}\\
    &\lesssim \norm{u_{0}}_{\cC^{\frac{5}{2}-\epsilon}(\T)}+1+\sup_{t\in[0,T]}\norm{w^{M,N}_t}_{ B^{\frac{1-\epsilon}{2}+\frac{1}{q}}_{q,\infty}(\Pi_N)}^{2\tilde{m}+1}+ \norm{\Theta_N O}_{ C_T\cC^{\frac{1-\epsilon}{2}}(\Pi_N)}^{2\tilde{m}+1}.
\end{align*}
Due to \cref{lem:apriori-WMN} and \cref{lem:OUreg} the $L^p(\Omega)$- norm of the right-hand side is bounded.
Hence, taking the $p$-th moment, we see that by plugging in the estimate for \eqref{eq:w-intermediate}, which is possible since the regularities satisfy $\frac{1-\epsilon}{2}+\frac{1}{q}<2$ that
\begin{align}
\norm{\sup_{t\in[0,T]}\norm{w^{M,N}_t}_{\cC^{\frac{5}{2}-\epsilon}(\Pi_N)}}_{L^p(\Omega)}\lesssim C.
\end{align}
To see the time regularity, we write
    \begin{align*}
        w^{M,N}_t-w^{M,N}_s &= (P_{t}^N -P_s^N)\Theta_N u_0 + \int_{s}^{t} P_{t-r}^{N} g_h(w^{M,N}_r+\Theta_N O_r)\, dr \\&\qquad + \int_{0}^{s}P_{s-r}^N(P_{t-s}^N-\operatorname{Id})g_h(w^{M,N}_r+\Theta_N O_r) \, dr.
    \end{align*}
Now we use the semigroup estimates from \cref{lem:semigroup1} and the composition estimate \cref{lem:composition} with the spatial regularity bound, to obtain
for $\lambda\in (0,2]$,
\begin{align*}
\MoveEqLeft
    \norm{w^{M,N}_t-w^{M,N}_s}_{\cC^{\frac{5}{2}-\epsilon-\lambda}(\Pi_N)}\\&\lesssim \norm{(P_{t}^N-P_s^N)\Theta_N u_0}_{\cC^{\frac{5}{2}-\epsilon-\lambda}(\Pi_N)} + \Big\|\int_{s}^{t} P_{t-r}^{N} g_h(w^{M,N}_r+\Theta_N O_r)\, dr\Big\|_{\cC^{\frac{5}{2}-\epsilon-\lambda}(\Pi_N)}
    \\&\qquad + \Big\|\int_{0}^{s}P_{s-r}^N(P_{t-s}^N-\operatorname{Id})g_h(w^{M,N}_r+\Theta_N O_r) \, dr\Big\|_{\cC^{\frac{5}{2}-\epsilon-\lambda}(\Pi_N)}
    \\&\lesssim (t-s)^{\lambda/2}\norm{u_0}_{\cC^{\frac{5}{2}-\epsilon}(\T)} + \paren[\big]{1+\sup_{M,N}\sup_{t\in[0,T]}\norm{w^{M,N}_t+\Theta_N O_t}_{\cC^{\frac{1}{2}-\epsilon}(\Pi_N)}^{2\tilde{m}+2}} \int_{s}^{t}(t-r)^{-1+\lambda/2} \, dr \\&\qquad + \paren[\big]{1+\sup_{M,N}\sup_{t\in[0,T]}\norm{w^{M,N}_t+\Theta_N O_t}_{\cC^{\frac{1}{2}-\epsilon/2}(\Pi_N)}^{2\tilde{m}+2}} \int_{0}^{s} (s-r)^{-1+\varepsilon/4}(t-s)^{\lambda/2}\, dr
    \\&\lesssim C(t-s)^{\lambda/2}.\qedhere
\end{align*}
\end{proof}

Next, we prove a priori estimates for the scheme $\tilde{X}^{M,N}$ and the auxiliary scheme $X^{M,N}$.
The proof of the following is similar to \cite[Proposition 5.3 and Corollary 5.4]{DjGK}. 

\begin{proposition}\label{prop:apriori-XMN}
Let $\lambda \in [0,2)$, $p\geqslant 1$, $\varepsilon\in (0,2-\lambda)$. Then, there exists a constant $C=C(p,K,\tilde{m},\varepsilon,\lambda,T)$, that does not depend on $M,N$, such that
    \begin{align}
         &\E\|X^{M,N}\|^p_{C^{\frac{\lambda}{2}}_T\cC^{\frac{5}{2}-\lambda-\eps}(\Pi_N)}\leqslant C.
        \\ &\E\|\tilde{X}^{M,N}\|^p_{C^{\frac{\lambda}{2}}_T\cC^{\frac{5}{2}-\lambda-\eps}(\Pi_N)}\leqslant C.
    \end{align}
\end{proposition}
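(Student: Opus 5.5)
The argument mirrors that of \cref{lem:apriori-WMN} and \cref{cor:wMN1}, but in discrete time and following \cite[Proposition 5.3]{DjGK}. I treat $X^{M,N}$ and $\tilde X^{M,N}$ simultaneously. The only properties of the noise that enter are $\E\|\Theta_N O\|^p_{C_T\cC^{1/2-\eps}(\Pi_N)}<\infty$ and $\E\|\Theta_N\tilde O\|^p_{C_T\cC^{1/2-\eps}(\Pi_N)}<\infty$, which follow from \cref{lem:OUreg}, \eqref{eq:Otimespace} and \cref{lem:Theta-Psi} f).

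\emph{Step 1: a priori $L^\infty$ bound at grid times.} I work with the recursion \eqref{eq:Y}--\eqref{eq:XY}. Since $P_h^N$ is a contraction on $L^\infty(\Pi_N)$ (\cref{lem:eigenvalues} d)), it suffices to control $Y^{M,N}$. The flow bound \eqref{eq:PhiLip} gives
\[
|\Phi_h(x+o)-o|\le e^{Kh/2}|x|+|\Phi_h(o)-o|, \qquad |\Phi_h(o)-o|=h|g_h(o)|\le h\tilde K(1+|o|^{2\tilde m+1})
\]
by \eqref{eq:ghgrowth}. Hence
\[
\|X_{t_k}\|_{L^\infty}\le e^{Kh/2}\|X_{t_{k-1}}\|_{L^\infty}+Ch\bigl(1+\|\Theta_N O_{t_{k-1}}\|_{L^\infty}^{2\tilde m+1}\bigr).
\]
Iterating this gives $\max_k\|X_{t_k}\|_{L^\infty}\lesssim \|u_0\|_{L^\infty}+1+\|O\|_{C_T L^\infty}^{2\tilde m+1}$ pathwise, which has all moments. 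This step is the key point, because the splitting structure avoids the blow-up of explicit schemes. The bound extends to all $t$ via the mild form \eqref{eq:auxscheme}, using the $L^\infty$-contractivity of $P^N$ and the growth of $g_h$.

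\emph{Step 2: spatial regularity.} I use the mild form \eqref{eq:auxscheme} with the semigroup estimates \eqref{eq:Calpha}, \eqref{eq:Linfty}. A first bootstrap gives
\[
\|X_t\|_{\cC^{2-\eps}}\lesssim \|u_0\|_{\cC^{2-\eps}}+\int_0^t (t-\kappa_M(s))^{-1+\eps/2}\|g_h(X_{\kappa_M(s)}+\Theta_N O_{\kappa_M(s)})\|_{L^\infty}\,ds .
\]
The integral is finite uniformly in $M$ since $t-\kappa_M(s)\ge t-s$. Then, using \cref{lem:composition} (\eqref{eq:comp-B2} with $\kappa=1$, $\theta=1/2-\eps/2$) for $g_h$, whose derivative bounds are uniform in $h$ by \cref{lem:g_h-bounds}, I obtain
\[
\|X_t\|_{\cC^{5/2-\eps}}\lesssim \|u_0\|_{\cC^{5/2-\eps}}+\sup_s\bigl(1+\|X_s\|^{2\tilde m}_{L^\infty}+\|\Theta_NO_s\|^{2\tilde m}_{L^\infty}\bigr)\bigl(1+\|X_s\|_{\cC^{1/2}}+\|\Theta_NO_s\|_{\cC^{1/2-\eps/2}}\bigr).
\]
Here $\|X_s\|_{\cC^{1/2}}$ is controlled by the previous $\cC^{2-\eps}$ bound. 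Taking moments with Hölder's inequality gives the case $\lambda=0$.

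\emph{Step 3: time regularity, and the main obstacle.} I split $X_t-X_s$ exactly as in the proof of \cref{cor:wMN1}:
\[
X_t-X_s=(P^N_t-P^N_s)\Theta_Nu_0+\int_s^t P^N_{t-\kappa_M(r)}g_h(\cdots)\,dr+\int_0^s P^N_{s-\kappa_M(r)}(P^N_{t-s}-\Id)g_h(\cdots)\,dr .
\]
The delicate point is the second term when $s,t$ lie in the same or adjacent intervals. There the semigroup argument $t-\kappa_M(r)$ is not small like $t-r$, and the singularity $(t-\kappa_M(r))^{-1+\lambda/2}$ is bounded by $(t-r)^{-1+\lambda/2}$, which is integrable; so the bound $(t-s)^{\lambda/2}$ still holds. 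The third term uses \eqref{eq:Calpha2} together with the $\cC^{1/2-\eps}$ bound on $g_h(\cdots)$, and the first term uses the regularity of $u_0$. The restriction $\lambda<2$ (rather than $\le 2$) together with $\eps<2-\lambda$ leaves the room needed for integrability. Finally, taking $p$-th moments gives the claim for $X^{M,N}$, and the identical argument with $\tilde O$ gives it for $\tilde X^{M,N}$.
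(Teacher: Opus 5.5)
Your proposal is correct and follows essentially the same route as the paper: a pathwise $L^\infty$ bound at grid times from the splitting recursion (Lipschitz flow \eqref{eq:PhiLip}, $\Phi_h(x)-x=hg_h(x)$, $L^\infty$-contractivity of $P^N_h$), followed by mild-form semigroup and composition estimates that exploit $t-\kappa_M(r)\ge t-r$ for the spatial and temporal regularity. The differences are cosmetic: the paper goes directly from $L^\infty$ to a $\cC^{1/2-\eps}$ bound instead of bootstrapping through $\cC^{2-\eps}$, and your increment decomposition keeps the initial-condition term $(P^N_t-P^N_s)\Theta_N u_0$, which the paper's display omits (both versions tacitly use $\|\Theta_N u_0\|_{\cC^{5/2-\eps}(\Pi_N)}\lesssim\|u_0\|_{\cC^{5/2-\eps}(\T)}$, which goes beyond what Lemma~\ref{lem:Theta-Psi} f) states).
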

\begin{proof}
We only prove the result for $\tilde{X}^{M,N}$, as the one for $X^{M,N}$ is analogue replacing $\tilde{O}$ by $O$.
Recall that the discrete Laplacian is the generator of a continuous time random walk and therefore the corresponding semigroup $P^N$ defines a contraction on $(L^\infty(\Pi_N),\|\cdot\|_{L^\infty(\Pi_N)})$ (see \cref{lem:eigenvalues}). 
We first prove that, for $t_k=\frac{kT}{M},$
\begin{align} \label{eq:Xapriori}
   \E \sup_{k=0,\dots,M} \|\tilde{X}_{t_k}^{M,N}\|^p_{L^\infty(\Pi_N)}\leqslant C(p,K,\tilde{m},T).
\end{align}
Using the formulation in \eqref{eq:XY}, the Lipschitz property \eqref{eq:PhiLip} of $\Phi_t$ and that $\Phi_h(x)-x=hg_h(x)$, we have by definition (see \eqref{eq:gh}) that for $0\leqslant k\leqslant M-1$,
\begin{align*}
\|\tilde{X}_{t_{k+1}}^{M,N}\|_{L^\infty(\Pi_N)}&=\|P_h^N \tilde{Y}_{t_{k+1}}^{M,N}\|_{L^\infty(\Pi_N)}=\|P_h^N[\Phi_h(\tilde{X}_{t_k}^{M,N}+\Theta_N \tilde{O}_{t_k})-\Theta_N \tilde{O}_{t_k}]\|_{L^\infty(\Pi_N)}\\
    &\leqslant \|P_h^N[\Phi_h(\tilde{X}_{t_k}^{M,N}+\Theta_N \tilde{O}_{t_k})-\Phi_h(\Theta_N \tilde{O}_{t_k})]+P_h^N[\Phi_h(\Theta_N \tilde{O}_{t_{k}})-\Theta_N \tilde{O}_{t_{k}}]\|_{L^\infty(\Pi_N)}\\
    &\leqslant e^{Kh/2}\|\tilde{X}_{t_k}^{M,N}\|_{L^\infty(\Pi_N)}+h\tilde{K}(1+\|\Theta_N \tilde{O}\|_{C_T L^\infty(\Pi_N)}^{2\tilde{m}+1}).
\end{align*}
Iterating the above, using that $\|\Theta_N f\|_{L^\infty(\Pi_N)}\leqslant \|f\|_{L^\infty(\T)}$ for a continuous function $f$ by Lemma~\ref{lem:Theta-Psi} f) and using that $(k+1)h\leqslant T$ twice, we arrive at
\begin{align*}   \|\tilde{X}_{t_{k+1}}^{M,N}\|_{L^\infty(\Pi_N)}&\leqslant e^{(k+1)Kh/2}\|\Theta_N u_0\|_{L^\infty(\Pi_N)}+h\tilde{K}\sum_{j=0}^k e^{jKh/2}(1+\|\tilde{O}\|_{C_T L^\infty(\T)}^{2\tilde{m}+1})\\
&\leqslant e^{TK/2}\Big(\|u_0\|_{L^\infty(\T)}+T\tilde{K} (1+\|\tilde{O}\|_{C_T L^\infty(\T)}^{2\tilde{m}+1})\Big),
\end{align*}
which implies \eqref{eq:Xapriori}.
Using the semigroup estimates from Lemma~\ref{lem:semigroup1} and the composition estimate \cref{lem:composition}, we obtain for any $t\in[0,T]$, using that $k_{M}(r)\leq r$,
\begin{align*}
    \norm{\tilde{X}^{M,N}_t}_{\calC^{\frac{1}{2}-\eps}}&\leq \norm{u_0}_{\calC^{1/2-\eps}} + \int_{0}^{t}\norm{P_{t-k_{M}(r)}g_{h}(\tilde{X}^{M,N}_{k_M(r)}+\Theta_N \tilde{O}_{k_M(r)})}_{\calC^{1/2-\eps}}\, dr
    \\&\leq  \norm{u_0}_{\calC^{1/2+\eps}} + \int_{0}^{t}(t-r)^{-1/4+\eps/2}\norm{g_{h}(\tilde{X}^{M,N}_{k_M(r)}+\Theta_N \tilde{O}_{k_M(r)})}_{L^{\infty}}\, dr
    \\&\leq \norm{u_0}_{\calC^{1/2+\eps}} + T^{3/4+\eps/2}\Big(1+\sup_k\|\tilde{X}_{t_k}^{M,N}\|_{L^\infty(\Pi_N)}^{2\tilde{m}+1}+\|\tilde{O}\|^{2\tilde{m}+1}_{C_TL^\infty(\T)}\Big),
\end{align*}
where the right-hand side is bounded by \eqref{eq:Xapriori} and \cref{ass:tildeO}. Furthermore, using again \cref{lem:semigroup1} and \cref{lem:composition}, we find for $\lambda\in[0,2)$ and $\eps\in (0,2-\lambda)$,
\begin{align*}
\MoveEqLeft
    \|\tilde{X}_t^{M,N}-\tilde{X}_s^{M,N}\|_{\cC^{\frac{5}{2}-\lambda-2\eps}(\Pi_N)}
    \\&\leqslant \int_s^t \|P_{t-k_M(r)}^N g_h(\tilde{X}^{M,N}_{k_M(r)}+\Theta_N \tilde{O}_{k_M(r)})\|_{\cC^{\frac{5}{2}-\lambda-2\eps}(\Pi_N)} \, dr \\
    &\qquad \qquad \quad \quad  + \int_0^s \|P_{s-k_M(r)}^N(P_{t-s}^N-\Id)g_h(\tilde{X}^{M,N}_{k_M(r)}+\Theta_N \tilde{O}_{k_M(r)})\|_{\cC^{\frac{5}{2}-\lambda-2\eps}(\Pi_N)}\, dr\\
    &\lesssim \int_s^t (t-r)^{-1+\lambda/2+\eps/2} \|g_h(\tilde{X}^{M,N}_{k_M(r)}+\Theta_N \tilde{O}_{k_M(r)})\|_{\calC^{\frac{1}{2}-\eps}(\Pi_N)}\, dr\\
    &\qquad \qquad +\int_0^s (s-r)^{-1+\eps/2}\|(P_{t-s}^N-\Id)g_h(\tilde{X}^{M,N}_{k_M(r)}+\Theta_N \tilde{O}_{k_M(r)})\|_{\cC^{\frac{1}{2}-\lambda-\eps}(\Pi_N)}\, dr\\
    &\lesssim \squeeze[1]{\Big(1+\sup_k\|\tilde{X}_{t_k}^{M,N}\|_{\calC^{\frac{1}{2}-\eps}(\Pi_N)}^{2\tilde{m}+1}+\|\tilde{O}\|^{2\tilde{m}+1}_{C_T\calC^{\frac{1}{2}-\eps}(\T)}\Big)\Big(\int_s^t (t-r)^{-1+\lambda/2+\eps/2}\, dr+\int_0^s (s-r)^{-1+\frac{\varepsilon}{2}}(t-s)^{\frac{\lambda}{2}}\, dr\Big)}\\
    &\lesssim \Big(1+\sup_k\|\tilde{X}_{t_k}^{M,N}\|_{\calC^{\frac{1}{2}-\eps}(\Pi_N)}^{2\tilde{m}+1}+\|\tilde{O}\|^{2\tilde{m}+1}_{C_T\calC^{\frac{1}{2}-\eps}(\T)}\Big)(t-s)^{\lambda/2}
\end{align*}
and the result follows by an application of Kolmogorov's continuity theorem and the a priori estimate \eqref{eq:Xapriori}, as well as the regularity bounds for $\tilde{O}$ from \cref{ass:tildeO} b).
\end{proof}

Next we bound the terms in the last line of the error decomposition \eqref{BIG-decomposition}. 
\begin{lemma}\label{lem:XtildeX}
    Let $p\geq 1$ and $\eps>0$. Let $\tilde{m}$ be as in \cref{lem:g_h-bounds}. Then there exists $C=C(T,\eps, p,K, \tilde{m})$, that does not depend on $M,N\in\N$, such that
\begin{align}
\MoveEqLeft
    (\E\sup_{t\in[0,T]}\norm{X^{M,N}_t - \tilde{X}^{M,N}_t}_{L^{2}(\Pi_N)}^p)^{1/p}\nonumber
    \leq C (M^{-1+\eps}+N^{-\frac{3}{2}+\eps}).
\end{align}
\end{lemma}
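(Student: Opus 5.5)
We compare the two recursions directly on the grid, using the same discrete Gronwall idea as in \cref{lem:approximation}. The only new difficulty is that $g_h$ is merely locally Lipschitz. We handle this with the polynomial-growth Lipschitz bound \eqref{eq:g-bound2}, the a priori estimates of \cref{prop:apriori-XMN}, and H\"older's inequality.

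Set $e_k\coloneqq X^{M,N}_{t_k}-\tilde X^{M,N}_{t_k}$ and $\delta_k\coloneqq\Theta_N(O_{t_k}-\tilde O_{t_k})$.
\begin{enumerate}
\item[Step 1.] Subtracting the recursions \eqref{eq:Y}--\eqref{eq:XY} and \eqref{eq:tY}--\eqref{eq:tXY}, the common term $-\Theta_N O$ resp.\ $-\Theta_N\tilde O$ gives
\begin{align*}
e_{k+1}=P^N_h\Big[\Phi_h(X^{M,N}_{t_k}+\Theta_N O_{t_k})-\Phi_h(\tilde X^{M,N}_{t_k}+\Theta_N\tilde O_{t_k})-\delta_k\Big].
\end{align*}
\item[Step 2.] Write $\Phi_h(x)-\Phi_h(y)-(x-y)=h(g_h(x)-g_h(y))$ and use the contraction of $P^N_h$ on $L^2(\Pi_N)$ (\cref{lem:semigroup1}). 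This gives
\begin{align*}
\|e_{k+1}\|_{L^2(\Pi_N)}\le \|e_k\|_{L^2(\Pi_N)}+h\,\|g_h(a_k)-g_h(b_k)\|_{L^2(\Pi_N)},
\end{align*}
with $a_k,b_k$ the two arguments. Alternatively, one can use \eqref{eq:PhiLip} directly: $\|e_{k+1}\|\le e^{Kh/2}(\|e_k\|+\|\delta_k\|)+\|\delta_k\|$. This would lose a factor $M$ when summing, so we do not use it.
\item[Step 3.] By \eqref{eq:g-bound2},
\begin{align*}
\|g_h(a_k)-g_h(b_k)\|_{L^2}\le \tilde K\big(1+\|a_k\|_{L^\infty}^{2\tilde m}+\|b_k\|_{L^\infty}^{2\tilde m}\big)\big(\|e_k\|_{L^2}+\|\delta_k\|_{L^2}\big).
\end{align*}
Call the random prefactor $R$, taking the supremum over $k$. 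By \cref{prop:apriori-XMN}, \cref{lem:OUreg} and \cref{ass:tildeO}, $R$ has moments of all orders, uniformly in $M,N$.
\item[Step 4.] Pathwise discrete Gronwall then yields
\begin{align*}
\max_k\|e_k\|\le e^{TR}\,TR\,\max_k\|\delta_k\|.
\end{align*}
\end{enumerate}

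The main obstacle is that $e^{TR}$ need not have finite moments when $R$ is polynomial in Gaussian quantities. To avoid the exponential, we replace the Lipschitz estimate by a one-sided estimate for the linear part. Writing $a_k-b_k=e_k+\delta_k$, we have
\begin{align*}
\Phi_h(a_k)-\Phi_h(b_k)-\delta_k=\big(\Phi_h(b_k+e_k+\delta_k)-\Phi_h(b_k+\delta_k)\big)+\big(\Phi_h(b_k+\delta_k)-\Phi_h(b_k)-\delta_k\big).
\end{align*}
The first bracket has $L^2$-norm at most $e^{Kh/2}\|e_k\|$ by \eqref{eq:PhiLip}, which is deterministic. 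The second bracket equals $h(g_h(b_k+\delta_k)-g_h(b_k))$, which by \eqref{eq:g-bound2} is bounded by $h\tilde K(1+\|b_k\|_\infty^{2\tilde m}+\|b_k+\delta_k\|_\infty^{2\tilde m})\|\delta_k\|_{L^2}$. Moreover $b_k+\delta_k=\tilde X^{M,N}_{t_k}+\Theta_N O_{t_k}$, so this prefactor again has all moments. Iterating with the deterministic factor $e^{Kh/2}$ gives
\begin{align*}
\max_k\|e_k\|_{L^2(\Pi_N)}\le e^{KT/2}\,T\,R'\max_k\|\delta_k\|_{L^2(\Pi_N)}.
\end{align*}

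It remains to take moments and extend to all $t\in[0,T]$.
\begin{enumerate}
\item[Step 5.] H\"older's inequality together with \eqref{eq:O-tildeO-close} bounds $\E\max_k\|\delta_k\|^{2p}$. The maximum over $M+1$ grid times costs at most a factor $M^{1/q}$ when using the $q$-th moments, and this is absorbed into $\eps$ by taking $q$ large.
\item[Step 6.] For non-grid $t$, use the mild forms \eqref{eq:auxscheme}--\eqref{eq:scheme1-mild}. On $[t_k,t_{k+1}]$ the difference equals $P^N_{t-t_k}$ applied to $e_k$, plus an integral of $g_h$-differences of length $\le h$, which is controlled exactly as in the previous steps. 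This extends the bound to $\sup_{t\in[0,T]}$.
\end{enumerate}
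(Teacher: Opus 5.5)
Your final argument (the splitting that replaces Steps 3--4) is correct in substance, but Step 5 as written does not give a constant uniform in $M,N$; the repair is immediate and given in the second paragraph. Your route is genuinely different from, and more elementary than, the paper's.

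The paper never works with the recursion itself. It introduces an auxiliary process $y^{M,N}$, the mild form with $P^N_{t-s}$ in place of $P^N_{t-k_M(s)}$. Using the H\"older a priori bounds of \cref{prop:apriori-XMN}, it shows $\|y^{M,N}-\hat y^{M,N}\|_{C_TL^2(\Pi_N)}\lesssim M^{-1+\eps}$ and $y^{M,N}\in C^{1-\eps}_TL^\infty(\Pi_N)$. It then runs a continuous-time energy estimate for $y^{M,N}$. The one-sided bound \eqref{eq:g-bound1} handles the main term. The random local Lipschitz constant $K^{M,N}$ multiplies only the small quantities $\Theta_N(O-\tilde O)$, $y^{M,N}_s-y^{M,N}_{k_M(s)}$ and $y^{M,N}-\hat y^{M,N}$, so Gronwall runs with a deterministic constant.

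You get the same decoupling directly on the grid via $a_k\to b_k+\delta_k\to b_k$. The error $e_k$ is propagated only through \eqref{eq:PhiLip} and the $L^2$-contractivity of $P^N_h$. The random factor from \eqref{eq:g-bound2} appears only in front of $h\|\delta_k\|$. This needs only the $L^\infty$ bound \eqref{eq:Xapriori} and avoids $y^{M,N}$ and its time regularity. It also bounds the error by the grid noise error alone, whereas the paper's comparison of $y^{M,N}$ with $\hat y^{M,N}$ adds an $M^{-1+\eps}$. On the other hand, your argument relies on the splitting structure. The paper's energy method is the same tool it uses in \cref{lem:vhatz} and \cref{prop:temporal}.

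Here is the problem in Step 5. You pass to $\max_k\|\delta_k\|_{L^2(\Pi_N)}$ and absorb the resulting factor $M^{1/q}$ into $\eps$. That works for the $M^{-1+\eps}$ term, but not for the other one. $M^{1/q}N^{-3/2+\eps}$ is not bounded by $C(M^{-1+\eps}+N^{-3/2+\eps})$ uniformly in $M,N$: fix $N$ and let $M\to\infty$. Also, \cref{ass:tildeO} imposes no relation between $M$ and $N$.

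The fix costs nothing. Before you take the maximum, your iteration already gives $\max_k\|e_k\|_{L^2(\Pi_N)}\le e^{KT/2}R'\sum_{j=0}^{M-1}h\|\delta_j\|_{L^2(\Pi_N)}$. H\"older's and Minkowski's inequalities then bound the $L^p(\Omega)$-norm of the right-hand side by $e^{KT/2}\,T\,\|R'\|_{L^{2p}(\Omega)}\max_{t\in I_M}\big(\E\|\Theta_N(O_t-\tilde O_t)\|_{L^2(\Pi_N)}^{2p}\big)^{1/2p}$. That is exactly the quantity controlled by \eqref{eq:O-tildeO-close}. In Step 6 the $\delta_k$ term carries a factor $h$, so the crude maximum is harmless there.
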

\begin{proof}
We have that, using the shorthand notations $\hat{y}_t^{M,N}=X^{M,N}_{t}-\tilde{X}^{M,N}_t$, and $X_{t}=X^{M,N}_t$, $\tilde{X}_t=\tilde{X}^{M,N}_t$,
\begin{align*}
    \hat{y}_{t}^{M,N} = \int_{0}^{t}P^{N}_{t-k_{M}(s)} [g_{h}(X_{k_{M}(s)}+\Theta_NO_{k_{M}(s)})-g_{h}(\tilde{X}_{k_{M}(s)}+\Theta_N\tilde{O}_{k_{M}(s)})] \, ds.
\end{align*}
Define an auxiliary process $y_t^{M,N}$ by
\begin{align*}
    y_{t}^{M,N} = \int_{0}^{t}P^{N}_{t-s} [g_{h}(X_{k_{M}(s)}+\Theta_NO_{k_{M}(s)})-g_{h}(\tilde{X}_{k_{M}(s)}+\Theta_N\tilde{O}_{k_{M}(s)})] \, ds,
\end{align*}
where we replaced $P_{t-k_{M}(s)}^N$ in the definition of $\hat{y}^{M,N}$ by $P_{t-s}^N$.
First we prove that $y^{M,N}$ and $\hat{y}^{M,N}$ are sufficiently close. But this follows by the semigroup estimates, \cref{lem:semigroup1}, with $u^{M,N}_s=g_{h}(X_{k_{M}(s)}+\Theta_NO_{k_{M}(s)})-g_{h}(\tilde{X}_{k_{M}(s)}+\Theta_N\tilde{O}_{k_{M}(s)})$ and the following observation,
\begin{align*}
    \int_0^{t}\norm{(P_{t-s}^{N}-P_{t-k_{M}(s)}^{N}) u^{M,N}_s}_{L^2(\Pi_N)}\, ds
    &\leq \int_{0}^{t}\norm{(P_{t-s}^{N}(\operatorname{Id}-P_{s-k_{M}(s)}^{N}) u^{M,N}_s}_{L^{\infty}(\Pi_N)}\, ds
    \\&\leq \sup_{s\in[0,T]}\norm{u^{M,N}_s}_{\calC^{1/2-2\eps}(\Pi_N)}\int_{0}^{t}(t-s)^{-3/4} (s-k_{M}(s))^{1-\eps}\, ds 
    \\&\leq M^{-1+\eps} T^{1/4} \sup_{s\in[0,T]}\norm{u^{M,N}_s}_{\calC^{1/2-2\eps}(\Pi_N)}.
\end{align*}
The $p$-th moment of $\sup_{s\in[0,T]}\norm{u^{M,N}_s}_{\calC^{1/2-2\eps}(\Pi_N)}$ is bounded by a constant that does not depend on $M,N$ by the composition estimate, \cref{lem:composition}, and the apriori estimates from \cref{prop:apriori-XMN}, as well as \cref{ass:tildeO}. Thus we obtain that
\begin{align}\label{eq:close}
    (\E\norm{y^{M,N}-\hat{y}^{M,N}}_{C_TL^2(\Pi_N)}^p)^{1/p}\leq C M^{-1+\eps}.
\end{align}
In particular, we find
\begin{align*}
    \E\norm{\hat{y}^{M,N}}_{C_TL^2}^p\lesssim \E\norm{y^{M,N}}_{C_TL^2}^p + M^{(-1+\epsilon)p}.
\end{align*}
Next, we observe that for $0\leq s\leq t\leq T$, due to the semigroup estimates of $P^{N}$ from \cref{lem:semigroup1} and the growth bound for $g_h$, for any $\epsilon\in (0,1)$ with the notation from above
\begin{align*}
\MoveEqLeft
    \norm{y^{M,N}_{t}-y^{M,N}_{s}}_{L^{\infty}(\Pi_N)}\nonumber
    \\&\lesssim \int_{s}^{t}\norm{P^{N}_{t-r} u^{M,N}_{r}}_{L^{\infty}(\Pi_N)}\,dr + \int_{0}^{s} \norm{P_{s-r}^N(P_{t-s}^N-\operatorname{Id}) u^{M,N}_{r}}_{L^{\infty}(\Pi_N)}\, dr \nonumber
    \\&\lesssim [(t-s) + (t-s)^{1-\epsilon} T^{\epsilon}] \sup_{r\in[0,T]}\norm{u^{M,N}_{r}}_{L^{\infty}(\Pi_N)}\nonumber
    \\&\lesssim (t-s)^{1-\epsilon} (1+\norm{X^{M,N}}_{C_TL^{\infty}(\Pi_N)}^{2\tilde{m}+1}+\norm{\tilde{X}^{M,N}}_{C_TL^{\infty}(\Pi_N)}^{2\tilde{m}+1}+\norm{O}_{_{C_TL^{\infty}(\Pi_N)}}^{2\tilde{m}+1}+\norm{\tilde{O}}_{_{C_TL^{\infty}(\Pi_N)}}^{2\tilde{m}+1}),
\end{align*}
where the $q$-th moment of the right-hand side is bounded due to \cref{prop:apriori-XMN} and \cref{lem:OUreg} and \cref{ass:tildeO}, for any $q\geq 1$. This implies that for any $q\geq 1$,
\begin{align}\label{eq:y-time-reg}
 \norm{\norm{y^{M,N}}_{C_T^{1-\epsilon}L^{\infty}(\Pi_N)}}_{L^{q}(\Omega)}\lesssim \tilde{C},
\end{align}
where $\tilde{C}$ does not depend on $M,N$.
Furthermore, we have that $y^{M,N}$ weakly solves (cf. \cite{bell})
\begin{align*}
    \partial_{t} y_t^{M,N}=\Delta_N y_t^{M,N}+g_{h}(X_{k_{M}(t)}+\Theta_NO_{k_{M}(t)})-g_{h}(\tilde{X}_{k_{M}(t)}+\Theta_N\tilde{O}_{k_{M}(t)}), \quad y_0^{M,N}=0.
\end{align*}
Thus we have that by testing against $y^{M,N}$, where we shortly denote $\langle u, v\rangle=\langle u, v\rangle_{L^2(\Pi_N)}$ below and where we add and subtract appropriate terms, 
\begin{align*}
\MoveEqLeft
    \norm{y_t^{M,N}}_{L^{2}(\Pi_N)}^2 
    \\&= 2\int_{0}^{t}\langle \Delta_N y^{M,N}_s, y^{M,N}_s\rangle \, ds 
    \\&\quad+ 2\int_{0}^{t}\langle g_{h}(X_{k_{M}(t)}+\Theta_NO_{k_{M}(t)})-g_{h}(\tilde{X}_{k_{M}(t)}+\Theta_N\tilde{O}_{k_{M}(t)}), y^{M,N}_{s}\rangle \, ds
    \\&\leqslant 2\int_{0}^{t}\langle g_{h}(X_{k_{M}(s)}+\Theta_NO_{k_{M}(s)})-g_{h}(\tilde{X}_{k_{M}(s)}+\Theta_N\tilde{O}_{k_{M}(s)}), y^{M,N}_{s}\rangle \, ds
    \\& = 2\int_{0}^{t}\langle g_{h}(X_{k_{M}(s)}+\Theta_NO_{k_{M}(s)})-g_{h}(\tilde{X}_{k_{M}(s)}+\Theta_N\tilde{O}_{k_{M}(s)}), \hat{y}^{M,N}_{k_{M}(s)} + \Theta_NO_{k_M(s)}-\Theta_N\tilde{O}_{k_{M}(s)}\rangle \, ds
    \\&\quad -2\int_{0}^{t}\langle g_{h}(X_{k_{M}(s)}+\Theta_NO_{k_{M}(s)})-g_{h}(\tilde{X}_{k_{M}(s)}+\Theta_N\tilde{O}_{k_{M}(s)}), \Theta_NO_{k_M(s)}-\Theta_N\tilde{O}_{k_{M}(s)}\rangle \, ds
    \\&\quad -2\int_{0}^{t}\langle g_{h}(X_{k_{M}(s)}+\Theta_NO_{k_{M}(s)})-g_{h}(\tilde{X}_{k_{M}(s)}+\Theta_N\tilde{O}_{k_{M}(s)}), y^{M,N}_{k_{M}(s)} -y^{M,N}_{s}\rangle \, ds
    \\&\quad +2\int_{0}^{t}\langle g_{h}(X_{k_{M}(s)}+\Theta_NO_{k_{M}(s)})-g_{h}(\tilde{X}_{k_{M}(s)}+\Theta_N\tilde{O}_{k_{M}(s)}), y^{M,N}_{k_{M}(s)} -\hat{y}^{M,N}_{k_{M}(s)}\rangle \, ds.
\end{align*}
Using the one-sided Lipschitzness of $g_h$ with constant $K$, as well as triangle inequality, Cauchy Schwarz and the local Lipschitz bound of $g_h$ with $\tilde{m}$ from \cref{lem:g_h-bounds} we find
\begin{align*}
\MoveEqLeft
    \norm{y_t^{M,N}}_{L^{2}(\Pi_N)}^2
    \\&\leq 2K\int_{0}^{t} \norm{\hat{y}^{M,N}_{k_{M}(s)}}^{2}_{L^2(\Pi_N)} \, ds + 2K\int_{0}^{T}\norm{\Theta_NO_{k_{M}(s)}-\Theta_N\tilde{O}_{k_{M}(s)}}_{L^{2}(\Pi_N)}^2 \, ds
    \\&\quad + 2K^{M,N}\int_{0}^{t} \norm{\hat{y}^{M,N}_{k_{M}(s)}}_{L^2(\Pi_N)}\norm{\Theta_NO_{k_{M}(s)}-\Theta_N\tilde{O}_{k_{M}(s)}}_{L^{2}(\Pi_N)}  \, ds 
     \\&\quad + 2K^{M,N} \int_{0}^{T} \norm{\Theta_NO_{k_{M}(s)}-\Theta_N\tilde{O}_{k_{M}(s)}}_{L^{2}(\Pi_N)}^2 \, ds
    \\&\quad + 2K^{M,N} M^{-1+\eps}\norm{y^{M,N}}_{C_T^{1-\eps}L^{2}(\Pi_N)}\int_{0}^{t}\norm{\hat{y}^{M,N}_{k_{M}(s)}}_{L^2(\Pi_N)}\, ds
    \\&\quad + 2K^{M,N} M^{-1+\eps}\norm{y^{M,N}}_{C_T^{1-\eps}L^{2}(\Pi_N)}\int_{0}^{T} \norm{\Theta_NO_{k_{M}(s)}-\Theta_N\tilde{O}_{k_{M}(s)}}_{L^{2}(\Pi_N)} \, ds
    \\&\quad + 2K^{M,N}\norm{y^{M,N}-\hat{y}^{M,N}}_{C_TL^2(\Pi_N)} \int_{0}^{t}\norm{\hat{y}^{M,N}_{k_{M}(s)}}_{L^2(\Pi_N)}\, ds
    \\&\quad + 2K^{M,N}\norm{y^{M,N}-\hat{y}^{M,N}}_{C_TL^2(\Pi_N)} \int_{0}^{T} \norm{\Theta_NO_{k_{M}(s)}-\Theta_N\tilde{O}_{k_{M}(s)}}_{L^{2}(\Pi_N)} \, ds
\end{align*}
and thus with Young's inequality,
\begin{align*}
\MoveEqLeft
    \norm{y_t^{M,N}}_{L^{2}(\Pi_N)}^2
    \\&\leqslant 2(K+\frac{1}{2}+\frac{1}{2}+\frac{1}{2})\int_{0}^{t} \norm{\hat{y}^{M,N}_{k_{M}(s)}}^{2}_{L^2(\Pi_N)} \, ds
    \\&\quad + 2(K+K^{M,N}+\frac{1}{2}(K^{M,N})^2) \int_{0}^{T} \norm{\Theta_NO_{k_{M}(s)}-\Theta_N\tilde{O}_{k_{M}(s)}}_{L^{2}(\Pi_N)}^2 \, ds
    \\&\quad +  (K^{M,N})^2 M^{-2+2\eps} \norm{y^{M,N}}_{C_T^{1-\eps}L^{2}(\Pi_N)}^2 
    \\&\quad +  2K^{M,N} M^{-1+\eps}\norm{y^{M,N}}_{C_T^{1-\eps}L^{2}(\Pi_N)}\int_{0}^{T} \norm{\Theta_NO_{k_{M}(s)}-\Theta_N\tilde{O}_{k_{M}(s)}}_{L^{2}(\Pi_N)} \, ds
    \\&\quad + (K^{M,N})^2 \norm{y^{M,N}-\hat{y}^{M,N}}_{C_TL^2(\Pi_N)}^2
    \\&\quad + 2K^{M,N}\norm{y^{M,N}-\hat{y}^{M,N}}_{C_TL^2(\Pi_N)} \int_{0}^{T} \norm{\Theta_NO_{k_{M}(s)}-\Theta_N\tilde{O}_{k_{M}(s)}}_{L^{2}(\Pi_N)} \, ds,
\end{align*}
where we employed the notation $$K^{M,N}:=1+\norm{X^{M,N}}_{C_TL^{\infty}(\Pi_N)}^{2\tilde{m}+1}+\norm{\tilde{X}^{M,N}}_{C_TL^{\infty}(\Pi_N)}^{2\tilde{m}+1}+\norm{O}_{C_TL^{\infty}(\T)}^{2\tilde{m}+1}+\norm{\tilde{O}}_{C_TL^{\infty}(\T)}^{2\tilde{m}+1}.$$
Taking the $p$-th moment and applying Gronwalls inequality to $\E\sup_{t\in[0,r]}\norm{\hat{y}^{M,N}_{t}}_{L^2(\Pi_N)}^{2p}$ for $r\in [0,T]$, thus yields that, since $k_{M}(s)\leq s$, 
\begin{align*}
\MoveEqLeft
    (\E\sup_{t\in[0,r]}\norm{\hat{y}_t^{M,N}}_{L^{2}(\Pi_N)}^{2p})^{1/2p}
    \\&\lesssim (\E\sup_{t\in[0,r]}\norm{y_t^{M,N}}_{L^{2}(\Pi_N)}^{2p})^{1/2p} + M^{-1+\epsilon}
    \\&\lesssim C (\E(K^{M,N})^{4p})^{1/4p} 
    \\&\quad\times\paren[\Big]{\max_{t\in I_M}(\E\norm{\Theta_NO_t-\Theta_N\tilde{O}_t}_{L^{2}(\Pi_N)}^{4p})^{1/4p}+ M^{-1+\eps} (\E\norm{y^{M,N}}_{C_T^{1-\eps}L^{2}(\Pi_N)}^{4p})^{1/4p}
    \\&\quad + M^{-\frac{1}{2}+\eps/2} (\E\norm{y^{M,N}}_{C_T^{1-\eps}L^{2}(\Pi_N)}^{4p})^{1/8p}\max_{t\in I_M}(\E\norm{\Theta_N O_t-\Theta_N\tilde{O}_t}_{L^{2}(\Pi_N)}^{4p})^{1/8p}
    \\&\quad +\squeeze[1]{(\E\norm{y^{M,N}-\hat{y}^{M,N}}_{C_TL^2}^{4p})^{1/4p} + (\E\norm{y^{M,N}-\hat{y}^{M,N}}_{C_TL^2}^{4p})^{1/8p}\max_{t\in I_M}(\E\norm{\Theta_NO_t-\Theta_N\tilde{O}_t}_{L^{2}(\Pi_N)}^{4p})^{1/8p}}}
    \\&\quad + M^{-1+\epsilon}
    \\&\lesssim C (1+(\E(K^{M,N})^{4p})^{1/4p}) \paren[\Big]{M^{-1+\eps}+N^{-\frac{3}{2}+\eps}+ M^{-1+\eps} (\E\norm{y^{M,N}}_{C_T^{1-\eps}L^{2}(\Pi_N)}^{4p})^{1/4p}
    \\&\qquad + M^{-\frac{1+\eps}{2}} (M^{-\frac{1+\eps}{2}}+N^{-\frac{3}{4}+\frac{\eps}{2}}) (\E\norm{y^{M,N}}_{C_T^{1-\eps}L^{2}(\Pi_N)}^{4p})^{1/8p} +(M^{-\frac{1+\eps}{2}}+N^{-\frac{3}{4}+\frac{\eps}{2}})^2}
    \\&\lesssim C (M^{-1+\eps}+ N^{-3/2+\eps}),
\end{align*}
where $C=C(T,K,p, \eps, \tilde{m})$ is a constant, that does not depend on $M,N$ and may change from line to line. Above we further used the apriori estimates from \cref{prop:apriori-XMN} which yield a bound on the moments of $K^{M,N}$, that does not depend on $M,N$, together with the bound from \eqref{eq:y-time-reg}, as well as Cauchy Schwarz and the bound on $\Theta_NO-\Theta_N\tilde{O}$ from \cref{ass:tildeO} a) together with the bound \eqref{eq:close}.
\end{proof}

\subsection{Spatial error} \label{subsec:spatial}

The following lemmata each bound one term in the decomposition \eqref{BIG-decomposition}.
\begin{lemma} \label{lem:easyterm}
   Let $\varepsilon\in (0,1/2)$ and $p\geqslant 1$. Then there exists $C=C(T,\epsilon, p, K, \mathcal{M})$, that does not depend on $N$, such that
   \begin{equ}\label{eq:error2/1}
         \Big\|\sup_{t\in[0,T]}\|(\Psi_N\Theta_N -\Id) v_t\|_{L^2(\T)}\Big\|_{L^p(\Omega)}\leq C N^{-5/2+\eps}.
    \end{equ}
\end{lemma}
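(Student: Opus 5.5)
The estimate is pathwise: \cref{lem:operator} is applied at each fixed time and the regularity of $v$ supplies the moments. Fix $\eps\in(0,1/2)$ and set $\alpha:=5/2-\eps>1$, so that \eqref{eq:pos-est} applies. Almost surely, for every $t\in[0,T]$,
\begin{equ}
\|(\Psi_N\Theta_N-\Id)v_t\|_{L^2(\T)}\leq C(\alpha)N^{-5/2+\eps}\|v_t\|_{H^{5/2-\eps}(\T)}.
\end{equ}
The constant $C(\alpha)$ does not depend on $t$, $N$ or $\omega$. Taking the supremum over $t$ on both sides therefore gives
\begin{equ}
\sup_{t\in[0,T]}\|(\Psi_N\Theta_N-\Id)v_t\|_{L^2(\T)}\leq C N^{-5/2+\eps}\sup_{t\in[0,T]}\|v_t\|_{H^{5/2-\eps}(\T)}.
\end{equ}

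It remains to bound $\E\sup_t\|v_t\|^p_{H^{5/2-\eps}}$ uniformly in $N$. By \cref{lem:spaces-equivalence} e), we have $\cC^{5/2-\eps/2}(\T)\hookrightarrow H^{5/2-\eps}(\T)$, so it suffices to control $\E\|v\|^p_{C_T\cC^{5/2-\eps/2}(\T)}$.

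For this I use \eqref{eq:apriori-v} from \cref{prop:wp} under \cref{ass2}. I take $\eps/2$ in place of $\eps$ and a small $\lambda\in(0,2]$, so that it gives control of $\sup_t\|v_t\|_{\cC^{5/2-\lambda-\eps/2}}$. The exponent $\lambda$ must be positive, which costs a little regularity. I handle this by choosing, say, $\lambda=\eps/4$ and embedding $\cC^{5/2-3\eps/4}(\T)\hookrightarrow H^{5/2-\eps}(\T)$ instead. The right-hand side of \eqref{eq:apriori-v} is then $C(1+\E\|u_0\|^{(m+1)^2p}_{\cC^{5/2-\eps'}})$ for an appropriate $\eps'$. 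By \cref{asn:u0} this is bounded by a constant depending only on $\mathcal{M}$.

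The only point requiring care is this bookkeeping of the $\eps$-losses between the Hölder and Sobolev scales and the positivity requirement on $\lambda$. It is a routine adjustment, not a real obstacle.
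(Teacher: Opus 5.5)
Your proposal is correct and is the paper's proof: apply \eqref{eq:pos-est} from \cref{lem:operator} with $\alpha=5/2-\eps$ pathwise and uniformly in $t$, then bound the moments of $\sup_t\|v_t\|_{H^{5/2-\eps}(\T)}$ using \eqref{eq:apriori-v} from \cref{prop:wp} together with \cref{asn:u0}. Your $\eps$-bookkeeping ($\lambda=\eps/4$, $\eps/2$ in place of $\eps$, and the embedding $\cC^{5/2-3\eps/4}(\T)\hookrightarrow H^{5/2-\eps}(\T)$) spells out details the paper leaves implicit.
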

\begin{proof}
    This follows from \cref{lem:operator} for $\alpha=5/2-\epsilon$ and the regularity bounds for $v$ from \cref{prop:wp}.
\end{proof}

\begin{lemma} \label{lem:ztildez}
 Let $p\geq 1$. Then there exists a constant $C=C(p,K,T,m,\mathscr{M})$, that does not depend on $N$, such that the following bound holds:
 \begin{align}
    \norm{\|\mathbf{z}^{M,N} -z^{M,N}\|_{C_T L^2(\T)}}_{L^{p}(\Omega)}\lesssim C N^{-2}.
 \end{align}
\end{lemma}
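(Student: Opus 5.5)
We write out the difference directly from the two mild forms. Using $\Psi_N\Theta_N\Psi_N=\Psi_N$ (Lemma~\ref{lem:Theta-Psi} a)) and $\Theta_N\big(g_h(z^{M,N}_s+O_s)\big)=g_h(w^{M,N}_s+\Theta_N O_s)$ (Lemma~\ref{lem:Theta-Psi} e)), we have
\begin{align*}
\mathbf{z}^{M,N}_t-z^{M,N}_t=(P_t\Psi_N-\Psi_N P^N_t)\Theta_N u_0+\int_0^t (P_{t-s}\Psi_N-\Psi_N P^N_{t-s})\,G_s\,ds,
\end{align*}
where $G_s\coloneqq g_h(w^{M,N}_s+\Theta_N O_s)\in C(\Pi_N)$. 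No Gronwall argument or stochastic sewing is needed, because $G$ does not involve $\mathbf{z}^{M,N}$. The whole estimate then reduces to the commutator bound of Lemma~\ref{lem:commutator} with $\alpha=0$ and $\gamma=2$.

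For the initial term, take $\beta=2$, so that $\beta-\alpha-\gamma=0$. This gives $\|(\Psi_NP^N_t-P_t\Psi_N)\Theta_Nu_0\|_{L^2(\T)}\lesssim N^{-2}\|\Theta_Nu_0\|_{H^2(\Pi_N)}$ uniformly in $t$. By the isometry in Lemma~\ref{lem:Theta-Psi} c) and \eqref{eq:PsiThetabounded} we have $\|\Theta_Nu_0\|_{H^2(\Pi_N)}=\|\Psi_N\Theta_Nu_0\|_{H^2(\T)}\lesssim\|u_0\|_{H^2(\T)}\lesssim\|u_0\|_{\calC^{5/2-\eps}(\T)}$, and the moments of the last quantity are controlled by \cref{asn:u0}.

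For the integral term, take $\beta=2-\delta$ for some small $\delta\in(0,1/2)$. Lemma~\ref{lem:commutator} then gives the integrable singularity $(t-s)^{-\delta/2}$:
\begin{align*}
\Big\|\int_0^t(\Psi_NP^N_{t-s}-P_{t-s}\Psi_N)G_s\,ds\Big\|_{L^2(\T)}\lesssim N^{-2}\int_0^t (t-s)^{-\delta/2}\|G_s\|_{H^{2-\delta}(\Pi_N)}\,ds\lesssim N^{-2}\sup_{s\le T}\|G_s\|_{H^{2-\delta}(\Pi_N)}.
\end{align*}
It remains to bound the moments of $\sup_s\|G_s\|_{H^{2-\delta}(\Pi_N)}$ uniformly in $M,N$. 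This is where I expect the main obstacle.

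The difficulty is that $\Theta_N O$ has only regularity just below $1/2$, so $G_s$ itself is certainly not in $H^{2-\delta}$. The choice $\beta=2-\delta$ is therefore too optimistic. I would instead lower $\beta$ to $1/2-2\delta$. With $\gamma=2$ the exponent is $\beta-\alpha-\gamma=-3/2-2\delta$, which gives a singularity $(t-s)^{-3/4-\delta}$. This is still integrable, so the $N^{-2}$ rate survives.

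It then suffices to bound $\|G_s\|_{H^{1/2-2\delta}(\Pi_N)}\lesssim\|G_s\|_{\calC^{1/2-\delta}(\Pi_N)}$, using Lemma~\ref{lem:spaces-equivalence} e). By the discrete version of the composition estimate \eqref{eq:comp-B2} (Lemma~\ref{lem:composition}, with the growth of $g_h$ from \cref{lem:g_h-bounds}), this is bounded by
\[
(1+\|w^{M,N}_s+\Theta_NO_s\|_{L^\infty(\Pi_N)}^{2\tilde m})\,(1+\|w^{M,N}_s+\Theta_NO_s\|_{\calC^{1/2-\delta}(\Pi_N)}).
\]
The moments of this are bounded uniformly in $M,N$: for $w^{M,N}$ by Corollary~\ref{cor:wMN1}, and for $\Theta_N O$ by Lemma~\ref{lem:Theta-Psi} f) together with Lemma~\ref{lem:OUreg}. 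Taking the supremum over $t$ inside the deterministic bound and applying H\"older's inequality for the moments yields the claimed $N^{-2}$.
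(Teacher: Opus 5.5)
Your argument is correct and, once you discard the initial $\beta=2-\delta$ attempt, it is the paper's proof. The paper also applies Lemma~\ref{lem:commutator} with $\alpha=0$, $\gamma=2$, taking $\beta=2$ for the initial datum and $\beta$ just below $1/2$ for the integral term, so that the singularity $(t-s)^{-3/4-\delta}$ is integrable. The only cosmetic difference is in bounding $\|G_s\|_{H^{1/2-2\delta}(\Pi_N)}$: you go through $\calC^{1/2-\delta}(\Pi_N)$ and the H\"older composition estimate \eqref{eq:comp-B2}, whereas the paper applies the Sobolev composition estimate \eqref{eq:comp-H} directly. Both give moment bounds uniform in $M,N$ via Corollary~\ref{cor:wMN1} and Lemma~\ref{lem:OUreg}.
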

\begin{proof}
   We recall that
   \begin{align*}
       \mathbf{z}^{M,N}_t-z^{M,N}_t= (P_{t}\Psi_N-\Psi_N P_{t}^{N}) (\Theta_N u_0) + \int_{0}^{t} (P_{t-r}\Psi_N-\Psi_N P_{t-r}^{N}) g_{h}(w^{M,N}_r+\Theta_N O_r) \, dr.
   \end{align*}
   Using the commutator \cref{lem:commutator} for $\gamma=2$, $\alpha=0$ and $\beta=2$ and the regularity of $u_0$ from \cref{asn:u0}, we bound
   \begin{align*}
       \sup_{t\in[0,T]}\norm{(P_{t}\Psi_N-\Psi_N P_{t}^{N}) (\Theta_N u_0)}_{L^2(\T)}\lesssim N^{-2}\norm{\Theta_N u_0}_{H^{2}(\Pi_N)}\lesssim N^{-2}\norm{u_0}_{\calC^{\frac{5}{2}-\epsilon}(\T)}.
   \end{align*}
   Furthermore, by \cref{lem:commutator} for $\gamma=2$, $\alpha=0$ and $\beta=1/2-\epsilon$, the composition estimate from \cref{lem:composition} and integration,
   \begin{align*}
   \MoveEqLeft
       \norm[\bigg]{\int_{0}^{t} (P_{t-r}\Psi_N-\Psi_N P_{t-r}^{N}) (g_{h}(w^{M,N}_r+\Theta_N O_r) \, dr}_{L^2(\T)}\\&\lesssim N^{-2}\, t^{\frac{1-2\epsilon}{4}} \sup_{r\in[0,t]}\norm{g_{h}(w^{M,N}_r+\Theta_N O_r)}_{H^{1/2-\epsilon}(\Pi_N)}\\&\lesssim N^{-2}  (1+\norm{w^{M,N}+\Theta_N O}_{C_T L^{\infty}(\Pi_N)}^{2\tilde{m}+1}) (1+ \norm{w^{M,N}+\Theta_N O}_{C_T H^{1/2-\epsilon}(\Pi_N)}).
   \end{align*}
   We conclude by \cref{cor:wMN1} with a Besov embedding into $L^\infty$ and the bounds for $O$ from \cref{lem:OUreg}.
\end{proof}

\begin{lemma} \label{lem:zhatztilde}
Let $\alpha \in (1,3/2)$ and $p\geqslant 1$. Then there exists $C=C(T,p,\alpha,K,\mathscr{M})$, that does not depend on $N$, such that 
\begin{align*}
    (\E\|\hat{\mathbf{z}}^{M,N}-\mathbf{z}^{M,N}\|_{C_T L^2(\T)}^p)^{1/p}\leqslant C N^{-\alpha}.
\end{align*}
\end{lemma}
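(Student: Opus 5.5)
The difference has a simple explicit form. Both processes share the initial term $P_t(\Psi_N\Theta_Nu_0)$, so
\begin{equ}
\hat{\mathbf{z}}^{M,N}_t-\mathbf{z}^{M,N}_t=\int_0^t P_{t-r}(\Id-\Psi_N\Theta_N)\,g_h\big(z^{M,N}_r+O_r\big)\,dr .
\end{equ}
This is the analogue of the term $E_2$ in \eqref{eq:error-decomp}. The plan is to run the stochastic sewing argument of \cref{lem:space-ss} directly on this integral. The Girsanov reduction of \cref{cor:girsanov} is not available here: $g_h$ is unbounded, and $z^{M,N}$ is not of the form $\int_0^\cdot P_{\cdot-s}D_s\,ds$ because $\Psi_NP^N\neq P\Psi_N$. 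Instead I write $z^{M,N}+O=O+\zeta$ with $\zeta:=z^{M,N}$. By \cref{cor:wMN1}, $\zeta$ satisfies $\zeta\in C_T^{\lambda/2}H^{5/2-\eps-\lambda}(\T)$ and $\zeta\in C_T\calC^{2-\eps}(\T)$, with all moments bounded uniformly in $M,N$. I then freeze $\zeta$ in the germ, as is done for non-Girsanov sewing in \cite{DjGK}.

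Fix $(s,t)$ and, for $(u,v)\in[s,t]^2_\le$, set
\begin{equ}
A_{u,v}:=\E_u\int_u^vP_{t-r}(\Id-\Psi_N\Theta_N)\,g_h(O_r+\zeta_u)\,dr=\int_u^vP_{t-r}(\Id-\Psi_N\Theta_N)(P^\R_{Q(r-u)}g_h)\big(P_{r-u}O_u+\zeta_u\big)\,dr,
\end{equ}
using the rule \eqref{eq:rule}, since $\zeta_u$ is $\F_u$-measurable.

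For \eqref{eq:sewingassump2} I follow the proof of \cref{lem:space-ss} verbatim. The steps are: \cref{lem:operator} together with $\calC^{\alpha+\eps}\hookrightarrow H^\alpha$; then \cref{lem:composition} with \cref{lem:composition-applies} for the polynomially weighted function $P^\R_{Q(r-u)}g_h$, using \cref{lem:g_h-bounds} for uniformity in $h$; then the bounds \eqref{eq:a-bound} for $P_{r-u}O_u$. The role of $\mathfrak b$ is now played by $\zeta_u$, whose $\calC^{\alpha+\eps}$ norm is controlled because $\alpha+\eps<2-\eps$. The polynomial factors $(1+\|O\|_{L^\infty}+\|\zeta\|_{L^\infty})^{\tilde m}$ are absorbed by Hölder's inequality and the moment bounds. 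This yields $\Gamma_2\lesssim N^{-\alpha}$ with $\delta_2=0$ and exponent $(v-u)^{5/4-\alpha/2-\eps}$.

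The new point is \eqref{eq:sewingassump1}, because now $\E_u\delta A_{u,w,v}\neq0$. By the tower property,
\begin{equ}
\E_u\delta A_{u,w,v}=\E_u\int_w^vP_{t-r}(\Id-\Psi_N\Theta_N)\big[(P^\R_{Q(r-w)}g_h)(P_{r-w}O_w+\zeta_u)-(P^\R_{Q(r-w)}g_h)(P_{r-w}O_w+\zeta_w)\big]dr.
\end{equ}
I bound this by $N^{-\alpha}$ times the $H^\alpha$ norm of the bracket. I write the bracket as $\int_0^1(P^\R g_h)'(\cdots)\,d\lambda\cdot(\zeta_u-\zeta_w)$ and apply the product estimate \eqref{eq:product} in $H^\alpha$.

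\begin{itemize}
\item The factor $\zeta_u-\zeta_w$ is estimated in $H^{\alpha}$, and even in $\calC^{\alpha+\eps}$ via $H^{3/2-\eps}\hookrightarrow \calC^{1-\eps}$ plus interpolation. Choosing $\lambda$ with $5/2-\eps-\lambda>\alpha+1/2$ gives a gain $(w-u)^{\lambda/2}$, i.e. some $(v-u)^{1/4+\eps'}$ for $\alpha<3/2$. If needed, I instead use the slightly weaker combination of $C^{1/2}_TH^{3/2-\eps}$ with \eqref{eq:product1}, which requires only $H^\alpha\cap L^\infty$.
\item The derivative factor costs $(r-w)^{-\alpha/2+1/4-\eps}$, as in \eqref{eq:a-bound}.
\end{itemize}

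Integrating over $r$ gives $(v-u)^{1+\eps_1}$ with $\eps_1>0$ and $\Gamma_1\lesssim N^{-\alpha}$. This exponent bookkeeping, which trades the time regularity of $\zeta$ against the spatial regularity needed in $H^\alpha$ with $\alpha$ close to $3/2$, is where I expect the main difficulty. If it is too tight, the fallback is to freeze $P_{r-u}\zeta_u$ or to use $\zeta\in C^{1-\eps}_TL^\infty$ together with a lower $\alpha$ and interpolation.

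To identify the sewing limit $\mathcal A$ with the true integral $\tilde{\mathcal A}_v=\int_s^vP_{t-r}(\Id-\Psi_N\Theta_N)g_h(O_r+\zeta_r)\,dr$, I check that $\tilde{\mathcal A}$ satisfies \eqref{eq:SSL-conc1}–\eqref{eq:SSL-conc2}. For \eqref{eq:SSL-conc1} I use the $L^2$-boundedness of $\Id-\Psi_N\Theta_N$ (from \eqref{eq:PsiThetabounded} at level $L^2$, trivially) and moment bounds on $g_h(O+\zeta)$. For \eqref{eq:SSL-conc2}, the difference $\tilde{\mathcal A}_v-\tilde{\mathcal A}_u-A_{u,v}$ involves $g_h(O_r+\zeta_r)-g_h(O_r+\zeta_u)$, which is $O((v-u)^{1-\eps})$ in $L^2$ by the $C^{1-\eps}_T L^\infty$ regularity of $\zeta$ and the local Lipschitz bound \eqref{eq:g-bound2}; integrating over $r$ gives $(v-u)^{2-\eps}$. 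Uniqueness in \cref{lem:SSLHilbert} then gives $\mathcal A=\tilde{\mathcal A}$, and \eqref{eq:resultSewing} yields the pointwise bound $CN^{-\alpha}(t-s)^{\eps''}$. Finally, I obtain the supremum over $t$ via \cref{prop:vKolmogorov} with the semigroup $P$, exactly as at the end of \cref{lem:space-ss}.
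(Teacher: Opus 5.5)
Your plan follows the paper's proof: stochastic sewing applied directly rather than via Girsanov, with $z^{M,N}$ frozen in the germ. The sewing condition \eqref{eq:sewingassump2} comes from \cref{lem:operator} plus \cref{lem:composition}. The condition \eqref{eq:sewingassump1} comes from a mean-value expansion plus the product estimate \eqref{eq:product}. Then $\mathcal A$ is identified through the uniqueness part of \cref{lem:SSLHilbert}, and \cref{prop:vKolmogorov} gives the supremum. The only structural difference is that you freeze $z^{M,N}_u$, whereas the paper freezes $\E_u z^{M,N}_r$. This is harmless, and it even makes $\E_u\delta A_{u,w,v}$ involve the simpler increment $z^{M,N}_u-z^{M,N}_w$.

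Two steps need correcting, though both are easy to repair.

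\textbf{The exponent bookkeeping for \eqref{eq:sewingassump1}.} In \eqref{eq:product} the increment $\zeta_u-\zeta_w$ enters only through its $H^\alpha$ norm, and its time gain must be at least about $1/2$. The gain you actually state is $(w-u)^{\lambda/2}$ with $\lambda<2-\eps-\alpha$, which is roughly $(w-u)^{1/4}$ for $\alpha$ near $3/2$. Combined with $\int_w^v (r-w)^{1/4-\alpha/2-\eps}\,dr$, this gives a total exponent of about $9/4-\alpha$. That exceeds $1$ only for $\alpha<5/4$, so the claim ``integrating over $r$ gives $(v-u)^{1+\eps_1}$'' fails on $[5/4,3/2)$. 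What you call the fallback is in fact the necessary choice. By \cref{cor:wMN1} with $\lambda=1$, one has $z^{M,N}\in C^{1/2}_TH^{3/2-\eps}\subset C^{1/2}_TH^{\alpha}$. This gives the bound $N^{-\alpha}(v-u)^{7/4-\alpha/2-\eps}$, exactly as in the paper.

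\textbf{The identification step.} $\Id-\Psi_N\Theta_N$ is not bounded on $L^2(\T)$: $\Theta_N$ is point evaluation, and \eqref{eq:PsiThetabounded} requires regularity strictly above $1$. Use instead $\|\Psi_N\Theta_N f\|_{L^2(\T)}=\|\Theta_N f\|_{L^2(\Pi_N)}\le\|f\|_{L^\infty(\T)}$ from \cref{lem:Theta-Psi} c), f). Combine this with $L^\infty$ moment bounds on $g_h(O+\zeta)$ and on $g_h(O_r+\zeta_r)-g_h(O_r+\zeta_u)$. The latter bound uses the $C^{1-\eps}_TL^\infty$ regularity of $z^{M,N}$, which follows from \cref{cor:wMN1} and the embedding $H^{1/2+}\hookrightarrow L^\infty$. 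This is what the paper does.
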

\begin{proof}  
Since throughout the proof there is no risk of confusion, we omit $\T$ in the norms appearing. Throughout the proof, we rely on the properties of $g_h$, collected in Lemma~\ref{lem:g_h-bounds}. We aim to bound, for arbitrary $0\leq s\leq t\leq T$,
\begin{align}\label{eq:claim1}
   \paren[\bigg]{\E\Big\|\int_s^t P_{t-s} ((\operatorname{Id}-\Psi_N \Theta_N) g_h(z^{M,N}_s + O_s))\, ds\Big\|_{L^2}^p}^{1/p}\leq C N^{-\alpha} (t-s)^{1/2},
\end{align}
from which the claim follows by a version of Kolmogorov's continuity theorem from \cref{prop:vKolmogorov}.
We use the stochastic sewing Lemma~\ref{lem:SSLHilbert} to estimate the above. To do so, let for $(u,v) \in [s,t]_\leq^2$,
\begin{align*}
    A_{u,v}&\coloneqq \E_u \int_u^v P_{t-r} (\operatorname{Id}-\Psi_N \Theta_N)g_h(\E_u z^{M,N}_r+O_r)\, dr
\end{align*}
Computing the conditional expectation using that $g_h$ is Nemytskii together with the rule \eqref{eq:rule}, similar as in the proof of \cref{lem:space-ss}, we find that almost surely,
\begin{align*}
    A_{u,v}=\int_u^v P_{t-r} (\operatorname{Id}-\Psi_N \Theta_N)(P^{\mathbb{R}}_{\Qx{u}{r}} g_h)(\E_u z^{M,N}_r+P_{r-u} O_u)\, dr.
\end{align*}
We first verify \eqref{eq:sewingassump2}. 
Again we let $\varepsilon\in (0,\min(\frac{3}{4}-\frac{\alpha}{2},\frac{1}{2}))$ be small and introduce the notation $\mathfrak{a}_{u,r}:=P_{r-u}O_u$ for $u\leq r$ and $\mathfrak{c}_u:=\E_u z_r^{M,N}=\mathfrak{c}$, where the bounds for $\mathfrak{a}$ are given by \eqref{eq:a-bound}. Further, we note that due to the contraction property of the heat semigroup on $L^{\infty}$,
\begin{align}\label{eq:a-bound-unif}
    \norm{\mathfrak{a}_{u,r}}_{L^{\infty}}\lesssim \norm{O}_{C_TL^{\infty}},
\end{align}
where the right-hand side is a uniform bound in $0\leq u\leq r\leq T$.
For $\mathfrak{c}$, we find the following uniform bound in $u\in[0,T]$,
\begin{align}\label{eq:c-bound}
  \norm{\mathfrak{c}_u}_{C^{1}_{b}}\lesssim \norm{z^{M,N}}_{C_T \calC^{\frac{1}{2}-\epsilon}}\lesssim \norm{z^{M,N}}_{C_T \calC^{\frac{3}{2}-\epsilon}},
\end{align}
where $p$-th moments of the right-hand side are bounded by \cref{cor:wMN1}. Below we only write $\mathfrak{c}$, omitting the index $u$, whenever we only employ the uniform bound \eqref{eq:c-bound}.
Notice that the composition estimate from \cref{lem:composition} applies to $P^{\mathbb{R}}_{\Qx{u}{r}} g_h$ with $\theta=\alpha$, $\kappa=2$ due to \cref{lem:composition-applies} and $g_{h}\in C^{2}_{\omega}$ for $\omega$ as therein and $\beta=2\tilde{m}+1$ by \eqref{eq:ghgrowth}.
Together with the bounds \eqref{eq:a-bound}, \eqref{eq:a-bound-unif} and \eqref{eq:c-bound}, we thus obtain the following estimate on $A_{u,v}$,  using that $\alpha-1+\epsilon\leq \frac{1}{2}-\epsilon$ and the embedding $\calC^{\alpha+\epsilon}\hookrightarrow H^{\alpha}$,
\begin{align*}
    \|A_{u,v}\|_{L^2}&\leqslant \int_{u}^{v}\|(\operatorname{Id}-\Psi_N \Theta_N)(P^{\mathbb{R}}_{Q(r-u)} g_h)(\mathfrak{a}_{u,r}+\mathfrak{c})\|_{L^2}\, dr
    \\(\text{Lem. }\ref{lem:operator}, \text{embed})&\lesssim N^{-\alpha} \int_{u}^{v} \norm{(P^{\mathbb{R}}_{Q(r-u)} g_h)(\mathfrak{a}_{u,r}+\mathfrak{c})}_{\calC^{\alpha+\epsilon}} \, dr
    \\(\text{Lemma }\ref{lem:composition})&\lesssim   \squeeze[1]{N^{-\alpha} \int_{u}^{v} (1+\norm{\mathfrak{a}_{u,r}+\mathfrak{c}}_{L^{\infty}}^{2\tilde{m}+1})\,(1 +\|\mathfrak{a}_{u,r}+\mathfrak{c}\|_{C^{1}_b}\|\mathfrak{a}_{u,r}+\mathfrak{c}\|_{\cC^{\alpha-1+\varepsilon}}+\norm{\mathfrak{a}_{u,r}+\mathfrak{c}}_{\cC^{\alpha+\varepsilon}}) \, dr}
    \\&\lesssim   N^{-\alpha} \int_{u}^{v} (1+\norm{O}_{C_TL^{\infty}}^{2\tilde{m}+1}+\norm{z^{M,N}}_{C_T L^{\infty}}^{2\tilde{m}+1})\\
    &\qquad\qquad \times\Big(1+\norm{z^{M,N}}_{C_T \calC^{3/2-\epsilon}}(\norm{z^{M,N}}_{C_T \calC^{3/2-\epsilon}} +\|\mathfrak{a}_{u,r}\|_{\cC^{\alpha+\varepsilon-1}}+ \norm{\mathfrak{a}_{u,r}}_{C^{1}_b}))\\
    &\qquad\qquad\quad +\|\mathfrak{a}_{u,r}\|_{C^{1}_b}\|\mathfrak{a}_{u,r}\|_{\cC^{\alpha+\varepsilon-1}}+\norm{\mathfrak{a}_{u,r}}_{\cC^{\alpha+\varepsilon}}\Big) \, dr
    \\(\text{Lemma }\ref{lem:semigroup1})&\lesssim   N^{-\alpha} \int_{u}^{v} (1+\norm{O}_{C_TL^{\infty}}^{2\tilde{m}+1}+\norm{z^{M,N}}_{C_T L^{\infty}}^{2\tilde{m}+1})\\
    &\qquad\times\Big(1+\norm{z^{M,N}}_{C_T \calC^{3/2-\epsilon}}(\norm{z^{M,N}}_{C_T \calC^{3/2-\epsilon}}+(1+(r-u)^{-1/4-\varepsilon})\norm{O}_{C_T\calC^{1/2-\epsilon}})
    \\&\qquad + (r-u)^{-1/4-\varepsilon}\norm{O}_{C_T\calC^{1/2-\epsilon}}^2+(r-u)^{-\alpha/2+1/4-\varepsilon})\norm{O}_{C_T\calC^{1/2-\epsilon}}\Big) \,dr
    \\&\lesssim   N^{-\alpha} (1+\norm{O}_{C_T\calC^{1/2-\epsilon}}^{2\tilde{m}+3}+\norm{z^{M,N}}_{C_T \calC^{3/2-\eps}}^{2\tilde{m}+3})(v-u)^{5/4-\alpha/2-\eps}
\end{align*}
Hence, taking the $L^p(\Omega)$-norm, this gives
\begin{align*}
\big\|\|A_{u,v}\|_{L^2}\big\|_{L^p(\Omega)}&\lesssim N^{-\alpha} (v-u)^{5/4-\alpha/2-\eps}\\
&\qquad\times \Big(1+\big\|\norm{z^{M,N}}_{\mathcal{C}_T \cC^{3/2-\eps}}\big\|_{L^{(2\tilde{m}+3)p}(\Omega)}+\big\|\norm{O}_{C_T\calC^{1/2-\eps}}\big\|_{L^{(2\tilde{m}+3)p}(\Omega)}\Big),
\end{align*}
where the right hand side in the above is finite by \cref{cor:wMN1} and \eqref{eq:Otimespace}. Therefore \eqref{eq:sewingassump2} is fulfilled for $\epsilon_2= \frac{3}{4}-\frac{\alpha}{2}-\epsilon>0$, which is positive due to the choice on $\epsilon$, $\Gamma_2=C N^{-\alpha}$, $\delta_2=0$.
To also verify \eqref{eq:sewingassump1}, we use Lemma~\ref{lem:operator} with $\alpha>1$, tower property of conditional expectation as well as the product estimate \eqref{eq:product} from \cref{lem:product-est}, so that for $\xi \in [u,v]$ and small enough $\varepsilon\in (0,\frac{3}{4}-\frac{\alpha}{2})$, with the notation for $\mathfrak{a},\mathfrak{c}$ from above,
\begin{align*}
    \|&\E_u\delta A_{u,\xi,v}\|_{L^2}\\
    &=\norm[\bigg]{\int_\xi^v \E_u(\operatorname{Id}-\Psi_N \Theta_N) [(P^{\mathbb{R}}_{\Qx{\xi}{r}}g_h)(\E_u z_r^{M,N}+P_{r-\xi}O_r)-(P_{\Qx{\xi}{r}}^{\R}g_h)(\E_\xi z_r^{M,N} + P_{r-\xi}O_r)] \, dr}_{L^2} \\
    &\lesssim N^{-\alpha} \E_u\int_{\xi}^v \Big\|\Big(\int_0^1 (P^{\mathbb{R}}_{\Qx{\xi}{r}}g_h)^\prime (\lambda \mathfrak{c}_u + (1-\lambda) \mathfrak{c}_{\xi}+\mathfrak{a}_{\xi,r}) \, d\lambda\Big)(\mathfrak{c}_u -\mathfrak{c}_{\xi})\Big\|_{H^\alpha} \, dr\\
    &\lesssim N^{-\alpha} \E_u\int_\xi^v \underbrace{\Big\|\int_0^1 (P^{\mathbb{R}}_{\Qx{\xi}{r}}g_h)^\prime (\lambda \mathfrak{c}_u + (1-\lambda) \mathfrak{c}_{\xi}+\mathfrak{a}_{\xi,r}) \, d\lambda\Big\|_{\calC^{\alpha+\epsilon}}}_{\coloneqq P_1 (r)} \|\mathfrak{c}_u - \mathfrak{c}_{\xi}\|_{H^\alpha} \, dr.
\end{align*}
To estimate $P_1(r)$, we apply the composition estimate, \cref{lem:composition}, to $(P^{\mathbb{R}}_{\Qx{\xi}{r}}g_h)^\prime$ and $v=\lambda \mathfrak{c}_u + (1-\lambda) \mathfrak{c}_{\xi}+\mathfrak{a}_{\xi,r}$ and $\theta=\alpha+\epsilon\in (1,2)$, where the composition estimate applies to that setting by \cref{lem:composition-applies} using now that $g_h\in C^{3}_{\omega}$, so that $(P^{\R}_{Q(\xi,r)}g_{h})^{\prime}\in C^{2}_{\omega}$, $\beta=2\tilde{m}+1$. This yields, by similar arguments as for $A_{u,v}$ (in particular using \cref{lem:semigroup1}, \cref{lem:OUreg} and the estimate of $z^{M,N}$, i.e. \cref{cor:wMN1}),
\begin{align*}
&\|P_1(r)\|_{L^{2p}(\Omega)}
\\&\lesssim \squeeze[1]{\norm{(1+ \|z^{M,N}\|_{C_TL^{\infty}}^{2\tilde{m}+1}+\norm{O}_{C_TL^{\infty}}^{2\tilde{m}+1})(1+\|z^{M,N}\|_{C_T\calC^{3/2-\epsilon}}^2+\|\mathfrak{a}_{\xi,r}\|_{C^{1}_b}\norm{\mathfrak{a}_{\xi,r}}_{\calC^{\alpha+\epsilon-1}}+ \|\mathfrak{a}_{\xi,r}\|_{\calC^{\alpha+\epsilon}})}_{L^{2p}(\Omega)}}
\\&\lesssim (1+(r-\xi)^{1/4-\alpha/2-\epsilon}) (1+\norm{O}_{C_T\calC^{1/2-\epsilon}}^{2\tilde{m}+3}+\norm{z^{M,N}}_{C_T \calC^{3/2-\eps}}^{2\tilde{m}+3}).
\end{align*}
Further, we see that for $u\leq\xi\leq r\leq v$, since $\alpha<3/2$,
\begin{align*}
\|\|\mathfrak{c}_u - \mathfrak{c}_{\xi}\|_{H^\alpha}\|_{L^{2p}(\Omega)}&\lesssim \|\|z_r^{M,N} - z_\xi^{M,N}\|_{H^\alpha}\|_{L^{2p}(\Omega)}+\|\|z_r^{M,N} -  z_u^{M,N}\|_{H^\alpha}\|_{L^{2p}(\Omega)}\\&\lesssim  \|\|z^{M,N}\|_{C^{1/2}_T H^{\alpha}}\|_{L^{2p}(\Omega)}(r-u)^{1/2}
\\&\lesssim (r-u)^{1/2}
\end{align*}
using the a priori estimate, Lemma~\ref{cor:wMN1}. Hence, we get
\begin{align*}
    \|\|\E_u \delta A_{u,\xi,v}\|_{L^2}\|_{L^p(\Omega)} 
    &\lesssim N^{-\alpha} \Big(\int_{\xi}^v(1+ (r-\xi)^{1/4-\alpha/2-\epsilon})(r-u)^{1/2}\, dr\Big)\\&\lesssim N^{-\alpha} (v-u)^{7/4-\alpha/2-\epsilon}.
\end{align*}
Due to the choice of $\epsilon$, we have that $\epsilon_1=\frac{3}{4}-\frac{\alpha}{2}-\epsilon>0$ and thus \eqref{eq:sewingassump1} follows with $\Gamma_1= C N^{-\alpha},\delta_1=0$.

It remains to identify the sewing integral $(\mathcal{A}_{v})_{v\in[s,t]}$ as $\mathcal{A}_{v}=\int_{s}^{v} P_{t-r}((\operatorname{Id}-\Psi_N\Theta_N)g_h(z_r^{M,N}+O_r))\, dr$ for $0\leq s\leq v\leq t$. We do so, using that it is the unique process fulfilling \eqref{eq:SSL-conc1} and \eqref{eq:SSL-conc2}, i.e. controlling
\begin{align*}
R_1&\coloneqq\int_{u}^{v} P_{t-r}((\operatorname{Id}-\Psi_N\Theta_N)g_h(z_r^{M,N}+O_r))\, dr- \int_{u}^{v} P_{t-r}((\operatorname{Id}-\Psi_N\Theta_N)g_h(\E_u z_r^{M,N}+P_{r-u}O_u))\, dr
\end{align*}
and $R_2\coloneqq \E_{u} R_1$. Again, we use the growth bound on $g_h$ and the a priori estimates for $z^{M,N}$ and $O$, that we have used multiple times throughout the proof, for each of the two summands of $R_1$ separately, as well as that $P_{t-r}$ is a contraction on $L^2({\T})$  and $\|\Psi_N \Theta_N f\|_{L^2}\leqslant \|f\|_{L^{\infty}}$ by \cref{lem:Theta-Psi} c) and f) for $f\in C(\T)$. Then we obtain that $\Big\|\|R_1\|_{L^2}\Big\|_{L^2(\Omega)}\lesssim (v-u)$ and thus \eqref{eq:SSL-conc1} holds for $C_1=C_2=C(T,K,m,p)$, where the dependence comes from the a priori estimates.
To treat $R_2$, we additionally use the contraction property of conditional expectation to see that
\begin{align*}
    \Big\|\|R_2\|_{L^2}\Big\|_{L^2(\Omega)}&\leqslant \int_{u}^{v} \Big\| \|g_h(z_r^{M,N}+O_r)-g_h(\E_{u} z^{M,N}_r+P_{r-u}O_u)\|_{L^{\infty}}\Big\|_{L^2(\Omega)}\, dr.
\end{align*}
Hence by the growth bound on $g_h$, semigroup estimates and the regularity of $O$ from \cref{lem:OUreg} and $z^{M,N}$ from \cref{cor:wMN1}, we deduce that for $\epsilon\in (0,1/2)$
\begin{equation*}
\Big\|\|R_2\|_{L^2}\Big\|_{L^2(\Omega)}\lesssim (v-u)^{5/4-\epsilon/2} \paren[\big]{\norm{\norm{z^{M,N}}_{C_T^1L^{\infty}}}_{L^{2(2m+1)}(\Omega)}+  \norm{\norm{O}_{C_T\calC^{1/2-\epsilon}}}_{L^{2(2m+1)}(\Omega)}}
\end{equation*}
and thus \eqref{eq:SSL-conc2} follows for $C_3=C(T,m,K,p)$.
Hence, the claim \eqref{eq:claim1} follows by \eqref{eq:resultSewing}, from which the lemma follows by an application of a version of Kolmogorov's continuity theorem from \cref{prop:vKolmogorov}. 
\end{proof}

\begin{lemma}\label{lem:vhatz}
    Let $\alpha \in (1,3/2)$ and $p\geqslant 1$. Then there exists $C=C(\alpha, K, m, T,p)$, that does not depend on $M,N$, such that 
\begin{equation*}
    (\E\sup_{t\in[0,T]}\|v^M_t - \hat{\mathbf{z}}_t^{M,N}\|^{p}_{L^2(\T)})^{1/p}\leqslant C N^{-\alpha}.
\end{equation*}
\end{lemma}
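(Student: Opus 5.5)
We compare the mild forms directly. Since
\begin{equation*}
v^M_t-\hat{\mathbf{z}}^{M,N}_t=P_t(\Id-\Psi_N\Theta_N)u_0+\int_0^tP_{t-r}\big(g_h(v^M_r+O_r)-g_h(z^{M,N}_r+O_r)\big)\,dr,
\end{equation*}
the first term is bounded uniformly in $t$ by $CN^{-5/2+\eps}\norm{u_0}_{H^{5/2-\eps}}$ via \cref{lem:operator} and the $L^2$-contractivity of $P_t$. The nonlinear term only gives one-sided information, so we work at the level of the PDE rather than with Gronwall on the mild form. We insert and subtract $g_h(\hat{\mathbf{z}}^{M,N}_r+O_r)$. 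Set $e=v^M-\hat{\mathbf{z}}^{M,N}$ and $\rho=\hat{\mathbf{z}}^{M,N}-z^{M,N}$. Then $e$ solves weakly
\begin{equation*}
\partial_te=\Delta e+\big[g_h(v^M+O)-g_h(\hat{\mathbf{z}}^{M,N}+O)\big]+\big[g_h(\hat{\mathbf{z}}^{M,N}+O)-g_h(z^{M,N}+O)\big],
\end{equation*}
with $e_0=(\Id-\Psi_N\Theta_N)u_0$.

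We test this equation with $e$. The Laplacian term is nonpositive, and the one-sided Lipschitz bound \eqref{eq:g-bound1} controls the first bracket by $K\norm{e}_{L^2}^2$. For the second bracket we use the local Lipschitz bound \eqref{eq:g-bound2} together with Cauchy--Schwarz and Young. This gives
\begin{equation*}
\norm{e_t}_{L^2}^2\le\norm{e_0}_{L^2}^2+C\int_0^t\norm{e_r}_{L^2}^2\,dr+\int_0^t\mathcal{K}_r^2\norm{\rho_r}_{L^2}^2\,dr,
\end{equation*}
where
\begin{equation*}
\mathcal{K}=1+\norm{\hat{\mathbf{z}}^{M,N}}_{C_TL^\infty}^{2\tilde m}+\norm{z^{M,N}}_{C_TL^\infty}^{2\tilde m}+\norm{O}_{C_TL^\infty}^{2\tilde m}.
\end{equation*}
Pathwise Gronwall then yields
\begin{equation*}
\sup_t\norm{e_t}_{L^2}\lesssim\norm{e_0}_{L^2}+\mathcal{K}\sup_t\norm{\rho_t}_{L^2}.
\end{equation*}
The constant here is deterministic, since the one-sided constant $K$ is independent of $M$ by \cref{lem:g_h-bounds}.

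Next we take $L^p(\Omega)$ norms and apply Hölder. The moments of $\mathcal{K}$ are bounded uniformly in $M,N$ by \cref{cor:wMN1} (via \eqref{eq:apriorihatz} and \eqref{eq:apriori-zMN-emb} with Sobolev embedding into $L^\infty$), by \cref{lem:OUreg}, and by \cref{lem:wpvM} (needed only for $v^M$ if one prefers a symmetric version). It remains to bound $\rho$. We split $\rho=(\hat{\mathbf{z}}^{M,N}-\mathbf{z}^{M,N})+(\mathbf{z}^{M,N}-z^{M,N})$. \cref{lem:zhatztilde} gives the bound $N^{-\alpha}$ for the first piece in $C_TL^2$ moments, and \cref{lem:ztildez} gives $N^{-2}\le N^{-\alpha}$ for the second. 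Together with $N^{-5/2+\eps}\le N^{-\alpha}$, this gives the claim.

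The main obstacle is justifying the energy identity. Weak solutions in the PDE sense are available for $v^M$ (\cref{lem:wpvM}). For $\hat{\mathbf{z}}^{M,N}$ we note that it is the mild solution of a linear heat equation with forcing $g_h(z^{M,N}+O)$, which lies in $C_TL^2$ with all moments. Hence $e$ is a mild, and by \cite{bell} weak, solution of a heat equation with $L^2$ forcing. The chain rule for $\norm{e_t}_{L^2}^2$ then holds, for instance after Galerkin or mollification regularisation, using the $H^1$ regularity of $e$ from the a priori bounds. A second point to check is that the mixed term involves $\norm{e}\,\norm{\rho}$ with a random prefactor. This is why we keep the Gronwall step pathwise and apply Hölder only afterwards.
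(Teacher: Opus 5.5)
Your proposal is correct and follows the paper's proof essentially step by step. Both run an energy estimate for $e=v^M-\hat{\mathbf{z}}^{M,N}$ after inserting $g_h(\hat{\mathbf{z}}^{M,N}+O)$, use the one-sided Lipschitz bound on the first difference and the local Lipschitz bound with Young on the second, apply Gr\"onwall pathwise, and finish with H\"older together with \cref{lem:zhatztilde} and \cref{lem:ztildez} to control $\hat{\mathbf{z}}^{M,N}-z^{M,N}$. The only cosmetic difference is that you bound the initial error by $N^{-5/2+\eps}$ rather than by $N^{-\alpha}\|u_0\|_{H^\alpha}$, which works equally well.
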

\begin{proof}
We know that $v^M-\hat{\mathbf{z}}^{M,N}$ weakly solves (cf. \cite[Remark 6.1]{DjGK} and \cite{bell})
\begin{align*}
\partial _t (v^M_t -\hat{\mathbf{z}}^{M,N}_t) = \Delta (v^M_t -\hat{\mathbf{z}}^{M,N}_t) + g_{h}(v^M_t + O_t)- g_h (z^{M,N}_t + O_t), \quad v^M_0 -\hat{\mathbf{z}}^{M,N}_0= u_0- \Psi_N\Theta_N u_0.
\end{align*}
Applying Lemma~\ref{lem:operator} for the initial condition, Young's product inequality and the properties of $g_h$ (see Lemma~\ref{lem:g_h-bounds}), we get
\begin{align*}
    \|v^M_t &- \hat{\mathbf{z}}_t^{M,N}\|^2_{L^2(\T)}= \|P_t (u_0-\Psi_N\Theta_Nu_0 )\|_{L^2(\T)}^2 +\int_0^t \langle v_s^M -\hat{z}_s^{M,N},\Delta (v_s^M-\hat{\mathbf{z}}_s^{M,N})\rangle\, ds \\
    &\quad \quad \quad + \int_0^t \langle v^M_s-\hat{\mathbf{z}}^{M,N}_s,g_h(v^M_s +O_s )-g_h (z^{M,N}_s+O_s)\rangle \, ds\\
    &\leqslant C N^{-2\alpha}\|u_0\|_{H^{\alpha}(\T)}^2\underbrace{-\int_0^t \langle \nabla (v_s^M-\hat{\mathbf{z}}^{M,N}_s),\nabla (v_s^M-\hat{\mathbf{z}}^{M,N}_s)\rangle \, ds}_{\leqslant 0} \\
    &\quad  \quad \quad + \int_0^t\langle v^M_s-\hat{\mathbf{z}}_s^{M,N},g_h(v^M_s+O_s)-g_h(z_s^{M,N}+O_s)\rangle \, ds\\
    &\leqslant C N^{-2\alpha}\|u_0\|_{H^{\alpha}(\T)}^2+\int_0^t \langle v^M_s-\hat{\mathbf{z}}_s^{M,N},g_h(v^M_s+O_s)-g_h(\hat{\mathbf{z}}_s^{M,N}+O_s)\rangle\, ds\\
    &\quad \quad \quad +\int_0^t \langle v^M_s-\hat{\mathbf{z}}^{M,N}_s,g_h(\hat{\mathbf{z}}^{M,N}_s+O_s)-g_h(z^{M,N}_s+O_s)\rangle\, ds\\
    &\leqslant C N^{-2\alpha}\|u_0\|_{H^{\alpha}(\T)}^2+K \int_0^t \|v^M_s-\hat{\mathbf{z}}^{M,N}_s\|_{L^2(\T)}^2\, ds + \frac{1}{2} \int_0^t \|v^M_s-\hat{\mathbf{z}}^{M,N}_s\|^2_{L^2(\T)}\, ds \\
    &\quad \quad \quad +\frac{1}{2} \int_0^t \|g_h(\hat{\mathbf{z}}_s^{M,N}+O_s)-g_h(z^{M,N}_s+O_s)\|^2_{L^2(\T)}\, ds \\
    &\leqslant C N^{-2\alpha}\|u_0\|_{H^{\alpha}(\T)}^2+K \int_0^t \|v^M_s-\hat{\mathbf{z}}^{M,N}_s\|_{L^2(\T)}^2\, ds + \frac{1}{2} \int_0^t \|v^M_s-\hat{\mathbf{z}}^{M,N}_s\|^2_{L^2(\T)}\, ds \\
    &\quad \quad \quad \quad + \tilde{K}^2(1+\|\hat{\mathbf{z}}^{M,N}+O\|_{C_TL^\infty(\T)}^{2\tilde{m}+1}+\|z^{M,N}+O\|_{C_TL^\infty(\T)}^{2\tilde{m}+1})^2 \int_0^t \|\hat{\mathbf{z}}^{M,N}_s-z^{M,N}_s\|_{L^2(\T)}^2\, ds.
\end{align*}
Applying Gr\"onwall's inequality, we thus obtain
\begin{align*}
    &\|v^M_t - \hat{\mathbf{z}}_t^{M,N}\|^2_{L^2(\T)}\\
    &\leqslant C\Big(N^{-2\alpha}\|u_0\|^2_{H^\alpha(\T)}+(1+\|\hat{\mathbf{z}}^{M,N}+O\|_{C_TL^\infty(\T)}^{2\tilde{m}+1}+\|z^{M,N}+O\|_{C_TL^\infty(\T)}^{2\tilde{m}+1})^2 \|\hat{\mathbf{z}}^{M,N}-z^{M,N}\|_{C_TL^2(\T)}^2\Big)
\end{align*}
Recall the a priori estimates for $z^{M,N},\hat{\mathbf{z}}^{M,N}$ and $O$ from Corollary~\ref{cor:wMN1} and \cref{lem:OUreg}. Therefore, after taking the $L^{p/2}(\Omega)$ norm and Cauchy-Schwarz, the result follows as $$\|\|\hat{\mathbf{z}}^{M,N}-z^{M,N}\|^2_{C_TL^2(\T)}\|_{L^p(\Omega)}\leqslant C N^{-2\alpha}$$ by Lemma~\ref{lem:ztildez} and Lemma~\ref{lem:zhatztilde}.
\end{proof}

\subsection{Temporal error} \label{subsec:temporal}

This subsection is dedicated to bounding the two terms in the decomposition \eqref{BIG-decomposition} contributing to the temporal error; i.e. $\|v_t-v_t^M\|_{L^2(\T)}$ and $\|z_t^{M,N}-\Psi_N X_t^{M,N}\|_{L^2(\T)}$. The former is directly treated in \cite{DjGK}, see \cref{lem:6.3} below. In order to prove the latter we employ the a priori estimates for $X^{M,N}$ from \cref{prop:apriori-XMN}. Then we proceed along similar lines as in \cite{DjGK} via stochastic sewing and a buckling argument.

\begin{lemma}\label{lem:6.3}
    Let $p\geq 1$. Then there exists a constant $C=C(p,K,T,m,\mathscr{M})$, that does not depend on $M$, such that 
    \begin{equ}
        \Big\|\sup_{t\in[0,T]}\|v- v^M\|_{L^2(\T)}\Big\|_{L^p(\Omega)}\leq C M^{-1}.
    \end{equ}
\end{lemma}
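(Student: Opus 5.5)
The difference $e^M:=v-v^M$ solves, in the weak sense (cf. \cite{bell}),
\begin{equ}
\partial_t e^M=\Delta e^M+f(v+O)-g_h(v^M+O),\qquad e^M_0=0 .
\end{equ}
I would run an energy estimate in $L^2(\T)$ and split the nonlinear term as
\begin{equ}
f(v+O)-g_h(v^M+O)=\big[g_h(v+O)-g_h(v^M+O)\big]+\big[f(v+O)-g_h(v+O)\big].
\end{equ}
Testing against $e^M$ and discarding the nonpositive gradient term gives
\begin{equ}
\|e^M_t\|_{L^2}^2\le 2\int_0^t\langle e^M_s,g_h(v_s+O_s)-g_h(v^M_s+O_s)\rangle\,ds+2\int_0^t\langle e^M_s,(f-g_h)(v_s+O_s)\rangle\,ds .
\end{equ}
The first integral is controlled by the one-sided Lipschitz bound \eqref{eq:g-bound1} for $g_h$. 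Its constant $K$ is uniform in $h$, so this integral is at most $2K\int_0^t\|e^M_s\|_{L^2}^2\,ds$.

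For the second integral I use Young's inequality and the last bound of \cref{lem:g_h-bounds}, namely $|g_h(x)-f(x)|\le Ch(1+|x|^{4m+2})$. This gives
\begin{equ}
\|(f-g_h)(v_s+O_s)\|_{L^2}\lesssim h\big(1+\|v\|_{C_TL^\infty}^{4m+2}+\|O\|_{C_TL^\infty}^{4m+2}\big),
\end{equ}
and hence
\begin{equ}
\|e^M_t\|_{L^2}^2\le (2K+1)\int_0^t\|e^M_s\|_{L^2}^2\,ds+C\,T\,h^2\big(1+\|v\|_{C_TL^\infty}+\|O\|_{C_TL^\infty}\big)^{8m+4}.
\end{equ}
Gr\"onwall's inequality, applied pathwise, then yields
\begin{equ}
\sup_{t\le T}\|e^M_t\|_{L^2}\lesssim M^{-1}\big(1+\|v\|_{C_TL^\infty}+\|O\|_{C_TL^\infty}\big)^{4m+2}.
\end{equ}
Taking $L^p(\Omega)$ norms, the moments on the right are finite by \cref{prop:wp} and \cref{lem:OUreg}. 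This gives the bound $CM^{-1}$ with $C$ independent of $M$.

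The point to check carefully is the justification of the energy identity. The nonlinearities have polynomial growth, so the identity has to be obtained for weak solutions. The regularity of $v$ and $v^M$ from \cref{prop:wp} and \cref{lem:wpvM} makes the right-hand side lie in $L^\infty_tL^2_x$ and $e^M$ be a strong solution, and then the chain rule for $\|e^M\|_{L^2}^2$ is standard. It also matters that the one-sided bound is applied to $g_h$ rather than to $f$, so that the difference term is evaluated at the smooth argument $v+O$. This is why no stochastic sewing is needed and the full rate $M^{-1}$ appears without any $\eps$-loss. The argument is essentially the one in \cite{DjGK} and could alternatively be cited from there directly.
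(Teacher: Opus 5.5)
Your proof is correct and is essentially the argument the paper delegates to \cite[Lemma 6.3]{DjGK}. It is an $L^2(\T)$ energy estimate for $v-v^M$, justified via the strong/weak solution property (cf.\ \cite{bell}), with the nonlinear difference split at $v+O$ so that the one-sided Lipschitz bound for $g_h$ and the estimate $|g_h-f|\lesssim h(1+|x|^{4m+2})$ from \cref{lem:g_h-bounds} reduce everything to moments of $\|v\|_{C_TL^\infty}$ and $\|O\|_{C_TL^\infty}$, giving the rate $M^{-1}$ without loss.
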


\begin{proof}
    The proof directly corresponds to the proof of \cite[Lemma 6.3]{DjGK} plugging $v+O$ instead of $u$ there and $v^{M}+O$ instead of $X^h$ there and using the regularity bounds on $v$ from \cref{prop:wp}, as well as the regularity bounds on $O$ from \cref{lem:OUreg}. Thus we omit further details.
\end{proof}

Having the a priori estimates for $X^{M,N}$ from \cref{prop:apriori-XMN} at hand, we can tackle the last remaining term in \eqref{BIG-decomposition}. 

\begin{proposition}\label{prop:temporal}
Let $p\geqslant 1$ and $\eps\in (0, 1/2)$. Then there exists a constant $C=C(T,p,\epsilon, K, \mathcal{M})$, that does not depend on $M,N$, such that 
\begin{align}
    \Big\|\sup_{t\in[0,T]}\|z^{M,N}_t-\Psi_N X^{M,N}_t\|_{L^2(\T)}\Big\|_{L^p(\Omega)}&=\Big\|\sup_{t\in[0,T]}\|w^{M,N}_t - X^{M,N}_t\|_{L^2(\Pi_N)}\Big\|_{L^p(\Omega)}\nonumber
    \\&\leq C [M^{-1+\epsilon}+N^{-2+\eps}].
\end{align}
\end{proposition}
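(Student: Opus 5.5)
The equality in the statement is immediate from the isometry property of $\Psi_N$ (\cref{lem:Theta-Psi} c) together with $z^{M,N}=\Psi_N w^{M,N}$. It therefore suffices to bound $e_t\coloneqq w^{M,N}_t-X^{M,N}_t$ in $L^2(\Pi_N)$. Using the mild forms of $w^{M,N}$ and of $X^{M,N}$ in \eqref{eq:auxscheme}, I write
\begin{align*}
e_t&=\int_0^t P^N_{t-s}\big[g_h(w^{M,N}_s+\Theta_N O_s)-g_h(w^{M,N}_{k_M(s)}+\Theta_N O_{k_M(s)})\big]\,ds\\
&\quad+\int_0^t \big(P^N_{t-s}-P^N_{t-k_M(s)}\big)g_h(w^{M,N}_{k_M(s)}+\Theta_N O_{k_M(s)})\,ds\\
&\quad+\int_0^t P^N_{t-k_M(s)}\big[g_h(w^{M,N}_{k_M(s)}+\Theta_N O_{k_M(s)})-g_h(X^{M,N}_{k_M(s)}+\Theta_N O_{k_M(s)})\big]\,ds\\
&\eqqcolon R^1_t+R^2_t+R^3_t .
\end{align*}

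The term $R^2$ is handled exactly as in the comparison of $y^{M,N}$ and $\hat y^{M,N}$ in \cref{lem:XtildeX}. The semigroup estimates of \cref{lem:semigroup1} give a factor $(t-s)^{-3/4}(s-k_M(s))^{1-\eps}$. The a priori bounds of \cref{cor:wMN1} and \cref{lem:OUreg}, combined with the composition estimate \cref{lem:composition}, then yield a bound of order $M^{-1+\eps}$.

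The term $R^1$ is the genuine temporal-regularity error, and it is handled as in \cref{lem:temp-ss}, but with two changes: the semigroup is now $P^N$ instead of $\tilde P^N$, and the nonlinearity $g_h$ grows polynomially. Splitting the integrand, the part where only $w^{M,N}$ is incremented is bounded deterministically by $M^{-1+\eps}$. This uses $\|w^{M,N}_s-w^{M,N}_{k_M(s)}\|\lesssim (s-k_M(s))^{1-\eps}$, which follows from \eqref{eq:apriori-WMN} with $\lambda=2-2\eps$, together with the local Lipschitz bound \eqref{eq:g-bound2}.

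The part where only $\Theta_N O$ is incremented is treated by stochastic sewing, \cref{lem:SSLHilbert}. The frozen drift $w^{M,N}_{k_M(r)}$ is replaced by $\E_u w^{M,N}_r$, in the spirit of \cref{lem:zhatztilde}, which avoids the need for Girsanov (unavailable for unbounded $g_h$). The germ $A_{u,v}$ is evaluated via the Gaussian rule \eqref{eq:rule}, with $P^\R_{Q(r-u)}g_h$ controlled by \cref{lem:composition-applies}. For $|v-u|\lesssim M^{-1}$, I measure in $H^{-1/2+\eps}(\Pi_N)$ using \cref{lem:productestnegative} and \cref{lem:Otreg}. For $|v-u|\gtrsim M^{-1}$, I use the decomposition $E_1+E_2$ from \cref{lem:temp-ss}, with \eqref{eq:Ot-Os2} applied at $\beta=1-2\eps$ and \eqref{eq:Qtwostep} applied to $Q$. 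Since $\E_u w$ now varies with $u$, the sewing assumption \eqref{eq:sewingassump1} must be checked: $\E_u\delta A$ involves $\E_u w_r-\E_\xi w_r$, which is controlled by the $C^{1/2}_T$-regularity of $w^{M,N}$ in a positive Hölder space, exactly as for $\mathfrak c_u-\mathfrak c_\xi$ in \cref{lem:zhatztilde}. This should give $\Gamma_2\sim N^{-2+\eps}+M^{-1+\eps}$ and $\delta_2<1/2+\eps_2$. \cref{prop:vKolmogorov} then turns the resulting two-time bound into a bound on the supremum over $t$.

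The term $R^3$ is where the polynomial growth prevents a direct Lipschitz–Gronwall argument. Instead I will use a buckling argument together with the one-sided Lipschitz property \eqref{eq:g-bound1}, as in the energy estimate of \cref{lem:XtildeX}. I introduce $\rho_t\coloneqq e_t-R^1_t-R^2_t$ and, where needed, its variant with $P^N_{t-s}$ in place of $P^N_{t-k_M(s)}$. This $\rho$ weakly solves a discrete PDE whose forcing is the $g_h$-difference. Testing against $\rho$ and using $\langle\Delta_N\rho,\rho\rangle\le 0$ from \cref{lem:eigenvalues}, I rewrite the forcing as a difference evaluated at $e_{k_M(s)}=\rho_{k_M(s)}+R^1+R^2$. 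The one-sided Lipschitz bound controls the $\rho$ part; the remainders $R^1$, $R^2$, and the time shift $\rho_{k_M(s)}-\rho_s$ are handled by the local Lipschitz bound with random constant $K^{M,N}$. Moments of $K^{M,N}$ are bounded by \cref{prop:apriori-XMN} and \cref{lem:apriori-WMN}. Gronwall's inequality, Cauchy–Schwarz, and Hölder in $\omega$ then close the estimate.

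I expect the main obstacle to be the sewing step for $R^1$ with unbounded $g_h$, and in particular verifying \eqref{eq:sewingassump1} and identifying the sewing limit with the actual integral, since neither Girsanov nor boundedness is available.
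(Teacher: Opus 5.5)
Your argument is correct, but it is organised differently from the paper's.

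\textbf{How the paper does it.} The paper inserts its intermediate terms along the scheme. In \eqref{eq:decomposition}, $\cE^2$ and $\cE^3$ are the temporal-regularity and semigroup-shift errors of $X^{M,N}$, which is extended off the grid by its mild form. The buckling is then done in continuous time through $\hat X^{M,N}$. Because $w^{M,N}-\hat X^{M,N}$ solves a discrete PDE forced by $g_h(w^{M,N}_r+\Theta_NO_r)-g_h(X^{M,N}_r+\Theta_NO_r)$, the one-sided Lipschitz bound acts directly on $w^{M,N}_r-X^{M,N}_r$. The remainder $X^{M,N}-\hat X^{M,N}$ is controlled by $\cE^2+\cE^3$.

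\textbf{How you do it, and what each route buys.} You insert the intermediate terms along the semi-discrete solution $w^{M,N}$ and buckle at grid times, in the style of \cref{lem:XtildeX}.
\begin{itemize}
\item What you gain is a simpler sewing step. Since $w^{M,N}\in C^1_T\calC^{1/2-\eps}$ by \eqref{eq:apriori-WMN}, the $w$-increment comes off pathwise. The germ then contains only the Gaussian increment $\Theta_NO_r-\Theta_NO_{k_M(r)}$ with an $\F_u$-measurable frozen argument. As a result, $\E_u\delta A$ produces only the analogue of the paper's $J_1$ and no $J_2$. Identifying the sewing limit is also immediate, because $w_r-\E_uw_r=O(r-u)$.
\item What you pay is on the buckling side. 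You get extra remainders from $P^N_{t-s}$ versus $P^N_{t-k_M(s)}$ and from $\tilde\rho_s-\tilde\rho_{k_M(s)}$; the latter is controlled as in \eqref{eq:y-time-reg}. In addition, $R^1$ and $R^2$ enter the energy inequality multiplied by the random constant $K^{M,N}$. You therefore need higher moments of $\sup_t\|R^1_t\|_{L^2(\Pi_N)}$ and H\"older in $\omega$, all of which are available.
\end{itemize}

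\textbf{Two points in your sewing step need care.}
\begin{itemize}
\item $\Gamma_1$ must also carry the rate $M^{-1+\eps}+N^{-2+\eps}$, since it enters \eqref{eq:resultSewing}. Unlike in \cref{lem:zhatztilde}, there is no free prefactor from $\operatorname{Id}-\Psi_N\Theta_N$. The smallness therefore has to come from conditioning the $g_h'$-difference between times $r$ and $k_M(r)$, using \eqref{eq:Ot-Os2}, \eqref{eq:Qtwostep} and \eqref{eq:R-semigroup-weighted}, as for the paper's $J_1$. In the regime $|v-\xi|\le 3M^{-1}$ the paper bounds that difference crudely. There, the $C^{1/2}_T$ bound you quote on $\E_uw_r-\E_\xi w_r$ only gives $M^{-1/2}$. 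Use instead the $(r-u)^{1-\eps}$ bound from \eqref{eq:apriori-WMN} with $\lambda$ close to $2$, as the paper does with \eqref{eq:aprioriR}.
\item Freeze the argument consistently. If the $O$-increment part carries $w_{k_M(r)}$, a germ built with $\E_uw_r$ sews to the integral with $w_r$ instead, because the mismatch in \eqref{eq:SSL-conc2} is only $O(v-u)$. Either split so that $w_r$ is the frozen argument, or bound the resulting second-order discrepancy pathwise by $K^{M,N}M^{-1}\|\Theta_NO_r-\Theta_NO_{k_M(r)}\|_{L^\infty(\Pi_N)}$.
\end{itemize}
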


In order to prove the above, we split the error into three terms, each of which requires different arguments to be bounded. More precisely, we decompose as 
\begin{align}\label{eq:decomposition}
    \|w_t^{M,N}&- X^{M,N}_t\|_{L^2(\Pi_N)}\nonumber\\
    &\leqslant \Big\|\int_0^tP_{t-r}^N\Big(g_h(w_r^{M,N}+\Theta_N O_r)\Big)\, dr-\int_0^t {P}_{t-r}^N \Big(g_h(X_r^{M,N}+\Theta_N O_r)\Big)\, dr\Big\|_{L^2(\Pi_N)}\nonumber\\
    &+\Big\|\int_0^t {P}_{t-r}^N \Big(g_h(X_r^{M,N}+\Theta_N O_r)\Big)\, dr-\int_0^t {P}_{t-r}^N \Big(g_h(X_{k_M(r)}^{M,N}+\Theta_N O_{k_M(r)})\Big)\, dr\Big\|_{L^2(\Pi_N)}\nonumber\\
    &+\Big\|\int_0^t {P}_{t-r}^N \Big(g_h(X_{k_M(r)}^{M,N}+\Theta_N O_{k_M(r)})\Big)\, dr-\int_0^t P^N_{t-k_M(r)}\Big(g_h(X_{k_M(r)}^{M,N}+\Theta_N O_{k_M(r)})\Big)\, dr\Big\|_{L^2(\Pi_N)}\nonumber\\
    &\coloneqq \cE^1_t+\cE^2_t+\cE^3_t.
\end{align}
We bound each in $\|\cdot\|_{L^p(\Omega)}$ norm. First, we state \cref{lem:E3,lem:E2} which allow to bound $\cE^3$ and $\cE^2$. Then we proceed with the proof of \cref{prop:temporal} with a buckling argument. The proof of \cref{lem:E3,lem:E2} is given thereafter. The proof of the former relying solely on the properties of the semigroup $P^N$; the latter follows a stochastic sewing argument.

\begin{lemma}\label{lem:E3}
Let $p\geqslant 1$, $\eps\in (0,1)$ and $\cE^3$ as in \eqref{eq:decomposition}. Then there exists a constant $C=C(T,p,\epsilon, K, \mathcal{M})$, that does not depend on $M,N$, such that 
\begin{align*}
\|\sup_{t\in[0,T]}\cE^3_t\|_{L^p(\Omega)}\leqslant C M^{-1+\epsilon}.
\end{align*}
\end{lemma}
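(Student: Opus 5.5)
The plan is to bound $\cE^3_t$ deterministically, pathwise, using only the smoothing and regularisation properties of $P^N$ together with the a priori bounds on $X^{M,N}$. Write $u_r:=g_h(X_{k_M(r)}^{M,N}+\Theta_N O_{k_M(r)})$, which is piecewise constant in $r$. Then
\begin{align*}
\cE^3_t=\Big\|\int_0^t P^N_{t-r}\big(\Id-P^N_{r-k_M(r)}\big)u_r\,dr\Big\|_{L^2(\Pi_N)}.
\end{align*}
This mirrors the computation already used in the proof of \cref{lem:XtildeX} for $y^{M,N}-\hat y^{M,N}$.

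First I bound $\|\cdot\|_{L^2(\Pi_N)}\le\|\cdot\|_{L^\infty(\Pi_N)}$. For $r<t$, I apply \eqref{eq:Linfty} to $P^N_{t-r}$, with a loss of $\delta=2-2\eps-\theta$ derivatives measured from $\cC^{\theta-(2-2\eps)}$. Alternatively, and more simply, I split the regularity between the two factors:
\begin{align*}
\|P^N_{t-r}(\Id-P^N_{r-k_M(r)})u_r\|_{L^\infty}\lesssim (t-r)^{-\frac{3}{4}+\frac{\eps}{2}}\,\|(\Id-P^N_{r-k_M(r)})u_r\|_{\cC^{-3/2+\eps/2}}.
\end{align*}
Here I use \eqref{eq:Calpha} followed by the embedding $\cC^{\eps/2}\hookrightarrow L^\infty$. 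Next I use \eqref{eq:Calpha2} with $\delta=2-\eps$, which gives
\begin{align*}
\|(\Id-P^N_{r-k_M(r)})u_r\|_{\cC^{-3/2+\eps/2}}\lesssim (r-k_M(r))^{1-\eps/2}\|u_r\|_{\cC^{1/2-\eps/2}}.
\end{align*}
Since $r-k_M(r)\le h=T/M$ and $(t-r)^{-3/4+\eps/2}$ is integrable, this yields, uniformly in $t\in[0,T]$,
\begin{align*}
\sup_t\cE^3_t\lesssim M^{-1+\eps}\sup_{k}\|u_{t_k}\|_{\cC^{1/2-\eps/2}(\Pi_N)}.
\end{align*}

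It remains to control the moments of $\sup_k\|u_{t_k}\|_{\cC^{1/2-\eps/2}(\Pi_N)}$ uniformly in $M,N$. The composition estimate \eqref{eq:comp-B2} (in its $\Pi_N$ version) with $\kappa=1$ applies to $g_h$ thanks to the growth bound \eqref{eq:ghgrowth}. It gives
\begin{align*}
\|u_{t_k}\|_{\cC^{1/2-\eps/2}}\lesssim \big(1+\|X^{M,N}_{t_k}+\Theta_NO_{t_k}\|_{L^\infty}^{2\tilde m}\big)\big(1+\|X^{M,N}_{t_k}+\Theta_NO_{t_k}\|_{\cC^{1/2-\eps/2}}\big).
\end{align*}
The $L^p(\Omega)$ norms of the right-hand side are bounded independently of $M,N$. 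For $X^{M,N}$ this follows from \cref{prop:apriori-XMN}. For $O$ it follows from \cref{lem:Theta-Psi} f) and \cref{lem:OUreg}. The Hölder inequality then finishes the argument.

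The only delicate point is choosing the exponents so that the time singularity remains integrable while still extracting the near-full power $h^{1-\eps/2}$. This forces measuring $(\Id-P^N)u_r$ in a negative-regularity space, which is exactly why \eqref{eq:Calpha2} is used with $\alpha<0$.
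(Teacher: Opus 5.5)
Your argument is correct and is essentially the paper's. Both bound $P^N_{t-r}(\Id-P^N_{r-k_M(r)})$ pathwise via \cref{lem:semigroup1} and then use the a priori moments from \cref{prop:apriori-XMN} and \cref{lem:OUreg}. The paper applies only the $L^\infty$ growth bound \eqref{eq:ghgrowth} to the integrand and pays with the still integrable kernel $(t-r)^{-1+\eps}$. You instead spend H\"older regularity via \eqref{eq:comp-B2} to get a milder kernel, as in the proof of \cref{lem:XtildeX}.

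Two harmless slips. Going from $\cC^{-3/2+\eps/2}$ to $\cC^{\eps/2}$ costs $(t-r)^{-3/4}$, not $(t-r)^{-3/4+\eps/2}$. Also, \cref{lem:composition} needs a common growth exponent for $g_h$ and $g_h'$, so the polynomial prefactor should carry the power $2\tilde m+1$ rather than $2\tilde m$.
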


\begin{lemma}\label{lem:E2}
Let $p\geqslant 1$, $\eps\in (0, 1/2)$ and $\cE^2$ as in \eqref{eq:decomposition}. Then there exists a constant $C=C(T,p,\epsilon, K, \mathcal{M})$, that does not depend on $M,N$, such that 
\begin{align*}
\|\sup_{t\in[0,T]}\cE^2_t\|_{L^p(\Omega)}\leqslant C [M^{-1+\epsilon}+N^{-2+\eps}].
\end{align*}
\end{lemma}

\begin{proof}[Proof of \cref{prop:temporal}]
First we define an auxiliary process for $t\in[0,T]$ by 
$$\hat{X}^{M,N}_t\coloneqq P_{t}^{N}u_0+\int_0^t {P}_{t-r}^N \Big(g_h(X_r^{M,N}+\Theta_N O_r)\Big)\, dr,$$ which differs from $X^{M,N}$ by an integration of continuous time points $r$ instead of $k_{M}(r)$. Further, since $w^{M,N}-\hat{X}^{M,N}$  weakly solves (using again \cite{bell} and \cite[Remark 6.1]{DjGK})
$$\partial_t (w^{M,N}-\hat{X}^{M,N})=\Delta_N (w^{M,N}-\hat{X}^{M,N}) + g_h(w^{M,N}+\Theta_NO)-g_h(X^{M,N}+\Theta_NO), \quad w^{M,N}_0-\hat{X}^{M,N}_0=0,$$
we obtain using \cref{lem:g_h-bounds},
\begin{align*}
    &(\cE^1_t)^2=\|w_t^{M,N}-\hat{X}^{M,N}_t\|_{L^2(\Pi_N)}^2\\
    &\quad \leqslant 2 \int_0^t \langle w_r^{M,N}-X^{M,N}_r,g_h(w_r^{M,N}+\Theta_N O_r)-g_h(X_r^{M,N}+\Theta_N O_r)\rangle\, dr\\
    &\quad \quad + 2 \int_0^t  \langle X_r^{M,N}-\hat{X}^{M,N}_r,g_h(w_r^{M,N}+\Theta_N O_r)-g_h(X_r^{M,N}+\Theta_N O_r)\rangle\, dr\\
    &\quad \leqslant 2K \int_0^t  \|w^{M,N}_r-X^{M,N}_r\|_{L^2(\Pi_N)}^2 \, dr\\
    &\quad \quad +2\tilde{K} \Big(1+\|w^{M,N}\|^{2\tilde{m}+1}_{C_T L^\infty(\Pi_N)}+\|X^{M,N}\|^{2\tilde{m}+1}_{C_T (\Pi_N)}+\|O\|^{2\tilde{m}+1}_{C_T L^\infty(\T)}\Big)\\
    &\qquad \qquad   \times \int_0^t  \|X_r^{M,N}-\hat{X}_r^{M,N}\|_{L^2(\Pi_N)}\|w_r^{M,N}-X_r^{M,N}\|_{L^2(\Pi_N)}\, dr.
\end{align*}
Hence we obtain after an application of Youngs inequality
\begin{align*}
    \|&w^{M,N}_t-\hat{X}^{M,N}_t\|_{L^2(\Pi_N)}^p\lesssim \int_0^t \|w_r^{M,N}-X_r^{M,N}\|_{L^2(\Pi_N)}^2\, dr\\
    &\quad + \Big(1+\|w^{M,N}\|^{2\tilde{m}+1}_{C_T L^\infty(\Pi_N)}+\|X^{M,N}\|^{2\tilde{m}+1}_{C_T L^\infty(\Pi_N)}+\|O\|^{2\tilde{m}+1}_{C_T L^\infty(\T)}\Big)^2 \int_0^t \|X_r^{M,N}-\hat{X}_r^{M,N}\|^2_{L^2(\Pi_N)}\, dr.
\end{align*}
Plugging this into \eqref{eq:decomposition},  we have that
\begin{align*}
    \|&w_t^{M,N}-X_t^{M,N}\|_{L^2(\Pi_N)}^p\lesssim (\cE^2_t)^2+(\cE^3_t)^2+ \int_0^t \|w_r^{M,N}-X_r^{M,N}\|_{L^2(\Pi_N)}^2\, dr\\
    &\quad + \Big(1+\|w^{M,N}\|^{2\tilde{m}+1}_{C_T L^\infty(\Pi_N)}+\|X^{M,N}\|^{2\tilde{m}+1}_{C_T L^\infty(\Pi_N)}+\|O\|^{2\tilde{m}+1}_{C_T L^\infty(\T)}\Big)^2 \int_0^t \|X_r^{M,N}-\hat{X}_r^{M,N}\|^2_{L^2(\Pi_N)}\, dr.
\end{align*}
Applying Gr\"onwall's inequality gives
\begin{align}\label{eq:beforeexp}
    \|&w_t^{M,N}-X_t^{M,N}\|_{L^2(\Pi_N)}^2\lesssim (\cE^2_t)^2+(\cE^3_t)^2\\
    &\quad + \Big(1+\|w^{M,N}\|^{2\tilde{m}+1}_{C_T L^\infty(\Pi_N)}+\|X^{M,N}\|^{2\tilde{m}+1}_{C_T L^\infty(\Pi_N)}+\|O\|^{2\tilde{m}+1}_{C_T L^\infty(\T)}\Big)^2 \int_0^t \|X_r^{M,N}-\hat{X}_r^{M,N}\|^2_{L^2(\Pi_N)}\, dr.\nonumber
\end{align}
Note that we have moment bounds for the bracket in the above by \cref{prop:apriori-XMN,cor:wMN1}. Moreover, $X_t^{M,N}-\hat{X}_t^{M,N}$ can be rewritten as
\begin{align*}
    \int_0^t P_{t-k_M(r)}^N&\Big(g_h(X_{k_M(r)}^{M,N}+\Theta_NO_{k_M(r)})\Big) \, dr-\int_0^t P^N_{t-r}\Big(g_h(X^{M,N}_{k_M(r)}+\Theta_N O_{k_M(r)})\Big)\, dr \\
    &+\int_0^t P^N_{t-r}\Big(g_h(X^{M,N}_{k_M(r)}+\Theta_N O_{k_M(r)})\Big)\, dr-\int_0^t P^N_{t-r}\Big(g_h(X^{M,N}_r+\Theta_N O_r\Big)\, dr
\end{align*}
and its squared $L^2(\Pi_N)$-norm can thus be bounded by $(\cE^2_t)^2+(\cE^3_t)^2$.
Hence, by \cref{lem:E2,lem:E3}, we can bound for any $q\geq 1$
\begin{align*}
\E\Big[\sup_{t \in [0,T]}\|X_t^{M,N}-\hat{X}_t^{M,N}\|_{L^2(\Pi_N)}^q\Big]\leqslant C(q,T,\varepsilon,K,\mathscr{M}) [M^{q(-1+\varepsilon)}+N^{q(-2+\varepsilon)}].
\end{align*}

Overall, using again \cref{lem:E3,lem:E2} to bound the moments of $\cE^2$ and $\cE^3$, this gives 
\begin{equation*}
\Big\|\sup_{t\in[0,T]}\|w_t^{M,N}-X_t^{M,N}\|_{L^2(\Pi_N)}^2\Big\|_{L^{p/2}(\Omega)}^{1/2}\leq C(q,T,\varepsilon,K,\mathscr{M}) [M^{-1+\varepsilon}+N^{-2+\varepsilon}].\qedhere
\end{equation*}
\end{proof}

\begin{proof}[Proof of \cref{lem:E3}]
Applying \cref{lem:semigroup1} (similar as in the end of the proof of \cref{prop:apriori-XMN}), \eqref{eq:ghgrowth} and the a priori estimate \cref{prop:apriori-XMN}, we have
\begin{align*}
    \cE^3_t=&\Big\|\int_0^t {P}_{t-r}^N(\operatorname{Id}-P^N_{r-k_M(r)}) \Big(g_h(X_{k_M(r)}^{M,N}+\Theta_N O_{k_M(r)})\Big)\, dr\Big\|_{L^2(\Pi_N)}\\
    &\lesssim \Big(1+\|X^{M,N}\|_{C_T L^{\infty}(\Pi_N)}^{2\tilde{m}+1}+\|\Theta_N O\|_{C_T L^{\infty}(\Pi_N)}^{2\tilde{m}+1}\Big)\\
    &\quad \times \int_0^t (t-r)^{-1+\varepsilon} (r-k_M(r))^{1-\varepsilon} \, dr\lesssim M^{-1+\varepsilon},
\end{align*}
from which the claim follows after taking the supremum in $t\in[0,T]$ and the $p$-th moment.
\end{proof}

\begin{proof}[Proof of \cref{lem:E2}]
The proof resembles the proof of Lemma~\ref{lem:temp-ss} and follows similar arguments as the one from \cite[Proposition 6.5]{DjGK}.
We prove that for any $p\geq 1$, $0\leq s\leq t \leq T$
\begin{align*}
\MoveEqLeft
  \paren[\bigg]{\E \norm[\bigg]{\int_s^t {P}_{t-r}^N \Big[g_h(\E_u X_r^{M,N}+\Theta_N O_r)-g_h(\E_u X_{k_M(r)}^{M,N}+\Theta_NO_{k_M(r)})\Big]\, dr}_{L^2(\Pi_N)}^p}^{1/p}
  \\&\leq C(T,\epsilon, p, K, \mathcal{M}) [M^{-1+2\epsilon}+N^{-2+2\epsilon}](t-s)^{\frac{1}{4}+\frac{\epsilon}{2}}
\end{align*}
from which the claim follows by an application of Kolmogorov's continuity theorem, \cref{prop:vKolmogorov}. 
Let $0\leq s\leq t\leq T$ be arbitrary and let for $(u,v) \in [s,t]_\leq$ the germ be defined as
\begin{align*}
    A_{u,v}=\E_u \int_u^v {P}_{t-r}^N \Big[g_h(\E_u X_r^{M,N}+\Theta_N O_r)-g_h(\E_u X_{k_M(r)}^{M,N}+\Theta_NO_{k_M(r)})\Big]\, dr.
\end{align*}
In order to verify \eqref{eq:sewingassump2}, we bound $\|\norm{A_{u,v}}_{L^2(\Pi_N)}\|_{L^p(\Omega)}$, which works as in the case of bounded $f$, however additionally taking into account the growth of $g_h$ and the remainder term $R_{u,r}\coloneqq \E_u X_{r}^{M,N}$. Throughout the proof, we use that the latter is $\mathcal{F}_u$-measurable multiple times without always explicitly mentioning it. Moreover we use several times that by \cref{prop:apriori-XMN}, we have the estimate for any $p\geq 1$ and $\eps\in (0,1/2)$,
\begin{align} \label{eq:aprioriR}
    \Big\|\|R_{u,r}-R_{u,\eta}\|_{L^2(\Pi_N)}\Big\|_{L^p(\Omega)}\leq \Big\|\|R_{u,r}-R_{u,\eta}\|_{\cC^{\frac{1}{2}-\eps}(\Pi_N)}\Big\|_{L^p(\Omega)}\leq C |r-\eta|^{1-\varepsilon}.
\end{align}
The above follows from the a priori bound for $X^{M,N}$  in \cref{prop:apriori-XMN} and the contraction property of conditional expectation. Additionally, by the composition estimate, \cref{lem:composition}, we have
\begin{align}\label{eq:g_h-comp}
    \Big\|&\sup_{\lambda\in[0,1]}\sup_{r \in [u,T]} \Big\|g_h^\prime\Big(\lambda(R_{u,r}+\Theta_N O_r)+(1-\lambda) (R_{u,k_M(r)}+\Theta_N O_{k_M(r)})\Big) \Big\|_{\mathcal{C}^{1/2-\eps}(\Pi_N)}\Big\|_{L^{2p}(\Omega)}\\
    &\lesssim K\Big(1+\big\|\|X^{M,N}\|_{C_T L^{\infty}(\Pi_N)}^{2\tilde{m}}\big\|_{L^{2p}(\Omega)}+\big\|\|\Theta_N O\|_{C_T L^{\infty}(\Pi_N)}^{2\tilde{m}}\big\|_{L^{2p}(\Omega)}\Big)\nonumber\\
    &\quad \quad \times\Big(1+\big\|\|X^{M,N}\|_{C_T\cC^{1/2-\varepsilon}(\Pi_N)}\big\|\big\|_{L^{2p}(\Omega)}+\big\|\|\Theta_N O\|_{C_T\mathcal{C}^{1/2-\eps}(\Pi_N)}\big\|_{L^{2p}(\Omega)}\Big)\lesssim 1,\nonumber
\end{align}
which is bounded by a constant that does not depend on $N,M$ due to the a priori estimates for $X^{M,N}$ from \cref{prop:apriori-XMN} and the bounds for $O$ from \cref{lem:OUreg}, using that $\norm{\Theta_N O_t}_{\calC^{1/2-\varepsilon}(\Pi_N)}\leq \norm{O_t}_{\calC^{1/2-\varepsilon}(\T)}$ by property f) from \cref{lem:Theta-Psi}.\\
In order to prove \eqref{eq:sewingassump2}, we first assume that $|v-u|\leqslant 3M^{-1}$. In this case, we bound using \cref{lem:semigroup2}, \eqref{eq:aprioriR}, the product estimate \cref{lem:productestnegative}, \eqref{eq:g_h-comp} and \eqref{eq:Ot-Os} from \cref{lem:Otreg} for $\alpha=\frac{1}{2}-2\eps$,
\begin{align*}
\MoveEqLeft
  \norm{\|A_{u,v}\|_{L^2(\Pi_N)}}_{L^p(\Omega)}
   \\&\leq \norm[\bigg]{\int_{u}^{v} (t-r)^{-1/4+\eps/2}\norm[\bigg]{\paren[\bigg]{\int_{0}^{1} g_h^\prime(\lambda(R_{u,r}+\Theta_N O_r)+(1-\lambda) (R_{u,k_M(r)}+\Theta_N O_{k_M(r))} \, d\lambda} 
   \\&\qquad\qquad\times \paren[\big]{R_{u,r}+\Theta_N O_r-R_{u,k_M(r)}-\Theta_N O_{k_M(r)}}}_{H^{-1/2+\eps}(\Pi_N)} \, dr}_{L^{p}(\Omega)}
   \\&\leqslant (t-v)^{-1/4+\eps/2}
   \\&\qquad\qquad\times \norm{\sup_{\lambda\in [0,1]}\sup_{r \in [u,T]} \norm{g_h^\prime\big(\lambda(R_{u,r}+\Theta_N O_r)+(1-\lambda) (R_{u,k_M(r)}+\Theta_N O_{k_M(r)})\big)}_{\calC^{1/2-\eps/2}(\Pi_N)}}_{L^{2p}(\Omega)} 
   \\&\qquad\qquad\times\norm[\bigg]{\int_{u}^{v}\norm{R_{u,r}+\Theta_N O_r-R_{u,k_M(r)}-\Theta_N O_{k_M(r)}}_{H^{-1/2+\eps}(\Pi_N)}\, dr}_{L^{2p}(\Omega)}
   \\&\lesssim (t-v)^{-1/4+\eps/2}(v-u) M^{-1+\varepsilon} 
   \\&\qquad + (t-v)^{-1/4+\eps/2}(v-u) [M^{-1/2+\eps}+N^{-1+\eps}]
   \\&\lesssim (t-v)^{-1/4+\eps/2}\paren[\big]{(v-u)M^{-1+\eps} + (v-u)^{1/2+\epsilon} M^{-1/2+\eps} [M^{-1/2+\epsilon}+N^{-1+\eps}]}
   \\&\lesssim (t-v)^{-1/4+\eps/2}(v-u)^{1/2+\eps}[M^{-1+2\eps}+N^{-2+2\eps}]
\end{align*}
where in the penultimate inequality we employed that $$v-u=(v-u)^{1/2+\eps}(v-u)^{1/2-\eps}\leq (v-u)^{1/2+\eps} M^{-1/2+\eps}.$$
Now consider the case $|v-u|> 3M^{-1}$. Let $t^\prime=k_M(u)+3M^{-1}$. As in the proof of Lemma~\ref{lem:temp-ss}, we split the integral into two integrals on $[u,t^\prime]$ and $[t^\prime,v]$. The former can be treated as the first case because $|t^\prime-u|\leqslant 3M^{-1}$. The latter integral is left to treat. First, we again compute the conditional expectation, similar as in the proof of \cref{lem:temp-ss}, and obtain that almost surely
\begin{align*}
    \E_u \int_{t^\prime}^v &{P}_{t-r}^N \Big[g_h(R_{u,r}+\Theta_N O_r)-g_h(R_{u,k_M(r)}+\Theta_NO_{k_M(r)})\Big]\, dr\\
    &=\int_{t^\prime}^v {P}_{t-r}^N \Big[(P_{\Qx{u}{r}}^{\mathbb{R}}g_h)(R_{u,r}+\Theta_N Y_{u,r})-(P_{\Qx{u}{r}}^\mathbb{R}g_h)(R_{u,k_M(r)}+\Theta_NY_{u,k_M(r)})\Big]\, dr\\
    &\quad +\int_{t^\prime}^v {P}_{t-r}^N \Big[(P_{\Qx{u}{r}}^\mathbb{R}g_h-P_{\Qx{u}{k_M(r)}}^\mathbb{R}g_h)(R_{u,k_M(r)}+\Theta_NY_{u,k_M(r)})\Big]\, dr\eqqcolon E_1+E_2,
\end{align*}
where we used the shorthand notation $Y_{u,t}:=P_{t-u}O_u$ for $u\leq t$. We start by estimating $E_1$. We have by \cref{lem:semigroup2} and the product estimate, \cref{lem:productestnegative}, 
\begin{align*}
\MoveEqLeft
    \|E_1\|_{L^2(\Pi_N)}
    \\&\lesssim (t-v)^{-1/4+\eps/2}
    \\&\quad\times\int_{t^\prime}^v \underbrace{\Big\|\int_0^1 (P_{Q(r-u)}^\mathbb{R}g_h)^\prime\big(\lambda R_{u,r}+\lambda \Theta_N Y_{u,r} +(1-\lambda) R_{u,k_M(r)}+(1-\lambda) \Theta_N Y_{u,k_M(r)}\big)\,d\lambda\Big\|_{\calC^{1/2-\eps/2}(\Pi_N)}}_{P_1}\\
    &\quad \times \underbrace{\Big\|R_{u,r}+\Theta_N Y_{u,r}-R_{u,k_M(r)}-\Theta_N Y_{u,k_M(r)}\Big\|_{H^{-1/2+\eps}(\Pi_N)}}_{P_2}\, dr.
\end{align*}
The first of the two factors that need to be controlled is bounded by, using \cref{lem:composition},
\begin{align}\label{eq:P1}
    \norm{P_1}_{L^{2p}(\Omega)}\lesssim K\paren[\Big]{\E(1+&\|X^{M,N}\|_{C_T\calC^{1/2-\eps/2}(\Pi_N)}^{2\tilde{m}}+\|\Theta_NO\|_{C_T\calC^{1/2-\eps/2}(\Pi_N)}^{2\tilde{m}})^{2p}}^{1/2p}\lesssim 1
\end{align}
uniformly in $N,M$ due to the a priori estimates for $X^{M,N}$ from Proposition~\ref{prop:apriori-XMN} and the bounds for the OU process.
Moreover, by \eqref{eq:aprioriR} and \eqref{eq:Ot-Os} from \cref{lem:Otreg} applied for $\alpha=1/2-2\eps$ and $\beta=1-2\eps$, we obtain
\begin{align*}
    \|P_2\|_{L^{2p}(\Omega)}&\leqslant \norm{\norm{R_{u,r}-R_{u,k_M(r)}}_{H^{-1/2+\eps}(\Pi_N)}}_{L^{2p}(\Omega)} + \norm{\norm{\Theta_N Y_{u,r}-\Theta_N Y_{u,k_M(r)}}_{H^{-1/2+\eps}(\Pi_N)}}_{L^{2p}(\Omega)}
    \\&\lesssim (r-k_M(r))^{1-\varepsilon}+[M^{-1+\eps}+N^{-2+\eps}](k_M(r)-u)^{-1/2+\eps}.
\end{align*}
After integration in $r$ and using that $k_M(r)-u\geqslant (r-u)/2$, this gives overall that
\begin{align*}
\|\|&E_1\|_{L^2(\Pi_N)}\|_{L^p(\Omega)}\lesssim [M^{-1+\eps}+N^{-2+\eps}] (t-v)^{-1/4+\eps/2}(v-u)^{1/2+\eps}.
\end{align*}
Next, we bound $E_2$ as
\begin{align*}
    \|\|E_2\|_{L^2(\Pi_N)}\|_{L^p(\Omega)}\leqslant \int_{t^\prime}^v \Big\|\|(P_{\Qx{u}{r}}^\mathbb{R}g_h-P_{\Qx{u}{k_M(r)}}^\mathbb{R}g_h)(R_{u,k_M(r)}+\Theta_NY_{u,k_M(r)})\|_{L^2(\Pi_N)}\Big\|_{L^p(\Omega)}\, dr.
\end{align*}
Using the weighted spaces $C^k_\omega$ for a polynomial weight $\omega(x)=(1+\abs{x}^2)^{-\beta/2}$ for $\beta=2\tilde{m}+1$, such that $g_h\in C^{3}_{\omega}$, and the estimate \eqref{eq:R-semigroup-weighted} for $P^{\R}$ on weighted spaces, as well as \cref{lem:QQN}, we see that
\begin{align}\label{eq:twosemigroup}
\MoveEqLeft
 \|(P_{\Qx{u}{r}}^\mathbb{R}g_h-P_{\Qx{u}{k_M(r)}}^\mathbb{R}g_h)(R_{u,k_M(r)}+\Theta_N Y_{u,k_M(r)})\|_{L^2(\Pi_N)}
 \\&\leqslant \|(P_{\Qx{u}{r}}^\mathbb{R}g_h-P_{\Qx{u}{k_M(r)}}^\mathbb{R}g_h)(R_{u,k_M(r)}+\Theta_N Y_{u,k_M(r)})\|_{L^\infty(\Pi_N)}   \nonumber
 \\&\leqslant \norm{P_{\Qx{u}{r}}^\mathbb{R}g_h-P_{\Qx{u}{k_M(r)}}^\mathbb{R}g_h}_{C^{0}_{\omega}}\norm{\omega^{-1} (R_{u,k_M(r)}+\Theta_N Y_{u,k_M(r)})}_{L^{\infty}(\Pi_N)}\nonumber
 \\&\lesssim \paren[\Big]{\Qx{u}{r}-\Qx{u}{k_M(r)}} \norm{g_h}_{C^{2}_\omega} (1+ \sup_{M,N}\norm{X^{M,N}}_{C_TL^{\infty}(\Pi_N)}^{2\tilde{m}+1}+\norm{Y_{u,k_M(r)}}_{L^{\infty}(\T))}^{2\tilde{m}+1})\nonumber
 \\&\lesssim (r-k_M (r))^{1-\epsilon}(k_{M}(r)-u)^{-1/2+\epsilon} \norm{g_h}_{C^{2}_\omega} (1+ \sup_{M,N}\norm{X^{M,N}}_{C_TL^{\infty}(\Pi_N)}^{2\tilde{m}+1}+\norm{Y_{u,k_M(r)}}_{L^{\infty}(\T)}^{2\tilde{m}+1}).\nonumber
\end{align}
Taking the $p$-th moment and integrating in $r$ using again that $k_M(r)-u\geqslant (r-u)/2$ and using \cref{lem:OUreg} thus yields
\begin{align*}
  \|\|&E_2\|_{L^2(\Pi_N)}\|_{L^p(\Omega)}\lesssim M^{-1+\eps}(v-u)^{1/2+\eps}.  
\end{align*}
Together we obtain \eqref{eq:sewingassump2} for $\epsilon_2=\epsilon, \delta_2= \frac{1}{4}-\frac{\eps}{2}, \Gamma_2= C(T,\eps,K,m,\mathcal{M}) (M^{-1+2\eps}+N^{-2+2\eps})$.
It remains to bound $\E_u \delta A_{u,\xi,v}$ for $\xi \in [u,v]$ in order to verify \eqref{eq:sewingassump1}. We have that
\begin{align*}
    \E_u \delta A_{u,\xi,v} &= \int_{\xi}^{v} \E_{s}\E_{\xi}P^{N}_{t-r}[g_h(R_{u,r}+\Theta_N O_r)-g_h(R_{u,k_M(r)}+\Theta_N O_r)
    \\&\qquad-g_h(R_{\xi,r}+\Theta_N O_r)+g_h(R_{\xi,k_M(r)}+\Theta_N O_r)] \, dr.
\end{align*}
To do so, we use that
\begin{align*}
    g_h(&R_{u,r}+\Theta_N O_r)-g_h(R_{u,k_M(r)}+\Theta_N O_r)-g_h(R_{\xi,r}+\Theta_N O_r)+g_h(R_{\xi,k_M(r)}+\Theta_N O_r)\\
    &=(R_{u,r}-R_{\xi,r})\int_0^1 \Big(g_h^\prime(\lambda(R_{u,r}+\Theta_N O_r)+(1-\lambda)(R_{\xi,r}+\Theta_N O_r))\\
    &\quad \quad -g_h^\prime(\lambda(R_{u,k_M(r)}+\Theta_N O_{k_M(r)})+(1-\lambda)(R_{\xi,k_M(r)}+\Theta_N O_{k_M(r)}))\Big)\, d\lambda\\
    &+(R_{u,k_M(r)}-R_{u,r}+R_{\xi,r}-R_{\xi,k_M(r)})\\
    &\quad \quad \times \int_0^1 g_h^\prime(\lambda(R_{u,k_M(r)}+\Theta_N O_{k_M(r)})+(1-\lambda)(R_{\xi,k_M(r)}+\Theta_N O_{k_M(r)}))\,d\lambda\coloneqq J_1+J_2.
\end{align*}
We start by estimating the term corresponding to $J_2$. In the below, there appears a factor almost identical to $P_1$ from before and its moments can be bounded similar as the moments of $P_1$ in \eqref{eq:P1}. Additionally, using \eqref{eq:aprioriR} twice, we get
\begin{align*}
   \int_\xi^v \Big\|&\|J_2\|_{L^2(\Pi_N)}\Big\|_{L^p(\Omega)}\, dr\\
   &\lesssim \int_\xi^v \Big\|\|R_{u,k_M(r)}-R_{u,r}+R_{\xi,r}-R_{\xi,k_M(r)}\|_{L^2(\Pi_N)}\Big\|_{L^{2p}(\Omega)}\\
   &\times \Big\|\Big\|\int_0^1 g_h^\prime(\lambda(R_{u,k_M(r)}+\Theta_N O_{k_M(r)})+(1-\lambda)(R_{\xi,k_M(r)}+\Theta_N O_{k_M(r)}))\,d\lambda\Big\|_{L^\infty(\Pi_N)}\Big\|_{L^{2p}(\Omega)}\, dr\\
   &\lesssim \int_\xi^v \min\{(r-u)^{1-\varepsilon},(r-k_M(r))^{1-\varepsilon}\}\, dr\\
   &\lesssim (v-u)^{1+\varepsilon/2} M^{-1+2\varepsilon},
\end{align*}
where we used in the last line that $\min\{a,b\}\leqslant a^{1-\epsilon} b^\eps$.
Next, we estimate the term corresponding to $J_1$; we separate the cases $|v-\xi|\leqslant 3M^{-1}$ and $|v-\xi|>3M^{-1}$.
We start by assuming the latter.
We take $t^\prime=k_M(\xi)+3M^{-1}$ similar to previously in the proof and similarly it will be sufficient to treat an integral from $t^\prime$ to $v$.
Using that $R_{u,r}-R_{\xi,r}$ is $\mathcal{F}_\xi$-measurable and \cref{lem:semigroup1} together with \cref{lem:productestnegative}, we have that
\begin{align}\label{eq:ghdiff}
    \int_{t^\prime}^v& (t-r)^{-1/4+\eps/2}\Big\|\|\E_\xi J_1\|_{H^{-1/2+\eps}(\Pi_N)}\Big\|_{L^p(\Omega)}\, dr\nonumber\\
    &\lesssim (t-v)^{-1/4+\eps/2}\int_{t^\prime}^v \|\|R_{u,r}-R_{\xi,r}\|_{\calC^{1/2-\eps/2}(\Pi_N)}\|_{L^{2p}(\Omega)} \nonumber\\
    & \times \int_0^1 \Big\|\Big\|(P^\R_{\Qx{\xi}{r}}g_h^\prime)(\lambda(R_{u,r}+\Theta_NY_{\xi,r})+(1-\lambda)(R_{\xi,r}+\Theta_N Y_{\xi,r}))\\
    & -(P_{\Qx{\xi}{k_M(r)}}^\R g_h^\prime)(\lambda(R_{u,k_M(r)}+\Theta_N Y_{\xi,k_M(r)})+(1-\lambda)(R_{\xi,k_M(r)}+\Theta_N Y_{\xi,k_M(r)}))\, \Big\|_{H^{-1/2+\eps}(\Pi_N)}\Big\|_{L^{2p}(\Omega)}\,d\lambda\, dr.\nonumber
\end{align}
We rewrite the difference in \eqref{eq:ghdiff}, calling the respective arguments of the functions $A^{\lambda}_\cdot$ (surpressing the dependence on $\xi$ and $u$),  as
\begin{align}\label{eq:smugglingsemigroup}
    \paren[\Big]{\int_0^1 (P^\R_{\Qx{\xi}{r}} g_h^\prime)^\prime (\tilde{\lambda}A^\lambda_r+(1-\tilde{\lambda})A^\lambda_{k_M(r)})\,d\tilde{\lambda} }\,(A^\lambda_r-A^\lambda_{\kappa_M(r)})+((P^\R_{\Qx{\xi}{r}}-P^\R_{\Qx{\xi}{k_M(r)}})g_h^\prime)(A^\lambda_{k_M(r)}).
\end{align}
By \eqref{eq:aprioriR} and \cref{lem:Otreg}, we have, for any $q\geqslant 1$,
\begin{align} \label{eq:differenceR}
\Big\|\|R_{\xi,r}+\Theta_NY_{\xi,r}-(R_{\xi,k_M(r)}&+\Theta_NY_{\xi,k_M(r)})\|_{H^{-1/2+\eps}(\Pi_N)}\Big\|_{L^q(\Omega)}\\
    &\lesssim [M^{-1+\eps}+N^{-2+\eps}](k_M(r)-\xi)^{-1/2+\eps}\nonumber
\end{align}
and after going through the same argument for $u$ instead of $\xi$, we get
\begin{align*}
    \big\|\|A^\lambda_r-A^\lambda_{k_M(r)}\|_{H^{-1/2+\eps}(\Pi_N)}\big\|_{L^q(\Omega)}\lesssim [M^{-1+\eps}+N^{-2+\eps}](k_M(r)-\xi)^{-1/2+\eps}
\end{align*}
and the other factor in the first summand of \eqref{eq:smugglingsemigroup} can be controlled in $\|\cdot\|_{\calC^{1/2-\eps}(\Pi_N)}$ as for $P_1$ previously in the proof (note that $(P^{\R}_{Q(\xi,r)}g_h')'\in C^1_b$). For the second summand in \eqref{eq:smugglingsemigroup} we proceed as in the lines of inequalities \eqref{eq:twosemigroup}.
Using this, \eqref{eq:aprioriR} and that $k_M(r)-\xi\geqslant (r-\xi)/2$, we can estimate 
\begin{align*}
 \MoveEqLeft
(t-v)^{-1/4+\eps/2}\int_{t^\prime}^v\Big\|\|\E_\xi J_1\|_{H^{-1/2+\eps}(\Pi_N)}\Big\|_{L^p(\Omega)}\, dr
\\&\lesssim   (t-v)^{-1/4+\eps/2}\int_{t^\prime}^v (r-u)^{1-\varepsilon}[M^{-1+\eps}+N^{-2+\eps}] (k_M(r)-\xi)^{-1/2+\eps}\, dr\\
&\lesssim  (t-v)^{-1/4+\eps/2}(v-u)^{3/2+\eps}[M^{-1+\eps}+N^{-2+\eps}]
\end{align*}
It remains to treat the case $|v-\xi|\leqslant 3M^{-1}$. As we can bound the second factor in \eqref{eq:ghdiff} (with $\xi$ instead of $t^\prime$), we directly infer from \eqref{eq:ghdiff} and \eqref{eq:aprioriR} that
\begin{align*}
\int_{\xi}^v\Big\|\|\E_\xi J_1\|_{L^2(\Pi_N)}\Big\|_{L^p(\Omega)}\, dr&\lesssim  \int_{\xi}^v (r-u)^{1-\varepsilon}\, dr\\
&\lesssim  (v-u)^{1+\epsilon} M^{-1+2\eps},
\end{align*}
where we used that $|v-\xi|\lesssim M^{-1}$ in the last inequality. Together this yield \eqref{eq:sewingassump1} for $\delta_1=\frac{1}{4}-\frac{\eps}{2}, \epsilon_1=\eps, \Gamma_1= C(T,\eps,K,m,\mathcal{M})(M^{-1+\epsilon}+N^{-2+\eps})$.
It remains to identify for $v\in[s,t]$
\begin{align*}
\mathcal{A}_v= \int_s^v {P}_{t-r}^N \Big[g_h( X_r^{M,N}+\Theta_N O_r)-g_h(X_{k_M(r)}^{M,N}+\Theta_NO_{k_M(r)})\Big]\, dr. 
\end{align*}
 as the unique one-parameter process for which \eqref{eq:SSL-conc1} and \eqref{eq:SSL-conc2} is fulfilled. The argument for this was already carried out in the proof of \cite[Proposition 6.5]{DjGK} and can be applied similarly here.
\end{proof}

\begin{proof}[Proof of \cref{thm:main2}]
The result follows by combining the decomposition \eqref{BIG-decomposition} with \cref{lem:easyterm,lem:ztildez,lem:zhatztilde,lem:vhatz,lem:6.3,prop:temporal} and Assumption~\ref{ass:tildeO} \ref{en:b}.
\end{proof}

\section{A concrete choice of the input noise}\label{section:input-noise}

In this section we give an example of a process $\tilde O$ that verifies Assumption \ref{ass:tildeO} and that is furthermore implementable. The construction follows closely \cite[Section~2.3]{Davie-Gaines}.
Throughout the section we assume that $N$ is even, so that $J_N=\{-N/2+1,\ldots,0,\ldots,N/2\}$ and $$J_{N-1}=\{-N/2+1,\ldots,0,\ldots,N/2-1\}=\{j\in\Z\mid\abs{j}\leq N/2-1\}.$$ In fact a common choice is $N=2^K$ with various $K\in\N$, so that the different spatial grids are nested and thus the comparison of the different approximations is straightforward.

\subsection{Definition of $\tilde O$ and verifying \cref{ass:tildeO}}\label{section:tildeO}

In order to motivate our choice, we first write the OU process as
\begin{equ}
    O_t=\cQ_{N-1}^- O_t+(1-\cQ_{N-1}^-)O_t,
\end{equ}
where $\cQ_{N-1}^-$ is the orthogonal projection in $L^2(\T)$ to the subspace $L^2_{N-1}=\{e_j:\,j\in J_{N-1}\}$.
The choice of projecting to $L^2_{N-1}(\T)$ instead of $L^2_N(\T)$ is one of convenience, to keep the index set symmetric.
The projected part will be unchanged, as its restriction to the grid is possible to simulate exactly, while the remainder $(1-\cQ_{N-1}^-)O_t$ is replaced by the process
\begin{equ}
    Z_t^{M,N}=\bL_M(1-\cQ_{N-1}^-)\int_{(t-h)\vee 0}^t\int_\T p_{t-s}(\cdot-y)\xi(dy,ds),
\end{equ}
where $\bL_M$ is the Fourier multiplier defined as
\begin{equ}
    \widehat{\bL_M f}(j):=\frac{1}{\sqrt{1-e^{-8\pi^2j^2h}}}\hat f(j)
\end{equ}
for $j\neq 0$ and $f\in L^2(\T)$ with $\hat{f}(0)=0$.

First, we collect properties of the random variables $\hat{Z}^{M,N}_{kh}(j)$. In particular, we see that the covariance structure is explicit.

Below we use the convention that a $\C$-valued random variable $Z$ is complex Gaussian, if its real and imaginary parts are independent and identically distributed $\R$-valued Gaussians. In this case the distribution of $Z$ is characterised by its mean and variance $\mathbb{E}|Z|^2$. Note that $\mathbb{E}Z^2=0$ and two centered complex Gaussians $Z,Z'$ are independent if and only if $\E ZZ'=\E Z\bar{Z}'=0$.
\begin{lemma}\label{lem:triv-Z}
The random variables $\hat Z^{M,N}_{kh}(j)$ for $\abs{j}>\frac{N}{2}-1$, $k=1,2,\ldots,M$, are complex Gaussians with mean $0$ and variance $(8\pi^2 j^2)^{-1}$. Further, $\hat Z^{M,N}_{kh}(j),\hat Z^{M,N}_{k'h}(j')$ for $\abs{j},\abs{j'}>\frac{N}{2}-1$ are independent if $k\neq k'$ or if $k=k'$ and $j'\notin\{j,-j\}$, and moreover, if $k=k'$ and $j'=-j$, then $\hat Z^{M,N}_{kh}(j)=\overline{\hat Z^{M,N}_{k'h}(j')}$.
\end{lemma}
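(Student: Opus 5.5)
The plan is to compute the Fourier coefficients of $Z^{M,N}_{kh}$ explicitly as Wiener integrals against the white noise, and then read off means, variances and covariances via the It\^o isometry. For $|j|>N/2-1$ the projection $1-\cQ_{N-1}^-$ acts trivially on the mode $e_j$, and $\bL_M$ multiplies it by $(1-e^{-8\pi^2j^2h})^{-1/2}$. Exactly as in the proof of \cref{lem:Otreg}, we get
\begin{equ}
\hat Z^{M,N}_{kh}(j)=\frac{1}{\sqrt{1-e^{-8\pi^2j^2h}}}\int_{(k-1)h}^{kh}\int_\T e^{-4\pi^2 j^2(kh-s)}e_{-j}(y)\,\xi(ds,dy),
\end{equ}
where we use $kh-h\ge 0$ for $k\ge1$, so that the lower limit is $(k-1)h$. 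Each such coefficient is a linear functional $\xi(g)$ of the noise with a deterministic complex integrand $g=g^{(1)}+ig^{(2)}$. Hence the family $\{\hat Z^{M,N}_{kh}(j)\}$ is jointly Gaussian with mean zero, and the whole task reduces to computing the pairings $\E\,\xi(g)\xi(g')$ and $\E\,\xi(g)\overline{\xi(g')}$.

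For the variance, the It\^o isometry gives $\E|\hat Z(j)|^2=\int\!\!\int|g|^2$. The spatial integral of $|e_{-j}|^2$ equals $1$, and $\int_0^h e^{-8\pi^2j^2 r}dr=(1-e^{-8\pi^2j^2h})/(8\pi^2j^2)$. This cancels against the squared prefactor and yields $(8\pi^2j^2)^{-1}$. Next we show $\E\hat Z(j)^2=0$: this equals $\int\!\!\int g^2$, and its spatial factor $\int_\T e_{-2j}\,dy$ vanishes because $j\neq 0$ (indeed $|j|\ge N/2\ge1$). Since $\E Z^2=\E(\Re Z)^2-\E(\Im Z)^2+2i\E\Re Z\Im Z$, vanishing of $\E Z^2$ means the real and imaginary parts have equal variance and are uncorrelated. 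Being jointly Gaussian, they are therefore independent and identically distributed, so each coefficient is complex Gaussian in the paper's sense.

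For independence there are two cases. If $k\neq k'$, the integrands have disjoint time supports $[(k-1)h,kh]$ and $[(k'-1)h,k'h]$ (up to a null set). Both $\E ZZ'$ and $\E Z\bar Z'$ then vanish, and joint Gaussianity of all real and imaginary parts gives independence. If $k=k'$, the spatial factors are $\int_\T e_{-j}e_{-j'}\,dy=\mathbf 1_{j'=-j}$ and $\int_\T e_{-j}e_{j'}\,dy=\mathbf 1_{j'=j}$. So for $j'\notin\{j,-j\}$ both pairings vanish and we again get independence. Finally, when $j'=-j$, the integrand for $-j$ is the complex conjugate of that for $j$, since the prefactor and the exponential time factor are real and even in $j$. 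Because $\xi$ is real, we conclude $\hat Z(-j)=\overline{\hat Z(j)}$ pathwise.

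The only step requiring care is the complex-Gaussian bookkeeping: independence of complex variables requires that all four real covariances vanish. I would handle this by writing $\xi(g)$ and $\xi(g')$ in real and imaginary parts and noting that the vanishing of both $\E ZZ'$ and $\E Z\bar Z'$ forces $\E\,\Re Z\,\Re Z'=\E\,\Re Z\,\Im Z'=\E\,\Im Z\,\Re Z'=\E\,\Im Z\,\Im Z'=0$. Everything else is a direct computation.
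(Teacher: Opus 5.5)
Your proposal is correct and follows essentially the same route as the paper: you write $\hat Z^{M,N}_{kh}(j)$ as a Wiener integral over $[(k-1)h,kh]\times\T$ and get the variance from the It\^o isometry. Independence then comes from disjoint time supports when $k\neq k'$, or from the vanishing of both $\E ZZ'$ and $\E Z\bar Z'$ when $k=k'$ and $j'\notin\{j,-j\}$ (orthogonality of Fourier modes), and the conjugation relation from that of the Fourier basis. Your extra check that $\E\big(\hat Z^{M,N}_{kh}(j)\big)^2=0$, so that the real and imaginary parts are i.i.d., spells out a detail the paper simply calls clear.
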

\begin{proof}
    It is clear that $\hat{Z}^{M,N}_{kh}(j)$ for $k=1,\dots, M$, $\abs{j}>\frac{N}{2}-1$  are complex Gaussians with vanishing mean and variance that we compute using the isometry property of the white noise integral,
    \begin{align*}
        \E \abs{\hat{Z}^{M,N}_{kh}(j)}^2
        &=\E \abs[\bigg]{\int_{(k-1)h}^{kh}\int_{\T}\frac{\exp(-4\pi^2 j^2 (kh-r))}{\sqrt{1-\exp(-8\pi^2 j^2 h)}}\, e_{-j}(y)\,\xi(dy,dr)}^2
        \\&= \frac{1}{1-\exp(-8\pi^2 j^2 h)}\int_{(k-1)h}^{kh} \exp(-8\pi^2 j^2 (kh-r))\,dr
        \\&=\frac{1}{1-\exp(-8\pi^2 j^2 h)}\int_{0}^{h} \exp(-8\pi^2 j^2 r)\,dr = (8\pi^2 j^2)^{-1}.
    \end{align*}
    Independence of $\hat{Z}^{M,N}_{kh}(j)$ and $\hat{Z}^{M,N}_{k'h}(j)$ for $k\neq k'$ follows due to the integral against the white noise over non-overlapping intervals. The independence of $\hat{Z}^{M,N}_{kh}(j)$ and $\hat{Z}^{M,N}_{kh}(j')$ with $j'\notin \{j,-j\}$ holds, since $(\hat{Z}^{M,N}_{kh}(\ell))_{\ell\in\Z}$ are jointly complex Gaussian and due to the isometry property 
    \begin{align*}
    \MoveEqLeft
        \E \paren[\bigg]{\int_{(k-1)h}^{kh} e^{-4\pi^2 j^2 (kh-r)} \int_{\T} e_{j} (y) \xi(dy,dr)}\paren[\bigg]{\int_{(k-1)h}^{kh} e^{-4\pi^2 j^2 (kh-r)} \int_{\T} e_{j'} (y) \xi(dy,dr)} 
        \\&= \langle e_j, e_{-j'}\rangle_{L^2(\T,\C)}\int_{(k-1)h}^{kh}e^{-8\pi^2 j^2 (kh-r)} \, dr=0
    \end{align*}
    if $j\neq -j'$, and similarly, 
    $\E\big(\hat{Z}^{M,N}_{kh}(j)\overline{\hat{Z}}^{M,N}_{kh}(j')\big)=0$ if $j\neq j'$.
    The conjugate relation is also clear from the conjugate relation of the Fourier basis.
\end{proof}
Our choice for $\tilde O$ is then the following: 
\begin{equ}
    \tilde O=\cQ_{N-1}^-O+Z^{M,N}.
\end{equ}
The exact simulation of $\Theta_N\tilde O$ is
detailed in \cref{sec:implement} below. Next, we verify the required regularity bounds on $\tilde{O}$.
\begin{lemma}\label{lem:Z-est}
Let $c>0$ and assume that $M^{-1}\geq c N^{-2}$ for some. Let $p\geq 1$, $\lambda\in [0,1)$, $\epsilon\in (0,1/2)$. Then there exists $C=C(p,\lambda,\eps,c)$ such that
    \begin{align}
       \E\norm{\cQ_{N-1}^- O}_{C_T^{\frac{\lambda}{2}}\calC^{\frac{1}{2}-\lambda-\epsilon}(\T)}^p&\leq C,\label{eq:aa1}\\
       \E \norm{Z^{M,N}}_{C_T^{\lambda/2}\calC^{1/2-\lambda-\epsilon}(\T)}^p&\leqslant C,\label{eq:aa2}\\
       \E\norm{\tilde O}_{C_T^{\frac{\lambda}{2}}\calC^{\frac{1}{2}-\lambda-\epsilon}(\T)}^p&\leq C.\label{eq:aa3}
    \end{align}
\end{lemma}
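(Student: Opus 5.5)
The plan is to prove \eqref{eq:aa1} and \eqref{eq:aa2} separately and then obtain \eqref{eq:aa3} from the triangle inequality, since $\tilde O=\cQ_{N-1}^-O+Z^{M,N}$. In both cases I will use the standard route for Gaussian fields: bound second moments of Littlewood--Paley blocks and of time increments, upgrade to $p$-th moments by Gaussian hypercontractivity, and pass to the H\"older--Besov norm in space and the H\"older norm in time via Kolmogorov's continuity theorem. The loss of an arbitrary $\epsilon$ absorbs the usual gaps between the moment bounds and the supremum bounds.

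\textbf{Step 1: the projected part.} $\cQ_{N-1}^-O$ has Fourier coefficients $\hat O_t(j)\mathbf 1_{|j|\le N/2-1}$, which are independent Ornstein--Uhlenbeck processes (up to conjugation). I will repeat the argument behind \eqref{eq:Otimespace1} from \cref{lem:OUreg}. Every second-moment quantity used there is a sum over $j$ of nonnegative terms $\E|\hat O_t(j)-\hat O_s(j)|^2\,|\phi^k_1(j)|^2$ and similar expressions. Truncating to $|j|\le N/2-1$ can only decrease these sums. The resulting bounds are
\[
\E\|(\cQ_{N-1}^-O_t-\cQ_{N-1}^-O_s)^{[k]}\|_{L^2}^2\lesssim 2^{k(-1+2\lambda)}|t-s|^{\lambda},
\]
together with the analogous bound for $L^\infty$ through $L^p$ and Besov embeddings. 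These hold uniformly in $N$, and Kolmogorov then gives \eqref{eq:aa1}.

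\textbf{Step 2: the high-frequency part $Z^{M,N}$.} This is the main obstacle, because $Z$ is not a continuous-in-time convolution: the lower limit $(t-h)\vee0$ moves with $t$, and the multiplier $\bL_M$ amplifies the field. I first check that the amplification is harmless. Only modes with $|j|\ge N/2$ appear, so $8\pi^2j^2h\gtrsim N^2M^{-1}\ge 2\pi^2c$ under the assumption $M^{-1}\ge cN^{-2}$. Hence $1-e^{-8\pi^2j^2h}$ is bounded below by a constant depending only on $c$, and $\bL_M$ is a bounded multiplier, uniformly in $M,N$. Because of this, it suffices to control
\[
\bar Z_t(j)=\int_{(t-h)\vee0}^t e^{-4\pi^2j^2(t-s)}\,\hat\xi(ds,j).
\]

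For fixed $t$ this gives $\E|\bar Z_t(j)|^2\le (8\pi^2j^2)^{-1}$. For $s<t$, I split $\bar Z_t(j)-\bar Z_s(j)$ into three pieces: the new noise on $[s\vee(t-h),t]$, the change of the kernel on the overlap, and the noise on $[s-h,t-h]$ that drops out. Each piece has variance at most
\[
\min\big(j^{-2},|t-s|\big)\lesssim j^{-2+2\lambda}|t-s|^{\lambda}.
\]
The dropped-noise piece carries the factor $e^{-8\pi^2j^2(s-(t-h))}$, which is at most $1$, so it obeys the same bound. Summing over a dyadic block of $|j|\sim 2^k$ gives
\[
\E\|(\bar Z_t-\bar Z_s)^{[k]}\|_{L^2}^2\lesssim 2^{k(-1+2\lambda)}|t-s|^\lambda,
\]
which is the same estimate as in Step 1. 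Gaussianity and Kolmogorov in time then yield \eqref{eq:aa2}.

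\textbf{Step 3: combining.} \eqref{eq:aa3} follows from \eqref{eq:aa1}, \eqref{eq:aa2} and the triangle inequality in $C_T^{\lambda/2}\calC^{1/2-\lambda-\epsilon}(\T)$. The constant depends on $c$ only through the uniform lower bound on $1-e^{-8\pi^2j^2h}$ obtained in Step 2.
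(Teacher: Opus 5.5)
Your proposal is correct and is essentially the paper's argument. The hypothesis $M^{-1}\geq cN^{-2}$ makes the symbol of $\bL_M$ uniformly bounded on the modes $|j|\geq N/2$; more precisely $8\pi^2j^2h\geq 2\pi^2cT$, since $h=T/M$, so the constant also depends on $T$. All three bounds then follow from Fourier-coefficient variance estimates together with Gaussianity and Kolmogorov, and the truncation in \eqref{eq:aa1} only decreases variances.

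The one difference is that the paper skips your three-piece increment computation. It observes that $Z^{M,N}_t=m(\nabla)O_t-m(\nabla)P_hO_{(t-h)\vee 0}$ with $m(j)=(1-e^{-8\pi^2j^2h})^{-1/2}\mathbf{1}_{|j|>N/2-1}$, so the required variance bounds are inherited directly from those of $O$: the time shift $t\mapsto(t-h)\vee 0$ is $1$-Lipschitz and $P_h$ only damps each mode.
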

\begin{proof}
Recall that the corresponding estimate for $O$ is proven by estimating the variance of each Fourier coefficient, see e.g  \cite[Proposition 3.1]{DjGK}. Then it follows that the same estimate holds for any process $m(\nabla)O$, where $m(\nabla)$ is a Fourier multiplier, that is $m(\nabla) f := \F^{-1} (m\hat{f})$, with a uniformly bounded symbol $m$. 
Set now
$$m(j)=\frac{1}{\sqrt{1-e^{-8\pi^2 j^2 h}}}\mathbf{1}_{\abs{j}>\frac{N}{2}-1}.
$$
It follows from the assumption $M^{-1}\geq c N^{-2}$ that $|m(j)|\lesssim 1$ uniformly in $j$.
Moreover, $Z_t=m(\nabla)O_t-m(\nabla)P_hO_{(t-h)\vee0}$. The bound \eqref{eq:aa2} then readily follows. The case \eqref{eq:aa1} is even easier, since the multiplier symbol is just an indicator in this case. Finally, \eqref{eq:aa3} follows from \eqref{eq:aa1} and \eqref{eq:aa2}.
\end{proof}

We are left to prove that $O$ and $\tilde{O}$ are close with high rate, which is the content of the next lemma.

\begin{lemma}\label{lem:tilde-O-error}
    Assume that for $c,\varepsilon>0$, $M^{-1}\geq c N^{-2+\varepsilon}$. Let $p\geq 2$. Then there exist constants $C=C(T,p),c'=c'(T,p,c)>0$ such that
    \begin{equ}
   \max_{0\leqslant k \leqslant M} \big(\E\|\Theta_N(O_{t_k}-\tilde O_{t_{k}})\|_{L^2(\Pi_N)}^p\big)^{1/p}\leq C e^{-c'N^{\eps}}.
    \end{equ}
\end{lemma}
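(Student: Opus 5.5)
The plan is to write $O_{t_k}-\tilde O_{t_k}$ explicitly in Fourier space, show that every surviving Fourier coefficient has variance exponentially small in $j^2h$, and then control the aliasing created by $\Theta_N$. Since $\Theta_N(O_{t_k}-\tilde O_{t_k})$ is a centered Gaussian vector in the finite-dimensional space $C(\Pi_N)$, all $L^p(\Omega)$ norms of its $L^2(\Pi_N)$-norm are comparable to the $L^2(\Omega)$ norm, with constants depending only on $p$. It therefore suffices to treat $p=2$. For $k=0$ both processes vanish, since $O_0=0$ and $Z^{M,N}_0=0$, so we fix $k\geq1$ and set $t=t_k$.

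\textbf{Step 1: Fourier decomposition.} Since $\tilde O=\cQ^-_{N-1}O+Z^{M,N}$, we have $O_t-\tilde O_t=(1-\cQ^-_{N-1})O_t-Z^{M,N}_t$. Split the OU process at time $t-h$:
\begin{equ}
O_t=P_hO_{t-h}+S_t,\qquad S_t:=\int_{t-h}^t\int_\T p_{t-s}(\cdot-y)\,\xi(dy,ds).
\end{equ}
This gives
\begin{equ}
O_t-\tilde O_t=(1-\cQ^-_{N-1})\big[P_hO_{t-h}+(\mathrm{Id}-\bL_M)S_t\big].
\end{equ}
Hence only modes with $|j|\geq N/2$ survive, and the $j$-th coefficient equals
\begin{equ}
e^{-4\pi^2j^2h}\,\hat O_{t-h}(j)+\big(1-(1-e^{-a_j})^{-1/2}\big)\hat S_t(j),\qquad a_j:=8\pi^2j^2h.
\end{equ}

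\textbf{Step 2: variance of each coefficient.} By It\^o's isometry as in \cref{lem:triv-Z}, $\E|\hat O_{t-h}(j)|^2\leq(8\pi^2j^2)^{-1}$ and $\E|\hat S_t(j)|^2\leq(8\pi^2j^2)^{-1}$. For the multiplier we use the elementary bound $|1-(1-e^{-a})^{-1/2}|\lesssim e^{-a}$ for $a\geq a_0>0$. The hypothesis $M^{-1}\geq cN^{-2+\eps}$ gives, for $|j|\geq N/2$,
\begin{equ}
a_j\geq 2\pi^2N^2Tc\,N^{-2+\eps}\gtrsim N^\eps,
\end{equ}
so $a_j$ is bounded below and in fact large. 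Combining these (via $|x+y|^2\leq2|x|^2+2|y|^2$, which avoids any use of correlations between the two pieces),
\begin{equ}
\E\big|\widehat{(O_t-\tilde O_t)}(j)\big|^2\lesssim j^{-2}e^{-a_j}\qquad\text{for all } |j|\geq N/2.
\end{equ}

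\textbf{Step 3: aliasing.} By \eqref{eq:operator-action} and the isometry property of $\Psi_N$,
\begin{equ}
\|\Theta_N A\|^2_{L^2(\Pi_N)}=\sum_{k\in J_N}\Big|\sum_{m\in\Z}\hat A(k+mN)\Big|^2.
\end{equ}
The modes $j$ and $-j$ are conjugate rather than independent, so instead of exploiting orthogonality we apply a weighted Cauchy--Schwarz inequality with weights $(1+|m|)^{-2}$:
\begin{equ}
\Big|\sum_m\hat A(k+mN)\Big|^2\lesssim\sum_m(1+|m|)^2|\hat A(k+mN)|^2.
\end{equ}
Since $|k+mN|\gtrsim|m|N$ for $m\neq0$, the polynomial factor $(1+|m|)^2$ is absorbed by $e^{-a_j}$, which is Gaussian in $j$. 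Taking expectations and summing gives
\begin{equ}
\E\|\Theta_N(O_t-\tilde O_t)\|^2_{L^2(\Pi_N)}\lesssim\sum_{|j|\geq N/2}(1+|j|/N)^2j^{-2}e^{-8\pi^2j^2h}\lesssim e^{-c'N^\eps}.
\end{equ}
The bound is uniform in $k$, and the moment equivalence from the first paragraph upgrades it to all $p\geq2$, which concludes the argument.

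The main point requiring care is Step 2: the bound on the multiplier $1-\bL_M$ and the verification that the assumption $M^{-1}\geq cN^{-2+\eps}$ makes $a_j\gtrsim N^\eps$ uniformly over the aliased modes. The aliasing sum in Step 3 is then routine because of the Gaussian decay in $j$.
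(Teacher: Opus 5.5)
Your proof is correct and follows essentially the paper's route: the same split of the surviving Fourier modes at time $t-h$, the same coefficient bound $\E|\widehat{(O_t-\tilde O_t)}(j)|^2\lesssim j^{-2}e^{-8\pi^2j^2h}$ for $|j|\geq N/2$, the same aliasing sum, and Gaussian moment equivalence for $p>2$. The differences are only technical and not needed. First, your worry about conjugate modes is unfounded: $\E\hat A(\ell)\overline{\hat A(\ell')}=0$ for all $\ell\neq\ell'$ (for $\ell'=-\ell$ this is $\E\hat A(\ell)^2=0$), so the paper gets the exact identity $\E\|\Theta_N A\|_{L^2(\Pi_N)}^2=\sum_{k\in J_N}\sum_m\E|\hat A(k+mN)|^2$ instead of your weighted Cauchy--Schwarz. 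Second, it handles the multiplier via $(1-(1-x)^{-1/2})^2(1-x)=(1-\sqrt{1-x})^2\leq x^2$ rather than a bound needing $a_j\geq a_0$. Both choices keep $C$ independent of $c$, as in the statement, whereas your constants also depend on $Tc$.
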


\begin{proof}
    First note that by isometry property of $\Psi_N$, we have that
\begin{equ}
    \|\Theta_N(O_{t_k}-\tilde O_{t_k})\|_{L^2(\Pi_N)}=\|\Psi_N\Theta_N(O_{t_k}-\tilde O_{t_k})\|_{L^2(\T)}.
\end{equ}
Let $Y\coloneqq O-\tilde{O}^{M,N}$ and $\lambda_j=4\pi^2 j^2$. Then, for $\abs{j} >\frac{N}{2}-1$ and $t\geqslant h>0$,
\begin{align*}
    \hat{Y}_t(j)=\int_0^{t-h}e^{-\lambda_j(t-r)}\int_{\T}e^{-2\pi i j y} \xi(dy,dr)+\Big(1-\frac{1}{\sqrt{1-e^{-2\lambda_j h}}}\Big) \int_{t-h}^t e^{-\lambda_j (t-r)}\int_{\T} e^{-2\pi i j y} \xi(dy,dr)
\end{align*}
and $\hat{Y}_{t}(j)=0$ for $\abs{j}\leq \frac{N}{2}-1$.
For $t=t_0=0$ the claim is trivial and thus we assume wlog that $t=t_{k}=k h \geq h$ for $k\geq 1$.
Hence we find,
\begin{align*}
    \E[|\hat{Y}_{t}(j)|^2]&=\int_0^{t-h} e^{-2\lambda_j(t-r)}\, dr + \Big(1-\frac{1}{\sqrt{1-e^{-2\lambda_j h}}}\Big)^2 \int_{t-h}^t e^{-2\lambda_j(t-r)}\, dr\\
    &=\frac{e^{-2\lambda_j h}-e^{-2\lambda_j t}}{2\lambda_j}+\Big(1-\frac{1}{\sqrt{1-e^{-2\lambda_j h}}}\Big)^2 \frac{1-e^{-2\lambda_jh}}{2\lambda_j}\\
    &\lesssim \frac{e^{-2\lambda_j h}}{2\lambda_j}+\Big(1-\frac{1}{\sqrt{1-e^{-2\lambda_j h}}}\Big)^2 \frac{1-e^{-2\lambda_jh}}{2\lambda_j}\\
    &\lesssim j^{-2}\Big[ e^{-2\lambda_j h}+\Big(1-\frac{1}{\sqrt{1-e^{-2\lambda_j h}}}\Big)^2 (1-e^{-2\lambda_jh})\Big]\\
    &\lesssim j^{-2} e^{-2\lambda_j h},
\end{align*}
where we used that $x+(1-\frac{1}{\sqrt{1-x}})^2(1-x)\lesssim x$ for $x \in (0,1)$ in the last line of the above.
Next, we use similar arguments as in the proof of Lemma~\ref{lem:operator}. We have that
\begin{align*}
    \Psi_N\Theta_N Y_{t}=\sum_{m\in\mathbb{Z}}\sum_{j\in J_N}\hat{Y}_{t}(j+mN)e_j
\end{align*}
and thus by orthogonality and independence
\begin{align*}
    \E\norm{\Psi_N\Theta_N Y_{t}}_{L^2(\T)}^2 = \E\sum_{j\in J_N} \paren[\Big]{\sum_{m\in\Z}\hat{Y}_{t}(j+mN)}^2&=\sum_{j\in J_N} \sum_{m,\tilde{m}}\E\hat{Y}_{t}(j+mN)\hat{Y}_{t}(j+\tilde{m}N)
    \\&=\sum_{j\in J_N} \sum_{m\in\Z}\E\abs{\hat{Y}_{t}(j+mN)}^2.
\end{align*}
Note that the sum is only over $j,m$ with $\abs{j+mN}> \frac{N}{2}-1\geq \frac{N}{4}$ for $N\geq 4$, since its summand vanishes otherwise. In particular, if $j=0$ it follows that $m\neq 0$. In this case, we further have that $e^{-8\pi^2 (j+mN)^2 h}\leq e^{-c' N^{\eps}}$ for a constant $c'=c'(T,c)$ as $h=TM^{-1}\geq c TN^{-2+\eps}$. Thus we find, combining with the above estimate,
\begin{align*}
    \E\norm{\Psi_N\Theta_N Y_{t}}_{L^2(\T)}^2
    &\lesssim \sum_{j\in J_N} \sum_{m\in\Z} (j+mN)^{-2}e^{-8\pi^2 (j+mN)^2 h}
   \\ &= \sum_{j\in J_N\setminus\{0\}} \sum_{m\in\Z} (j+mN)^{-2}e^{-8\pi^2 (j+mN)^2 h} + \sum_{m\in\Z\setminus\{0\}} (mN)^{-2}e^{-8\pi^2 (j+mN)^2 h}
    \\&\lesssim e^{-c'N^{\eps}}  N^{-2} \paren[\big]{N\sum_{m\in\Z} (1+m)^{-2}+\sum_{m\in\Z\setminus\{0\}} m^{-2}}
    \\&\lesssim N^{-1} e^{-c'N^{\eps}}\leq e^{-c'N^{\eps}}.
\end{align*}
Since Gaussian moments are comparable, the claim follows.
\end{proof}

Putting \cref{lem:triv-Z} and \cref{lem:tilde-O-error} together we obtain that our choice for $\tilde{O}$ satisfies \cref{ass:tildeO}.

\subsection{Description of the pseudo-algorithm}\label{sec:implement}

For an $N$-dimensional $\C$-valued vector indexed by $\Pi_N=\{0,1/N,\ldots,(N-1)/N\}\to\C$ we denote by $\mathtt{FT}(g)$ its discrete Fourier transform, which we view as an $N$-dimensional $\C$-valued vector $\hat g$ indexed by $J_N$. Similarly, $\mathtt{FT}^{-1}(\hat g)$ is the inverse Fourier transform.
Note that the computational effort to compute $\mathtt{FT} g$ from $g$ (or $\mathtt{FT} ^{-1}\hat g$ from $\hat g$) is $O(N\log N)$. 
Recall that since we are working with a complex Fourier basis, $i$ denotes $\sqrt{-1}$ and is never used as an index.
By $\mathtt{random}$ we denote a function that generates standard normals that are independent at each call of the function.
For simplicity we also use the notation $\mathtt{Crandom}$ for $\frac{1}{\sqrt{2}}(\mathtt{random}+i\,\mathtt{random)}$, i.e. for the generation of a complex Gaussian with variance $1$.

\textbf{Step 1: Generating the noise from \cref{section:tildeO}}

\textit{Step 1a: The projected component in Fourier}.
First, we would like to simulate the random variables $\mathcal{F}_N\Theta_N\cQ_{N-1}^-O_{kh}$ for $k=0,\dots,M$, where $\mathcal{F}_N$ denotes the discrete Fourier transform $\mathcal{F}_Nf(j)=\langle f, e_j^N\rangle_{L^2(\Pi_N,\mathbb{C})}$. Since each Fourier mode of $O$ is a $1$-dimensional Ornstein-Uhlenbeck process, the recursive exact generation is straightforward

Let $\mathtt{O}^{\mathtt{FT,proj}}_0(j)=0$ for all $j\in J_{N-1}$. For $k=1,\ldots,M$ we proceed inductively: for $j=1,\ldots,N/2-1$ set
\begin{equs}
\mathtt{O}^{\mathtt{FT,proj}}_k(j)&=e^{-4\pi^2j^2h}\mathtt{O}^{\mathtt{FT,proj}}_{k-1}(j)+\Big(\int_0^he^{-8\pi^2 j^2 s}\,ds\Big)^{1/2}
\mathtt{Crandom}
\\
&=e^{-4\pi^2j^2h}\mathtt{O}^{\mathtt{FT,proj}}_{k-1}(j)+\frac{\sqrt{1-e^{-8\pi^2j^2h}}}{\sqrt{8}\pi j}
\mathtt{Crandom},
\end{equs}
and for $j=-N/2+1,\ldots,-1$ set $\mathtt{O}^{\mathtt{FT,proj}}_k(j)=\overline{\mathtt{O}}^{\mathtt{FT,proj}}_k(-j)$.
Note that the $0$-th 
Fourier mode is always purely real and given by the induction
\begin{equ}
\mathtt{O}^{\mathtt{FT,proj}}_k(0)=\mathtt{O}^{\mathtt{FT,proj}}_{k-1}(0)+\sqrt{h}\,\,\mathtt{random}.
\end{equ}
The $j=N/2$ component simply vanishes due to the projection $\mathcal{Q}_{N-1}$.

\textit{Step 1b: The remainder in Fourier.}
Next, we would like to simulate $\mathcal{F}_N\Theta_NZ^{M,N}_{kh}$ for $k=0,\ldots,M$. 
First we set $\mathtt{Z}^{\mathtt{FT}}_0(j)=0$ for all $j\in J_N$, since by our definition above $Z^{M,N}_0=0$.
Recall the identity for $j\in J_{N-1}$ (the case $j=N/2$ will be treated separately below), 
\begin{equ}\label{eq:help-eq-in-implement}
    \cF_N\big(\Theta_NZ^{M,N}_{kh}\big)(j)=\sum_{m\in\Z\setminus\{0\}}\cF(Z^{M,N}_{kh})(j+mN).
\end{equ}
As before, we separate different cases for $j$. For $j=1,\ldots,N/2-1$, notice that by \cref{lem:triv-Z}, the random variables in the sum are independent for all $k=1,2,\ldots,M$,  $m\in\N\setminus\{0\}$, and the variance can be computed in a closed form as 
\begin{equ}
    \E\Big|\cF_N\big(\Theta_NZ^{M,N}_{kh}\big)(j)\Big|^2=-\frac{1}{8\pi^2j^2}+\sum_{m\in\Z}\frac{1}{8\pi^2(j+mN)^2}
    =\frac{1}{8N^2\sin^2(\pi j/N)}-\frac{1}{8\pi^2j^2}=:\mathtt{B}_N^2(j).
\end{equ}
Therefore we set for each $k=1,\ldots,M$ and $j=1,\ldots,N/2-1$
\begin{equ}
    \mathtt{Z}^{\mathtt{FT}}_k(j)=\mathtt{B}_N(j)\,\mathtt{Crandom}
\end{equ}
and for $j=-N/2+1,\ldots,-1$ set $\mathtt{Z}^{\mathtt{FT}}_k(j)=\overline{\mathtt{Z}}^{\mathtt{FT}}_k(-j)$. In the case $j=0$ the sum in \eqref{eq:help-eq-in-implement} is made up of conjugate pairs so the imaginary part cancels, while the real part doubles, leaving a real Gaussian with variance 
\begin{equ}
    \E\Big|\cF_N\big(\Theta_NZ^{M,N}_{kh}\big)(0)\Big|^2=\sum_{m\in\Z\setminus\{0\}}\frac{1}{8\pi^2(mN)^2}=\frac{1}{24N^2}=:\mathtt{B}_N^2(0).
\end{equ}
Therefore we set for each $k=1,\ldots,M$
\begin{equ}
\mathtt{Z}^{\mathtt{FT}}_k(0)=\mathtt{B}_N(0)\,\mathtt{random}.
\end{equ}
Finally, the situation for $j=N/2$ is similar to $j=0$, but now the equality \eqref{eq:help-eq-in-implement} holds only with including the $m=0$ term. We thus obtain a real-valued Gaussian with variance
\begin{equ}
    \E\Big|\cF_N\big(\Theta_NZ^{M,N}_{kh}\big)(N/2)\Big|^2=\mathtt{B}_{N/2}^2(0)-\mathtt{B}_N^2(0)=\frac{1}{8N^2}=:\mathtt{B}_N^2(N/2)
\end{equ}
Therefore we set for each $k=1,\ldots,M$
\begin{equ}
\mathtt{Z}^{\mathtt{FT}}_k(N/2)=\mathtt{B}_N(N/2)\,\mathtt{random}.
\end{equ}

\textit{Step 1c: Putting the noise parts together.}
Summing the two components and inverting Fourier yields the input noise of the scheme:
\begin{equ}
    \mathtt{O}_k=\mathtt{FT}^{-1}\big(\mathtt{O}_k^{\mathtt{FT,proj}}+\mathtt{Z}_k^{\mathtt{FT}}\Big).
\end{equ}

\textbf{Step 2: The schemes}

 Once $\mathtt{O}$ is given, the schemes are easy to implement. Define the operators appearing in \eqref{eq:V-recursion} and \eqref{eq:XMN} on the Fourier side as
Let
\begin{equ}
    \mathtt{P^{FT,1}}(j)=e^{-4\pi^2j^2 h},\qquad\mathtt{Q^{FT,2}}(j)=(4\pi^2j^2)^{-1}\big(\mathtt{SG^{FT,1}}(j)-1\big)
\end{equ}
with the convention $\mathtt{Q^{FT,2}}(0)=h$.
In real space, we can write for $i=1,2$
\begin{equ}
    \mathtt{P} g=\mathtt{FT}^{-1}\big(j\mapsto \mathtt{FT}g(j)\mathtt{P^{FT}}(j)\big),\qquad
    \mathtt{Q} g=\mathtt{FT}^{-1}\big(j\mapsto \mathtt{FT}g(j)\mathtt{Q^{FT}}(j)\big)
\end{equ}

\textit{Step 2a: The case of bounded $f$}.
With the above notation, \eqref{eq:V-recursion} can be written as the inductive procedure with $\mathtt{V}_0=u_0$ and, for $k=1,\dots M$,
\begin{equ}
    \mathtt{V}_k=\mathtt{P} \mathtt{V}_{k-1}+\mathtt{Q} \big(f(\mathtt{V}_{k-1}+\mathtt{O}_{k-1})\big).
\end{equ}
And finally, the full scheme is 
\begin{equ}
\mathtt{U}_k=\mathtt{V}_k+\mathtt{O}_k.
\end{equ}
\textit{Step 2b: The case of polynomially growing $f$}.
In the second case we further recall the notation \eqref{eq:gh} (see also \eqref{eq:Phi}. Then \eqref{eq:XMN} can be written as the inductive procedure with $\mathtt{X}_0=u_0$ and, for $k=1,\dots M$,
\begin{align*}
     \mathtt{X}_k=\mathtt{P} \mathtt{X}_{k-1}+h\,\mathtt{P} \big(g_h(\mathtt{X}_{k-1}+\mathtt{O}_{k-1})\big).
\end{align*}
Again, the full scheme is given by 
\begin{equ}
\mathtt{U}_k=\mathtt{X}_k+\mathtt{O}_k.
\end{equ}
The total computational effort in both cases is $O(MN\log N)$.

\begin{appendices}
\begin{section}{Appendix A}\label{appendix}

\begin{proof}[Proof of \cref{lem:semigroup-w}]
Let $\omega(z)=(1+z^2)^{-\beta/2}$ and $\omega_{-\beta}(z)=(1+z^2)^{\beta/2}$ for $z\in\R$.
First, we note that by an easy computation for $x,y\in\R$, it holds that $$\frac{\omega(x+y)}{\omega(y)}= \paren[\bigg]{\frac{1+y^2}{1+(x+y)^2}}^{\beta/2}\leq 2^{\beta/2} (1+x^2)^{\beta/2}= 2^{\beta/2}\omega_{-\beta}(x).$$ By definition for $v\in C^{0}_\omega$, $\norm{v}_{C^{0}_\omega}=\norm{vw}_{L^\infty}$. For $t=0$ the claims are trivial, so that we may assume $t>0$. Then using the bound on $\omega$, we find that with the weighted Young inequality from \cite[Theorem 2.1]{MW},
    \begin{align*}
        \norm{P^{\R}_t v}_{C^0_\omega}= \norm{(p_{t}\ast v) w}_{L^\infty}\leq \norm{p_{t}\omega_{-\beta}}_{L^1} \norm{vw}_{L^{\infty}}\leq C \norm{v}_{C^0_\omega},
    \end{align*}
    since $\norm{p_{t}\omega_{-\beta}}_{L^1}=\E(1+B_t^2)^{\beta/2}\leq c(\beta) (1+t^{\beta/2})\leq c(\beta)(1+T^{\beta/2})$ for $t\in (0,T]$ and a constant $c(\beta)>0$ that does not depend on $t$, where $B$ denotes a standard Brownian motion. Next, we note that, since $\partial_t P_{t}^{\R} v = P_{t}^\R \Delta v$,
    \begin{align*}
        (P_t^\R - \operatorname{Id}) v=\int_{0}^{t} P^{\R}_r\Delta v \, dr.
    \end{align*}
    Hence we have that, using the semigroup estimate above,
    \begin{align*}
        \norm{(P_t^\R - \operatorname{Id}) v}_{C^{0}_\omega}\lesssim t  \norm{\Delta v}_{C^{0}_\omega}\lesssim t \norm{v}_{C^2_\omega},
    \end{align*}
    where  the last inequality follows from the fact that $\abs{w'(z)},\abs{w''(z)}\lesssim  w(z)$.
\end{proof}

\begin{proof}[Proof of \cref{lem:semigroup1}]
    The $L^2$ bounds are trivial from the definition of $P_{t},P_{t}^{N},\tilde{P}_{t}^{N}$ since $e^{\lambda_{k} t}\leq 1$ for $\lambda_{k}\leq 0$ being equal to the eigenvalues for $\Delta$, $\Delta_N$ or $\tilde{\Delta}_N$ on $\mathbb{T}$, respectively $\Pi_N$, for $k\in\Z$, respectively $k\in J_N$. 
    The semigroup estimates \eqref{eq:heatkernelH}, \eqref{eq:heatLinfty} and \eqref{eq:heatalpha2} for $(P_{t})$ follow from \cite[Lemma A7]{GIP} and \cite[Proposition 5 and 6]{MW}, 
    whose estimates also apply for the Besov space $\calC^{\alpha}(\mathbb{T})$ on the torus $\T$, since for $\alpha\in\R$, $f\in\calC^{\alpha}(\mathbb{T})$ if and only if its periodic extension $f^{\R}\in\calC^{\alpha}(\R)$ (where $f^{\R}(\phi):= f(\sum_{n\in\Z} \phi(\cdot+n))$, $\phi\in \mathcal{S}(\R)$) and $P_{t}f=P_{t}^{\R} f^{\R}$ for the heat-semigorup $P_{t}^{\R}$ on $\R$.
    The estimate \eqref{eq:heatLinfty2} follows from \eqref{eq:heatalpha2} applied for $\delta'=\delta -\epsilon$ and $\alpha=\epsilon$, respectively $\delta' =\delta+\epsilon$ and $\alpha=-\epsilon$, for small $\epsilon>0$ together with (below $\Delta_j u :=\F^{-1}(\phi^j \hat{u})$ denotes the $j$-the Littlewood Paley block)
    \begin{align*}
        \norm{ P_{t}f-f}_{L^{\infty}(\T)}&\leqslant \sum_{j\geq 0} \norm{\Delta_j (P_{t}f-f)}_{L^{\infty}(\T)}\\&\leqslant \norm{f}_{\calC^{\delta}(\T)}\paren[\Big]{\sum_{j: 2^{-j}<t} 2^{-j\epsilon} t^{(\delta-\epsilon)/2} + \sum_{j: 2^{-j}\geq t} 2^{j\epsilon} t^{(\delta+\epsilon)/2}}
        \\&\lesssim \norm{f}_{\calC^{\delta}(\T)} t^{\delta/2}
    \end{align*}
    To see the semigroup estimates for $(P_t^{N})$, we follow the proof of \cite[Lemma 2.11 and 2.12]{GS}. 
    Note that their proof only applies for $t\in I_M\cap [0,1]$ and we extend it to $t\in [0,1]$. To that aim, we define for $z\in\R$, $t\in [0,1]$
    \begin{align*}
        \tilde{\lambda}^N (z)=-4 N^2\sin^2 (\frac{z\pi}{2N}), \quad \tilde{\mu}_{t,N}(z)=e^{t\tilde{\lambda}^{N}(z)}
    \end{align*}
    so that $\tilde{\lambda}^{N}(j)=\lambda^{N}_{j}$ for $j\in\Z$ and for $f\in C(\Pi_N)$, $$P_{t}^{N}f = \paren[\big]{\sum_{k\in J_{N}}\tilde{\mu}_{t,N}(k) e_{k}^{N}}\ast_{N} f = \sum_{k\in J_{N}} e^{t\tilde{\lambda}^{N}(k)} \langle f,e^{N}_{k}\rangle_{L^{2}(\Pi_N)} e_{k}^{N}$$ with $(f \ast_{N} g) (y) = \frac{1}{N}\sum_{x\in \Pi_{N}} f(x) g(y-x) = \langle f , g(y-\cdot)\rangle_{L^{2}(\pi_N)}, \quad y\in\Pi_N$. We have that 
    \begin{align*}
        \partial_{z} &\tilde{\mu}_{t,N}(z) = -t 4\pi N \sin (\frac{z\pi}{2N})\cos(\frac{z\pi}{2N})\tilde{\mu}_{t,N}(z),\\ \partial_{z}^2& \tilde{\mu}_{t,N}(z)= 2  \pi^2 t \paren[\big]{\sin^2(\frac{\pi z}{2 N}) + \cos^2(\frac{\pi z}{2N}) (-1 + 8 N^2 t \sin^2(\frac{\pi z}{2 N}))} \tilde{\mu}_{t,N}(z)
    \end{align*}
    and thus the following bounds hold true
    \begin{align}\label{eq:density-est}
        \abs{\partial_{z} \tilde{\mu}_{t,N}(z)}\lesssim t \abs{z}\tilde{\mu}_{t,N}(z), \quad \abs{\partial_{z}^2 \tilde{\mu}_{t,N}(z)}\lesssim (t+t^2 \abs{z}^2)\tilde{\mu}_{t,N}(z).
    \end{align}
    The bounds \eqref{eq:density-est} are the same bounds as in \cite[equation (2.11)]{GS}. From here on, the same proof as for \cite[Lemma 2.11 and 2.12]{GS} applies replacing their definition of $\tilde{\mu}_{t,N}(z)$ with our definition above in order to deduce \eqref{eq:Calpha} and \eqref{eq:Calpha2}. To see \eqref{eq:Linfty} for $f\in L^{\infty}$, the same argument as above together with the proof of \cite[Lemma 2.11]{GS} applies. Furthermore the estimate \eqref{eq:Linfty2} again follows from \eqref{eq:Calpha2}.
\end{proof}   

\begin{proof}[Proof of \cref{lem:semigroup2}]
    The bounds for $\tilde{P}_{t}^{N}$ follow from the ones for $P_{t}$. Indeed, we find that due to the isometry property of $\Psi_N$ and the commutativity $\Psi_N\tilde{P}^{N}_t=P_t\Psi_N$
    \begin{align*}
        \norm{\tilde{P}_{t}^{N}f}_{H^{\alpha+\delta}(\Pi_N)}=\norm{\Psi_N\tilde{P}_{t}^{N}f}_{H^{\alpha+\delta}(\T)}=\norm{P_{t}\Psi_N f}_{H^{\alpha+\delta }(\T)}\leq C t^{-\delta/2}\norm{\Psi_N f}_{H^{\alpha}(\T)}=\norm{f}_{H^{\alpha}(\Pi_N)}
    \end{align*}
    and similar for the other bound. The bounds for $(P_t)$ on Sobolev spaces are classical and follow from $e^{-4\pi^2 t \abs{k}^2}\leq C t^{-\delta/2}(1+\abs{k}^2)^{-\delta}$ and $1-e^{-4\pi^2 t \abs{k}^2}\leq C t^{\delta/2}\abs{k}^{\delta}$ for $\delta\in [0,2]$.
\end{proof}

\begin{proof}[Proof of \cref{lem:composition}]
    We treat the case of $\theta \in (1,2)$; the case of $\theta \in (0,1)$ follows along similar but easier arguments. By equivalence of the H\"older norm and Besov norm,
    \begin{align*} \|g(v)\|_{\cC^\theta}= \|g(v)\|_{L^\infty}+\|g'(v)v'\|_{L^\infty}+ \sup_{x \neq y}\frac{|g'(v)(x)(v')(x)-g'(v)(y)(v')(y)|}{|x-y|^{\theta-1}}\eqqcolon E_1+E_2+E_3.
    \end{align*}
By assumption on $g$, we have that
\begin{align*}
    E_1+E_2&\leqslant K (1+\|v\|_{L^\infty}^{m})(1+\|v'\|_{L^\infty})\\
    &\leqslant K (1+\|v\|_{L^\infty}^{m})(1+\|v\|_{C^1_b}).
\end{align*}
Note that for $x,y\in\T$ and $f \in C^1$, $$f(v(x))-f(v(y))=(v(x)-v(y))\int_{0}^{1} f' (\lambda v(x)+(1-\lambda)v(y)) \,d\lambda.$$
    Moreover, for $w(x,y)\coloneqq \lambda v(x)+(1-\lambda) v(y)$, $x,y\in\T$, $\lambda\in[0,1]$, we have that $\norm{w}_{L^{\infty}(\T\times\T)}\leq 2 \norm{v}_{L^{\infty}}$. Using this,
\begin{align*}
    E_3&\leqslant \sup_{x\neq y}\frac{|g'(v)(x)v'(x)-g'(v)(y)v'(x)|+|g'(v)(y)v'(x)-g'(v)(y)v'(y)|}{|x-y|^{\theta-1}}\\
    &\leqslant \|v^\prime\|_{L^\infty} \sup_{x\neq y}\frac{\Big|(v(x)-v(y))\int_0^1 g''(\lambda v(x)+(1-\lambda)v(y))\, d\lambda\Big|}{|x-y|^{\theta-1}}+\|g'(v)\|_{L^\infty}\|v'\|_{\cC^{\theta-1}}\\
    &\leqslant K(1+\|v\|_{L^\infty}^{m})\Big(\|v'\|_{L^\infty}\|v\|_{\cC^{\theta-1}}+\|v\|_{C^{\theta}}\Big)\\
    &\lesssim K(1+\|v\|_{L^\infty}^{m})\Big(\|v\|_{C^1_b}\|v\|_{\cC^{\theta-1}}+\|v\|_{\cC^\theta}\Big)
\end{align*}
and therefore \eqref{eq:comp-B} holds.
    Now, we consider the case of $v \in H^{\theta}(\T)\cap L^\infty(\T)$ for $\theta \in (0,1)$, i.e. proving \eqref{eq:comp-H}.
    For $s \in (0,1)$, recall the Slobodeckij norm defined by
    \begin{align} \label{eq:SoboSlobo}
  \|f\|_{H^s}\coloneqq \|f\|_{L^2}+\Big(\int_{\T} \int_{\T} \frac{|\Delta_hf(x)|^2}{|h|^{1+2s}}\, dx\, dh\Big)^{1/2},
    \end{align}
    with $\Delta_h f (x) := f(x+h)-f(x)$, which is equivalent to $B^s_{2,2}(\T)$ (see \cref{lem:spaces-equivalence}). 
Then we have that
\begin{align*}
\|g(v)\|_{H^\theta}&\lesssim K(1+\|v\|^{m}_{L^\infty})+\Big(\int_{\T} \int_{\T} \frac{|\Delta_h(g(v))(x)|^2}{|h|^{1+2\theta}}\, dx\, dh\Big)^{1/2}\\
&\lesssim K(1+\|v\|^{m}_{L^\infty})+\Big(\int_{\T} \int_{\T} h^{-1+2\theta}\Big(\Delta_h v(x)\int_0^1 g'(\lambda v(x+h)+(1-\lambda) v(x))\, d\lambda \Big)^2\, dx\, dh\Big)^{1/2}\\
&\lesssim K(1+\|v\|^{m}_{L^\infty})(1+\|v\|_{H^\theta}).\qedhere
\end{align*}
\end{proof}
\end{section}
\end{appendices}

\bigskip
\textbf{Acknowledgments}
\begin{itemize}
    \item[(LA)] This research was funded in whole or in part by the Austrian Science Fund (FWF) [10.55776/STA119]. For open access purposes, the author has applied a CC BY public copyright license to any author-accepted manuscript version arising from this submission.
\item [(MG)] Funded by the European Union (ERC, SPDE, 101117125). Views and opinions expressed
are however those of the author(s) only and do not necessarily reflect those of the European Union
or the European Research Council Executive Agency. Neither the European Union nor the granting
authority can be held responsible for them. 
\item [(HK)] Acknowledges support from the Deutsche Forschungsgemeinschaft (DFG) CRC/TRR 388 ``Rough Analysis, Stochastic Dynamics and Related Fields" - Project ID 516748464 within sub-project A02.
\end{itemize}

\bigskip
\textbf{Declaration on AI usage}

In the preparation of the present article AI tools were used exclusively for the purpose of searching for references on heat kernel estimates.

\bibliographystyle{Martin}
\bibliography{numerics}

\end{document}